\newcommand{\be}{\begin{equation}}
\newcommand{\ee}{\end{equation}}
\newcommand{\bes}{\begin{equation*}}
\newcommand{\ees}{\end{equation*}}
\renewcommand{\l}{\lambda}
\newcommand{\G}{\Gamma}
\newcommand{\C}{\mathbb C}
\newcommand{\R}{\mathbb R}
\newcommand{\e}{\epsilon}
\newcommand{\dist}{\mathrm{dist}}
\newcommand{\diam}{\operatorname{diam}}
\renewcommand{\Im}{\operatorname{Im}}
\renewcommand{\Re}{\operatorname{Re}}
\newcommand{\cM}{\mathcal{M}}
\newcommand{\cO}{\mathcal{O}}
\newcommand{\cE}{\mathcal{E}}
\newcommand{\cN}{\mathcal{N}}
\newcommand{\cB}{\mathcal{B}}
\newcommand{\cP}{\mathcal{P}}
\newcommand{\bP}{\mathbb{P}}
\newcommand{\bE}{\mathbb{E}}
\newcommand{\ep}{\epsilon }
\newcommand{\la}{\lambda }
\newcommand{\de}{\delta }
\newcommand{\si}{\sigma }
\newcommand{\ga}{\gamma }
\newcommand{\Ga}{\Gamma }
\newcommand{\Om}{\Omega }
\newcommand{\ones}{\mathbbm{1}}
\newcommand{\one}{\mathbf{1}}
\newcommand{\conv}{\operatorname{conv}}
\newcommand{\rea}{\operatorname{Re}}
\newcommand{\Length}{\operatorname{Length}}
\newcommand{\ima}{\operatorname{Im}}
\newcommand{\area}{\operatorname{Area}}
\newcommand{\bd}[1]{\partial #1}
\newcommand{\Mod}{\operatorname{Mod}}
\newcommand{\bi}{\begin{itemize}}
\newcommand{\ei}{\end{itemize}}
\newtheorem{thm}{Theorem}
\newtheorem{prop}[thm]{Proposition}
\newtheorem{cor}[thm]{Corollary}
\newtheorem{claim}[thm]{Claim}
\newtheorem{lemma}[thm]{Lemma}
\theoremstyle{definition}
\newtheorem{remark}[thm]{Remark}
\newtheorem{defn}[thm]{Definition}
\newtheorem{example}[thm]{Example}
\def\1{{\bf 1}}
\begin{document}

\title{\bf Convergence of the Probabilistic Interpretation of Modulus}

\author[1]{Nathan Albin}
\author[2]{Joan Lind}
\author[1]{Pietro Poggi-Corradini}
\affil[1]{Department of Mathematics, Kansas State University, Manhattan, KS 66506, USA.}
\affil[2]{Department of Mathematics, University of Tennessee, Knoxville, TN 37996, USA.}

\maketitle

\abstract{
Given a Jordan domain $\Om\subset\C$ and two disjoint arcs $A, B$ on $\bd\Om$, the modulus $m$ of the curve family connecting $A$ and $B$ in $\Om$ is equal to the modulus of the curve family connecting the vertical sides in the rectangle $R=[0,1]\times[0,m]$. Also, $m>0$ is the unique value such that  there is a conformal map $\psi$ mapping $\Om$ to  ${\rm int}(R)$ so that $\psi$ extends continuously to a homeomorphism of $\bd \Om$ onto $\bd R$ and the arcs $A$ and $B$ are sent to the vertical sides of $R$. Moreover, in the case of the rectangle  the family of horizontal segments connecting the two sides has the same modulus as the entire connecting family. Pulling these segments back to $\Om$ via $\psi$ yields a family of extremal curves (also known as horizontal trajectories)  connecting $A$ to $B$ in $\Om$. In this paper, we show that these extremal curves can be approximated by some discrete paths arising from an orthodiagonal approximation of $\Om$. Moreover, we show that there is a natural probability mass function (pmf) on these paths, deriving from the theory of discrete modulus, which converges to the transverse measure on the set of extremal curves. The key ingredient is an algorithm that, for an embedded planar graph, takes the current flow between two sets of nodes, and produces a unique path decomposition with non-crossing paths. Moreover, some care was taken to adapt recent results for harmonic convergence on orthodiagonal maps, due to Gurel-Gurevich, Jerison, and Nachmias, to our context. Finally, we generalize a result of N.~Alrayes from the square grid setting to the orthodiagonal setting, and prove that the discrete modulus of the approximating non-crossing paths converges to the continuous modulus.
}
\vspace{0.1in}

\noindent {\bf Keywords:} continuous and discrete modulus, electric flows, rectangle-tilings, harmonic convergence.

\vspace{0.1in}

\noindent {\bf 2010 Mathematics Subject Classification:} 30C85 (Primary) 31C20; 05C85 (Secondary)

\tableofcontents

\bigskip

\section{Introduction and results}\label{intro}

Many objects in complex analysis are approximated by discrete analogues.  
In this paper, we establish the convergence of three discrete objects, discrete modulus, discrete paths that are extremal for the modulus, and discrete harmonic functions, to their continuous counterparts, the continuous modulus, the extremal curves (or horizontal trajectories) for the modulus, and harmonic functions.

Consider a Jordan domain $\Om\subset\C$ and two disjoint arcs  on $\bd\Om$.  
The modulus of the curve family connecting these arcs in $\Omega$ is well understood.
In particular, there is a subfamily of extremal curves (or horizontal trajectories) that carry all the information for this modulus.
We wish to approximate $\Omega$ with a plane graph $G$ and approximate the boundary arcs with sets of vertices $A$ and $B$.
As previously studied in \cite{abppcw:ecgd2015, acfpc:ampa2019, apc:jan2016}, the discrete modulus of the family of paths between $A$ and $B$ in $G$ arises from putting a density on the edges of $G$.  See Section \ref{DiscreteModulus} for the definitions.

Our first task, accomplished in the result below, 
is to identify a subfamily of paths between $A$ and $B$ in $G$ that is the candidate approximating family for the extremal curves in the classical setting.  
The non-crossing paths  in this subfamily are allowed to intersect (i.e.~they may share vertices and edges in common), but they may not cross.  See Section \ref{non-crossing}~for the precise definition.

\begin{thm}\label{existence}
Let $G=(V,E, \sigma)$ be a finite plane network with the set $V$ of vertices, the set $E$ of edges, and edge-weights $\si\in\R^E_{>0}$. 
Assume that there are nonempty sets of  boundary vertices $A$ and $B$  defined as follows:
   $A = S_1 \cap V$ and $B= S_2 \cap V$, 
  where $S_1, S_2$ are two disjoint arcs from a Jordan curve $\mathcal{C}$ that encircles $G$ in the closed outer face of $G$.
Consider the family $\Ga(A,B)$ of all paths in $G$ from $A$ to $B$.

Then, there is a unique subfamily $\Ga\subset\Ga(A,B)$ such that
\bi
\item[{\rm (i)}] $\Ga$ is a family of non-crossing paths (defined in Section \ref{non-crossing}); 
\item[{\rm (ii)}] $\Ga$ supports a pmf $\mu$  that is optimal for the probabilistic interpretation of the $(2,\si)$-modulus of $\Ga(A,B)$ (defined in Section \ref{sec:prob-interp-modulus});
\item[{\rm (iii)}]  and $\Ga$ is minimal for the $(2,\si)$-modulus of $\Ga(A,B)$ (defined in Section \ref{DiscreteModulus}). 
\ei
In particular, $\mu$ is the unique pmf supported on $\Ga$ that satisfies (ii).

Moreover, both $\Ga$  and $\mu$ can be computed using the output of the Non-crossing Minimal Subfamily Algorithm \ref{alg:non-cross-min-subfam} described in Section \ref{non-crossing}.
\end{thm}

Our next goal is to show that the candidate subfamily identified in Theorem \ref{existence} does in fact approximate the family of extremal curves.  
For this, we specialize to plane graphs that arise from finite orthodiagonal maps, a class of graphs that generalize the square grid.  
Roughly, an othodiagonal map is a plane graph where every interior face is a quadrilateral with orthogonal diagonals.
Associated with the othodiagonal map is the so-called primal graph $G^\bullet$ that we utilize, and the edges of $G^\bullet$ are equipped with canonical weights defined in Equation (\ref{eq:canonical-weights}).  
See Section \ref{orthodiagIntro} for definitions and notation.

Given a Jordan domain $\Om\subset\C$ and two arcs $A, B$ on $\bd\Om$, let $\psi$ be the conformal map from $\Om$ to a rectangle $R=[0,1]\times[0,m]$ so that $A$ and $B$ are sent to the vertical sides $\{0\} \times [0,m]$ and $\{1\} \times [0,m]$. 
Then the extremal curves for modulus are the pullback under $\psi$ of the horizontal segments connecting the two vertical sides of $R$.
The transverse measure on the horizontal curves in $R$ is given by the 1-dimensional Lebesgue measure on the height ($y$-coordinate in $R$) of these curves.
We define the {\it transverse measure on the extremal curves from $A$ to $B$ in $\Omega$} to be the pullback via $\psi$ of the transverse measure on the horizontal curves in $R$.

For simplicity, we state the next two results in the case when $\Om$ is an analytic quadrilateral, see Definition \ref{def:smooth-quad}.

\begin{thm} \label{limitpmf}
Let $\Omega$ be an analytic quadrilateral with boundary arcs $A, \tau_1, B, \tau_2$.
Let $\ep_n>0$ with $\ep_n \to 0$.
Let $G_n$ be an orthodiagonal map with edge length at most $\epsilon_n$  and with boundary arcs $S^n_{1},T^n_{1}, S^n_{2},T^n_{2}$, 
so that  $(G_n;S^n_{1},T^n_{1}, S^n_{2},T^n_{2})$  approximates $(\Omega;A, \tau_1, B, \tau_2)$ within distance $\e_n$ 
(in the sense of Definition \ref{def:approx-smooth-quad}). Equip $G_n$ with canonical edge-weights $\si_n$ as in Equation (\ref{eq:canonical-weights}).
For $A_n = V_n^\bullet \cap S^n_1$ and $B_n = V_n^\bullet \cap S^n_2$, consider the family $\Ga(A_n,B_n)$ of all paths in the primal graph $G^\bullet_n$ from $A_n$ to $B_n$. Let 
$\Ga_n$ and $\mu_n$ be corresponding objects for $\Ga(A_n,B_n)$, as described in Theorem \ref{existence}; namely, 
$\Gamma_n\subset \Ga(A_n,B_n)$ is the unique minimal subfamily with non-crossing paths and  $\mu_n$  is the pmf supported on $\Gamma_n$ that is optimal for the probabilistic interpretation of  $\Mod_{2,\si_n}\Ga(A_n,B_n)$ (defined in Section \ref{sec:prob-interp-modulus}).   

Then, as $n \to \infty$, $\mu_n$ converges to the transverse measure $\mu$ on the extremal curves from $A$ to $B$ in $\Omega$
(under weak convergence using  the Hausdorff metric as the underlying metric).
\end{thm}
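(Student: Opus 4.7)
My plan is to parametrize the non-crossing path family $\Gamma_n$ by a discrete stream (conjugate-harmonic) function and reduce weak convergence of $\mu_n$ to a Riemann-sum identity, invoking the orthodiagonal harmonic-convergence results adapted in the paper to match the limit with the pullback of Lebesgue measure.

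Since all curves under consideration lie in the compact set $\overline{\Omega}$ and the Hausdorff space of nonempty compact subsets of $\overline{\Omega}$ is itself compact, tightness of $\{\mu_n\}$ is automatic, and it suffices to verify $\int f\, d\mu_n \to \int f\, d\mu$ for every bounded continuous $f$. On the discrete side, the probabilistic interpretation of modulus in Section~\ref{sec:prob-interp-modulus} identifies $\mu_n(\gamma)$ (up to normalization) with the fraction of the unit current from $A_n$ to $B_n$ under conductances $\sigma_n$ carried by $\gamma$. By planarity of $G_n^\bullet$ and the non-crossing property, there is a unique linear order $\gamma_1^n < \cdots < \gamma^n_{N_n}$ on $\Gamma_n$ running from $T_1^n$ to $T_2^n$, together with a discrete stream function $v_n$ on dual vertices that, after normalization, takes values in $[0,1]$ and jumps by exactly $\mu_n(\gamma_j^n)$ across $\gamma_j^n$; write $y_j^n := v_n(\gamma_j^{n,+})$ for the cumulative height after the $j$-th path, so that $y_0^n = 0$, $y_{N_n}^n = 1$, and $y_j^n - y_{j-1}^n = \mu_n(\gamma_j^n)$.

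Next I would invoke the Gurel-Gurevich--Jerison--Nachmias convergence, in the orthodiagonal form adapted in the paper, to conclude that the discrete harmonic potential $u_n$ (zero on $A_n$, one on $B_n$, Neumann on $T_1^n \cup T_2^n$) converges uniformly on $\overline{\Omega}$ to $\Re(\psi)/m$; the analyticity of the four boundary arcs together with the non-cuspidal corner hypothesis supplies the regularity up to the boundary. Planar duality for orthodiagonal maps with canonical weights then transfers this to uniform convergence of $v_n$ to $\Im(\psi)/m$. Along any single path $\gamma_j^n$, the discrete stream values stay in an interval of length $\mu_n(\gamma_j^n)$, so uniform convergence of $v_n$ and continuity of $\psi^{-1}$ squeeze $\gamma_j^n$ between the two horizontal trajectories $\psi^{-1}(\{\Im w = m(y_j^n \pm o(1))\})$, yielding Hausdorff convergence $\gamma_j^n \to \psi^{-1}(\{\Im w = m y_j^n\})$, uniformly in $j$.

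Combining these ingredients with uniform continuity of $f$ on the compact curve space gives
\[
\int f\, d\mu_n = \sum_{j=1}^{N_n} f(\gamma_j^n)\,\mu_n(\gamma_j^n) = \sum_{j=1}^{N_n} f\bigl(\psi^{-1}(\{\Im w = m y_j^n\})\bigr)(y_j^n - y_{j-1}^n) + o(1),
\]
which is a Riemann sum for $\int_0^1 f\bigl(\psi^{-1}(\{\Im w = m h\})\bigr)\, dh$, i.e.\ $\int f\, d\mu$ up to the normalization convention. I expect the main obstacle to be the path-identification step: one needs harmonic convergence that is strong enough up to the analytic boundary arcs for the endpoints of $\gamma_j^n$ to converge to points of $A$ and $B$, together with the quantitative estimate $\max_j \mu_n(\gamma_j^n) \to 0$, which ensures that no single path carries a macroscopic fraction of the total mass and the Riemann sums are genuine. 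The former is supplied by the adapted boundary regularity of the GGJN result under the analytic-arc hypothesis, while the latter runs in parallel with the orthodiagonal modulus-convergence argument generalizing Alrayes' theorem.
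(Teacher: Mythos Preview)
Your proposal is correct and follows essentially the same strategy as the paper: order the non-crossing paths, introduce the cumulative height $y_j^n=\sum_{i\le j}\mu_n(\gamma_i^n)$, identify it with the dual (conjugate) discrete harmonic function via Fulkerson duality (the paper's Lemma~\ref{lem:fulkerson-duality}), invoke the adapted harmonic-convergence Theorem~\ref{HarmConvThm} to pin $\operatorname{Im}\phi$ along each $\gamma_j^n$ to an interval of width $o(1)$ around $y_j^n$, and conclude weak convergence via compactness of the curve space. The only packaging difference is that the paper extracts subsequential limits by Prohorov and then characterizes the limit by computing $\mu(H_r)=r$ for the sets $H_r=\{\gamma:\phi(\gamma)\subset[0,L]\times[0,r]\}$, whereas you phrase the same conclusion as a Riemann-sum identity for $\int f\,d\mu_n$; both rely on the same two ingredients you correctly flag as the crux, namely $\max_j\mu_n(\gamma_j^n)\to 0$ (which the paper gets from $\mu_n(\gamma')\le\eta_n(e^\circ)\le\delta_n$) and harmonic convergence up to the boundary, for which the paper applies Theorem~\ref{HarmConvThm} directly to the dual problem rather than ``transferring by planar duality'' as you suggest.
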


The measures $\mu_n, \mu$ in the above theorem are supported on different sets of curves, 
but we view them as measures on the same space as follows.  
Let $X = \left(\bigcup_n \Gamma_n \right) \cup \Gamma$ where $\Gamma$ is the set of extremal curves from $A$ to $B$ in $\Omega$.
Then we consider $\mu_n, \mu$ as elements of the set $\cP(X)$ of probability measures on $(X, \mathcal{B}(X))$, where $\cB(X)$ is the Borel $\si$-algebra induced by the Hausdorff metric.

The proof of Theorem \ref{limitpmf}, relies on the fact that discrete harmonic functions approximate  continuous harmonic functions.
The following is an adaptation of the main theorem of \cite{gurel-jerison-nachmias:advm2020}.

\begin{thm} \label{HarmConvThm}
Let $\Omega$ be an analytic quadrilateral with boundary arcs $A, \tau_1, B, \tau_2$.
Let $\psi$ be a conformal map of $\Omega$ onto the interior of the rectangle $R=[0,1] \times [0,m]$ taking the arcs $A$ and $B$ to the vertical sides of the rectangle,
and let $h_c = \rea \psi$.
Let $\epsilon, \delta \in (0,\diam(\Omega))$.
Let $G= (V^\bullet \sqcup V^\circ, E)$ 
be a finite orthodiagonal map with  boundary arcs $S_1, T_1, S_2, T_2$ 
with maximal edge length at most $\e$, so that $(G;S_1, T_1, S_2, T_2)$ 
approximates $(\Omega;A, \tau_1, B, \tau_2)$ within distance $\delta$ (in the sense of Definition \ref{def:approx-smooth-quad}). Equip $G$ with canonical edge-weights $\si$ as in Equation (\ref{eq:canonical-weights}).
Let $h_d: V^\bullet \to \mathbb{R}$ be discrete harmonic on ${\rm int}(V^\bullet)$ as in (\ref{eq:si-harmonic}), with the following boundary values:
$h_d(z) = 0$ for $z \in V^\bullet \cap S_1$ and $h_d(z) = 1$ for $z \in V^\bullet \cap S_2$.
Then, for $z \in V^\bullet \cap \overline{\Omega}$,
$$|h_d(z) - h_c(z)| \leq \frac{C}{\log^{1/2}\left(\diam(\Omega)/ (\epsilon \vee \delta) \right)},$$
where $C$ depends only on $\Omega$.
\end{thm}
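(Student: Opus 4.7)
The plan is to reduce the mixed Dirichlet--Neumann problem to a pure Dirichlet problem by Schwarz reflection across the analytic arcs $\tau_1,\tau_2$, and then invoke the main harmonic convergence theorem of \cite{gurel-jerison-nachmias:advm2020} (henceforth GGJN). On the continuous side, because $\tau_1,\tau_2$ are real analytic and meet $A,B$ at non-cusp corners, the conformal map $\psi$ extends analytically across each $\tau_i$; concretely, reflecting the rectangle $R=[0,1]\times[0,m]$ across its horizontal sides a fixed number of times produces an enlarged rectangle $\wt R$ whose conformal preimage $\wt\Omega\supset\Omega$ is a Jordan domain, and $h_c$ extends by reflection to a harmonic function $\wt h_c$ on $\wt\Omega$ with pure Dirichlet data: $0$ on the boundary arcs mapping to the left vertical sides of $\wt R$ and $1$ on those mapping to the right vertical sides.

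Second, I would construct an orthodiagonal map $\wt G$ approximating $\wt\Omega$ at scale $\epsilon$ within distance $\delta$ (in the sense of Definition \ref{def:approx-smooth-quad}), arranged so that a large interior portion of $\wt G$ coincides with $G$. Naively reflecting $G$ across $T_i$ fails because reflection through a non-straight boundary arc destroys the orthogonality of diagonals; instead, near each $T_i$ one fills a thin buffer region with a fresh orthodiagonal mesh that matches $G$ on one side and its geometric image on the other, verifying that the angular non-degeneracy required for GGJN's crossing estimates is preserved. Applying GGJN to the discrete Dirichlet solution $h_d^*$ on $\wt G$ with boundary data derived from $\wt h_c$ yields
$$|h_d^*(z)-\wt h_c(z)|\le \frac{C}{\log^{1/2}(\diam(\wt\Omega)/(\epsilon\vee\delta))}$$
for all $z\in V(\wt G)\cap\overline{\wt\Omega}$, and in particular on $V^\bullet\cap\overline\Omega$, where $\wt h_c=h_c$.

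The final step is to compare $h_d$ with the restriction of $h_d^*$ to $V^\bullet\cap\overline\Omega$. Both are discrete harmonic on interior vertices; $h_d$ implicitly satisfies a discrete Neumann condition at $V^\bullet\cap(T_1\cup T_2)$ since no edges of $G^\bullet$ cross those arcs, while $h_d^*$ inherits an approximate Neumann condition there from the reflection symmetry of $\wt h_c$, up to the GGJN error. A discrete maximum-principle argument on $G^\bullet$ then controls $|h_d-h_d^*|$ by a constant multiple of the GGJN error, which combined with the bound above yields the claimed rate. The principal difficulty is the buffer construction in the second step: the orthodiagonal mesh must cover a curved neighborhood of each $T_i$ without gaps and with cells of controlled aspect ratio, since GGJN's quantitative estimates rely on uniform angular non-degeneracy. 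An alternative route, avoiding the geometric doubling entirely, is to re-derive GGJN's random-walk coupling directly between the walk on $G^\bullet$ absorbed on $V^\bullet\cap(S_1\cup S_2)$ and Brownian motion on $\Omega$ reflected on $\tau_1\cup\tau_2$ and absorbed on $A\cup B$; either route produces the stated logarithmic error bound.
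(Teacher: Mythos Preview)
Your strategy differs substantially from the paper's, and both of your proposed routes contain real gaps.

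\textbf{The geometric doubling route.} You correctly identify the buffer construction as the principal difficulty, but you understate it: there is no known general procedure for gluing two orthodiagonal maps along a curved interface while keeping every inner face a quadrilateral with orthogonal diagonals and bounded aspect ratio. More seriously, even granting such a $\wt G$, your final comparison of $h_d$ with $h_d^*|_{V^\bullet}$ does not go through. The function $h_d^*$ is discrete harmonic on $\wt G^\bullet$, so at a vertex $v\in V^\bullet\cap T_i$ its $G^\bullet$-Laplacian equals the net flux through the edges of $\wt G^\bullet$ leaving $G$ into the buffer region. You want this flux to be small (``approximate Neumann''), but the GGJN theorem gives only a \emph{pointwise} bound $|h_d^*-\wt h_c|\le C/\log^{1/2}(\cdot)$; it says nothing about discrete gradients, and since $\wt G$ is not reflection-symmetric there is no symmetry forcing the flux to vanish. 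A maximum-principle argument on $h_d-h_d^*$ would then only bound it by the supremum over $T_i$ of this uncontrolled flux times a Green's-function factor.

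\textbf{What the paper does instead.} The paper avoids doubling the discrete object entirely. It extends only the \emph{continuous} map $\psi$ by Schwarz reflection across $\tau_1,\tau_2$ (Lemma \ref{reflection}), so that $h_c$ and its conjugate $\tilde h_c$ are defined on a neighborhood of a sub-orthodiagonal map $G_1\subset G$ lying at distance $\gtrsim\lambda^{1/4}$ from $A\cup B$. It then reproves the three key GGJN ingredients (their Propositions 5.2, 6.1, 7.1) in the presence of the free arcs $T_1,T_2$. The crucial observation, used in Propositions \ref{modified5.2} and \ref{modified6.1}, is that at a primal vertex $v\in T_i$ the dual edges around $v$ form a \emph{path} rather than a cycle, but its two endpoints lie on $\tau_i$ where $\tilde h_c=\ima\psi$ is constant; hence the telescoping sum of $\tilde h_c$-increments still vanishes, and the node law for the auxiliary flow holds without any reflected graph. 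The region near $A\cup B$ is handled separately (Claim \ref{claim1}) using H\"older continuity of $h_c$ and a direct application of the modified Proposition \ref{modified7.1}. Your alternative route---redoing the random-walk/Brownian coupling with reflection on $\tau_i$---is closer in spirit to this, but you would still need to carry out the analogues of these modifications, which is the actual content of the proof.
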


Since $\Omega$ is bounded by analytic arcs, we are able to analytically extend $\psi$ by reflection (see Lemma \ref{reflection}).
The conclusion of Theorem \ref{HarmConvThm} will also hold for the extended harmonic function $h_c$ at points $z \in V^\bullet \setminus \overline{\Omega}$ with $\dist(z, A \cup B) > 2(\epsilon \vee \delta)^{1/4}$ when $\epsilon \vee \delta$ is small enough.  This is accomplished in Proposition \ref{subdiagHarmConv}, and we will use this result in proving Theorem \ref{limitpmf}. 

Theorem \ref{limitpmf} shows that the discrete optimal measures for modulus weakly converge to the continuous analog. However, we want to have a stronger notion of convergence, namely, that the expected overlaps  defined right after (\ref{eq:prob-dual}) converge to the reciprocal of the continuous modulus. In view of Theorem \ref{HarmConvThm}, it means proving that the discrete Dirichlet energy of the discrete harmonic functions $h_d$ converge to the Dirichlet energy of the continuous harmonic function $h_c$. In practice, this stronger claim confirms what we see numerically in Figure \ref{fig:rect-pack}, namely that the heights in those two pictures converge to the continuous modulus. That is the content of our next main result, where we show that the discrete modulus converges to the continuous modulus. This generalizes a result by Norah 
Alrayes in  \cite{alrayes-phdthesis} from the square grid setting to the orthodiagonal maps setting. Finally, note that the convergence in energy cannot be deduced from the results in \cite{gurel-jerison-nachmias:advm2020} and instead depends crucially on Fulkerson duality for discrete modulus, see Lemma \ref{lem:fulkerson-duality}. 
\begin{thm}\label{cor4}
In the setting of Theorem \ref{limitpmf}, as $\epsilon_n \to 0$
the discrete modulus of the path family between $A_n$ and $B_n$ in $G^\bullet_n$
converges to continuous modulus of the curve family between $A$ and $B$ in $\Omega$. 
\end{thm}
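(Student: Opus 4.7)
Write $M_n := \Mod_{2,\si_n}\Ga(A_n, B_n)$ and $m := \Mod_2 \Ga(A,B)$ for the continuous modulus in $\Om$. The plan is to sandwich $M_n$ between $\limsup_n M_n \le m$ and $\liminf_n M_n \ge m$. The first bound will come from the Dirichlet principle applied to a suitable discretization of the continuous harmonic function $h_c = \Re \psi$; the second will come from applying the same bound on the Fulkerson-dual problem and then invoking Lemma~\ref{lem:fulkerson-duality}. As a preliminary, I would invoke the standard identification of the $2$-modulus of a path family in a finite weighted graph with the effective conductance between its terminals: $M_n$ equals the discrete Dirichlet energy $\cE_n(h_{d,n}) := \sum_{e=uv} \si_n(e)(h_{d,n}(u)-h_{d,n}(v))^2$ of the harmonic function $h_{d,n}$ from Theorem~\ref{HarmConvThm}, and $m = \int_\Om |\nabla h_c|^2 \, dA$.

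\textbf{Upper bound.} For the first inequality I would build a test function $\tilde h_n : V_n^\bullet \to \R$ by restricting $h_c$ to interior vertices and setting $\tilde h_n \equiv 0$ on $A_n$, $\tilde h_n \equiv 1$ on $B_n$. Because $A$ and $B$ are analytic arcs meeting the remaining arcs at non-cusp corners, Schwarz reflection extends $h_c$ to be real analytic in a neighborhood of each arc, so $\tilde h_n$ is well defined at every vertex and agrees with $h_c$ up to $O(\ep_n)$ at vertices near $A_n \cup B_n$. By the Dirichlet principle, $M_n \le \cE_n(\tilde h_n)$. To show $\cE_n(\tilde h_n) \to m$, I would do a face-by-face Taylor expansion: each face is a quadrilateral with orthogonal diagonals of length at most $2\ep_n$, and the canonical weight $\si_n(e) = |e^\circ|/|e^\bullet|$ converts $\si_n(e)(h_c(u)-h_c(v))^2$ into $|\nabla h_c|^2$ times $|e^\bullet||e^\circ|$, which is (twice) the area of the face. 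Summing turns the discrete energy into a Riemann sum for $\int_\Om |\nabla h_c|^2$, with the error controlled by the smoothness of $h_c$ and by $\ep_n$. This yields $\limsup_n M_n \le m$.

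\textbf{Lower bound.} For the matching inequality, let $M_n^\star := \Mod_{2,1/\si_n} \Ga(T_1^n, T_2^n)$ denote the $2$-modulus on the dual orthodiagonal map $G_n^\circ$ equipped with the reciprocal canonical weights. Lemma~\ref{lem:fulkerson-duality} gives $M_n \cdot M_n^\star = 1$, while the continuous analogue satisfies $m \cdot m^\star = 1$ with $m^\star = \Mod_2 \Ga(\tau_1,\tau_2;\Om) = 1/m$. The dual of an orthodiagonal map is itself an orthodiagonal map (with edge lengths still $\le 2\ep_n$), and on the dual side the harmonic function playing the role of $h_c$ is $(1/m)\Im \psi$, with analytic boundary arcs $\tau_1, \tau_2$ meeting $A, B$ at non-cusp corners. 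Repeating the upper-bound argument in the dual setting therefore yields $\limsup_n M_n^\star \le 1/m$, and consequently $\liminf_n M_n = \liminf_n (M_n^\star)^{-1} \ge m$. Combined with the upper bound, this gives $M_n \to m$.

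\textbf{Main obstacle.} The hard part will be the face-by-face upper bound in Step~2: one must produce a test function whose boundary values on $A_n, B_n$ are exactly correct while its discrete Dirichlet energy still converges to $\int_\Om |\nabla h_c|^2$. The analyticity and non-cusp hypotheses on the arcs are essential here, since they supply a Schwarz-type smooth extension of $h_c$ and uniform control on $\nabla h_c$ up to the corners of $\Om$, without which the faces straddling $\partial \Om$ cannot be handled. The role of Fulkerson duality is precisely to avoid having to prove an independent lower bound via admissible flows on $G_n^\bullet$ (which would require convergence of optimal discrete currents, something the uniform convergence in Theorem~\ref{HarmConvThm} does not deliver), by turning it into a second application of the very same upper-bound estimate on the dual graph.
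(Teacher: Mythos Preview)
Your overall strategy---Dirichlet-principle upper bound on $M_n$, Fulkerson duality to convert the dual upper bound into a lower bound---is exactly the skeleton of the paper's proof. The gap is in the execution of the upper bound. You assert that ``the canonical weight $\si_n(e)=|e^\circ|/|e^\bullet|$ converts $\si_n(e)(h_c(u)-h_c(v))^2$ into $|\nabla h_c|^2$ times $|e^\bullet||e^\circ|$.'' This is not true: for a face $Q=[v_1,w_1,v_2,w_2]$ one only gets
\[
\sigma(e_Q^\bullet)\bigl(h_c(v_2)-h_c(v_1)\bigr)^2
\;\approx\;|v_1v_2|\,|w_1w_2|\,\bigl(\partial_{\mathbf v_Q} h_c\bigr)^2
\;=\;2\,\area(Q)\,\bigl(\partial_{\mathbf v_Q} h_c\bigr)^2,
\]
where $\partial_{\mathbf v_Q}$ is the directional derivative along the primal diagonal of $Q$. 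Summing over faces gives $\sum_Q 2\,\area(Q)\,(\partial_{\mathbf v_Q}h_c)^2$, which is a \emph{mesh-dependent} functional, not a Riemann sum for $\int_\Om|\nabla h_c|^2$. (Even your own formula, taken at face value, gives a Riemann sum for $2\int_\Om|\nabla h_c|^2$.) The general a~priori estimate is only $\cE_n^\bullet(h_c)\le 2\int_\Om|\nabla h_c|^2+o(1)$, so your restriction test function yields $\limsup M_n\le 2m$; the dual argument then gives $\liminf M_n\ge m/2$, and the sandwich does not close.

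The paper repairs exactly this defect by bringing in the harmonic conjugate $\tilde h_c=\Im\psi$ and summing \emph{four} discrete energies---of $h_c$ and $\tilde h_c$ on both $G_n^\bullet$ and $G_n^\circ$. By the Cauchy--Riemann equations (Claim~\ref{lem:main-identity}), face by face
\[
(\partial_{\mathbf v_Q}h_c)^2+(\partial_{\mathbf v_Q}\tilde h_c)^2
+(\partial_{\mathbf w_Q}h_c)^2+(\partial_{\mathbf w_Q}\tilde h_c)^2
=2\,|\nabla h_c|^2,
\]
so the four energies together \emph{do} form a Riemann sum for $4m$. Combined with Claim~\ref{claimA} and the two Fulkerson identities $\cE_n^\bullet(h_n^\bullet)\,\cE_n^\circ(\tilde h_n^\circ)=1=\cE_n^\circ(h_n^\circ)\,\cE_n^\bullet(\tilde h_n^\bullet)$, one obtains
\[
\Bigl(\tfrac{M_n}{m}+\tfrac{m}{M_n}\Bigr)+\Bigl(\tfrac{\cE_n^\circ(h_n^\circ)}{m}+\tfrac{m}{\cE_n^\circ(h_n^\circ)}\Bigr)\le 4+o(1),
\]
and since $x+1/x\ge 2$, this forces $M_n\to m$. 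A second, independent problem with your sketch: the non-cusp hypothesis does \emph{not} give uniform control of $\nabla h_c$ up to the corners---unless the interior angle is exactly $\pi/2$, $|\psi'|$ blows up or vanishes there---so even the Riemann-sum step must be carried out on the excised sub-map $G_{n,1}$ at distance $\ep_n^{1/4}$ from $A\cup B$, as in Claim~\ref{claim2} and Claim~\ref{claimA}.
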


It should be noted that, in the special case when the primal graph is a Delaunay triangulation and the dual graph is the Voronoi dual, as in our Figure \ref{fig:cross_domain} below, Theorem \ref{cor4} can be deduced from the finite-element literature in numerical analysis \cite{hara-mizumoto:jmsj1990,ciarlet1978}. In particular, see  \cite[Section 5.1]{skopenkov:adv-math2013} for a nice explanation of this connection. Thus, in the language of the finite-element method, Theorem \ref{cor4} corresponds to results about convergence of the Sobolev $H^1$ norm, while Theorem \ref{HarmConvThm} is about the $L^\infty$ convergence.

We end with a note about the organization of this paper.  
In Section \ref{background}, we introduce the needed background, including discrete modulus, discrete harmonic functions, and orthodiagonal maps, and we consider two illuminating examples.
We begin Section \ref{non-crossing} by discussing the Non-crossing Minimal Subfamily Algorithm and proving Theorem \ref{existence}.
Then we prove Theorem \ref{limitpmf}, using Theorem \ref{HarmConvThm} as a tool.
We end by discussing an interpretation of Theorem \ref{existence}  in terms of current flow 
and a rectangle-packing result.
Section  \ref{harmonic} contains the proof of  Theorem \ref{HarmConvThm} and Theorem \ref{cor4}.

\section*{Acknowledgments}

Lind and Poggi-Corradini would like to thank MSRI for their support while some of this research was completed. The work of Poggi-Corradini and Albin is supported in part by NSF Grant n. 2154032.

\noindent{\bf Added in proof:} After the work on this paper was completed, A.~Nachmias sent us a personal communication noting that he had a similar unpublished result as Theorem \ref{cor4}, but with a different proof.

\section{Background and examples}\label{background}

\subsection{Discrete modulus}\label{DiscreteModulus}

Let $G=(V,E, \sigma)$ be a finite weighted graph, which is also referred to as a {\it network}, with vertex set $V$, edge set $E$, and edge weights $\sigma : E \to (0, \infty)$.  We will use the term {\it vertex} and {\it node} interchangeably.
We assume that $G$ has no self-loops, but we allow multiple edges.
Let $\Gamma$ be a collection of objects in $G$.  
In this paper, most of our attention will be on the case when $\Gamma$ is a collection of paths, but we will also need modulus to be defined for a collection of $AB$-cuts in Section \ref{non-crossing}.
A {\it path} is a sequence of edges  $(x_{k-1}x_k)_{k=1}^n$ so that the vertices $x_0,\dots,x_n$ are all distinct.
Given  $A,B \subset V$, then $S \subset V$ is an {\it $A  B  $-cut} if $A  \subset S$ and $B  \cap S=\emptyset$. 
The {\it usage matrix} $\mathcal{N}$ is a $|\Gamma | \times |E|$ matrix; 
in the case of paths  $\mathcal{N}(\gamma, e) = \mathbbm{1}_{\{ e \in \gamma\}}$, 
and in the case of 
 $A B $-cuts  $\cN(S,e)=\ones_{\{e\in\delta S\}}$, where $\delta S=\{e=xy\in E : |S\cap \{x,y\}|=1\}$. 

A function $\rho: E \to [0, \infty)$, referred to as a {\it density}, is called {\it admissible for $\Gamma$} if
$$\ell_\rho(\gamma) := \sum_{e \in E} \mathcal{N}(\gamma, e) \rho(e) \geq 1 \;\; \text{ for all } \gamma \in \Gamma.$$
When $\gamma$ is a path, we consider $\ell_\rho(\gamma)$ to be the $\rho$-length of $\gamma$.
This means that
 $\rho$ is admissible for a collection of paths if every path has $\rho$-length at least $1$.
For $ 1 \leq p < \infty$, the {\it (discrete) $(p,\si)$-modulus of $\Gamma$} is defined as
$$\Mod_{p,\si}(\Gamma) := \inf \sum_{e \in E} \sigma(e) \rho(e)^p,$$
where the infimum is taken over all admissible densities $\rho$.
See \cite{abppcw:ecgd2015, acfpc:ampa2019, apc:jan2016} 
 for further background on the discrete modulus.
In the rest of this paper, we will specialize to the case when $p=2$.

The {\it extremal density} $\rho^*$ is an admissible density that satisfies 
$$\Mod_{2,\si}(\Gamma) = \sum_{e \in E} \sigma(e) \rho^*(e)^2.$$
The extremal density is unique in this case and  is characterized in the following theorem, 
which  is the $p=2$ case of a well-known result (see for instance,  \cite[Theorem 2.1]{apc:jan2016}, \cite[Theorem 4-4]{ahlfors1973}, and also \cite{badger:2013finn}).

\begin{thm} (Beurling's Criterion) \label{BeurlingCriterion}
Let $G=(V, E, \sigma)$ be a network and $\Gamma$ a family of paths on $G$.
An admissible density $\rho$ is extremal for $\Mod_{2,\si}(\Gamma)$ if there is a subfamily $\tilde \Gamma \subset \Gamma$ that satisfies the following two conditions:
\begin{enumerate}
\item[(i)] $\ell_\rho(\gamma) = 1$ for all $\gamma \in \tilde \Gamma$. 
\item[(ii)] If $h: E \to \mathbb{R}$ satisfies $\ell_h(\gamma) = \sum_{e \in\gamma} h(e) \geq 0$ for all $\gamma \in \tilde \Gamma$, then 
$$\sum_{e \in E} h(e) \rho(e) \sigma(e) \geq 0.$$
\end{enumerate}
Furthermore, in this case
$\Mod_{2,\si}(\tilde \Gamma) = \Mod_{2,\si}(\Gamma)$.
\end{thm}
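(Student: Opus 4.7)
The plan is to verify extremality by comparing $\rho$ against an arbitrary competitor density $\rho'$ admissible for $\Gamma$, using the difference $h := \rho' - \rho$ as the test function supplied to condition (ii). The role of (i) is to turn admissibility into a one-sided bound for $\ell_h$ on $\tilde\Gamma$, the role of (ii) is to convert that one-sided bound into a bilinear inequality between $\rho$ and $\rho'$, and then a weighted Cauchy--Schwarz closes the loop to give the desired energy comparison.

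First I would unpack conditions (i) and (ii) simultaneously. Fix $\gamma \in \tilde\Gamma \subset \Gamma$. Since $\rho'$ is admissible, $\ell_{\rho'}(\gamma) \geq 1$, and by (i), $\ell_{\rho}(\gamma) = 1$, so
$$\ell_h(\gamma) \;=\; \ell_{\rho'}(\gamma) - \ell_{\rho}(\gamma) \;\geq\; 0 \qquad \text{for every } \gamma \in \tilde\Gamma.$$
This is exactly the hypothesis of (ii), which therefore yields $\sum_{e} \sigma(e)\rho(e)(\rho'(e)-\rho(e)) \geq 0$, i.e.,
$$\sum_{e \in E} \sigma(e)\,\rho(e)\rho'(e) \;\geq\; \sum_{e \in E} \sigma(e)\,\rho(e)^2.$$
Next I would apply the weighted Cauchy--Schwarz inequality to the left-hand side,
$$\sum_{e \in E} \sigma(e)\,\rho(e)\rho'(e) \;\leq\; \Bigl(\sum_{e \in E} \sigma(e)\rho(e)^2\Bigr)^{1/2}\Bigl(\sum_{e \in E} \sigma(e)\rho'(e)^2\Bigr)^{1/2},$$
and combine it with the previous inequality. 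Dividing by $\bigl(\sum_e \sigma(e)\rho(e)^2\bigr)^{1/2}$ (which may be assumed positive, as its vanishing forces $\Mod_{2,\sigma}(\Gamma) = 0$ and the claim becomes trivial) produces $\sum_e \sigma(e)\rho(e)^2 \leq \sum_e \sigma(e)\rho'(e)^2$, so $\rho$ is extremal for $\Mod_{2,\sigma}(\Gamma)$.

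For the closing equality $\Mod_{2,\sigma}(\tilde\Gamma) = \Mod_{2,\sigma}(\Gamma)$, the inequality $\Mod_{2,\sigma}(\tilde\Gamma) \leq \Mod_{2,\sigma}(\Gamma)$ is immediate because $\tilde\Gamma \subset \Gamma$ makes any density admissible for $\Gamma$ admissible for $\tilde\Gamma$. For the reverse, I would rerun the exact chain above but now only asking $\rho'$ to be admissible for $\tilde\Gamma$: the only place admissibility of $\rho'$ was ever used was along curves in $\tilde\Gamma$, so the derivation of $\ell_h(\gamma) \geq 0$ on $\tilde\Gamma$, the application of (ii), and the Cauchy--Schwarz step all go through unchanged, yielding $\sum_e \sigma(e)\rho'(e)^2 \geq \sum_e \sigma(e)\rho(e)^2 = \Mod_{2,\sigma}(\Gamma)$.

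There is no real obstacle here; the argument is a clean variational comparison. The only point that deserves care is that condition (ii) must be applied to a genuinely signed function $h = \rho' - \rho$ (which can be negative on some edges), so it is essential that (ii) is stated for arbitrary $h \colon E \to \mathbb{R}$ rather than only nonnegative densities. Once that is noted, the proof is essentially the two-line Cauchy--Schwarz computation above.
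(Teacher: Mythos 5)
Your argument is correct and is exactly the classical Beurling-criterion proof: test condition (ii) with $h=\rho'-\rho$ for a competitor $\rho'$, then close with weighted Cauchy--Schwarz, and rerun the same chain with $\rho'$ admissible only for $\tilde\Gamma$ to get $\Mod_{2,\si}(\tilde\Gamma)=\Mod_{2,\si}(\Gamma)$. The paper does not prove this statement itself but cites it (Ahlfors, and Theorem 2.1 of the modulus-on-networks paper), and the proof in those sources is the same variational comparison you give, so there is nothing to flag beyond the degenerate case $\rho\equiv 0$, which you already dispose of correctly.
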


A subfamily $\tilde \Gamma$ that satisfies the two conditions of Beurling's Criterion for the extremal density $\rho^*$ is called a {\it Beurling subfamily} of $\Gamma$.

A {\it minimal subfamily} of $\Gamma$ is a subfamily $\tilde \Gamma\subset\Ga$ so that $\Mod_{2,\si}(\tilde \Gamma) = \Mod_{2,\si}(\Gamma)$
and  such that removing any path from $\tilde \G$ decreases the modulus, 
i.e. for any $\gamma \in \tilde \G$ we have $\Mod_{2,\si}(\tilde \Gamma \setminus \{\gamma \}) < \Mod_{2,\si}(\Gamma)$.
As shown in \cite[Theorem 3.5]{apc:jan2016}, $\Gamma$ will always contain a minimal subfamily, although it may not be unique, and any minimal subfamily is also a Beurling subfamily for $\Ga$.

\subsection{The probabilistic interpretation of discrete modulus}\label{sec:prob-interp-modulus}

Here we recall the fact that, following  \cite{apc:jan2016} and \cite{acfpc:ampa2019},  Lagrangian duality yields a probabilistic interpretation for discrete modulus.

Assume that $\Ga$ is a finite family of objects on $G$ with usage matrix $\cN$.
In order to formulate the probabilistic interpretation of $p$-modulus, we let $\cP(\Gamma)$
represent the set of probability mass functions (pmfs) on the set
$\Gamma$.  In other words, $\cP(\Gamma)$ contains the set of vectors
$\mu\in\mathbb{R}_{\ge 0}^\Gamma$ with the property that
$\mu^T\one= 1$ (here $\one\in\R_{\ge 0}^\Ga$ is the vector of all ones).  Given a pmf $\mu$, we can define a
$\Gamma$-valued random variable $\underline{\gamma}$ with distribution:
\[\mathbb{P}_\mu\left(\underline{\gamma}=\gamma\right) = \mu(\gamma).\]
Then, for every edge $e\in E$, the usage $\cN(\underline{\gamma},e)$ is also
a random variable, and we represent the expected edge-usage (depending on the
pmf $\mu$) as $\mathbb{E}_\mu\left[\cN(\underline{\gamma},e)\right]$.
With these notations, Lagrangian duality yields the following probabilistic interpretation
\begin{thm}[Theorem 2 of \cite{acfpc:ampa2019}]\label{thm:prob-interp}
  Let $G=(V,E,\si)$ be a finite graph with edge weights $\sigma\in\R^E_{>0}$, and let
  $\Gamma$ be a finite family of objects on $G$ with usage matrix
  $\cN$.  Then, for any $1<p<\infty$, letting $q=p/(p-1)$ be the conjugate exponent to $p$, we have
\begin{equation}
    \label{eq:prob-interp-p}
    \Mod_{p,\sigma}(\Gamma)^{-\frac{q}{p}} = 
      \min_{\mu\in\cP(\Gamma)}\sum_{e\in E}\sigma(e)^{-\frac{q}{p}}
      \bE_\mu\left[\cN(\underline{\gamma},e)\right]^q.
\end{equation}
Moreover, $\mu\in\cP(\Ga)$ attains the minimum on the right-hand side of~\eqref{eq:prob-interp-p}, if and only if
\begin{equation}\label{eq:expected-edge-usage-of-optimal-mu}
\bE_{\mu}\left[\cN(\underline{\gamma},e)\right]=\frac{\si(e)\rho^*(e)^{p-1}}{\Mod_{p,\si}(\Ga)}\qquad\forall
e\in E,
\end{equation}
where $\rho^*$ is the unique extremal density for $\Mod_{p,\si}(\Ga)$.
\end{thm}
If $\mu\in\cP(\Ga)$ attains the minimum on the right-hand side of~\eqref{eq:prob-interp-p}, or equivalently, satisfies (\ref{eq:expected-edge-usage-of-optimal-mu}), we say that $\mu$ {\it is optimal for $\Mod_{p,\si}(\Ga)$}. By a slight abuse of notation, we will use the expression ``$\mu$ is optimal for $\Mod_{p,\si}(\Ga)$'' even in cases where $\mu$ is initially defined on a subfamily $\tilde{\Ga}$, since such pmf's embed into $\cP(\Ga)$ by setting them equal to zero outside of $\tilde\Ga$. Optimal pmf's for a given modulus problem always exist, but they might not be unique in general.

In this paper, we are mostly interested in the case when $p=2$ and $\Ga$ is a family of paths. In this setting, the probabilistic interpretation says that modulus can be computed by finding a pmf on $\Ga$ that minimizes the expected (weighted) overlap of two independent paths in $\Ga$. We describe this in the following corollary.
\begin{cor}\label{cor:prob}
Consider a  graph $G=(V,E,\si)$ with edge-weights $\si\in\R^E_{>0}$. Let $\Gamma$ be a finite family of paths. Then
  \begin{equation}\label{eq:prob-dual}
    \Mod_{2,\si}(\Gamma)^{-1} =  \min_{\mu\in\cP(\Gamma)}\mathbb{E}_\mu\left|
    \underline{\gamma}\cap\underline{\gamma}'
  \right|_\si  =  \min_{\mu\in\cP(\Gamma)} \sum_{e\in E}\si(e)^{-1}\left(\bP_\mu(e\in \underline{\ga})\right)^2,
  \end{equation}
where $\underline{\gamma}$ and $\underline{\gamma}'$ are two
independent random paths chosen according to $\mu$, and the weighted overlap $\left| \gamma\cap\gamma' \right|_\si$ of two paths is $\sum_{e\in\ga\cap\ga'} \si(e)^{-1}$. 

Moreover, $\mu^*\in\cP(\Ga)$ is optimal for $\Mod_{2,\si}(\Ga)$, i.e., attains the minimum on the right-hand side of~\eqref{eq:prob-dual}, if and only if
\begin{equation}\label{eq:rhomu}
\bP_{\mu^*}\left(e\in\underline{\ga}\right) =  \frac{\si(e)\rho^*(e)}{\Mod_{2,\si}(\Gamma)}, \qquad\forall e\in E,
\end{equation}
where $\rho^*$ is the unique extremal density for $\Mod_{2,\si}(\Ga)$.
\end{cor}
\begin{proof}
Note that for paths we have $\bE_\mu\left[\cN(\underline{\gamma},e)\right]=\bP_\mu(e\in \underline{\ga})$. Hence, when $p=2$, the quantity minimized in the right hand-side of  (\ref{eq:prob-interp-p}) becomes:
\begin{align*}
\sum_{e\in E}\si(e)^{-1}\left( \bE_\mu\left[\cN(\underline{\gamma},e)\right]\right)^2 
& = \sum_{e\in E}\si(e)^{-1}\left(\bP_\mu(e\in \underline{\ga})\right)^2 \\
& = \sum_{e\in E}\si(e)^{-1}\left(\sum_{\ga\in\Ga}\mu(\ga)\cN(\ga,e)\right)^2\\
& = \sum_{\ga,\ga'\in\Ga} \mu(\ga)\mu(\ga')\left(\sum_{e\in E}\si(e)^{-1}\cN(\ga,e)\cN(\ga',e)\right)\\
& = \mathbb{E}_\mu\left|
    \underline{\gamma}\cap\underline{\gamma}'
  \right|_\si.
\end{align*}
\end{proof}
As mentioned above, the optimal pmf's $\mu^*$ for (\ref{eq:prob-dual}) are in general not unique, even though the extremal density $\rho^*$ is. When $\Ga$ is itself minimal, then the optimal $\mu^*$ does become unique. This fact can be deduced from \cite[Theorem 3.5 and Section 5]{apc:jan2016}, however, it is not explicitly formulated there, so we summarize it here, for clarity, and give a proof using the pmf's directly instead of Lagrange multipliers. We also expanded this result for our current purposes.
\begin{prop}\label{prop:pmf-minimal}
Let $G=(V,E, \sigma)$ be a finite graph with edge-weights $\si\in\R^E_{>0}$, and let $\Gamma$ be a finite (non-empty) family of paths in $G$. 
Suppose $\tilde{\Gamma}\subset\Ga$ is a minimal subfamily for $\Ga$, then there is a unique pmf $\mu^*\in\cP(\tilde{\Gamma})$ such that 
\bi
\item[(a)] $\mu^*$ is optimal for $\Mod_{2,\si}(\Ga)$, and
\item[(b)] $\mu^*$ is strictly supported on $\tilde{\Ga}$, i.e., $\mu^*(\ga)>0$ if and only if $\ga\in\tilde\Gamma$. 
\ei
\end{prop} 
\begin{proof}
Let $\cM$ be the set of pmf's in $\cP(\tilde{\Ga})$ that satisfy (a) and (b).

We want to show that, if $\tilde{\Gamma}\subset\Ga$ is a minimal subfamily for $\Ga$, then $|\cM|=1$. Since optimal pmf's always exist, there is at least one pmf $\mu^*\in\cP(\tilde{\Ga})$ that is optimal for $\Mod_{2,\si}(\tilde{\Ga})$.  Since $\Mod_{2,\si}(\tilde{\Ga})=\Mod_{2,\si}(\Ga)$, Equation (\ref{eq:prob-dual}) implies that
\[
 \min_{\tilde{\mu}\in\cP(\tilde\Gamma)}\mathbb{E}_{\tilde{\mu}}\left|
    \underline{\gamma}\cap\underline{\gamma}'
  \right|_\si=
   \min_{\mu\in\cP(\Gamma)}\mathbb{E}_\mu\left|
    \underline{\gamma}\cap\underline{\gamma}'
  \right|_\si.
\]
And, since $\tilde{\Gamma}\subset\Ga$, the pmf $\mu^*$ is also in $\cP(\Ga)$ (extending it to be zero outside of $\tilde{\Ga}$). Therefore, $\mu^*$ is also optimal for $\Mod_{2,\si}(\Ga)$. Thus, we have shown that $\mu^*$ satisfies property (a).

Next, property (b) will follow from Claim \ref{cl:min-subfam} below, and that will show that $\mu^*\in\cM$.
\begin{claim}\label{cl:min-subfam}
If $\tilde{\Ga}\subset\Ga$ is a minimal subfamily for $\Ga$ and $\mu_0$ is a pmf on $\tilde{\Ga}$ that is optimal for $\Mod_{2,\si}(\Ga)$, then $\mu_0$ is strictly supported on $\tilde{\Ga}$.
\end{claim}
\begin{proof}[Proof of Claim \ref{cl:min-subfam}]
Suppose that there is a path $\ga\in\tilde{\Ga}$ with $\mu_0(\ga)=0$. Define $\Ga':=\tilde{\Ga}\setminus\{\ga\}$. Then,  by minimality of $\tilde{\Ga}$, we must have
\begin{equation}\label{eq:minmutminmup}
 \min_{\tilde{\mu}\in\cP(\tilde\Gamma)}\mathbb{E}_{\tilde{\mu}}\left|
    \underline{\gamma}\cap\underline{\gamma}'
  \right|_\si  <
   \min_{\mu'\in\cP(\Gamma')}\mathbb{E}_{\mu'}\left|
    \underline{\gamma}\cap\underline{\gamma}'
  \right|_\si.
\end{equation}
However, the restriction of $\mu_0$ to $\Ga':=\tilde{\Ga}\setminus\{\ga\}$ is also a pmf in $\cP(\Ga')$, which we again denote by $\mu_0$. Hence,
\begin{equation}\label{eq:minmupmuo}
   \min_{\mu'\in\cP(\Gamma')}\mathbb{E}_{\mu'}\left|
    \underline{\gamma}\cap\underline{\gamma}'
  \right|_\si
  \le
  \mathbb{E}_{\mu_0}\left|
    \underline{\gamma}\cap\underline{\gamma}'
  \right|_\si
\end{equation}
Finally, we have
\begin{align*}
\mathbb{E}_{\mu_0}\left|\underline{\gamma}\cap\underline{\gamma}'\right|_\si
&  = \min_{\mu\in\cP(\Gamma)}\mathbb{E}_{\mu}\left|\underline{\gamma}\cap\underline{\gamma}'\right|_\si  
& \text{(since $\mu_0$ is optimal for $\Mod_{p,\si}(\Ga)$)}  \\
&  \le \min_{\tilde{\mu}\in\cP(\tilde\Gamma)}\mathbb{E}_{\tilde{\mu}}\left|
\underline{\gamma}\cap\underline{\gamma}'\right|_\si
& \text{(since $\tilde{\Ga}\subset\Ga$)}  \\
& < \min_{\mu'\in\cP(\Gamma')}\mathbb{E}_{\mu'}\left|\underline{\gamma}\cap\underline{\gamma}'\right|_\si.
& \text{(by (\ref{eq:minmutminmup}))} \\
& \le  \mathbb{E}_{\mu_0}\left|\underline{\gamma}\cap\underline{\gamma}'\right|_\si,
& \text{(by (\ref{eq:minmupmuo}))}
\end{align*}
which is a contradiction. Therefore, $\mu_0$ is strictly supported on $\tilde{\Ga}$.
\end{proof}
We have shown the existence of a pmf $\mu^*$ in $\cM$.
To prove uniqueness, suppose $\mu_0,\mu_1\in\cM$ are two distinct pmfs. Define the linear combination $\mu_t:=t\mu_1+(1-t)\mu_0$, for $t\in\R$.  
We claim that there are two values of $t$ so that $t_0<0<1<t_1$, and so that  $\mu_t\in\cP(\tilde{\Ga})$ for $t\in [t_0,t_1]$. 
Indeed, by linearity, $\one^T\mu_t=1$, for every $t\in\R$. Also, 
since $\mu_0$ and $\mu_1$ are strictly supported on $\tilde{\Ga}$, we have $\min_{j=0,1}\mu_j(\ga)>0$, for every $\ga\in\tilde\Ga$. In particular, if $\ga\in\tilde{\Ga}$ is such that $\mu_0(\ga)=\mu_1(\ga)$, then $\mu_t(\ga)>0$ for all $t\in\R$. Moreover,
since $\mu_0\ne \mu_1$ and $\one^T(\mu_0-\mu_1)=0$, there is at least one pair $(\ga,\ga')$ of paths in $\tilde{\Ga}$ such that $\mu_0(\ga)>\mu_1(\ga)$ and $\mu_0(\ga')<\mu_1(\ga')$. Then, $\mu_t(\ga)>0$ if and only if $t<t_1(\ga):=\frac{\mu_0(\ga)}{\mu_0(\ga)-\mu_1(\ga)}$, and
 $\mu_t(\ga')>0$ if and only if $t>t_0(\ga'):=-\frac{\mu_0(\ga')}{\mu_1(\ga')-\mu_0(\ga')}.$
 Note that  $(t_0(\ga'),t_1(\ga))\supset[0,1]$. If we take the (finite) intersection of all such intervals $(t_0(\ga'),t_1(\ga))$, over all possible pairs $(\ga,\ga')$ as above, we find an interval
$(t_0,t_1)\supset[0,1]$, such that $\mu_t(\ga)\ge 0$ for all $t\in [t_0,t_1]$ and for all $\ga\in\tilde{\Ga}$, with the further property that when $t=t_0$ or $t=t_1$, then $\mu_t$ must vanish on some path in $\tilde{\Ga}$.
In particular, we have shown that $\mu_t\in\cP(\Ga)$ for all $t\in [t_0,t_1]$. 

Finally, since $\mu_0$ and $\mu_1$ are optimal pmfs for $\Mod_{2,\si}(\Ga)$, they both satisfy  Equation (\ref{eq:rhomu}). By linearity, $\mu_t$  will also satisfy  Equation (\ref{eq:rhomu}), for $t\in [t_0,t_1]$, and therefore, by Corollary \ref{cor:prob}, the pmf $\mu_t$ will also be optimal for $\Mod_{2,\si}\Ga$, for $t\in [t_0,t_1]$. Hence, by construction, $\mu_{t_0}$ and $\mu_{t_1}$ are pmf's in $\cP(\tilde\Ga)$ that are optimal for $\Mod_{2,\si}\Ga$, and that vanish on some path $\ga\in\tilde\Ga$. This contradicts Claim \ref{cl:min-subfam}. Hence we have shown that the minimality of $\tilde\Ga$ implies $|\cM|=1$.
\end{proof}

\subsection{Discrete harmonic functions}\label{sec:discrete-harm-fnct}

For a network $G=(V,E, \sigma)$, when $v, w \in V$ are adjacent, we will use the notation $vw$ to refer to the edge incident to $v$ and $w$.  
A function $f:V \to \mathbb{R}$ is {\it discrete harmonic} at $v \in V$ if
\begin{equation}\label{eq:si-harmonic}
 f(v) \cdot \sum_{w: \, vw \in E} \sigma(vw) = \sum_{w: \, vw \in E} f(w) \sigma(vw),
 \end{equation}
or in other words, the value of $f$ at $v$ is a weighted average of the values of $f$ at the neighbors of $v$.
The  existence and uniqueness of discrete harmonic functions with specified boundary values is well known; see for instance Section 2.1 of \cite{lyons-peres:2016}.

\begin{prop} (Existence and uniqueness of discrete harmonic functions) \label{discreteDP}
Let $G= (V, E, \sigma)$ be a finite connected network. 
For $U \subsetneq V$ and $g:V \setminus U \to \mathbb{R}$, 
there is a unique function $h:V \to \mathbb{R}$ such that $h=g$ on $V \setminus U$ and $h$ is discrete harmonic on $U$.
\end{prop}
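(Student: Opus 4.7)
The plan is to prove uniqueness via a discrete maximum principle and existence via Dirichlet-energy minimization; equivalently, both can be obtained simultaneously by showing that the restriction of the weighted graph Laplacian to the ``interior'' block $U\times U$ is positive definite. The only nontrivial ingredient is the use of the connectedness hypothesis in propagating information from $U$ to the ``boundary'' $V\setminus U$.

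For uniqueness, suppose $h_1, h_2$ both satisfy the conditions, and let $f := h_1 - h_2$. Then $f$ vanishes on $V\setminus U$ and is discrete harmonic on $U$. Let $M := \max_{v\in V} f(v)$ and pick a maximizer $v_0$. If $v_0 \in V\setminus U$, then $M=0$. Otherwise $v_0 \in U$, and (\ref{eq:si-harmonic}) expresses $f(v_0)$ as a convex combination, with strictly positive weights $\sigma(v_0 w)/\sum_{w}\sigma(v_0 w)$, of the values $f(w)$ over neighbors $w$ of $v_0$. Since $f(v_0)=M$, every neighbor $w$ must also satisfy $f(w)=M$. Iterating along edges and invoking the connectedness of $G$, together with $U\subsetneq V$, forces $M$ to be achieved at some vertex of $V\setminus U$, where $f=0$; hence $M=0$. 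The same argument applied to $-f$ gives $\min f = 0$, so $f\equiv 0$.

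For existence, define the Dirichlet energy
\[
\cE(f) := \tfrac{1}{2}\sum_{vw \in E} \sigma(vw)\bigl(f(v) - f(w)\bigr)^2,
\]
and minimize $\cE$ over the finite-dimensional affine subspace $\mathcal{A} := \{f : V \to \R : f\equiv g \text{ on } V\setminus U\}$ of dimension $|U|$. The restriction of $\cE$ to $\mathcal{A}$ is a non-negative, convex, continuous quadratic function of the free coordinates $f|_U$. By the same connectedness propagation used for uniqueness, any unbounded sequence in $\mathcal{A}$ forces an edge across which the squared difference blows up, so $\cE$ is coercive on $\mathcal{A}$; coercivity plus continuity produces a minimizer $h$. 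Differentiating in $f(v)$ for each $v\in U$ yields $\sum_{w:\,vw\in E}\sigma(vw)(h(v)-h(w))=0$, which is exactly (\ref{eq:si-harmonic}) at $v$.

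Equivalently, writing the system as $L_{UU}\, h_U = -L_{U,V\setminus U}\, g$ where $L$ is the weighted graph Laplacian, the identity $x^T L_{UU}\, x = \sum_{vw\in E} \sigma(vw)(\tilde x(v)-\tilde x(w))^2$ (with $\tilde x$ extending $x$ by zero on $V\setminus U$) and the connectedness argument show $L_{UU}$ is positive definite, hence invertible, delivering both existence and uniqueness at once. The only conceptual obstacle, minor but essential, is verifying that from every vertex in $U$ one can reach $V\setminus U$ along a path in $G$; this is where the hypotheses that $G$ is connected and $U\subsetneq V$ are used.
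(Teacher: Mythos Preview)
Your proof is correct and follows the standard route (maximum principle for uniqueness, Dirichlet-energy minimization or equivalently invertibility of $L_{UU}$ for existence). The paper itself does not supply a proof of this proposition; it simply states the result as well known and refers the reader to Section~2.1 of Lyons--Peres, so there is nothing to compare against beyond noting that your argument is precisely the kind of proof one finds in that reference.
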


The function $h$ given by Proposition \ref{discreteDP} is called the {\it solution to the discrete Dirichlet problem} on $U$ with boundary data $g$.
Viewed from the electric network perspective, $h$ is called the {\it voltage function}.  From its discrete gradient, we obtain the {\it current flow} $f=\sigma dh$ defined by 
$$ f(vw) := \sigma(vw) \left[ h(w) - h(v) \right] \;\;\; \text{ for all } vw \in \vv{E}.$$
In the above, we use $\vv{E}$ for the set of directed edges, which contains both possible orientations for all the edges of $E$.  
In this setting, $vw$ represents the directed edge from $v$ to $w$.

More generally,  a {\it flow} on  $U \subset V$ is a function $\theta$ on $\vv{E} $ so that $\theta(vw) = -\theta(wv)$ for all $vw \in \vv{E} $ 
and each $v \in U$ satisfies {\it Kirchhoff's node law}, i.e.
$$ \sum_{w: \, vw \in \vv{E}} \theta(vw) = 0.$$
Furthermore, if $A$ and $B$ are nonempty and disjoint subsets in $V$ and $U:=V\setminus (A\cup B)$, then we say that $\theta$ is {\it a flow from $A$ to $B$}, if $\theta$ is a flow on $U$. In this case, the {\it strength} (or {\it value}) of $\theta$ is given by the formula:
\[
{\rm strength}(\theta):=\sum_{w: \, vw \in \vv{E}, v\in A} \theta(vw) =\sum_{v: \, vw \in \vv{E}, w\in B} \theta(vw).
\]
Moreover, such a flow is said to be a {\it unit flow} if it has unit strength.

The {\it energy} of a symmetric or anti-symmetric function $\theta: \vv{E} \to \mathbb{R}$ is given (with a slight abuse of notation) by 
$$\mathcal{E}(\theta) = \sum_{e \in E} \frac{1}{\sigma(e)} \theta^2(e).$$
The energy of  $h: V \to \mathbb{R}$ is defined to be the energy of its discrete gradient, i.e.
$$\mathcal{E}(h) = \mathcal{E}(\sigma dh) =  \sum_{vw \in E} \sigma(vw) \left[ h(w) - h(v) \right]^2.$$

We are now able to explain the close connection between the discrete 2-modulus and discrete harmonic functions.
Suppose $G$ is network with disjoint nonempty sets $A,B \subset V$, and edge-weights $\si\in\R^E_{>0}$.
Let $\Gamma$ be the family of paths in $G$ from $A$ to $B$, and let $h$ be the solution to the discrete Dirichlet problem on $V \setminus (A \cup B)$ with $h |_A = 0$ and $h |_B = 1$.
Then,
\begin{equation}\label{eq:connection-harmod}
\Mod_{2,\si}(\Gamma) = \mathcal{E}(h),
\end{equation}
and the extremal density is related to the solution of the Dirichlet problem as follows
\begin{equation} \label{rhofromharmonic}
\rho^*(vw) = |h(w) - h(v)|
\end{equation}
for all $vw \in E$.
See Theorem 4.2 in \cite{abppcw:ecgd2015}.

\subsection{Examples}

We wish to illustrate the concepts from the previous sections and our main results by taking a look at two simple examples.
\begin{example}\label{ex:horiz-rect}
For the first example, let $R$ be the rectangle $[0,L] \times [0,1]$, where $L \in \mathbb{N}$.  
Let $G$ be the square grid in $R$ with lattice size $1/n$ and weights $\sigma \equiv 1$. 
Let $\Gamma$ be the family of all paths in $G$ from  $A = \{v \in V \, : \, \text{Re}(v) = 0 \}$ to $B = \{v \in V \, : \, \text{Re}(v) = L\}$.
\end{example}
 For this example, we claim the following:
\begin{enumerate}
\item The extremal density is 
$\rho^*(e) = \begin{cases}
	\frac{1}{Ln}					&\quad\text{if } e \text{ is a horizontal edge,}  \\  
	0							&\quad\text{if } e \text{ is a vertical edge.}  \\               
      \end{cases}$

\item The subfamily of horizontal paths $\tilde \Gamma$ is a Beurling subfamily.
\item $\Mod_2(\Gamma) = \Mod_2(\tilde \Gamma) = \frac{1}{L} \left( 1+ \frac{1}{n} \right) $.
\item $\tilde \Gamma$ is  the unique minimal subfamily.
\item The optimal pmf $\mu^*$ is unique and is uniform  on $\tilde \Gamma$.
\end{enumerate}

We will use Beurling's criterion to establish the first two claims.  
Note that each horizontal path $\gamma \in \tilde \Gamma$ has $Ln$ horizontal edges, and so $\ell_{\rho^*}(\gamma) = 1$.
To establish the second condition of Beurling's criterion, assume that $h: E \to \mathbb{R}$ satisfies 
$\sum_{e \in\gamma} h(e)  \geq 0$ for all $\gamma \in \tilde \Gamma$.  
Then, since $\rho^*(e)=0$ when $e$ is vertical,
$$\sum_{e \in E} h(e) \rho^*(e)  = \frac{1}{Ln} \sum_{e \text{ horizontal}}  h(e)  = \frac{1}{Ln} \sum_{\gamma \in \tilde \Gamma} \sum_{e \in \gamma} h(e) \geq 0.$$
Thus Beurling's criterion (Theorem \ref{BeurlingCriterion}) shows that $\rho^*$ is the extremal density and $\Mod_2(\Gamma) = \Mod_2(\tilde \Gamma)$.
We compute
$$\Mod_2(\Gamma) = \sum_{e \in E} \rho^*(e)^2 = \left( \frac{1}{Ln} \right)^2  (\text{\# of horizonal edges} )
= \frac{1}{L} \left( 1+ \frac{1}{n} \right),$$
since there are $Ln(n+1)$ horizontal edges.
Notice that as $n \to \infty$, the discrete modulus converges to $\frac{1}{L}$, which is the corresponding continuous modulus.

An alternate approach to finding $\rho^*$ would be to check that 
$h(v) = \frac{1}{L}\text{Re}(v)$ is discrete harmonic on $V \setminus (A \cup B)$.  
Thus $h$ is the solution to the discrete Dirichlet problem with boundary values $h |_A = 0$ and $h |_B = 1$, and we could obtain $\rho^*$ from \eqref{rhofromharmonic}.

We observe that $\tilde \Gamma$ is a minimal subfamily, since removing any path $\gamma \in \tilde \Gamma$ will decrease the modulus.  To see this, change $\rho^*$ to be zero on the removed path $\gamma$ and use Beurling's criterion to show that the new density is extremal for $\tilde \Gamma \setminus \{ \gamma \}$.
To show that $\tilde \Gamma$ is the unique minimal subfamily, assume to the contrary that $\hat \Gamma$ is another minimal subfamily. 
Then the optimal pmf $\hat \mu$ on $\hat \Gamma$ satisfies that the expected edge usage of $e$ under $\hat \mu$ is given by 
$ \rho^*(e)/\Mod_2(\Gamma)$ by \eqref{eq:rhomu}.
This implies that the expected edge usage for any vertical edge must be zero, and the paths in $\hat \Gamma$ cannot use any vertical edges.  Therefore $\hat \Gamma \subset \tilde \Gamma$.  However, since $\Mod_2(\hat \Gamma) = \Mod_2(\tilde \Gamma)$, we must have $\hat \Gamma = \tilde \Gamma$.

Let $\mu^*$ be the optimal pmf on $\tilde{\Gamma}$.  From \eqref{eq:rhomu}, we see that if $e$ is a horizontal edge and $\ga$ is the corresponding horizontal path, then
$$ \mu^*(\ga)=\mathbb{P}_{\mu^*}\left[ \gamma \text{ contains } e\right] = \frac{\rho^*(e)}{\Mod_2(\Gamma)}  = \frac{1}{n+1} .$$
This implies that $\mu^*$  is uniform on $\tilde{\Gamma}$.

We wish to vary $n$, and so we update our notation to show the dependence on $n$, i.e. let $\mu^*_n = \mu^*$ which is supported on $\tilde \Gamma_n = \tilde \Gamma$.  
For this example, the result of Theorem \ref{limitpmf} is fairly intuitive: as $n\to \infty$, $\mu^*_n$ converges to the transverse measure on all horizontal segments in $R$, which are the extremal curves in the continuous setting.

\begin{example}\label{RotatedRectangle}
In our second example, we will look at a rotated rectangle.
Let $R$ be the rectangle  $[0,\sqrt{2} L] \times [0,\sqrt2]$ rotated by $\pi/4$ (i.e.~the corners of $R$ are $0, L+iL, L-1+i( L+1), -1+i$.)  In R, take the  (non-rotated) square grid with lattice size $1/n$ and weights $\sigma \equiv 1$.  Let $\Gamma$ be the family of all paths in $R$ from the left side $A = \{ v \, : \, \text{Im}(v) = -\text{Re}(v) \}$ to the right side $B =  \{ v \, : \, \text{Im}(v) = -\text{Re}(v) +2L\}$.  See Figure \ref{Example2}.
\end{example}

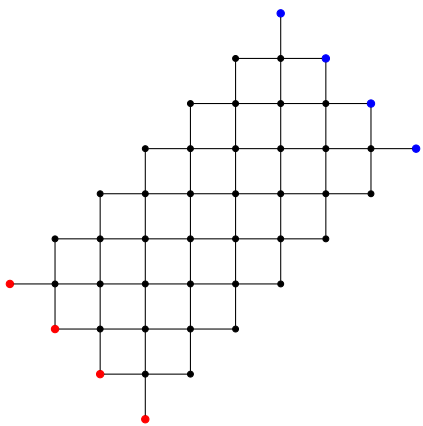
\begin{figure}
 \centering
\begin{tikzpicture}[scale=0.6, auto, node distance=3cm,  thin]
   \begin{scope}[every node/.style={circle,draw=black,fill=black!100!,font=\sffamily\Large\bfseries}]
   \node (v1) [scale=0.2]at (1,1) {};
     \node (v2) [scale=0.2]at (2,2) {};
    \node (v3)[scale=0.2] at (3,3) {};
      \node (v4) [scale=0.2]at (4,4) {};
   \node (vv1) [scale=0.2]at (0,1) {};
     \node (vv2) [scale=0.2]at (1,2) {};
    \node (vv3)[scale=0.2] at (2,3) {};
      \node (vv4) [scale=0.2]at (3,4) {};
    \node (vv5) [scale=0.2]at (4,5) {};
       \node (vv6) [scale=0.2]at (5,6) {};
   \node (w1) [scale=0.2]at (0,2) {};
     \node (w2) [scale=0.2]at (1,3) {};
    \node (w3)[scale=0.2] at (2,4) {};
      \node (w4) [scale=0.2]at (3,5) {};
   \node (ww1) [scale=0.2]at (-1,2) {};
     \node (ww2) [scale=0.2]at (0,3) {};
    \node (ww3)[scale=0.2] at (1,4) {};
      \node (ww4) [scale=0.2]at (2,5) {};
    \node (ww5) [scale=0.2]at (3,6) {};  
        \node (ww6) [scale=0.2]at (4,7) {};  
       \node (x1) [scale=0.2]at (-1,3) {};
     \node (x2) [scale=0.2]at (0,4) {};
    \node (x3)[scale=0.2] at (1,5) {};
      \node (x4) [scale=0.2]at (2,6) {};
   \node (xx1) [scale=0.2]at (-2,3) {};
     \node (xx2) [scale=0.2]at (-1,4) {};
    \node (xx3)[scale=0.2] at (0,5) {};
      \node (xx4) [scale=0.2]at (1,6) {};
    \node (xx5) [scale=0.2]at (2,7) {}; 
        \node (xx6) [scale=0.2]at (3,8) {}; 
           \node (y1) [scale=0.2]at (-2,4) {};
     \node (y2) [scale=0.2]at (-1,5) {};
    \node (y3)[scale=0.2] at (0,6) {};
      \node (y4) [scale=0.2]at (1,7) {};
      \node (v5)[scale=0.2] at (5,5) {};
     \node (w5)[scale=0.2] at (4,6) {};
       \node (x5)[scale=0.2] at (3,7) {};
       \node (y5)[scale=0.2] at (2,8) {};
      \end{scope}
\begin{scope}[every node/.style={circle,draw=red,fill=red!100!,font=\sffamily\Large\bfseries}]
    \node (v0)[scale=0.25] at (0,0) {};
     \node (w0)[scale=0.25] at (-1,1) {};
       \node (x0)[scale=0.25] at (-2,2) {};
       \node (y0)[scale=0.25] at (-3,3) {};
\end{scope}
\begin{scope}[every node/.style={circle,draw=blue,fill=blue!100!,font=\sffamily\Large\bfseries}]
     \node (v6)[scale=0.25] at (6,6) {};
     \node (w6)[scale=0.25] at (5,7) {};
       \node (x6)[scale=0.25] at (4,8) {};
       \node (y6)[scale=0.25] at (3,9) {};
\end{scope}
\begin{scope}[every edge/.style={draw=black,thin}]
    \draw  (w0) edge node{} (v1);
     \draw  (x0) edge node{} (v2);
        \draw  (y0) edge node{} (v3); 
      \draw  (y1) edge node{} (v4);    
   \draw  (y2) edge node{} (v5); 
      \draw  (y3) edge node{} (v6);   
      \draw (y4) edge node{} (w6);     
            \draw (y5) edge node{} (x6);  
      \draw (x0) edge node{} (y1);
      \draw (w0) edge node{} (y2);
       \draw (v0) edge node{} (y3);
         \draw (v1) edge node{} (y4);
           \draw (v2) edge node{} (y5);
           \draw (v3) edge node{} (y6);
            \draw (v4) edge node{} (x6); 
                        \draw (v5) edge node{} (w6); 
            \end{scope}
\end{tikzpicture}           

\caption{The graph for Example \ref{RotatedRectangle} with $L=2$, $n=3$, the vertices of $A$ shown in red, and the vertices of $B$ shown in blue.} \label{Example2}
\end{figure}

In contrast to Example \ref{ex:horiz-rect}, there is not a unique minimal subfamily in this case, and we will describe two  such subfamilies.
Consider first the subfamily $\Gamma^z$ of ``zig-zag" paths, which alternate between increasing the real component and increasing the imaginary component (or vice-versa).  These paths each have $2Ln$ steps, and each edge is used in exactly 1 path.  By arguing as before,
Beurling's criterion can be used to show that the extremal density is 
 $\frac{1}{2Ln}$ on all edges and $\Gamma^z$ is a Beurling subfamily.  Further we see that $\Gamma^z$ is also a minimal subfamily, since removing any path will decrease the modulus.  
 From the extremal density and the fact that there are $4n^2L$ total edges, we see that $\Mod_2(\Gamma) = \frac{1}{L}$, which is equal to the corresponding continuous modulus.

Another minimal subfamily $\Gamma^r$ consists of the ``reflected'' paths which keep increasing the same component (whether real or imaginary) until they hit the boundary of $R$ and then they switch to increasing the other component.  The paths in this family will also have $2Ln$ steps each, and each edge is used in exactly 1 path.  One can check that this will also be  a Beurling subfamily  and  a minimal subfamily.

Each subfamily will have an associated optimal pmf:
let $\mu_n^z$ be the pmf on the zig-zag subfamily $\Gamma^z$, and 
let $\mu_n^r$ be the pmf on the reflected subfamily $\Gamma^r$.
Both measures will be uniform, giving the $2n$ paths in the relevant subfamily weight $\frac{1}{2n}$.  
Further both $\mu_n^z$ and $\mu_n^r$ will have a limit as $n \to \infty$, but the limiting objects will be different.  
The limit for $\mu_n^z$ is the transverse measure on the straight line paths of slope 1 in $R$ (which are the extremal curves in the continuous setting).  
However, the limit for $\mu_n^r$ will be supported on paths that move in a straight line either horizontally or vertically until reflecting off the boundary of the rectangle.

To illustrate the point that not all Beurling subfamilies are minimal, we wish to examine one further subfamily.  Let
$$\hat{\Gamma} = \{ \gamma \in \Gamma \, : \, \gamma \text{ has } 2Ln \text{ steps}\}.$$ 
In other words, this is the collection of all paths that have length 1 under the extremal density. 
This family contains both $\Gamma^z$ and $\Gamma^r$.
In a moment, we will show that this is a Beurling subfamily, and hence it will be the largest Beurling subfamily by construction.

Let $\hat{\mu}$ be the pmf that arises from viewing the paths in $\hat{\Gamma}$ as 1-dim simple random walk (SRW) trajectories started on the left side of $R$ and reflected to stay in the rectangle.  In other words, we can think of creating the paths dynamically as follows: Each of the $2n$ initial edges has equal probability $\frac{1}{2n}$.  Once the first edge is chosen, we chose the next edge with probability  $1/2$ of going up or going to the right.  Then we repeat.  If we happen to be on the boundary, then we take the only available edge with probability 1.  One can show for any fixed edge $e$ that
 $\mathbb{P}_{\hat{\mu}}[e \in \gamma] = \frac{1}{2n}$, see Claim \ref{cl:edge-prob-rrw} below.

 With this understanding of $\hat{\mu}$, we will be able to  show that $\hat \Gamma$ is a Beurling subfamily by verifying Beurling's Criterion.  Let $h \in \mathbb{R}^E$ with $\sum_{e \in \gamma} h(e) \geq 0$ for all $\gamma \in \hat\Gamma $.  
Then
$$  \sum_{e \in E} h(e) \cdot \frac{1}{2nL} 
  =  \sum_{e \in E} h(e) \cdot \frac{1}{L} \sum_{\gamma \in \hat\Gamma} \hat\mu(\gamma)  \mathbbm{1}_{\{e \in \gamma\}}
  = \frac{1}{L} \sum_{\gamma \in \hat\Gamma}   \hat\mu(\gamma)  \sum_{e \in \gamma} h(e) \geq 0.$$

  A short computation will show that $\hat \mu$ is a pmf on $\hat \Gamma$ that minimizes the expected overlap of two independent randomly chosen paths in $\hat{\Gamma}$:
 \begin{align*}
 \mathbb{E}_{\hat{\mu}} |\gamma \cap \gamma'| &=  \mathbb{E}_{\hat{\mu}} \left[ \sum_{e\in E} \mathbbm{1}_{\{e \in \gamma \cap \gamma'\} } \right] \\
 &= \sum_{e \in E} \mathbb{P}_{\hat{\mu}} \left[ e \in \gamma \cap \gamma' \right] \\
 &=  \left( \frac{1}{2n} \right)^2 \cdot 4n^2L \\
 &= L. 
 \end{align*}
By (\ref{eq:prob-dual}), we have
$$L = \Mod_2(\Gamma)^{-1} = \min_{\mu \in \mathcal{P}(\Gamma)} \mathbb{E}_\mu | \gamma \cap \gamma' |.$$
Therefore, $\hat{\mu}$ is a pmf on $\hat{\Gamma}$ that minimizes expected overlap.
Hence, $\hat \mu$ is an optimal pmf on $\hat \Gamma$.
Note that it may not be unique, since we are only guaranteed a unique optimal pmf for a minimal subfamily, as we saw in Proposition \ref{prop:pmf-minimal}

To highlight the fact that the limit in Theorem \ref{limitpmf} depends on the correct choice of subfamily, 
we mention that the limit of the pmf $\hat \mu$ as $n \to \infty$ yields 1-dim reflected Brownian motion trajectories started at a uniform point on the left side of $R$.

\begin{claim}\label{cl:edge-prob-rrw}
  $\mathbb{P}_{\hat{\mu}}[e \in \gamma] = \frac{1}{2n}$ for any edge $e$.
  \end{claim}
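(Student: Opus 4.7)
The plan is to prove by induction on the time index $k$ that under $\hat\mu$, the marginal distribution of the walker at step $k$ is
\[
q_k(v) \;:=\; \bP_{\hat\mu}\!\left[\,\text{walker at } v \text{ at time } k\,\right] \;=\; \frac{b_v}{2n}
\quad \text{for every } v \in L_k,
\]
where $L_k := \{v\in V : n(\Re(v)+\Im(v)) = k\}$ is the anti-diagonal level set at depth $k$, and $b_v \in \{1,2\}$ is the number of up/right steps out of $v$ that stay inside $R$. Once this is established, the claim follows at once: for any edge $e = v\to w$ with $v \in L_k$,
\[
\bP_{\hat\mu}[e\in\gamma] \;=\; q_k(v)\cdot \frac{1}{b_v} \;=\; \frac{1}{2n}.
\]

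The base case $k=0$ is essentially the definition of $\hat\mu$: assigning probability $1/(2n)$ to each of the $2n$ initial edges is the same as starting at $v\in A$ with probability $b_v/(2n)$.

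For the inductive step, assume $q_k(v) = b_v/(2n)$ on $L_k$. Applying one transition,
\[
q_{k+1}(w) \;=\; \sum_{v\,:\, v\to w} q_k(v)\cdot\frac{1}{b_v} \;=\; \sum_{v\,:\, v\to w}\frac{1}{2n} \;=\; \frac{b'_w}{2n},
\]
where $b'_w$ counts the in-edges to $w$ from $L_k$ (down or left edges whose other endpoint lies in $R$). To close the induction it suffices to verify the geometric identity $b'_w = b_w$ for every $w \notin A\cup B$. A short case check settles this: if $w$ is interior to $R$, both counts equal $2$; if $w$ lies on the top long side $y=x+2$, the ``up'' direction is excluded both as an out-step and as an in-step, so both counts equal $1$; similarly for the bottom long side $y=x$. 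Corners of $R$ lie in $A \cup B$ and so do not appear in the inductive step.

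The main (and only real) obstacle is this geometric identity $b'_w = b_w$. It holds because the non-$A,B$ sides of the rotated rectangle are lines of slope $1$, parallel to the net direction of the up/right walk, so the direction forbidden at a boundary point is the same whether viewed as an outgoing or incoming step. The same strategy would not survive in a general Jordan domain whose boundary is not aligned with the net motion of the walker, which is consistent with the fact that for such domains the optimal pmf is not produced by this naive reflected random walk.
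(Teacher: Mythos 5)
Your proof is correct and takes essentially the same approach as the paper: an induction across the anti-diagonal level sets, tracking the vertex-arrival distribution (uniform mass on interior vertices, half on the reflecting boundary) and using the fact that in-degree matches out-degree at every non-$A$, non-$B$ vertex. You have merely made the paper's informal "same chance of arriving at any internal vertex, half at a boundary vertex" explicit as the invariant $q_k(v)=b_v/(2n)$, which is a clean way to organize the same argument.
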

\begin{proof}
This claim can be shown by induction.  In particular, we can subdivide the edges into $2nL$ distinct sets of $2n$ edges
$$E_k = \{ \text{ all the edges that could be step $k$ in a SRW trajectory } \},$$
and we do induction on $k$ to show that $\mathbb{P}_{\hat{\mu}}[e \in \gamma]$ is the same for all edges $e$ in $E_k$.
The base case follows by definition, since each of the initial edges has equal probability.  
We will show the basic idea behind the induction proof by explaining the reasoning for $E_2$ and $E_3$.
Consider the vertices that edges from $E_1$ lead into (we consider the edges oriented so that the SRW trajectories flow from the left side to the right side of $R$).  
Each of these vertices has exactly two edges leading into it, meaning that our SRW induces a uniform distribution on these vertices.  
Since each of these vertices has two edges leading out, and we will choose between them with equal probability, we have uniform probability for all edges in $E_2$.  
Now consider the vertices that the edges from $E_2$ lead into.  There are two special vertices, one on the top boundary and one on the bottom boundary, that only have one edge leading into them and one edge leading out.  All the rest have two edges leading in and two edges leading out.  While we no longer have a uniform distribution induced on these vertices, we still obtain the uniform distribution on $E_3$.  (Reason: We have the same chance of arriving at any internal vertex, and we have exactly half this probability of arriving at a boundary vertex.  From the internal vertex, we have two edges to choose from with equal probability, while at the boundary vertex we must take the only available edge.) 
\end{proof}
Note that the paths in $\Ga^z$ are non-crossing, while in $\Ga^r$ and $\hat{\Ga}$ they do cross.
In Section \ref{non-crossing} we will give an algorithm that, in the case of plane graphs, always produces the unique non-crossing minimal subfamily.

\subsection{Orthodiagonal maps} \label{orthodiagIntro}

A finite {\it orthodiagonal map} is a finite connected plane graph that satisfies the following:
\begin{itemize}
\item Each edge is a straight line segment.
\item  Each inner face is a quadrilateral with orthogonal diagonals.
\item  The boundary of the outer face is a simple closed curve. 
\end{itemize}
See the top left image in Figure \ref{orthomap} for an illustration.

We will briefly describe some of the structure associated with an orthodiagonal map $G = (V, E)$.
Since $G$ is a bipartite graph, we obtain a bipartition of the vertices $V = V^\bullet \sqcup V^\circ$.
Each inner face $Q$ is determined by the four vertices on its boundary, which must alternate between   $V^\bullet $ and  $V^\circ$.
We will write  $Q=[v_1, w_1, v_2, w_2]$ with the convention that  $v_1, v_2 \in V^\bullet$, $w_1, w_2 \in V^\circ$, and the vertices are ordered counterclockwise around $Q$.
We can create two additional graphs associated with $G$, the {\it primal graph} $G^\bullet = (V^\bullet, E^\bullet)$ and the {\it dual graph} $G^\circ = (V^\circ, E^\circ)$.
There is one primal edge and one dual edge  for each
 inner face  of $G$.  In particular, $e_Q^\bullet \in E^\bullet$ is an edge between $v_1$ and $v_2$ in $Q$; it is defined to be the line segment $v_1v_2$, if this segment is contained in $Q$, and otherwise it is the union of the line segments  $v_1p$ and $pv_2$, where $p=(w_1+w_2)/2$ is the midpoint of $w_1w_2$.
The edge  $e_Q^\circ \in E^\circ$ between $w_1$ and $w_2$ in $Q$ is defined similarly.
 For an edge $e \in E^\bullet$, the {\it dual edge to} $e$ is the unique edge $e^\circ \in E^\circ$ so that there is an inner face $Q$ with $e=e_Q^\bullet$ and $e^\circ = e_Q^\circ$.  
The {\it primal network} $(G^\bullet, \sigma)$ and the {\it dual network} $(G^\circ, \sigma)$ have the following canonical edge-weights:
\begin{equation}\label{eq:canonical-weights}
\sigma(e_Q^\bullet) = \frac{|w_1w_2|}{|v_1v_2|} \;\;\; \text{ and } \;\;\; \sigma(e_Q^\circ) = \frac{|v_1v_2|}{|w_1w_2|},
\end{equation}
where the notation $|ab|$ is the Euclidean distance between $a$ and $b$.  
Note that this distance is not necessarily the length of the edge.

One way to interpret these weights is through a finite-volume approximation of the Laplace equation.  Let $v_1$ be an interior vertex in the primal graph, and consider the collection of quadrilaterals containing this vertex.  The bottom right picture in Figure~\ref{orthomap} shows an example of such a collection of quadrilaterals (light edges in the figure).  The edges of the dual graph surround a polygonal region $D$ (thick dashed lines in the figure).  If $h$ is the harmonic function for the continuum problem, then the divergence theorem implies that
\begin{equation*}
0 = \int_D\Delta h\;dA = \int_{\partial D}\frac{\partial h}{\partial n}\;dS.
\end{equation*}
The integral on the right is a sum of integrals along the edges of $D$.  Let $w_1w_2$ be some such edge, as indicated in the figure.  There is a corresponding edge $v_1v_2$ in the primal graph.  Since these edges are orthogonal, $(h(v_2)-h(v_1))/|v_1v_2|$ provides an approximation of the normal derivative along $w_1w_2$.  Thus, the contribution of $w_1w_2$ to the boundary integral can be approximated as
\begin{equation*}
\int_{w_1w_2}\frac{\partial h}{\partial n}\;dS \approx \frac{|w_1w_2|}{|v_1v_2|}(h(v_2)-h(v_1)).
\end{equation*}
Performing a similar approximation on the remaining edges of $D$ yields the approximation
\begin{equation*}
0 \approx \sum_{u:v_1u\in E}\frac{|w_1w_2|}{|v_1u|}(h(u)-h(v_1)).
\end{equation*}
Thus, when the continuous harmonic function is restricted to the graph, it approximately satisfies the discrete harmonic equation (\ref{eq:si-harmonic}) at each interior vertex, with the edge conductances defined in (\ref{eq:canonical-weights}).  Moreover, this approximation improves as the lengths of the graph edges converge to zero.

We list some further notation relating to an orthodiagonal map $G$:
\begin{align*}
\partial G & = \text{ the topological boundary of the outer face of } G \\
{\rm int}(G) &= \text{ the open subset enclosed by } \partial G \\
\hat{G} &= {\rm int}(G) \cup \partial G \\
\tilde{G} &= \text{ the union of the convex hulls of the closures of the inner faces of } G
\end{align*}

In \cite{gurel-jerison-nachmias:advm2020}, Gurel-Gurevich, Jerison, and Nachmias show that one can approximate continuous harmonic functions by discrete harmonic functions on an orthodiagonal map, generalizing the work of \cite{skopenkov:advm2013} and \cite{Werness2015}. 
In particular, they are concerned with the
{\it solution $h_c : \overline{\Omega} \to \mathbb{R}$ to the continuous Dirichlet problem on $\Omega$ with boundary data $ g |_{\partial \Omega}$}; this terminology means that $h_c=g$ on $\partial \Omega$, $h_c$ is harmonic on $\Omega$, and $h_c$ is continuous on $\overline{\Om}$.

\begin{thm}[Theorem 1.1 of \cite{gurel-jerison-nachmias:advm2020}]
Let $\Omega \subset \mathbb{C}$ be a bounded simply connected domain, and let $g: \mathbb{C} \to \mathbb{R}$ be a $C^2$-smooth function.
Given $\e, \delta \in (0, \diam(\Omega))$, 
let $G = ( V^\bullet \sqcup V^\circ, E)$ be a finite orthodiagonal map with maximal edge length at most $\e$
such that the Hausdorff distance between $\partial G$ and $\partial \Omega$ is at most $\delta$.
Let $h_c : \overline{\Omega} \to \mathbb{R}$ be
the solution to the continuous Dirichlet problem on $\Omega$ with boundary data $ g |_{\partial \Omega}$,
and let $h_d : V^\bullet \to \mathbb{R}$ be the solution to the discrete Dirichlet problem on $V^\bullet \setminus \partial G$ with boundary data $g |_{\partial G \cap V^\bullet}$.
Set
$$ C_1 = \sup_{z \in \tilde \Omega} | \nabla g(z)| \;\;\; \text{ and } \;\;\; C_2 = \sup_{z \in \tilde \Omega} || H g(z) ||_2,$$
where $\tilde \Omega = \conv(\overline{\Omega} \cup \hat{G})$.
Then there is a universal constant $C < \infty$ such that for all $z \in V^\bullet \cap \overline{\Omega}$,
$$ |h_d(z) - h_c(z) | \leq \frac{ C \diam(\Omega)(C_1 + C_2\epsilon)}{\log^{1/2}(\diam(\Omega)/ (\delta \vee \e))}.$$ 
\end{thm}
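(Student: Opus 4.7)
The plan is probabilistic: represent both $h_d$ and $h_c$ as expectations, measure how badly $h_c$ fails to be $\sigma$-harmonic via a finite-volume consistency estimate, and control the accumulated defect by a discrete Beurling-type estimate. Let $(X_n)$ denote the Markov chain on $G^\bullet$ with transition probabilities $p(v,u):=\sigma(vu)/\sum_w\sigma(vw)$. With this choice, $\sigma$-harmonic functions are precisely the $(X_n)$-martingales, so that for $z\in V^\bullet\cap\overline\Omega$,
\begin{equation*}
h_d(z)=\mathbb{E}_z\bigl[g(X_\tau)\bigr],\qquad \tau:=\inf\{n\ge 0:X_n\in\partial G\},
\end{equation*}
while on the continuum $h_c(z)=\mathbb{E}_z[g(B_T)]$ for planar Brownian motion stopped at $\partial\Omega$. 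The strategy is not to literally couple the two processes, but to evaluate a $C^2$ extension $\tilde h$ of $h_c$ (defined on a neighborhood of $\overline\Omega$ and agreeing with $h_c$ on $\overline\Omega$) along the trajectory of $(X_n)$ and decompose via optional stopping
\begin{equation*}
h_d(z)-h_c(z)=\mathbb{E}_z\bigl[(g-\tilde h)(X_\tau)\bigr]+\mathbb{E}_z\Bigl[\sum_{n<\tau}\mathcal{L}_\sigma\tilde h(X_n)\Bigr],
\end{equation*}
where $\mathcal{L}_\sigma$ denotes the normalized $\sigma$-Laplacian.

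The first ingredient is the finite-volume consistency lemma indicated in the preamble of Section \ref{orthodiagIntro}: for any $C^2$ function $f$ on a neighborhood of an interior vertex $v\in V^\bullet$, orthodiagonality of the surrounding faces combined with a Taylor expansion yields
\begin{equation*}
\Bigl|\sum_{u:vu\in E^\bullet}\sigma(vu)\bigl(f(u)-f(v)\bigr)-\tfrac12|\mathrm{star}(v)|\,\Delta f(v)\Bigr|\le C\epsilon^2|\mathrm{star}(v)|\sup_{\mathrm{star}(v)}\|Hf\|_2,
\end{equation*}
so the per-step defect $|\mathcal{L}_\sigma\tilde h(v)|$ is of order $\epsilon^2\|H\tilde h(v)\|$. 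The boundary term $(g-\tilde h)(X_\tau)$ is bounded by $C_1\delta$, since $X_\tau$ lies within Hausdorff distance $\delta$ of $\partial\Omega$ and $|\nabla g|\le C_1$. For the bulk term, interior Cauchy estimates applied to $h_c$ (replaced by $h_c$ minus a fixed reference value of size at most $C_1\diam(\Omega)$) give $\|Hh_c(v)\|\lesssim C_1\diam(\Omega)/\dist(v,\partial\Omega)^2$ inside $\Omega$, and $\|H\tilde h\|\lesssim C_2$ in the boundary collar where the extension is needed.

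The crux is bounding the bulk term and extracting the $\log^{1/2}$ denominator. A termwise absolute-value bound combined with a Cauchy--Schwarz in the sum over $n<\tau$, and the dyadic decomposition of $\overline\Omega$ into annular scales $\{v:\dist(v,\partial\Omega)\asymp 2^{-k}\diam(\Omega)\}$, reduce the problem to estimating the expected $\sigma$-walk occupation time in each annulus. The required discrete Beurling-type harmonic measure estimate on an arbitrary orthodiagonal map, combined with these occupation bounds, produces a telescoping logarithmic sum truncated at the finest relevant scale $\epsilon\vee\delta$. Taking a square root converts this $\log$ into the denominator $\log^{1/2}(\diam(\Omega)/(\epsilon\vee\delta))$, while the prefactor $\diam(\Omega)(C_1+C_2\epsilon)$ is inherited from the Hessian bounds above.

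The main obstacle is the discrete Beurling estimate on \emph{general} orthodiagonal maps: no a priori regularity of the faces (minimum angle, bounded aspect ratio, non-degeneracy) is assumed, so standard random-walk comparison arguments based on Nash--Aronson or Harnack inequalities fail outright. This is precisely where orthodiagonality is essential: the canonical weights (\ref{eq:canonical-weights}) make the finite-volume scheme self-adjoint and consistent with the Euclidean metric, which allows a capacity comparison between the $\sigma$-walk and planar Brownian motion that is uniform in the local geometry of $G$. Once this Beurling bound is available, the assembly outlined above is routine bookkeeping, and the worst-case rate $1/\log^{1/2}$ is exactly the price paid for avoiding any regularity hypothesis on the faces of $G$.
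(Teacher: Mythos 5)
This theorem is not proved in the paper at all: it is quoted verbatim as Theorem 1.1 of \cite{gurel-jerison-nachmias:advm2020}, and what the paper does prove (Theorem \ref{HarmConvThm}, Section \ref{harmonic}) is an adaptation of that paper's argument, which is an energy/flow argument, not a probabilistic one. The structure there is: (a) bound the Dirichlet energy $\mathcal{E}^\bullet(h_c-h_d)$ by $C(M^2\epsilon^2+L^2(\epsilon\vee\delta))$ by comparing the discrete gradient flow of $h_d$ with a flow built from the harmonic conjugate of $h_c$ (the analogue of Proposition \ref{modified5.2}); (b) construct a unit flow from a small disk to the boundary with energy $O(\log(\diam(\Omega)/\epsilon))$ using the winding of a conformal image (the analogue of Proposition \ref{modified6.1}); (c) convert energy bounds into pointwise bounds via the gap--energy Cauchy--Schwarz inequality and an annulus-crossing/oscillation lemma for $\rho$-edges (the analogues of Proposition \ref{modified7.1} and Lemma \ref{modified7.3}). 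The $\log^{-1/2}$ rate is exactly the product $\mathcal{E}(h_c-h_d)^{1/2}\mathcal{E}(\theta)^{1/2}$ with $\mathcal{E}(\theta)\asymp\log$, and no random-walk estimate is used anywhere.

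Your proposal has a genuine gap, and it sits precisely where you wave it away. You represent $h_d$ as the exit expectation of the $\sigma$-weighted walk and reduce everything to (i) a per-step consistency defect and (ii) a ``discrete Beurling-type harmonic measure estimate'' together with occupation-time (Green function) bounds for the walk on an \emph{arbitrary} orthodiagonal map. But the theorem assumes no lower bound on angles, no bounded aspect ratio, no bounded degree; under these hypotheses the conductances $\sigma(e)$ and the normalizations $\sum_w\sigma(vw)$ have no uniform upper or lower bounds, so your ``normalized $\sigma$-Laplacian'' defect per step is not $O(\epsilon^2\|H\tilde h\|)$ uniformly (the stated consistency bound is also dimensionally inconsistent as written: the finite-volume error on a dual edge is of order $|w_1w_2|\,\epsilon\,\|Hf\|$, and dividing by $\sum_w\sigma(vw)$ can inflate it arbitrarily), and the Beurling/occupation estimates you invoke are exactly the results that are known only under uniform regularity of the faces (e.g.\ isoradial graphs with bounded angles, as in Chelkak--Smirnov or Skopenkov). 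Asserting that orthodiagonality alone yields ``a capacity comparison between the $\sigma$-walk and planar Brownian motion that is uniform in the local geometry of $G$'' is precisely the hard content of the theorem, not routine bookkeeping; the whole point of the Gurel-Gurevich--Jerison--Nachmias argument (and of the adaptation in Section \ref{harmonic}) is to bypass random-walk estimates, which are unavailable at this generality, by working with Dirichlet energies, dual flows, and a purely combinatorial crossing lemma. In addition, your heuristic that a Cauchy--Schwarz over dyadic annuli ``converts a $\log$ into the denominator $\log^{1/2}$'' does not match the bookkeeping: a termwise bound with $\|Hh_c\|\lesssim 1/\dist^2$ and Green-function occupation weights produces a sum whose truncation gives a positive power of $\log$ in the numerator, not a decaying $\log^{-1/2}$, unless one injects the annulus-modulus estimate that plays role (b)--(c) above. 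As it stands, the proposal assumes the theorem's main difficulty as an unproved lemma.
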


In the above result, the boundary condition involves the vertices of the primal graph that are in the topological boundary of $G$.  
In our work, we need a similar result with different boundary conditions.
This leads us to making the following definition.

\begin{defn}
The graph $G= (V^\bullet \sqcup V^\circ, E)$ 
is a finite {\it orthodiagonal map 
with  boundary arcs $S_1, T_1, S_2, T_2$,} if
$G$ is a finite orthodiagonal map with the following boundary condition:
\begin{itemize}
\item(Boundary condition) The topological boundary of the outer face of $G$ is  subdivided into four nonempty  paths $S_1, T_1, S_2, T_2$, listed counterclockwise, with $S_j \cap T_k$ equal to a single vertex, for $j,k=1,2$.

\end{itemize}
\end{defn}

This boundary condition allows us to think of $\hat{G}$ as a conformal rectangle with ``sides" $S_1, S_2$ and ``top/bottom"  $T_1, T_2$.
For the associated primal and dual graphs, we define the (graph) boundaries as follows:
\begin{equation*}
   \partial V^\bullet := V^\bullet \cap \left( S_1 \cup S_2 \right) \;\; \text{ and } \;\;
   \partial V^\circ := V^\circ \cap \left( T_1 \cup T_2 \right).
\end{equation*} 
Note that $\partial V^\bullet$ and $\partial V^\circ$ do not include all vertices in the topological boundary of $G$.
Further, we define
\begin{equation*}
   {\rm int}(V^\bullet) := V^\bullet \setminus \partial V^\bullet \; \text{ and } \;
     {\rm int}(V^\circ) := V^\circ \setminus \partial V^\circ,
 \end{equation*}
and we point out that these are different from the collections of vertices in the topological interior of $G^\bullet$ and $G^\circ$.
See Figure \ref{orthomap}.

\begin{figure}
\centering
 \begin{minipage}{0.45\textwidth}
\begin{tikzpicture}[scale=0.5, auto, node distance=3cm,  thin]
   \begin{scope}[every node/.style={circle,draw=black,fill=black!100!,font=\sffamily\Large\bfseries}]
   \node (A) [scale=0.3]at (-1.5,0) {};
    \node (B)[scale=0.3] at (0,2) {};
   \node (C)[scale=0.3] at (0,0) {};
   \node (D) [scale=0.3]at (0,-1.6) {};
    \node (E)[scale=0.3] at (0,-3) {};
    \node (F)[scale=0.3] at (2,0) {};
   \node (G)[scale=0.3] at (3,2) {};
   \node (H) [scale=0.3]at (3,-1) {};
    \node (I)[scale=0.3] at (3,-3) {};
    \node (J)[scale=0.3] at (3,-4.5) {};
    \node (K)[scale=0.3] at (4.5,0) {};
   \node (L)[scale=0.3] at (5,-3) {};
   \node (M) [scale=0.3]at (6,1) {};
    \node (N)[scale=0.3] at (7,0) {};
    \node (O)[scale=0.3] at (7,-3) {};
    \node (P)[scale=0.3] at (7.25,-4) {};
      \node (Q)[scale=0.3] at (8.5,-1) {};
       \end{scope}
   \begin{scope}[every node/.style={circle,draw=black,font=\sffamily\Large\bfseries}]
   \node (a) [scale=0.3]at (-1,1) {};
    \node (b)[scale=0.3] at (-1,-2) {};
    \node (c)[scale=0.3] at (1,3) {};
   \node (d) [scale=0.3]at (1,1) {};
    \node (e)[scale=0.3] at (1,-2) {};
    \node (f)[scale=0.3] at (1,-4) {};
    \node (g)[scale=0.3] at (2.4, -2) {};
   \node (h) [scale=0.3]at (3,0) {};
    \node (j)[scale=0.3] at (4.5,2) {};
    \node (k)[scale=0.3] at (6,-1) {};
   \node (l) [scale=0.3]at (6,-2) {};
    \node (m)[scale=0.3] at (6,-4) {};
    \node (o)[scale=0.3] at (8,1) {};
    \node (p)[scale=0.3] at (8,-3.5) {};
         \node (q)[scale=0.3] at (6.117647,-5) {};
       \end{scope}
 \begin{scope}[every edge/.style={draw=black,thin}]
    \draw  (A) edge node{} (a);
     \draw  (A) edge node{} (b);
     \draw  (a) edge node{} (B);
     \draw  (a) edge node{} (C);
         \draw  (b) edge node{} (C);
         \draw  (b) edge node{} (D);
        \draw  (b) edge node{} (E);
        \draw  (B) edge node{} (c);
        \draw  (B) edge node{} (d);
        \draw  (C) edge node{} (d);
        \draw  (C) edge node{} (e);
       \draw  (D) edge node{} (e);
      \draw  (E) edge node{} (e);
      \draw  (E) edge node{} (f);
    \draw  (c) edge node{} (G);
    \draw  (d) edge node{} (G);
    \draw  (d) edge node{} (F);    
     \draw  (e) edge node{} (F);
     \draw  (e) edge node{} (I);
     \draw  (f) edge node{} (I);
     \draw  (f) edge node{} (J);
     \draw  (G) edge node{} (j);
     \draw  (G) edge node{} (h);
     \draw  (F) edge node{} (h); 
     \draw  (H) edge node{} (h);   
   \draw  (H) edge node{} (g);
     \draw  (I) edge node{} (g);
    \draw  (J) edge node{} (m);
     \draw  (e) edge node{} (H);
     \draw  (I) edge node{} (m); 
     \draw  (I) edge node{} (l);
  \draw  (L) edge node{} (l);
     \draw  (L) edge node{} (m);
     \draw  (H) edge node{} (l);
     \draw  (h) edge node{} (K);
     \draw  (K) edge node{} (j);
     \draw  (j) edge node{} (M);
     \draw  (M) edge node{} (k); 
     \draw  (M) edge node{} (o);
    \draw  (k) edge node{} (N);
     \draw  (K) edge node{} (k);
     \draw  (K) edge node{} (l);
     \draw  (N) edge node{} (l);
     \draw  (N) edge node{} (o);
     \draw  (Q) edge node{} (o);
     \draw  (Q) edge node{} (l); 
     \draw  (O) edge node{} (l);
        \draw  (O) edge node{} (m);
          \draw  (P) edge node{} (m);
       \draw  (P) edge node{} (p);
         \draw  (Q) edge node{} (p);
         \draw  (O) edge node{} (p);
          \draw  (J) edge node{} (q);
          \draw  (P) edge node{} (q);
           \end{scope}
 \end{tikzpicture}
\end{minipage}
\begin{minipage}{0.45\textwidth}
\begin{tikzpicture}[scale=0.5, auto, node distance=3cm,  thin]
   \begin{scope}[every node/.style={circle,draw=black,fill=black!100!,font=\sffamily\Large\bfseries}]
   \node (A) [scale=0.3]at (-1.5,0) {};
    \node (B)[scale=0.3] at (0,2) {};
    \node (E)[scale=0.3] at (0,-3) {};
    \node (G)[scale=0.3] at (3,2) {};
    \node (J)[scale=0.3] at (3,-4.4) {};
   \node (M) [scale=0.3]at (6,1) {};
    \node (P)[scale=0.3] at (7.25,-4) {};
      \node (Q)[scale=0.3] at (8.5,-1) {};
       \end{scope}
   \begin{scope}[every node/.style={circle,draw=black,font=\sffamily\Large\bfseries}]
   \node (a) [scale=0.3]at (-1,1) {};
    \node (b)[scale=0.3] at (-1,-2) {};
    \node (c)[scale=0.3] at (1,3) {};
    \node (f)[scale=0.3] at (1,-4) {};
    \node (j)[scale=0.3] at (4.5,2) {};
    \node (o)[scale=0.3] at (8,1) {};
    \node (p)[scale=0.3] at (8,-3.5) {};
             \node (q)[scale=0.3] at (6.117647,-5) {};
       \end{scope}
 \begin{scope}[every edge/.style={draw=black,thin}]
    \draw  (A) edge node{} (a);
     \draw  (a) edge node{} (B);
       \draw  (B) edge node{} (c);
             \draw  (c) edge node{} (G);
     \draw  (Q) edge node{} (o);
       \draw  (P) edge node{} (p);
         \draw  (Q) edge node{} (p);
           \end{scope}
 \begin{scope}[every edge/.style={draw=black,dashed}]    
                   \draw  (A) edge node{} (b);
                   \draw  (b) edge node{} (E);
                  \draw  (G) edge node{} (j);
        \draw  (j) edge node{} (M);
              \draw  (E) edge node{} (f);
     \draw  (f) edge node{} (J);
    \draw  (J) edge node{} (q);
              \draw  (P) edge node{} (q);
                   \draw  (M) edge node{} (o);
 \end{scope} 
 \node[right] at (-0.5,1.3) {$S_1$};
\node[above] at (2.2,-4.2) {$T_1$};
\node[left] at (8.4,-1.8) {$S_2$};
\node[above] at (5,0.5) {$T_2$};      
\end{tikzpicture}
\end{minipage}
\begin{minipage}{0.45\textwidth}
\begin{tikzpicture}[scale=0.5, auto, node distance=3cm,  thin]
  \begin{scope}[every node/.style={circle,draw=black,fill=black!100!,font=\sffamily\Large\bfseries}]
   \node (C)[scale=0.3] at (0,0) {};
   \node (D) [scale=0.3]at (0,-1.6) {};
       \node (E)[scale=0.3] at (0,-3) {};
    \node (F)[scale=0.3] at (2,0) {};
         \node (G)[scale=0.3] at (3,2) {};
   \node (H) [scale=0.3]at (3,-1) {};
    \node (I)[scale=0.3] at (3,-3) {};
    \node (J)[scale=0.3] at (3,-4.5) {};
    \node (K)[scale=0.3] at (4.5,0) {};
   \node (L)[scale=0.3] at (5,-3) {};
             \node (M) [scale=0.3]at (6,1) {};
    \node (N)[scale=0.3] at (7,0) {};
    \node (O)[scale=0.3] at (7,-3) {};
        \node (egg)[scale=0.05] at (1.7,-2) {};
        \node (lkk)[scale=0.05] at (6,-1.5) {};
       \end{scope}
   \begin{scope}[every node/.style={circle,draw=red,fill=red!100!,font=\sffamily\Large\bfseries}]
       \node (A) [scale=0.3]at (-1.5,0) {};
       \node (B)[scale=0.3] at (0,2) {};
          \node (G)[scale=0.3] at (3,2) {};
           \node (P)[scale=0.3] at (7.25,-4) {};
      \node (Q)[scale=0.3] at (8.5,-1) {};
   \end{scope}
  \begin{scope}[every node/.style={circle,draw=black,font=\sffamily\Large\bfseries}]
   \node (a) [scale=0.3]at (-1,1) {};
    \node (b)[scale=0.3] at (-1,-2) {};
    \node (c)[scale=0.3] at (1,3) {};
   \node (d) [scale=0.3]at (1,1) {};
    \node (e)[scale=0.3] at (1,-2) {};
    \node (f)[scale=0.3] at (1,-4) {};
    \node (g)[scale=0.3] at (2.4, -2) {};
   \node (h) [scale=0.3]at (3,0) {};
    \node (j)[scale=0.3] at (4.5,2) {};
    \node (k)[scale=0.3] at (6,-1) {};
   \node (l) [scale=0.3]at (6,-2) {};
    \node (m)[scale=0.3] at (6,-4) {};
    \node (o)[scale=0.3] at (8,1) {};
    \node (p)[scale=0.3] at (8,-3.5) {};
             \node (q)[scale=0.3] at (6.117647,-5) {};
     \node (ok)[scale=0.05] at (6.5,0.5) {};
          \node (ILL)[scale=0.05] at (4,-3) {};
               \node (DC)[scale=0.05] at (0,-0.8) {};
       \end{scope}
 \begin{scope}[every edge/.style={draw=black,thin}]
    \draw  (A) edge node{} (C);
     \draw  (C) edge node{} (B);
     \draw  (B) edge node{} (G);
     \draw  (G) edge node{} (F);
         \draw  (G) edge node{} (K);
         \draw  (K) edge node{} (M);
        \draw  (M) edge node{} (N);
        \draw  (N) edge node{} (Q);
        \draw  (Q) edge node{} (O);
        \draw  (O) edge node{} (P);
        \draw  (C) edge node{} (D);
       \draw  (C) edge node{} (F);
      \draw  (E) edge node{} (D);
      \draw  (E) edge node{} (I);
    \draw  (F) edge node{} (H);
    \draw  (H) edge node{} (K);
    \draw  (H) edge node{} (I);    
     \draw  (I) edge node{} (J);
     \draw  (L) edge node{} (O);
     \draw  (L) edge node{} (I);
       \draw  (J) edge node{} (P);
    \draw  (egg) edge node{} (H);
     \draw  (egg) edge node{} (I);
     \draw  (lkk) edge node{} (K);   
   \draw  (lkk) edge node{} (N);
           \end{scope}
  \begin{scope}[every edge/.style={draw=black,dashed}]          
     \draw  (a) edge node{} (b);
     \draw  (a) edge node{} (d);
     \draw  (c) edge node{} (d);
    \draw  (d) edge node{} (e);
     \draw  (e) edge node{} (f);
     \draw  (b) edge node{} (e); 
     \draw  (e) edge node{} (g);
   \draw  (g) edge node{} (l);
    \draw  (f) edge node{} (m);
     \draw  (m) edge node{} (l);
     \draw  (m) edge node{} (p);
     \draw  (p) edge node{} (l);
     \draw  (l) edge node{} (o); 
     \draw  (l) edge node{} (k);
    \draw  (k) edge node{} (j);
     \draw  (j) edge node{} (h);
     \draw  (h) edge node{} (d);
     \draw  (h) edge node{} (e);
     \draw  (h) edge node{} (l);
          \draw  (m) edge node{} (q);
     \draw  (o) edge node{} (ok);
     \draw  (k) edge node{} (ok); 
     \draw  (m) edge node{} (ILL);
        \draw  (l) edge node{} (ILL);
          \draw  (b) edge node{} (DC);
      \draw  (e) edge node{} (DC);
           \end{scope}
 \end{tikzpicture}
 \end{minipage}
\begin{minipage}{0.45\textwidth}
\includegraphics[width=0.8\textwidth, trim = 2cm 0 0 0, clip]{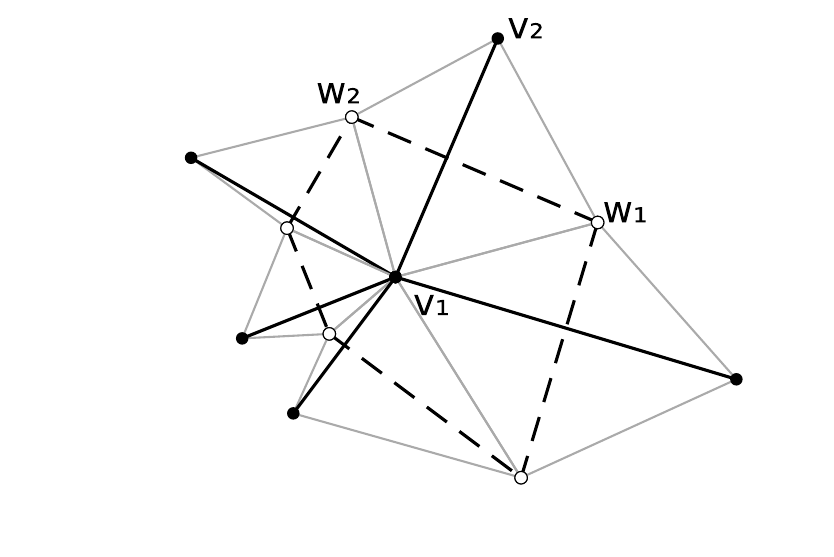}
 \end{minipage}
\caption{Top left: An example of an orthodiagonal map with the bipartation of the vertices. 
Top right: The topological boundary of the map decomposed into four paths: $S_1, S_2$ (solid) and $T_1,T_2$ (dashed).
Bottom left: The primal graph $G^\bullet$ (black/red vertices and solid edges) with $\partial V^\bullet$ shown in red and  the dual graph $G^\circ$ (white vertices and dashed edges).
Bottom right: Finite volume interpretation of the edge weights assigned to an orthodiagonal map.} \label{orthomap}
\end{figure}

\begin{defn}\label{def:smooth-quad}
Let $\Omega$ be a simply connected Jordan domain in the plane. We say $\Omega$ is  an {\it analytic quadrilateral}, if $\partial\Om$  can be decomposed into four closed analytic arcs $A, \tau_1, B, \tau_2$, listed in counter-clockwise order, which do not form a cusp at their intersection points.
Recall that
a {\it closed analytic arc} is the image of the interval $[0,1] \times \{0\}$ under a conformal map defined in an open set containing $[0,1] \times \{0\}$.  
Moreover, if $F : \mathbb{D}:=\{|z|<1\} \to \Omega$ is a conformal map, it will extend as a homeomorphism $F : \overline{\mathbb{D}} \to \overline{\Omega}$, by Carath\'eodory's Theorem.
Then, $\partial \Omega$ has a {\it cusp} at $F(e^{it})$, if
$$\lim_{s \to t^+} \arg [F(e^{is}) - F(e^{it})] - \lim_{s \to t^-} \arg [F(e^{is}) - F(e^{it})] = 0 \text{ mod}(2\pi).$$
\end{defn}
We wish to approximate an analytic quadrilateral with an orthodiagonal map.  To define this, we must specify how we will measure distances.
For $U \subset \mathbb{C}$, the notation dist$_{U}(x, y)$ means the minimum length of all paths between $x$ and $y$ that remain in $U$.  
The notation dist$_U(x, P) = \inf_{y \in P} \dist_U(x,y)$.
When $U=\mathbb{C}$, we omit the subscript and just write dist.
Finally, the notation dist$_{\hat{G}}(S_1, A) < \delta$ means that  dist$_{\hat{G}}(x, A) <\delta$ for all $x \in S_1$ 
       and dist$_{\hat{G}}(x, S_1) <\delta$ for all $x \in A$.
(i.e. for all $x \in S_1$, there is a point $y \in A$ with dist$_{\hat G}(x, y) < \delta$ and vice versa).  

\begin{defn}\label{def:approx-smooth-quad}
Let $G$ be a finite orthodiagonal map with boundary arcs $S_1, T_1, S_2, T_2$, and let $\Omega$ be an analytic quadrilateral with boundary arcs $A, \tau_1, B, \tau_2$.
We say that  {\it $(G;S_1, T_1, S_2, T_2)$   approximates  $ (\Omega;A, \tau_1, B, \tau_2)$ within distance $\delta$}, when
$\overline{\Omega} \subset \hat G$
and $\dist_{\hat G}(S_1, A)$, $\dist_{\hat G}(S_2, B)$, and $\dist_{\hat G}(T_k, \tau_k)$, for $k=1,2$, are all less than $\delta$.
\end{defn}

\subsection{Linearly connected  and John domains} \label{LinearlyConnected}

A simply connected domain $\Omega$ is {\it linearly connected}  if there exists $K>0$ so that any two points $z, w \in \Omega$ can be connected by a curve $\gamma \subset \Omega$ with diam$(\gamma) \leq K |z-w|$. 
On the other hand, a bounded simply connected domain $\Omega$ is a {\it John domain}  if there exists $J>0$ so that every crosscut $\kappa$ of $\Omega$ satisfies $\diam(H) \leq J \diam(\kappa)$, where $H$ is one of the two components of $\Omega \setminus \kappa$.
The inner domain of a piecewise smooth Jordan curve is linearly connected if and only if it has no inward-pointing cusps, and it is a John domain if and only if there are no outward-pointing cusps.
(See Chapter 5 of \cite{pommerenke1992}.)
Thus an analytic quadrilateral $\Omega$, as defined in Definition \ref{def:smooth-quad}, is a linearly connected John domain.  This geometric control gives us the H\"older continuity of some relevant functions.
Note that all the domains we consider are Jordan domains.  Therefore by Carath\'eodory's Theorem, all the associated conformal maps extend as homeomorphisms of the closures, and in particular they are well-defined on the boundary.

\begin{lemma} \label{Holdercontinuity}
Let $\Omega_1, \Omega_2$ be two Jordan domains with $\Omega_1$ linearly connected and $\Omega_2$ a John domain.
Let $\Phi : \Omega_1 \to \Omega_2$ be a conformal map.
Then there exists $a >0$ and $C>0$ so that 
$$ | \Phi(z) - \Phi(w) | \leq C |z-w|^{a} \;\;\; \text{ for all } z, w \in \overline{\Omega}_1.$$

\end{lemma}

\begin{proof}

Let $F_k : \mathbb{D} \to \Omega_k$ be conformal so that $\Phi = F_2 \circ F_1^{-1}$.
Recall that every John domain is a H\"older domain, 
which by definition means that $F_2$ is H\"older continuous on $\overline{\mathbb{D}}$.
Since $\Omega_1$ is a linearly connected domain,
Theorem 5.7 of \cite{pommerenke1992} implies that $F_1$ satisfies a lower H\"older condition with exponent $b<2$ on $\partial \mathbb{D}$, that is,
$$  |F_1(z) - F_1(w) | \geq C |z-w|^{b}$$
for $z,w \in \partial \mathbb{D}$.
This is equivalent to a H\"older condition on $F_1^{-1}$ with exponent $1/b > 1/2$ on $\partial \Omega_1$.
Hence there exists $a>0$ so that 
$$ |\Phi(z) - \Phi(w) | \leq C |z-w|^{a} \,\,\, \text{  for } z,w \in \partial \Omega_1.$$
Then Theorem 1 in \cite{hinkkanen1988} applied to $\Re \Phi$ and $\Im \Phi$ proves the result.

\end{proof}

In our work, we will need to extend some of our functions over a boundary arc.  We accomplish this in the following result.

\begin{lemma}\label{reflection}
Let $\Omega$ be an analytic quadrilateral with boundary arcs $A, \tau_1, B, \tau_2$, as in Definition \ref{def:smooth-quad}.
Let $\psi$ be a conformal map of $\Omega$ onto the interior of the rectangle $R=[0,1] \times [0,m]$ taking the arcs $A$ and $B$ to the vertical sides of the rectangle.
Let  
$$P_\lambda = \{ z \, : \, \dist(z, \overline{\Omega}) \leq \lambda \text{ and } \dist(z, A\cup B) \geq \lambda^{1/4} \}.$$
Then  the map $\psi$ can be extended to be conformal on a domain $\mathcal{O} \supset \Omega$ such that 
 for $\lambda$ small enough (depending on the analytic quadrilateral $\Omega$)  
 we have that $\mathcal{O}$ contains $P_\lambda$ and $\dist(P_\lambda, \partial \mathcal{O}) \geq \frac{5}{16}\l^{1/4}$.  (See the right panel in Figure \ref{SubOrthoPic} for an illustration.)
Further, there exist $a, b, c, C>0$ so that  the extended map $\psi$ satisfies 
\begin{equation}\label{hcont}
c |z-w|^b \le |\psi(z) - \psi(w)| \leq C |z-w|^{a} \,\,\, \text{ for all } z,w \in \overline{\mathcal{O}}.
\end{equation}
\end{lemma}

\begin{proof}
By Schwarz reflection, we may
extend $\psi $ across the arcs $\tau_1$ and $\tau_2$.
More precisely, recall from Definition \ref{def:smooth-quad}, that since  $\tau_k$ is an analytic arc, it is the image of the interval $[0,1] \times \{0\}$ under a conformal map $F_k$ which is defined in a domain $\mathcal{D}_k$ containing $[0,1] \times \{0\}$.  
Note that $\phi_k= \psi\circ F_k$ is defined in the domain $\mathcal{D}_k\cap F_k^{-1}(\Omega)$ and extends continuously to $[0,1] \times \{0\}$ mapping it to either the top or the bottom boundary arc of $R$ (i.e.~$[0,1] \times \{0\}$ or $[0,1] \times \{m\}$).
Hence by Schwarz reflection, we can extend $\phi_k$ over $(0,1) \times \{0\}$, and from this we obtain an extension of $\psi$, which is equal to $\phi_k \circ F_k^{-1}$ near $\tau_k$.  
In particular, we have extended 
 $\psi$ to be conformal in an open set containing $\Omega$ and the relative interior of $\tau_k$.  
 
 In a moment we will specify the domain $\mathcal{O}$ and show that it satisfies the desired list of properties, but 
 before getting to the details, we want to mention the key idea.  The proof relies on the fact that  the arcs $A, B$ meet $\tau_1, \tau_2$ non-tangentially.  This means that under the reflection, we can extend to a domain whose boundary is not tangent with $\tau_1, \tau_2$.

 To get started, we take a look at the ``corners" of $\Omega$.
  Consider the corner point $w=A\cap \tau_1$.
 By Definition \ref{def:smooth-quad} the arcs $A$ and $\tau_1$ are not tangent at $w$.
Then,  the interior angle $\theta$ of $\Omega$ at $w$ is in $(0, 2\pi)$. 
By conformality, $\theta$ is also the angle from $[0,1] \times \{0\}$ to $F_1^{-1}(A)$ at $F_1^{-1}(w) = 0$.
First, assume that $\theta < \pi$.  Then, under the Schwarz reflection, $\phi_1$ extends to a domain that has interior angle $2 \theta$ at 0.
Under conformality again, $\psi$ extends to a domain with interior angle $2\theta$ at $w$.
If $\theta \in [\pi, 2\pi)$, then $\phi_1$ extends to a domain that has interior angle $2 \pi$ at 0
and hence $\psi$ extends to a domain with interior angle $2\pi$ at w.

The desired domain $\mathcal{O}$ will be the interior domain of a Jordan curve, and we will construct this curve in stages.  First, we note that the arcs $A$ and $B$ will be part of the boundary.  It remains to specify two connecting boundary arcs, $\eta_1, \eta_2$.
Next we will specify $\eta_1$ locally near the corner $w$. By rotating if necessary, we may assume $\arg(z-w) \to 0$ as $z$ approaches $w$ along $\tau_1$.  
Let $D_{r}(w)$ be the disc of radius $r$ centered at $w$.
Then for $\rho >0$, there exists $r>0$ so that $A \cap D_r(w)$ is in the sector 
$\{ w+se^{it} \, : \, s \geq 0, \, t \in  (\theta - \rho, \theta + \rho) \}$
and $\tau_1 \cap D_r(w)$ is in the sector
$\{ w+se^{it} \, : \, s \geq 0, \, t \in  (- \rho, \rho) \}$.
When $\theta < \pi$, we define $\eta_1$ in $D_r(w)$ to be 
the segment $\{se^{i (-\theta+ \rho)} \, : s \in [0,r] \}$.  
By taking $\rho$ small enough, we have $\dist(z, \eta_1 \cap D_r(w)) \geq \frac{1}{2} \dist(z, A)$ for $z \in \tau_1 \cap D_r(w) $.
If $\theta \in [\pi, 2\pi)$, we take $\eta_1$ in $D_r(w)$ to be the segment $\{se^{i \frac{1}{2}(\theta-2\pi)} \, : s \in [0,r] \}$. 
As in the other case, we may again take $\rho$ small enough so that $\dist(z, \eta_1 \cap D_r(w)) \geq \frac{1}{2} \dist(z, A)$ for $z \in \tau_1 \cap D_r(w)$.

To finish defining $\partial \mathcal{O}$, we follow the same procedure at each of the four corners.   
To complete $\eta_k$, we connect the arc at the corner $A \cap \tau_k$ to the arc at the corner $B \cap \tau_k$ with an analytic arc that lies  in the extended domain of $\psi$ outside of $\overline{\Omega}$, making sure that there are no cusps formed at the connection points.  In this way, we have created the domain $\mathcal{O}$ containing $\Omega$ so that its boundary  is a piecewise smooth Jordan curve with no cusps and $\psi$ is defined on $\mathcal{O}$.


Let $w_1, w_2, w_3, w_4$ be the four corner points of $\Omega$ and let $r_k$ be the radius identified in the construction of $\partial \mathcal{O}$ at $w_k$.
Set  $D= \cup_{k=1}^4 D_{r_k}(w_k) $.
Then 
by taking $\lambda$ small enough, 
we may assume that 
\begin{equation} \label{gammaAssumption}
 \text{dist} \left(  x, \, \partial \mathcal{O} \right) \geq\lambda^{1/4} \;\; \text{for all } x \in \left(\tau_1 \cup \tau_2 \right) \setminus D,
\end{equation}
and
\begin{equation} \label{gammaAssumption2}
 \text{dist} \left(  x, \, \partial \mathcal{O} \setminus D \right) \geq\lambda^{1/4} \;\; \text{for all } x \in \left(\tau_1 \cup \tau_2 \right) \cap D.
\end{equation}
We also assume that $\lambda \leq 1/16$, which implies that $\lambda \leq \frac{1}{8} \lambda^{1/4}$.

We wish to show that for all $ p \in P_\lambda$
\begin{equation} \label{dist2bdry}
\text{dist}(p, \partial \mathcal{O}) \geq \frac{5}{16} \lambda^{1/4}.
\end{equation}
Let $p \in  P_\l$.
If the closest point to $p$ in $\partial \mathcal{O}$ is in $A \cup B$, then 
\eqref{dist2bdry} holds since dist$(p, A \cup B) \geq \lambda^{1/4}$.
So we will assume that the closest point to $p$ in $\partial \mathcal{O}$ is not contained in $A \cup B$.
First consider the case that $p \in \overline{\Omega}$.
The line segment connecting $p$ to a closest point in $\partial \mathcal{O}$ must cross out of $\Omega$ at some point $z_0 \in  \tau_1 \cup \tau_2$.  
We have two possible cases: 
    (1) $z_0 \in \left( \tau_1 \cup \tau_2 \right)  \setminus D$
    or    (2) $z_0 \in \left( \tau_1 \cup \tau_2 \right) \cap D $.
In case (1), we use \eqref{gammaAssumption} to obtain
 $$\text{dist}(p, \partial\mathcal{O}) \geq  \text{dist}(z_0, \partial\mathcal{O}) \geq \lambda^{1/4}.$$
In case (2), we may assume that the closest point to $z_0$ in $\partial \mathcal{O}$ is in $D$ (for otherwise, we are done by \eqref{gammaAssumption2}.)  Recall that by our construction $\dist(z_0, \partial \mathcal{O} \cap D) \geq \frac{1}{2} \dist(z_0, A \cup B)$.
Therefore
\begin{align*}
\text{dist}(p, \partial\mathcal{O})  &= \dist(p, z_0) +  \text{dist}(z_0, \partial\mathcal{O}) \\
     &\geq \text{dist}(p,z_0) + \frac{1}{2} \text{dist}(z_0, A \cup B)    \\
     &\geq \frac{1}{2} \text{dist}(p, A \cup B)   \\
     &\geq \frac{1}{2} \lambda^{1/4}. 
\end{align*}

    Next we assume that $p \notin \Omega$.  Then there is      
some $z_0 \in \tau_1 \cup \tau_2$ 
so that $\text{dist}(p, z_0)  \leq \l$.
If $z_0 \notin D$ or if $z_0 \in D$ but the closest point to $z_0$ in $\partial \mathcal{O}$ is not in $D$, then by  \eqref{gammaAssumption} or  \eqref{gammaAssumption2}
 $$\text{dist}(p, \partial\mathcal{O}) \geq  \text{dist}(z_0, \partial\mathcal{O})  - \dist(p, z_0) \geq \lambda^{1/4} - \l \geq \frac{7}{8} \lambda^{1/4},$$  
where we used that $\lambda \leq \frac{1}{8} \lambda^{1/4}$. 
Assume $z_0 \in D$ and the closest point to $z_0$ in $\partial \mathcal{O}$ is in $D$.
Then 
$$ \text{dist}(z_0, \partial \mathcal{O}) \geq \frac{1}{2} \text{dist}(z_0, A \cup B)
\geq \frac{1}{2} \left[ \text{dist}(p, A \cup B) - \text{dist}(p, z_0) \right]
\geq \frac{1}{2} \left( \lambda^{1/4} - \l \right) \geq \frac{7}{16} \lambda^{1/4}, $$
and so
$$ \text{dist}(p, \partial \mathcal{O}) \geq \text{dist}(z_0, \partial \mathcal{O}) - \text{dist}(p, z_0) \geq \frac{7}{16} \lambda^{1/4} - \l \geq \frac{5}{16} \lambda^{1/4}.$$
 This establishes \eqref{dist2bdry}.

By our construction above, $\mathcal{O}$ is the inner domain of a piece-wise smooth Jordan curve with no cusps, and so  $\mathcal{O}$ is a linearly connected John domain.  Similarly, the Schwarz reflection construction also gives that $\psi(\mathcal{O})$ is a linearly connected John domain. 
Then \eqref{hcont} follows from Lemma \ref{Holdercontinuity} applied to the extended maps $\psi$ and $\psi^{-1}$.
\end{proof}

\section{Non-crossing minimal subfamilies}\label{non-crossing}

In this section we assume that the finite network $G=(V, E, \sigma)$ is a {\it plane} graph (i.e.~$G$ is embedded in the plane with no edges crossing.)
Further, we assume that there are nonempty sets of  boundary vertices $A$ and $B$  defined as follows:
   $A = S_1 \cap V$ and $B= S_2 \cap V$, 
  where $S_1, S_2$ are two disjoint arcs from a Jordan curve $\mathcal{C}$ that encircles $G$ in the closed outer face of $G$.
We will refer to $G=(V, E, \sigma, A, B)$ as a {\it plane network with boundary}. 
Note that when we specialize to orthodiagonal maps later, we take the primal graph as our network and the boundary of the outer face of the map as the Jordan curve. 

Let $F$ be the conformal map from the interior of $\mathcal{C}$ to a rectangle $(0,L) \times (0,1)$ that takes $S_1$ to the left side $\{0\}\times [0,1]$ and $S_2$ to the right side $\{L\} \times [0,1]$.  
Given a path $\gamma$ from $A$ to $B$, the vertical strip $\{ z \, : \, \text{Re}(z) \in [0, L] \}$ is split by $F(\gamma)$ into two connected components.  Let $U_\gamma$ be the connected component containing $-i$ (i.e. the component ``under" $F(\gamma)$).
Then for a subgraph $H$ of $G$ and a path $\gamma$ in $H$ from $A$ to $B$, 
we say that $\gamma$ is the {\it top path} in $H$ from $A$ to $B$ if
$F(H) \subset \overline{U_\gamma}$. 
We say that two paths from $A$ to $B$ are {\it non-crossing} when one path is the top path in the subgraph induced by these two paths.
A family of paths is {\it non-crossing} when every pair of paths in the family is non-crossing.
We note that two non-crossing paths are allowed to share vertices and edges.
Also we say that an edge $e$ is {\it below} a path $\ga$ if $\ga$ is the top path of the graph induced by $\{e\}\cup\ga$.
\begin{remark}\label{rem:cross}
Two paths $\ga_1$ and $\ga_2$ are crossing if one can find $e_1\in\ga_1\setminus \ga_2$ and $e_2\in\ga_2\setminus\ga_1$, so that $e_1$ is not below $\ga_2$ and $e_2$ is not below $\ga_1$.
\end{remark}

\subsection{Non-crossing minimal subfamily algorithm}\label{ssec:noncross-minalg}
To establish the existence and uniqueness of the non-crossing minimal subfamily in Theorem \ref{existence}, we will use Algorithm \ref{alg:non-cross-min-subfam} below.  We summarize the properties of Algorithm \ref{alg:non-cross-min-subfam} in the following statement.
\begin{prop}\label{prop:non-cross-min-subfam}
 Given a plane network with boundary $G = (V, E, \sigma, A, B)$, the family $\Ga(A,B)$ of paths from $A$ to $B$ in $G$, and the extremal density $\rho^*$ for $\Mod_{2,\si}\Ga(A,B)$,
Algorithm \ref{alg:non-cross-min-subfam} outputs a subfamily $\Ga\subset \Ga(A,B)$ and a set of positive weights $m\in\R^\Ga_{>0}$.
Then, 
\bi
\item[(i)] the sum $M:=\sum_{\ga\in\Ga} m(\ga)$ is equal to  $\Mod_{2,\si}\Gamma(A,B)$;
\item[(ii)] $\mu^*:=m/M$ is a pmf on $\Ga$ that is optimal for $\Mod_{2,\si}\Ga(A,B)$;
\item[(iii)] $\Ga$ is a minimal subfamily of non-crossing paths for $\Ga(A,B)$.
\ei
In particular,  $\mu^*$ is the unique optimal pmf on $\Ga$ that is guaranteed by Proposition \ref{prop:pmf-minimal}.  

\end{prop}

\begin{figure}[H]
\begin{minipage}{0.05\textwidth}
\hspace*{1pt}
\end{minipage}
\begin{minipage}{0.9\textwidth}
\begin{algorithm}[H]
\smallskip

\noindent {\it Input:} 
a plane network with boundary $ G = (V, E, \sigma, A, B)$ 
and the corresponding extremal density  $\rho^*$ for $\Mod_{2,\si}\Ga(A,B)$, where $\Ga(A,B)$ is the family of paths from $A$ to $B$ in $G$.

\smallskip

\noindent {\it Initialization:}  set $r(e) = \sigma(e) \rho^*(e)$ for each edge $e \in E$.

\smallskip

\noindent {\it Iteration:}
\begin{enumerate}
\item Remove any edges $e$ for which $r(e) = 0$ from $G$ (call such edges {\it zero edges}).
\item If there is one, consider the top path $\gamma$ in $G$ from $A$ to $B$ and add it to a growing family $\Gamma$; if not, terminate.
\item For the top path $\gamma$ identified in step 2, set $m(\gamma) =\min_{e \in \gamma} r(e)$.
\item For each edge $e$ in the top path $\gamma$ identified in step 2, update $r(e)$ to be  $r(e) - m(\gamma)$.
\item Repeat.
\end{enumerate}

\noindent {\it Output:} path family $\Gamma$ and the positive weights $m\in\R_{>0}^\Ga$.

\caption{Non-crossing Minimal Subfamily Algorithm}
  \label{alg:non-cross-min-subfam}
\end{algorithm}
\end{minipage}
\end{figure}

We postpone the proofs of Proposition \ref{prop:non-cross-min-subfam} and Theorem \ref{existence} to the end of the section, in order to first establish some preliminary lemmas.

To get started, we show that we can direct the non-zero edges of $G$ to obtain a directed acyclic graph (once we remove any edges with $\rho^*(e) = 0$).
Informally, the edges are directed by the current flow defined by the solution of the discrete Dirichlet problem.
\begin{lemma} \label{dag}
Let $G=(V, E, \sigma, A, B)$ be a plane network with boundary.
Let $\rho^*$ be the unique extremal density for the family of paths from $A$ to $B$ in $G$, 
and let  $E_0 = \{ e\in E \, : \, \rho^*(e) = 0 \}$. 
Then there exists an orientation of all edges in $ E \setminus E_0$ so that the following hold:
\begin{enumerate}
\item[(i)] At any vertex $v$  not in $A\cup B$, the sum of $\sigma(e)\rho^*(e)$ over the edges $e$ leading into $v$ equals the sum of $\sigma(e)\rho^*(e)$ over the edges $e$ leading out of $v$.

\item[(ii)] All edges in $E \setminus E_0$ with exactly one vertex in $A$ (resp. B) are oriented away from $A$ (resp. towards $B$).

\item[(iii)] Let $\gamma$ be a path in  $G\setminus E_0$ from $A$ to $B$.
Then $\gamma$ is a directed path from $A$ to $B$ if and only if $\gamma$ has $\rho^*$-length 1, i.e. $\ell_{\rho^*}(\gamma) = 1$.

\item[(iv)] $G \setminus E_0$ is a directed acyclic graph.
\end{enumerate}
\end{lemma}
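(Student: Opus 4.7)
The plan is to exploit the close connection between the extremal density and the discrete harmonic potential from Section \ref{sec:discrete-harm-fnct}. Let $h:V\to\R$ be the unique solution of the discrete Dirichlet problem on $V\setminus(A\cup B)$ with $h|_A=0$ and $h|_B=1$, and let $f(vw)=\si(vw)(h(w)-h(v))$ for $vw\in\vv{E}$ be the associated current flow. By (\ref{rhofromharmonic}), $\rho^*(vw)=|h(w)-h(v)|$, so $E_0$ consists precisely of those edges whose endpoints carry the same $h$-value. We then orient each remaining edge $e=vw\in E\setminus E_0$ from the endpoint of smaller $h$-value toward the endpoint of larger $h$-value; equivalently, each directed edge points in the direction of positive current flow, and $\si(e)\rho^*(e)=|f(e)|$ is the magnitude of the (now nonnegative) flow along that directed edge.

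With this orientation in hand, (i) and (ii) are essentially bookkeeping. At any interior vertex $v\notin A\cup B$, the discrete harmonic equation (\ref{eq:si-harmonic}) is exactly Kirchhoff's node law for $f$; since $f$ is nonnegative on every directed edge under our orientation, the sum of $\si(e)\rho^*(e)$ over incoming edges equals the sum over outgoing edges, proving (i). For (ii), the discrete maximum principle yields $0\le h\le 1$ throughout $V$, so a non-zero edge incident to $A$ (where $h=0$) must have its other endpoint at a strictly positive $h$-value and is therefore oriented outward from $A$; the analogous argument handles $B$.

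For (iii), any path $\ga=(v_0,\dots,v_k)$ in $G\setminus E_0$ from $A$ to $B$ satisfies
\[
\ell_{\rho^*}(\ga) \;=\; \sum_{j=1}^k |h(v_j)-h(v_{j-1})| \;\geq\; \Big|\sum_{j=1}^k \bigl(h(v_j)-h(v_{j-1})\bigr)\Big| \;=\; |h(v_k)-h(v_0)| \;=\; 1,
\]
with equality exactly when the increments $h(v_j)-h(v_{j-1})$ all share a common sign; since $h(v_0)=0<1=h(v_k)$, that common sign must be positive, which is precisely the statement that $\ga$ is a directed path in our orientation. Admissibility of $\rho^*$ for $\Ga(A,B)$ already forces $\ell_{\rho^*}(\ga)\ge 1$, so $\ell_{\rho^*}(\ga)=1$ characterizes the directed $A$-to-$B$ paths exactly, giving (iii). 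Finally, (iv) is immediate: a directed cycle in $G\setminus E_0$ would force $h$ to increase strictly around the cycle, which is impossible. No serious obstacle arises; the only subtle point is recognizing that it is admissibility combined with the triangle-inequality equality case that converts the metric statement $\ell_{\rho^*}(\ga)=1$ into the purely combinatorial statement that $\ga$ is directed, and that planarity is not actually needed anywhere in (i)--(iv).
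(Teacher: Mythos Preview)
Your proof is correct and follows essentially the same approach as the paper: define the orientation via the discrete harmonic potential $h$ (so that edges point toward increasing $h$), use harmonicity/Kirchhoff for (i), the discrete maximum principle for (ii), the telescoping sum with the triangle inequality for (iii), and the strictly-increasing-around-a-cycle contradiction for (iv). Your remark that planarity is not actually used in this lemma is also correct.
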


\begin{proof}
Let  $h$ be the solution to the discrete Dirichlet problem on $V \setminus (A \cup B)$ with $h |_A = 0$ and $h |_B = 1$,
and recall, from (\ref{rhofromharmonic}), that $\rho^*(vw) = |h(w) - h(v)|$.
For each edge $e$ incident to vertices $v,w$ with $\rho^*(e) \neq 0$, we orient $e$ from $v$ to $w$ when $h(w) -h(v) >0$.

Let $f$ be  the corresponding current flow: $ f(vw) = \sigma(vw) \left[ h(w) - h(v) \right]$ for  $vw \in \vv{E},$ which we extend to $V\times V$, by setting $f$ to be zero on pairs of nodes not connected by an edge.  
For $v \in V \setminus (A \cup B)$,  let $E_v^{\text{in}}$ be the edges directed into $v$ and $E_v^{\text{out}}$ be the edges directed out of $v$.
Then,  by harmonicity of $h$ at $v$,
\begin{align*}
0 & = \sum_{w: \, vw \in E} \si(vw)[h(w)-h(v)]= \sum_{w: \, vw \in E} f(vw)\\& = \sum_{e \in E_v^\text{out}}\sigma(e) \rho^*(e) - \sum_{e \in E_v^\text{in}}\sigma(e) \rho^*(e),
\end{align*}
which proves (i).

Statement (ii) follows from the discrete maximum principle, which implies that $0\le h\le 1$.
To show (iii), let $\gamma$ be a path from $A$ to $B$ in $G \setminus E_0$ given by vertices $v_0, v_1, \cdots, v_k$.  Then
$$  \ell_{\rho^*}(\gamma) = 
 \sum_{j =1}^k \left| h(v_j) - h(v_{j-1}) \right|  \geq 
\sum_{j =1}^k (h(v_j) - h(v_{j-1})) = 1.$$
There is equality in the above equation if and only if $\gamma$ is a directed path from $A$ to $B$.

To show that $G \setminus E_0$ has no cycles (respecting the edge directions), we will
assume for the sake of contradiction that there is a cycle of directed edges $e_1, e_2, \cdots , e_k$ with each $e_j$ directed from $v_{j-1}$ to $v_j$. 
Then 
$$0 < \sum_{j=1}^k \rho^*(e_j) =  \sum_{j=1}^k (h(v_j) -h(v_{j-1})) = h(v_k) -h(v_0) = 0,$$
 since $v_k = v_0$. This is a contradiction.
\end{proof}

\begin{lemma}\label{toppath}
Let $G=(V, E, \sigma, A, B)$ be a plane network with boundary, and let $E\setminus E_0$ be oriented as in Lemma \ref{dag}.
Assume that $H$ is a connected subgraph of $G\setminus E_0$ so that all the source vertices of $H$ are in $A$ and all the sink vertices of $H$ are in $B$.
Then the following hold:
\begin{enumerate}
\item[(i)] The top path of $H$ is a directed path from $A$ to $B$.
\item[(ii)] Let $\tilde \Gamma$ be a family of non-crossing paths in $H$ that does not contain the top path of $H$.  Then there is an edge of the top path that is not contained in any path in $\tilde \Gamma$.
\end{enumerate}
\end{lemma}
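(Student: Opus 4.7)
I would prove (ii) first, since it uses only the planar structure of the top path, and then (i), which additionally uses the DAG orientation from Lemma~\ref{dag}. Write $\gamma_{top}$ for the top path of $H$.

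\textbf{Plan for (ii).} Let $\gamma^{*}$ be the topmost path in $\tilde\Gamma$, well-defined since pairwise non-crossing paths are totally ordered by the ``above'' relation. Since $\gamma_{top}\notin\tilde\Gamma$, $\gamma^{*}\neq\gamma_{top}$, so $\gamma^{*}$ lies strictly below $\gamma_{top}$ somewhere. If $\gamma^{*}$ starts at an $A$-vertex strictly below $v_0$ (the topmost $A$-vertex of $H$, where $\gamma_{top}$ begins), then $v_0v_1\notin\gamma^{*}$; any $\gamma'\in\tilde\Gamma$ containing $v_0v_1$ must begin at $v_0$, strictly above $\gamma^{*}$'s start, forcing $\gamma'$ above $\gamma^{*}$ and contradicting the topmost status of $\gamma^{*}$. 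Otherwise $\gamma^{*}$ and $\gamma_{top}$ both start at $v_0$ and agree on an initial segment up to a first divergence vertex $v_j$, where $\gamma^{*}$ takes some below-edge $v_jv_j'$ with $v_j'\neq v_{j+1}$. Suppose $v_jv_{j+1}\in\gamma'\in\tilde\Gamma$. In $H$ the two $\gamma_{top}$-edges $v_{j-1}v_j$ and $v_jv_{j+1}$ are topmost in the cyclic order at $v_j$ (they bound the upper outer face of $H$), so all other $H$-edges at $v_j$ lie in the bottom cyclic arc between them. A short case analysis on how $\gamma'$ enters $v_j$---either via $v_{j-1}v_j$, or via another below-edge $uv_j$, in which case non-crossing with $\gamma^{*}$ forces $u$ into the cyclic arc from $v_j'$ to $v_{j+1}$---shows that in either subcase $\gamma'$ lies above $\gamma^{*}$ at $v_j$, again contradicting topmostness. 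Hence $v_jv_{j+1}$ lies in no path of $\tilde\Gamma$.

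\textbf{Plan for (i).} Induct along $\gamma_{top}=v_0v_1\cdots v_k$. The base case is immediate: since $v_0\in A$ is a source of $H$ by Lemma~\ref{dag}(ii), $v_0v_1$ is oriented $v_0\to v_1$. For the inductive step I would use planar duality: on the faces of a suitable extension of $G\setminus E_0$ (splitting the outer face into ``top'' and ``bottom'' parts via ghost arcs along $\mathcal C$), define the stream potential $g$ via $f(e)=g(F_L)-g(F_R)$ across each directed edge $e$. Kirchhoff's law makes $g$ well-defined, and $g$ is discrete-harmonic on the dual graph with weights $1/\sigma$; by the dual maximum principle $g$ is maximized on the top outer face. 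A backward edge on $\gamma_{top}$ would, via the stream relation, place a face of strictly larger $g$-value immediately below it than immediately above, contradicting the monotonicity of $g$ toward the top outer face.

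\textbf{Main obstacle.} Part (ii) reduces to elementary planar combinatorics at the divergence vertex once the cyclic-order picture around $v_j$ is set up; this is where the bulk of the detailed verification lives. The subtler point is (i) when $H\subsetneq G\setminus E_0$, since the face of $H$ above an edge of $\gamma_{top}$ may decompose into several faces of $G\setminus E_0$; the stream-potential monotonicity must then be propagated across this decomposition to reach the top outer face of $G\setminus E_0$.
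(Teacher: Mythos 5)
Your plan for part (i) has a genuine unresolved gap, which you yourself flag: the dual stream potential $g$ is defined on faces of (an extension of) $G\setminus E_0$, but the ``top'' of the top path of $H$ need not be a single face of $G\setminus E_0$, and the monotonicity of $g$ toward the top outer face is exactly the nontrivial content (it is essentially the rectangle-tiling statement proved much later in the paper). Moreover, the dual maximum principle only places the extremes of $g$ on the dual boundary; you still have to argue why the maximum sits on the \emph{top} outer face rather than the bottom one. The paper's argument for (i) is far lighter and avoids all of this: if the top path $\gamma$ of $H$ had a backward-oriented edge $e$, then since $H$ is acyclic with all sources in $A$ and all sinks in $B$, the edge $e$ lies on some \emph{directed} path of $H$ from $A$ to $B$; that directed path traverses $e$ opposite to $\gamma$, lies (weakly) below $\gamma$ by the top-path property, and by planarity its concatenation with a piece of $\gamma$ closes up into a directed cycle, contradicting Lemma~\ref{dag}(iv). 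You should replace the stream-potential argument by this one, or else supply the missing propagation of monotonicity across the face decomposition.

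For part (ii) your approach is a cousin of the paper's but rests on two unproven auxiliary claims: (a) that the non-crossing relation totally orders $\tilde\Gamma$ (i.e.\ transitivity of ``above''), which you assert in order to extract a topmost $\gamma^{*}$; and (b) that ``lies above $\gamma^{*}$ at $v_j$'' implies $\gamma'$ is globally above $\gamma^{*}$, which is the local-to-global step your contradiction needs. (Also, a path of $\Gamma(A,B)$ containing $v_0v_1$ need not \emph{begin} at $v_0$; it could pass through $v_0$ in its interior.) The paper sidesteps all of this: assume every edge of the top path $\gamma$ is covered by $\tilde\Gamma$, take $\tilde\gamma\in\tilde\Gamma$ with the longest common initial segment with $\gamma$, let $e_0$ be the next edge of $\gamma$; then $\tilde\gamma$ has an edge below $\gamma$ after the split, and any other path of $\tilde\Gamma$ containing $e_0$ must (by maximality of the common prefix) have an edge below $\tilde\gamma$, so the two cross by Remark~\ref{rem:cross}. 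Either finish your route by proving (a) and (b) as lemmas, or adopt the maximal-common-prefix argument, which needs neither.
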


\begin{proof}
Let $\gamma$ be the top path of $H$.
Suppose $\gamma$ is not a directed path from $A$ to $B$, and let $e$ be an edge in $\gamma$ that is oriented ``to the left".  
Since all the sources (resp. sinks) are in $A$ (resp. $B$), there must be some directed path in $H$ from $A$ to $B$ that contains $e$.  However, the planarity of $H$ and the fact that $\gamma$ is the top path imply that $H$ must contain a directed cycle.  
This contradicts the fact that $H$ is a directed acyclic graph from Lemma \ref{dag} (iv). 
Therefore, $\gamma$ is a directed path from $A$ to $B$, which establishes (i).

We will prove (ii) by contradiction.  Assume that every edge of $\gamma$ is contained in some path in $\tilde \Gamma$.
Let $\tilde \gamma$ be a path from $\tilde \Gamma$ that contains the longest initial segment in common with $\gamma$.  
Let $e_1$ be the edge in $\gamma$ immediately after this initial segment (such an edge must exist since $\tilde \gamma \neq \gamma$).  
Then $e_1$ is not below $\tilde \gamma$.
Now there must be some other path $\tilde \gamma'$ in $\tilde \Gamma$ that contains  $e_1$, and by the choice of $\tilde \gamma$, the initial segment of $\tilde \gamma$ has an edge $e_2$ not in $\tilde \gamma'$.  Then $e_2$ is not below $\tilde \gamma'$.  
By Remark \ref{rem:cross}, this implies that $\tilde \Gamma$ contains two paths that cross, which is not possible.  

\end{proof}

\begin{lemma}\label{components}
Let $G=(V, E, \sigma, A, B)$ be a plane network with boundary, and let $E\setminus E_0$ be oriented as in Lemma \ref{dag}.
After each completion of step 1 in the Non-crossing Minimal Subfamily Algorithm \ref{alg:non-cross-min-subfam},
 the connected components of the resulting graph are  isolated vertices or subgraphs with all their source vertices in $A$ and all their sink vertices  in $B$.
\end{lemma}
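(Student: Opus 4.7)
My plan is to induct on the number of iterations and rely on a single flow-balance invariant, namely that after each execution of step 1, at every interior vertex $v \notin A\cup B$ one has
\[
   \sum_{e \in E_v^{\text{in}}} r(e) \;=\; \sum_{e \in E_v^{\text{out}}} r(e),
\]
where incoming/outgoing are measured in the orientation provided by Lemma~\ref{dag}. Lemma~\ref{dag}(i) gives this equality for the initial values $r(e) = \si(e)\rho^*(e)$, which handles the first pass through step~1 (the deleted edges are precisely $E_0$).

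For the inductive step, assume the invariant holds before a given execution of steps 2--5. Then the current graph (after step~1) falls under the hypothesis of Lemma~\ref{toppath}, so step~2's top path $\ga$ is a directed path from $A$ to $B$. Thus at any interior vertex $v \in \ga$, exactly one edge of $\ga$ enters $v$ and exactly one leaves $v$; subtracting the same quantity $m(\ga)$ from both sides preserves the equality in the displayed invariant. At endpoints of $\ga$ (which lie in $A \cup B$) no balance condition is required. Hence the invariant persists through step~4, and trivially through the next step~1 (removing an edge $e$ only happens when $r(e)=0$, which cannot destroy balance).

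Given the invariant, the conclusion is immediate. Let $C$ be a connected component of the graph left after some execution of step~1, and suppose $C$ contains at least one edge. If $v \in C$ is interior to $A \cup B$ and has any incident edge in $C$, then one side of the balance equation is strictly positive, forcing the other side to be positive as well, so $v$ has both an in-edge and an out-edge in $C$; in particular $v$ is neither a source nor a sink of $C$. Therefore every source of $C$ lies in $A \cup B$, and since by Lemma~\ref{dag}(ii) vertices of $B$ have only incoming edges, sources actually lie in $A$; symmetrically, sinks lie in $B$. The remaining alternative is that $C$ has no edges, in which case it is an isolated vertex, as allowed by the statement.

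The main obstacle I anticipate is a purely bookkeeping one: verifying that the top path always is a directed $A$-to-$B$ path so that Lemma~\ref{toppath}(i) applies at each iteration. This is resolved precisely by running the induction together with Lemma~\ref{toppath}: the invariant from the previous iteration supplies the hypothesis needed to invoke Lemma~\ref{toppath}(i), which in turn guarantees that the subtraction in step~4 is along a directed path and preserves the invariant. No further delicate planar or combinatorial argument seems to be required.
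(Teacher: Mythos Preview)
Your proof is correct and follows essentially the same approach as the paper's: both induct on the iteration count, invoke Lemma~\ref{toppath}(i) (guaranteed by the inductive hypothesis) to ensure each top path is a directed $A$-to-$B$ path, and use the flow balance of Lemma~\ref{dag}(i) at interior vertices to conclude that sources lie in $A$ and sinks in $B$. The only cosmetic difference is that you make the invariant $\sum_{e\in E_v^{\text{in}}} r(e)=\sum_{e\in E_v^{\text{out}}} r(e)$ explicit and argue directly, whereas the paper argues by contradiction, supposing an interior source $v$ survives and computing $\sum_{\gamma\in\Gamma_v} m(\gamma)=\sum_{E_v^{\text{in}}}\sigma\rho^*=\sum_{E_v^{\text{out}}}\sigma\rho^*$ to force all out-edges to be removed as well.
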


\begin{proof}
The first time we complete step 1, we remove all edges with $\rho^*(e) =0$.
Let $H$ be one of the resulting connected components with at least one edge.
Since $H$ is a directed acyclic graph by Lemma \ref{dag}, 
$H$ must contain at least one source vertex  and at least one sink vertex.\footnote{The fact that a connected directed acyclic graph with at least one edge contains a source and sink is straightforward to prove by contradiction.  For instance, suppose there is no sink.  Then every vertex has at least one edge oriented away from the vertex.  Since the graph is acyclic, we can then create a directed path that contains every vertex.  However, we obtain a contradiction since the last vertex in the path has an edge oriented outward, allowing us to create a cycle.  The argument showing there is a source vertex is similar.}
Properties (i) and (ii) of Lemma \ref{dag} imply that
all the source vertices of $H$ must be in $A$ and all the sink vertices of $H$ must be in $B$.

Now we proceed by induction. Suppose that the statement is true for the first $k$ times we complete step 1, and consider a connected component  $H$ obtained after the $(k+1)$th iteration of step 1.
We assume that $H$ has at least one edge, and we will use contradiction to show that all the source vertices are in $A$ and all the sink vertices are  in $B$.
For the sake of simplicity, assume that $H$ contains a source vertex $v$ that is not in $A$ (as a similar argument applies in the other case).
In completing the first $k$ iterations of the algorithm, we have identified the top paths $\gamma_1, \cdots, \gamma_{k}$.
By the induction hypothesis and Lemma \ref{toppath}\,(i), these paths are all directed paths from $A$ to $B$.
Let $\Gamma_v \subset \{ \gamma_1, \cdots, \gamma_k \}$ be the collection of these paths that pass through $v$.
Let $E_v^{\text{in}}$ be the edges directed into $v$ and $E_v^{\text{out}}$ be the edges directed out of $v$.
Then, since all of the edges directed towards $v$ have been removed by the end of the $(k+1)$th iteration of step 1, 
$$\sum_{\gamma \in \Gamma_v} m(\gamma) = \sum_{e \in E_v^\text{in}}\sigma(e) \rho^*(e). 
$$
Namely, an edge $e$ gets removed once the sum of the weights of all the paths in $\Ga_v$ that are using $e$ has exhausted the initial quantity $\si(e)\rho^*(e)$.
Moreover, by Lemma \ref{dag}\,(i),
$$ \sum_{e \in E_v^\text{in}}\sigma(e) \rho^*(e) = \sum_{e \in E_v^\text{out}}\sigma(e) \rho^*(e).$$
However, this means that we have also removed all the edges directed out from $v$, contradicting the fact that $H$ is a connected component larger than an isolated vertex.
\end{proof}

\begin{proof}[Proof of Proposition \ref{prop:non-cross-min-subfam}]
We begin by proving (i).
Let $\Ga\subset\Ga(A,B)$ and $m\in\R_{>0}^{\Ga}$ be, respectively, the subfamily and the set of positive weights that are output by the Non-crossing minimal subfamily Algorithm \ref{alg:non-cross-min-subfam}. The algorithm
must terminate, because we remove at least one edge from the graph during every iteration (except possibly the first one). 
When the algorithm terminates,  there are no edges remaining in the graph by Lemma \ref{components}.  Thus, each edge  $e\in E \setminus E_0$ belongs to at least one path in $\Ga$. Furthermore, 
\begin{equation}\label{eq:mrhostar}
\sum_ {\gamma \in \Gamma} m(\gamma)  \mathbbm{1}_{\{e \in \gamma\}}=\sigma(e) \rho^*(e), \qquad\text{for every $e\in E$.}
\end{equation}
Let $E\setminus E_0$ be oriented as in Lemma \ref{dag}. Then, from Lemmas \ref{components} and  \ref{toppath}\,(i), each path in $\Gamma$ is a directed path from $A$ to $B$.
Therefore,  Lemma \ref{dag}\,(iii) implies that the paths in $\Gamma$ all have $\rho^*$-length equal to $1$. In particular, if we multiply Equation  (\ref{eq:mrhostar}) by $\rho^*(e)$ and sum over $E$, we get
\begin{align*}
\Mod_{2,\si}\Ga(A,B) & = \sum_{e\in E} \si(e)\rho^*(e)^2 = \sum_{e\in E}\rho^*(e)\sum_ {\gamma \in \Gamma} m(\gamma)  \mathbbm{1}_{\{e \in \gamma\}}  & \text{(by Equation (\ref{eq:mrhostar}))}\\
& = \sum_ {\gamma \in \Gamma} m(\gamma) \sum_{e\in E}\rho^*(e) \mathbbm{1}_{\{e \in \gamma\}} =  \sum_ {\gamma \in \Gamma} m(\gamma) \ell_{\rho^*}(\ga) & \text{(switching sums)} \\
&=  \sum_ {\gamma \in \Gamma} m(\gamma). & \text{(because $\ell_{\rho^*}(\ga)=1$)}
\end{align*}
This shows that (i) holds, namely 
\begin{equation}\label{eq:sum-m-gamma}
M:=\sum_ {\gamma \in \Gamma} m(\gamma)=\Mod_{2,\si}\Ga(A,B).
\end{equation}
Next, we prove (ii). Define a pmf $\mu^*\in\cP(\Ga)$ by setting
\begin{equation}\label{eq:mustarm}
\mu^*(\gamma) := \frac{m(\gamma)}{\sum_ {\gamma \in \Gamma} m(\gamma)}, \quad \text{ for all }  \gamma \in \Gamma.
\end{equation}
Then, since $\mu^*=m/M$, for every edge $e\in E$, we have
\begin{align*}
\bP_{\mu^*}(e\in\underline{\ga}) & = \sum_{\ga\in\Ga}\mu^*(\ga)\ones_{\{e\in\ga\}}  = \frac{1}{M}\sum_{\ga\in\Ga}m(\ga)\ones_{\{e\in\ga\}} \\
& = \frac{\si(e)\rho^*(e)}{M} & \text{(by Equation \ref{eq:mrhostar})} \\
& = \frac{\si(e)\rho^*(e)}{\Mod_{2,\si}\Ga(A,B)} & \text{(by Equation \ref{eq:sum-m-gamma})} \\
\end{align*}
This means that $\mu^*$ satisfies (\ref{eq:rhomu}). Hence, by Corollary \ref{cor:prob}, $\mu^*$ is optimal for  $\Mod_{2,\si}\Gamma(A,B)$.
This shows that (ii) holds.

Finally, to prove (iii) we must prove that $\Ga$ is a minimal subfamily of $\Ga(A,B)$. First, we show that $\Mod_{2,\si}(\Gamma)=\Mod_{2,\si}(\Ga(A,B))$. 
To that end, we apply Beurling's Criterion (Theorem \ref{BeurlingCriterion}) to show that  $\Gamma$ is a Beurling subfamily of $\Ga(A,B)$. 
We have already seen that, by Lemma \ref{dag}\,(iii), $\ell_{\rho*}(\ga)=1$ for every $\ga\in\Ga$.
So the first condition for a Beurling subfamily is satisfied.
Next, we check the second condition.  Let $h \in \mathbb{R}^E$ with 
\begin{equation}\label{eq:h-non-neg}
\sum_{e \in \gamma} h(e) \geq 0
\end{equation}
for all $\gamma \in \Gamma$.
We apply this assumption and (\ref{eq:mrhostar}) to obtain the needed condition:
\begin{align*}
\sum_{e \in E} h(e) \rho^*(e) \sigma(e)
   &=  \sum_{e \in E} h(e)  \sum_{\gamma \in \Gamma} m(\gamma)  \mathbbm{1}_{\{e \in \gamma\}} & \text{(by Equation (\ref{eq:mrhostar}))} \\
   &=  \sum_{\gamma \in \Gamma}   m(\gamma)  \sum_{e \in \gamma} h(e) \geq 0.& \text{(by Equation (\ref{eq:h-non-neg}))}
\end{align*}
Thus, $\Gamma$ is a Beurling subfamily of $\Ga(A,B)$, which implies that $\Mod_{2,\si}(\Gamma)$ is the same as $\Mod_{2,\si}(\Ga(A,B))$. 

Next, suppose $\Ga$ is not minimal. Then, we can remove a path from $\Ga$ without affecting its modulus. Continue removing paths from $\Ga$ until we reach a strictly smaller family $\tilde{\Ga}$ with the same modulus, that cannot be made any smaller. Then, $\tilde{\Ga}$ also consists of non-crossing paths and it is a minimal subfamily for  $\Ga(A,B)$. By Proposition \ref{prop:pmf-minimal}, there is a unique pmf $\tilde\mu$  supported on $\tilde\Ga$, which is optimal for $\Mod_{2,\si}(\Ga(A,B))$, and is strictly supported on $\tilde\Ga$. Lemma \ref{minimalfamilyuniqueness}  below shows that we then must have  $\Gamma =\tilde \Gamma$,  but this is a contradiction. Therefore, $\Ga$ is also minimal and we are done with (iii).

Finally, this shows that the pmf $\mu^*$ defined in (\ref{eq:mustarm}) must be the optimal pmf guaranteed by Proposition \ref{prop:pmf-minimal} for $\Ga$.
\end{proof}
\begin{lemma} \label{minimalfamilyuniqueness}
Let $G=(V, E, \sigma, A, B)$ be a plane network with boundary and let $\Ga(A,B)$ be the family of all paths from $A$ to $B$ in $G$.
Consider the path family $\Gamma\subset\Ga(A,B)$ and the pmf $\mu^*$ on $\Ga$ defined in (\ref{eq:mustarm}) that can be computed using the output of the Non-crossing Minimal Subfamily Algorithm \ref{alg:non-cross-min-subfam}.
Let $\tilde \Gamma\subset\Ga(A,B)$ be any subfamily with non-crossing paths, such that there is a pmf $\tilde \mu$ strictly supported on $\tilde{\Ga}$ that is optimal for $\Mod_{2,\si} \Gamma(A,B)$.

Then $\tilde{\Gamma} = \Gamma$ and $\tilde{\mu}=\mu^*$.
\end{lemma}

\begin{proof}
Recall that $M:=\Mod_{2,\si}\Ga=\Mod_{2,\si}\Ga(A,B)$, by Proposition \ref{prop:non-cross-min-subfam} (i), which has already been proved.
Let $\gamma_1, \gamma_2, \cdots, \gamma_n$ be the paths of $\Gamma$ in the order that they are identified in the algorithm, and $m(\ga_1), m(\ga_2),\dots, m(\ga_n)$ the corresponding weights defined on step 3 of the algorithm.  
We will begin by proving that $\gamma_1$ must be in $\tilde \Gamma$.
Since the edges where $\rho^*=0$ are removed in step 1 of the algorithm, all the edges of $\gamma_1$ have nonzero $\rho^*$. Therefore, by \eqref{eq:rhomu} applied to $\tilde{\mu}$, each edge of $\gamma_1$  must be contained in at least one path in $\tilde \Gamma$.
Thus, Lemmas  \ref{components} and \ref{toppath} (ii)  show that $\gamma_1 \in \tilde \Gamma$. Furthermore,  Lemma \ref{toppath} (ii) implies
that there must be some edge $e_1$ in $\gamma_1$ that is contained in no other path in $\tilde \Gamma$. Again, by (\ref{eq:rhomu}) applied to $\tilde{\mu}$ on the edge $e_1$, we must have
\begin{equation}\label{eq:mutilde-eone}
\tilde{\mu}(\gamma_1)M   = \sigma(e_1) \rho^*(e_1).
\end{equation}
Also, for an edge $a\in\ga_1$ we have
\begin{align}
\sigma(a) \rho^*(a) & =M\sum_{\tilde{\ga}\in\tilde{\Ga}}\tilde{\mu}(\tilde{\ga})\cN(\tilde{\ga},a) & \text{(by (\ref{eq:rhomu}) applied to $\tilde{\mu}$)}\notag\\
& \ge M\tilde{\mu}(\ga_1) & \text{(since $\ga_1\in\tilde{\Ga}$)}\notag\\
&  \ge \sigma(e_1) \rho^*(e_1) & \text{(by (\ref{eq:mutilde-eone}))}.\label{eq:eone-min}
\end{align}
In conclusion,
\begin{align} 
\tilde{\mu}(\gamma_1)M  & = \sigma(e_1) \rho^*(e_1) &\text{(by (\ref{eq:mutilde-eone}))}\notag\\
& = \min_{a \in \gamma_1} \sigma(a) \rho^*(a) &\text{(by (\ref{eq:eone-min}))}\notag\\
& =m(\ga_1),&\text{(by Step 3 in Algorithm \ref{alg:non-cross-min-subfam})}
\label{minedge}
\end{align}
and hence $ \tilde{\mu}(\gamma_1) = m(\gamma_1)/M$. In particular, this implies that, if $e_1'\in\ga_1$ is an edge where the minimum in Step 3 of Algorithm \ref{alg:non-cross-min-subfam} is attained, then $e_1'$ does not belong to any other path in $\tilde{\Ga}\setminus\{\ga_1\}$ and so when $e_1'$ is removed in Step 1 of the following iteration, the remaining paths in $\tilde{\Ga}\setminus\{\ga_1\}$ are not affected. Therefore, we can now set up an induction.

Assume that $k\ge 2$ and $\gamma_1, \cdots, \gamma_{k-1}$ are in $\tilde \Gamma$ with
$\tilde{\mu}(\gamma_j) = \frac{m(\gamma_j)}{M}$ for $j = 1, \cdots, k-1$. 
Consider the point in the algorithm where we have just identified $\gamma_k$ as the top path, 
and consider an edge $e \in \gamma_k$.  
Since $e$ was not removed yet,
\begin{equation}\label{eq:re-positive-ind}
 r(e) :=  \sigma(e) \rho^*(e) - \sum_{j<k: \,e \in \gamma_j} m(\gamma_j) >0.
 \end{equation}
Therefore,
\begin{align*}
\sum_{j<k: \,e \in \gamma_j} \tilde{\mu}(\gamma_j) & =\frac{1}{M} \sum_{j<k: \,e \in \gamma_j} m(\gamma_j) & \text{(by induction hypothesis)}\\
& < \frac{\sigma(e) \rho^*(e)}{M}  & \text{(by (\ref{eq:re-positive-ind}))}\\
& =\sum_{\gamma: \,e \in \gamma} \tilde{\mu}(\gamma),& \text{(by  \eqref{eq:rhomu} applied to $\tilde{\mu}$)}
\end{align*}
This means that there must be a path in
 $\tilde \Gamma \setminus \{ \gamma_1, \cdots, \gamma_{k-1} \} $ that contains $e$, since we have not accumulated enough mass from the paths $\gamma_1, \cdots, \gamma_{k-1}$. 
Thus, Lemmas \ref{components} and \ref{toppath}\,(ii) show that $\gamma_k \in \tilde \Gamma$, and they further imply that
 there must be some edge  $e_k\in\gamma_k$ that is contained in no other path in $\tilde \Gamma \setminus \{ \gamma_1, \cdots, \gamma_{k} \}$.
 Such edge $e_k \in \gamma_k$ will satisfy
\begin{equation*} 
r(e_k) = \min_{a \in \gamma_k} r(a),
\end{equation*}
by the same argument as in (\ref{eq:eone-min}).
Therefore,  
\begin{equation}\label{eq:mutildegak}
 \tilde{\mu}(\gamma_k) = \frac{m(\gamma_k)}{M},
 \end{equation}
by the same argument as in (\ref{minedge}).

This completes the induction proof. Therefore, we have shown that $\Ga\subset\tilde{\Ga}$. Moreover, (\ref{eq:mutildegak}) implies that $\tilde{\mu}(\ga)=0$ for $\ga\in\tilde{\Ga}\setminus\Ga$, and therefore, $\Ga=\tilde{\Ga}$. Finally, $\tilde{\mu}=\mu^*$ by (\ref{eq:mutildegak}) and Proposition \ref{prop:non-cross-min-subfam} (ii), which has already been proved.
\end{proof}

\begin{proof}[Proof of Theorem \ref{existence}]
Apply the Non-Crossing Minimal Subfamily Algorithm \ref{alg:non-cross-min-subfam} to $\Ga(A,B)$ and use  Proposition \ref{prop:non-cross-min-subfam}. 
\end{proof}

\subsection{Scaling limit of non-crossing minimal subfamilies}

In this section,  we will prove Theorem \ref{limitpmf} (modulo Proposition  \ref{subdiagHarmConv}, which is a version of Theorem \ref{HarmConvThm} that applies to the analytic extension provided by Lemma \ref{reflection}).
Assume the hypotheses of Theorem \ref{limitpmf}  hold.
Then by Proposition \ref{prop:pmf-minimal} there is a unique extremal density $\rho_n$ on the edges of $G_n^\bullet$ and a unique optimal pmf $\mu_n$ on $\Gamma_n$, which are related by
\begin{equation}\label{rhoandmu}
 \frac{\sigma_n(e)\rho_n(e)}{\Mod_{2,\si_n}(\Gamma_n)} = \mathbb{P}_{\mu_n}\left[ e\in\underline{\gamma} \right]
\end{equation}
for all  $e \in G^\bullet_n$. 
We will prove that $\mu_n$ converges to the transverse measure on the family of extremal curves in $\Omega$ as $\epsilon \to 0$.

We begin with a lemma about a useful dual minimal subfamily.
Since this lemma only relates to the orthodiagonal map (and not to the underlying domain),  for ease of reading, we omit the subscript/superscript $n$.

\begin{lemma}\label{lem:fulkerson-duality}
Let $G$ be an orthodiagonal map with boundary arcs $S_1, T_1, S_2, T_2$. Let $\sigma$ be the canonical weights on the edges of $G^\bullet$ as in Equation (\ref{eq:canonical-weights}).
Consider the family $\Ga(A,B)$ of all paths in $G^\bullet$ from $A= V^\bullet \cap S_1 $ to $B= V^\bullet \cap S_2$, and let $\Gamma\subset\Ga(A,B)$ be the unique non-crossing minimal subfamily produced by Theorem \ref{existence}, and let $\mu$ by the corresponding optimal pmf defined in Proposition \ref{prop:non-cross-min-subfam} (ii). Likewise,  consider the dual graph $G^\circ$ with edge-weights as defined in Equation (\ref{eq:canonical-weights}), and let  $\Gamma^\circ$ be the unique non-crossing minimal subfamily of all paths in $G^\circ$ from $\alpha = V^\circ \cap T_1$ to $\beta = V^\circ \cap T_2$. 
 Then
 \begin{enumerate}
\item[(i)] 
$\Mod_{2,\si}(\Gamma) \cdot \Mod_{2,\si}(\Gamma^\circ) = 1.$
\item[(ii)] 
Let $\rho$ be the extremal density for $\Gamma$ and let $\eta$ be the extremal density for $\Gamma^\circ$.  Then
\begin{equation}\label{eq:dual-edge-probability}
\eta(e^\circ) = \frac{\sigma(e) \rho  (e)}{\Mod_{2}(\Gamma  )} = \mathbb{P}_{\mu  }\left[ \gamma \text{ contains } e \right] = \sum_{\gamma\in\Ga  : \,e \in \gamma} \mu  (\gamma), 
\end{equation}
for all pairs of dual edges $(e, e^\circ) \in E^\bullet \times E^\circ  $.
\item[(iii)] Fix a path $\gamma \in \Gamma$.  Then for every edge $e$ in $\gamma$, there is a path $\gamma^\circ \in \Gamma^\circ$ that contains the dual edge $e^\circ$.
\item[(iv)] Fix a path $\gamma^\circ \in \Gamma^\circ$.  Then, every path $\gamma \in \Gamma$ contains exactly one edge $e$ whose dual edge $e^\circ$ is in  $\gamma^\circ.$
\end{enumerate}
\end{lemma}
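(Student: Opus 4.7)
The plan is to deduce all three parts simultaneously from a Fulkerson-type duality, using planar duality to translate the primal extremal density $\rho$ into a dual density. The key topological input is that every path $\gamma^\circ$ in $G^\circ$ from $\alpha \subset T_1$ to $\beta \subset T_2$ traces a Jordan arc in $\hat G$ separating $S_1$ from $S_2$, so the set of primal edges $\{e \in E^\bullet : e^\circ \in \gamma^\circ\}$ forms an $AB$-cut in $G^\bullet$; the symmetric statement holds swapping primal and dual. Combined with the orthodiagonal identity $\sigma(e^\circ)\sigma(e) = 1$ from (\ref{eq:canonical-weights}), this is all the geometry I will need.

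To establish (i) and (ii), I will introduce the candidate dual density
$$\bar{\eta}(e^\circ) := \frac{\sigma(e)\rho(e)}{\Mod_{2,\si}(\Gamma)},$$
which by (\ref{eq:rhomu}) equals $\mathbb{P}_{\mu}[e \in \underline{\gamma}]$. Admissibility for $\Mod_{2,\si}(\Gamma^\circ)$ follows from a union bound: since every $\gamma \in \Gamma$ must cross the $AB$-cut associated to $\gamma^\circ$,
$$\sum_{e^\circ \in \gamma^\circ} \bar{\eta}(e^\circ) = \sum_{e^\circ \in \gamma^\circ} \mathbb{P}_{\mu}[e \in \underline{\gamma}] \geq \mathbb{P}_{\mu}\!\left[\underline{\gamma}\text{ uses some dual of }\gamma^\circ\right] = 1.$$
A short computation using $\sigma(e^\circ) = 1/\sigma(e)$ gives $\sum_{e^\circ} \sigma(e^\circ)\bar{\eta}(e^\circ)^2 = 1/\Mod_{2,\si}(\Gamma)$, so $\Mod_{2,\si}(\Gamma^\circ) \le 1/\Mod_{2,\si}(\Gamma)$. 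Repeating the same construction with the roles of primal and dual swapped gives the reverse inequality, hence (i), and simultaneously identifies $\bar{\eta}$ as the unique extremal density $\eta$, giving (ii).

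For (iii), I will exploit the equality case. Since $\Gamma^\circ$ is the non-crossing minimal subfamily produced by Proposition \ref{existence}, the analog of Lemma \ref{dag}(iii) for the dual modulus problem shows that every $\gamma^\circ \in \Gamma^\circ$ has $\eta$-length exactly $1$. The admissibility inequality is therefore saturated on $\Gamma^\circ$, which rewrites as
$$\mathbb{E}_{\mu}\!\left[\,\#\{e \in \underline{\gamma} : e^\circ \in \gamma^\circ\}\,\right] = 1.$$
The integer-valued random variable inside is at least $1$ for \emph{every} $\gamma \in \Gamma$ (again because the cut must be crossed), so equality of the expectation with $1$ forces it to equal $1$ almost surely under $\mu$. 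Minimality of $\Gamma$ (Proposition \ref{prop:pmf-minimal} and Lemma \ref{minimalfamilyuniqueness}) ensures $\mu(\gamma) > 0$ for every $\gamma \in \Gamma$, so (iii) follows for every such $\gamma$.

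The main obstacle I anticipate is the careful justification of the planar duality between dual $\alpha\beta$-paths and $AB$-cuts, which requires attention to how the boundary of the orthodiagonal map is partitioned into $S_1, T_1, S_2, T_2$ and to the interplay between $V^\bullet$ and $V^\circ$ along that boundary; everything downstream of that identification is essentially a formal application of the probabilistic interpretation of modulus and the orthodiagonal weight relation.
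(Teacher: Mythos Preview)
Your proof is correct. For part (iii) you run exactly the argument the paper does: use that $\gamma^\circ$ has $\eta$-length $1$, rewrite this as an expectation of the crossing count, and combine the pointwise lower bound $\ge 1$ with $\mu(\gamma)>0$ on $\Gamma$.

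For parts (i)--(ii) you take a genuinely different route from the paper. The paper invokes Fulkerson duality for $2$-modulus as a black box from \cite{acfpc:ampa2019}, obtaining $\Mod_{2,\si}(\Gamma)\cdot\Mod_{2,\si^{-1}}(\hat\Gamma)=1$ for the family $\hat\Gamma$ of $AB$-cuts, and then argues topologically (via an external lemma) that under the bijection $e\mapsto e^\circ$ the cut modulus coincides with the dual path modulus. You instead give a direct, self-contained proof of the duality in this setting: push $\rho$ across to a candidate $\bar\eta$, verify admissibility via the union bound / crossing argument, compute its energy using $\sigma(e^\circ)\sigma(e)=1$, and close the loop by symmetry. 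Your approach is more elementary and avoids the external citations; the paper's approach situates the result within a general duality framework. Both rest on precisely the same topological ingredient you flagged---that $\alpha\beta$-paths in $G^\circ$ correspond to $AB$-separating cuts in $G^\bullet$ and vice versa---and the paper spends most of its effort on exactly that point, so your identification of the obstacle is accurate.
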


\begin{proof}
From Fulkerson duality, see \cite[Section 4.2]{acfpc:ampa2019}, we have that 
\begin{equation}\label{eq:fulkerson-duality}
\Mod_{2,\si}(\Gamma  ) \cdot \Mod_{2,\si^{-1}}(\hat{\Gamma}  ) = 1,
\end{equation}
where $\hat{\Ga}  $ is the family of all $A  B  $-cuts, and $\si^{-1}(e):=1/\si(e)$, for every $e\in E(G^\bullet)$. Recall that $S\subset V^\bullet$ is an $A  B  $-cut if $A  \subset S$ and $B  \cap S=\emptyset$. Moreover, the usage of $S$ is defined by $\cN(S,e)=\ones_{\{e\in\delta S\}}$ where $\delta S=\{e=xy\in E^\bullet: |S\cap \{x,y\}|=1\}$. 
Note that  we can define a bijective map $r: E^\bullet \to E^\circ$ by setting $r(e) := e^\circ$ which is the dual edge to $e$.  
Therefore, applying the transformation $r$ to each edge in $\delta S$, we obtain an object $r(\de S)$ on the dual graph $G^\circ  $. 
We want to show that $r(\de S)$ contains a path connecting $\alpha  $ and $\beta  $ and conversely every such path comes from a cut $S$. 
First, every node in $V^\bullet \setminus \partial G  $ is contained in the interior of a closed  dual face for $G^\circ  $. 
In the case when the node $v$ is in $\partial G $, let $C$  be the closure of the component of $v$ in the intersection of the unbounded face for $G^\circ  $ with $\hat{G}  $. Take a small  ball $D_v$ centered at $v$ that is disjoint from any other vertex and define $C\cup D_v$ to be the dual face containing $v$. 
By taking the union of all the dual faces containing a node in $S$ and then taking the interior of this set, we obtain an open set $U$ containing $S$ (and hence $A  $) in its interior, such that $B  \subset {\rm int}(U^c)$, where $U^c$ is the complement of $U$.
Now we can apply Lemma 2.4 in \cite{ebpc}, where the quadrilateral is our set $\hat{G}  $, and obtain a path $\ga$ connecting $\alpha  $ and $\beta  $ with $\ga\subset \bd U\cap\hat{G}  $. By construction, we see that $\ga$ is a union of dual edges, and hence is a path in $G^\circ  $. Conversely, every such path can be continued to get a Jordan domain with $A  $ in its interior and $B  $ in its exterior. 

Finally, note that, by Equation (\ref{eq:canonical-weights}), for every pair of dual edges $(e,e^\circ)$ we have $\si(e^\circ)=\si(e)^{-1}$.
Therefore,  we conclude that $\Mod_{2,\si^{-1}}(\hat{\Ga}  )=\Mod_{2,\si}(\Ga^\circ  )$ and (i) follows from (\ref{eq:fulkerson-duality}).
Moreover, (\ref{eq:dual-edge-probability}) follows from (\ref{eq:rhomu}) and   \cite[Theorem 4 and Theorem 7]{acfpc:ampa2019}, by letting $\eta^\bullet  $ be the extremal density for $\hat{\Ga}  $ and setting
$\eta  (e^\circ)=\eta^\bullet  (r(e^\circ))$ (thinking of $r$ as an idempotent operation).

For (iii), fix $\gamma \in \Gamma$ and let $e $ be an edge in  $\gamma$ with dual edge $e^\circ$.
Suppose that $e^\circ$ is not contained in any path in $\Gamma^\circ$.  
By applying \eqref{eq:dual-edge-probability} with the roles of $\Gamma$ and $\Gamma^\circ$ reversed, we find that $\eta(e^\circ) = \rho(e) = 0$.
However, by \eqref{eq:dual-edge-probability} again, this implies that $e$ cannot be contained in any path in $\Gamma$, yielding a contradiction.

For (iv), fix $\gamma^\circ \in \Gamma^\circ$.
As in the first paragraph, $\gamma^\circ$ can be continued outside $\hat{G}$ to get a Jordan domain with $A  $ in its interior and $B  $ in its exterior. 
Then each $\gamma \in \Ga$ must cross this Jordan curve through an edge in $\gamma^\circ$, implying that 
$\gamma$ must contain at least one such edge $e$ with $e^\circ \in \gamma^\circ.$
Since every minimal subfamily is also a Beurling subfamily, $\gamma^\circ$ has length 1 under $\eta$.
Thus by (\ref{eq:dual-edge-probability}),
\begin{align*}
 1 = \sum_{e\, :\, e^\circ \in \gamma^\circ} \eta(e^\circ) 
 &= \sum_{e \,: \,  e^\circ \in \gamma^\circ} \sum_{\gamma \in \Ga \,: \, e \in \gamma} \mu (\gamma) \\
&= \sum_{\gamma \in \Ga} \mu(\gamma) \left[\text{ \# of edges } e \in \gamma \text{ with } e^\circ \in \gamma^\circ\right]\ge 1.
\end{align*}
Therefore, the number of edges $e\in\ga$ with $e^\circ\in\ga^\circ$ must be identically equal to one.

\end{proof}

The non-crossing condition for the non-crossing minimal subfamily $\Gamma_n$ allows us to order the paths in $\Gamma_n$:  
 $\gamma < \gamma' $ means $ \gamma \neq \gamma' $ and  $\gamma' $  is the top path in the subgraph consisting of these two paths.
 Equivalently,  $\gamma < \gamma'$ means that $\gamma$ is identified after $\gamma'$ in the Non-crossing Minimal Subfamily Algorithm.
We use this ordering in the next lemma.

\begin{lemma}\label{GammaHatLemma}
Assume the hypotheses of Theorem \ref{limitpmf} hold, and let $\phi$ be the conformal map from $\Omega$ onto the rectangle $R=(0,L)\times (0,1)$ so that (after extending $\phi$ to the boundary) $\phi(A)$ is the left side of $R$ and $\phi(B)$ is the right side of $R$. 
 Then $\phi$ has a conformal extension to a domain which contains
 \begin{equation}\label{gammahat}
\hat{\Gamma}_n = \{ \gamma \in \Gamma_n \, : \, \dist(\gamma, \tau_1 \cup \tau_2) \geq 4\epsilon_n^{1/4} \}
\end{equation}
when $n$ is large enough.
Moreover, there exists $\delta_n>0$ with $\delta_n \to 0$ as $n \to \infty$ so that the following hold.
\begin{enumerate}
\item[(i)] Let $\gamma' \in \hat\Gamma_n$ and define
              $\displaystyle p(\gamma') = \sum_{\gamma \in \Gamma_n, \gamma \leq \gamma'} \mu_n(\gamma)$.
Then for all points $u$ on $\gamma'$,              
\begin{equation} \label{ImagOsc}
 \Im  \phi(u) \in [p(\gamma')-\delta_n, \, p(\gamma')+\delta_n].
\end{equation}
\item[(ii)]  For $q \in [0,1]$ there exists $\gamma' \in \hat \Gamma_n$ with $|p(\gamma') -q| \leq 2\delta_n$ and $\Im \phi(u) \in [q-3\delta_n, q+3\delta_n]$ for any point $u$ on $\gamma'$.
\item[(iii)] $\mu_n(\hat \Gamma_n) \geq 1-4\delta_n$.
\end{enumerate}
\end{lemma}

The proof of this lemma will rely on Proposition  \ref{subdiagHarmConv}, which is established in Section \ref{harmonic}.
We also note that we will use $C$ to refer to constants that are independent of $\epsilon_n$ but may depend on the analytic quadrilateral $\Omega$.  The exact value of these constants may change from line to line.

\begin{proof}
By applying Lemma \ref{reflection} (with the roles of $A \cup B$ and $\tau_1 \cup \tau_2$ switched) and taking $\epsilon_n$ small enough,  we extend $\phi$ to be conformal on a domain $\mathcal{O}$ containing 
$$P_n = \{ z \, : \, \dist(z, \overline{\Omega}) \leq \epsilon_n \text{ and } \dist(z, \tau_1 \cup \tau_2) \geq \epsilon_n^{1/4} \}$$
so that 
\begin{equation} \label{phiHolder}
 |\phi(z) - \phi(w)| \leq C |z-w|^{a} \;\; \text{ for } z,w \in \overline{\mathcal{O}}.
\end{equation}

Let  $h_n$ be the solution to the discrete Dirichlet problem on ${\rm int}(V_n^\bullet)$ with $h_n |_{A_n} = 0$ and $h_n |_{B_n} = L$.
Similarly let  $\tilde{h}_n$ be the solution to the discrete Dirichlet problem on ${\rm int}(V_n^\circ)$ with $\tilde{h}_n |_{\alpha_n} = 0$  and $\tilde{h}_n |_{\beta_n}= 1$
for $\alpha_n = V^\circ \cap T_1^n$ and $\beta_n = V^\circ \cap T_2^n$.
Let $\Gamma_n^\circ$ be the unique non-crossing minimal subfamily of all paths in $G_n^\circ$ from $\alpha_n$ to $\beta_n$, and let $\rho_n$ (resp. $\eta_n$) be the extremal density for $\Gamma_n$ (resp. $\Gamma_n^\circ$).
Note that  by \eqref{rhofromharmonic},
$$\rho_n(v_1v_2) = \frac{1}{L} |h_n(v_1) - h_n(v_2)|  \;  \text{  and  }   \;  \eta_n(w_1w_2) =  |\tilde{h}_n(w_1) - \tilde{h}_n(w_2)|$$
for all $v_1v_2 \in E^\bullet_n$ and $w_1w_2 \in E^\circ_n$.

Fix $\gamma' \in \hat\Gamma_n$, and let
$$ p=p(\gamma') = \sum_{\gamma \in \Gamma_n, \gamma \leq \gamma'} \mu_n(\gamma),$$
recalling that  $\gamma \leq \gamma' $ means $\gamma' $  is the top path in the subgraph consisting of these two paths.
Let $e$ be an edge in $\gamma'$, with dual edge $e^\circ $.
Consider a path $\gamma^\circ \in \Gamma_n^\circ$ that contains $e^\circ$ (which exists by Lemma \ref{lem:fulkerson-duality}(iii)).
Under the orientation of edges induced by $\tilde h_n$ in Lemma \ref{dag}, $\gamma^\circ$ is an oriented path from $\alpha_n$ to $\beta_n$ by Lemma \ref{dag}\,(iii).  
 Let $e^\circ = xy$, oriented with respect to this orientation.
 Consider the portion of $\gamma^\circ$ before $x$ and let this be given by vertices $u_0, \cdots, u_k =x$.
  Recall that by Lemma \ref{lem:fulkerson-duality}(iv), every path $\gamma \in \Gamma_n$ contains exactly one edge $a$ with $a^\circ \in \gamma^\circ.$
 This along with  \eqref{eq:dual-edge-probability} implies that 
 $$ \tilde h_n (x) = \sum_{j=1}^k \tilde h_n(u_j) - \tilde{h}_n(u_{j-1}) = \sum_{j=1}^k \eta_n(u_{j}u_{j-1}) \leq \sum_{\gamma \leq \gamma'} \mu_n(\gamma) = p. $$
 Similarly, 
 $$ \tilde h_n (y) =  \sum_{j=1}^k \eta_n(u_{j}u_{j-1}) + \eta_n(xy) \geq \sum_{\gamma \leq \gamma'} \mu_n(\gamma)  = p.$$
Next we apply
 Proposition \ref{subdiagHarmConv} (which extends Theorem  \ref{HarmConvThm} to points in $P_n$)  to $\tilde h_n$ and $\Im \phi$ (noting that the roles of $A,B$ and $\tau_1, \tau_2$ are switched).  This gives that
 \begin{align}\label{htildebound}
|\tilde{h}_n(y) - \tilde{h}_n(x)| &\leq |\tilde{h}_n(y) - \text{Im } \phi(y) | + |\text{Im } \phi(y) - \text{Im } \phi(x)| + |\text{Im } \phi(x) - \tilde{h}_n(x)|  \nonumber \\
 &\leq \frac{C}{\log^{1/2}\left(\diam(\Omega)/\epsilon_n\right)} + C\epsilon_n^{a} \\
 &\leq \delta_n, \nonumber
\end{align}
where $\delta_n = \frac{C}{\log^{1/2}\left(1/\epsilon_n \right)}$ for some constant $C$ depending on $\Omega$.
Now let $u$ be a point on $e$.
Then $|u-x| \leq 2\epsilon_n$ and by arguing as in \eqref{htildebound} we obtain
\begin{align*}
\text{Im }\phi(u) 
  &\leq \tilde h_n(x) +  |\text{Im } \phi(x) - \tilde h_n(x)| + |\text{Im }\phi(u) - \text{Im }\phi(x)|  \\
  &\leq p + \delta_n.
\end{align*}
Similarly,
\begin{align*}
\text{Im }\phi(u) 
  &\geq   \tilde h_n(y) -  |\text{Im } \phi(y) - \tilde h_n(y)| - |\text{Im }\phi(u) - \text{Im }\phi(y)|  \\
  &\geq p - \delta_n.
\end{align*}
Since this hold for all edges of $\gamma'$, we have established \eqref{ImagOsc}.

To prove (ii) and (iii), we begin by looking at the curve $\gamma_{\text{first}} \in \hat \Gamma_n$ that comes first in our ordering 
(i.e.~$ \gamma_{\text{first}} \leq \gamma$ for all $\gamma \in \hat \Gamma_n$).
Let $\gamma_0 \in \Gamma_n$ be the path directly before $\gamma_{\text{first}}$.
Then since $\gamma_0$ is not in $\hat \Gamma_n$, we have that $\dist(\gamma_0, \tau_1 ) < 4\epsilon_n^{1/4}$.
In particular, there must be a point $z$ from an edge $e$ in $\gamma_0$ with $\dist(z, \tau_1 ) < 4\epsilon_n^{1/4}$.
By Lemma \ref{lem:fulkerson-duality}(iii), there exists $\gamma^\circ \in \Gamma_n^\circ$ containing the dual edge $e^\circ$.
Let $a^\circ$ be the edge after $e^\circ$ in $\gamma^\circ$ under the orientation of $\gamma^\circ$ from $\alpha_n$ to $\beta_n$
given by Lemma \ref{dag}.
We claim that $\gamma_{\text{first}}$ contains $a$ (the dual edge to $a^\circ$).
To see why, first note that $\gamma_{\text{first}}$ cannot contain $e$, because $\dist(\gamma_{\text{first}}, \tau_1) \geq 4\e_n^{1/4}$ but $\dist(e, \tau_1 ) < 4\epsilon_n^{1/4}$.
By Lemma \ref{lem:fulkerson-duality}(iii) applied to $\gamma^\circ$, there is some path in $\Gamma_n$ containing $a$.  
Since there are no paths between $\gamma_{\text{first}}$ and $\gamma_0$ in the ordering,  $\gamma_{\text{first}}$ must contain $a$.

Since each quadrilateral in $G_n$ has diameter bounded by $2\epsilon_n$, we have
$$\dist(\gamma_{\text{first}}, \tau_1)  \leq \dist(a, \tau_1) \leq \dist(a, z) + \dist(z, \tau_1) < 4\epsilon_n + 4 \epsilon_n^{1/4} \leq 8 \epsilon_n^{1/4}. $$
Then by \eqref{phiHolder},
\begin{equation}\label{first}
\dist(\phi(\gamma_{\text{first}}), [0,L]) \leq C \epsilon_n^{a/4} \leq \delta_n,
\end{equation} 
where for the last inequality, we adjust the $\Omega$-dependent constant in the definition of $\delta_n$, if needed.
By \eqref{ImagOsc} this implies that $p(\gamma_{\text{first}}) \leq 2\delta_n$.
By a similar argument, the path $\gamma_{\text{last}} \in \hat \Gamma_n$ that comes last in the ordering must satisfy $p(\gamma_{\text{last}}) \geq 1-2\delta_n$.
This shows that (iii) holds, since
$$\mu_n(\hat\Gamma_n) = p(\gamma_{\text{last}})-p(\gamma_0) \geq 1-4\delta_n.$$

Suppose that $\gamma'< \gamma''$ are two paths in $\hat\Gamma_n$ that are adjacent in the ordering.  Let $e$ be an edge in $\gamma''$ with dual edge $e^\circ = xy$.
Then 
$$p(\gamma'') - p(\gamma') = \mu_n(\gamma'')  \leq \sum_{\gamma: e \in \gamma} \mu_n(\gamma) 
 =\eta_n(e^\circ)   =  |\tilde{h}_n(y) - \tilde{h}_n(x)| \leq  \delta_n, $$
by \eqref{eq:dual-edge-probability} and \eqref{htildebound}.
Therefore for $q \in [0,1]$, there exists $\gamma \in \hat \Gamma_n$ with $|p(\gamma) -q| \leq 2\delta_n$.
By \eqref{ImagOsc}, this proves (ii).

\end{proof}

We are now ready to prove the convergence of the probability measures $\mu_n$.  

\begin{proof}[Proof of Theorem \ref{limitpmf}.]

We begin by using the previous lemma to identify 
 the limit points of $\hat{\Gamma}_n$ (defined in \eqref{gammahat}) as $n \to \infty$,
which are the curves $\gamma_{\star}$ such that there is an increasing sequence $n_k$ and a sequence $\gamma_k \in \hat\Gamma_{n_k}$ with $\gamma_k$ converging to $\gamma_{\star}$ (using the Hausdorff distance as the metric on curves).  
We claim that these limiting curves
are the  extremal curves in $\Omega$:
\begin{equation}\label{extremalcurves}
\Gamma := \bigl\{ \phi^{-1}\left(\{z \in R \, : \, \text{Im}(z) = p\}\right) \, : \, p\in [0,1] \bigr\},
\end{equation}
where $\phi:\Omega \to R= (0,L)\times (0,1)$ is conformal with  $\phi(A)$ equal to the left side of $R$ and $\phi(B)$ the right side of R.
Assume first  that $\gamma_k$ converges to $\gamma_{\star}$ with $\gamma_k \in \hat\Gamma_{n_k}$ for some some increasing sequence $n_k$.  
Then $\phi(\gamma_k)$ converges to $\phi(\gamma_{\star})$. By Lemma \ref{GammaHatLemma}(i), 
the oscillation of the imaginary part of $\phi(\gamma_k)$ must converge to zero, implying that $\Im\left[ \phi(\gamma_{\star}) \right]$ must be constant. This shows that $\gamma_{\star} \in \Gamma$,
and
 the limit points of $\hat\Gamma_n$ are contained in $\Gamma$.
 Combining this with Lemma \ref{GammaHatLemma}(iii) shows that any subsequential limit of $\mu_n$ must be suported on $\Gamma$.
 Although this is all we need for our proof, we point out that one could use 
 Lemma \ref{GammaHatLemma}(ii) to show that every curve of $\Gamma$ is a limit point of $\hat\Gamma_n$.

Recall that $X = \left(\bigcup_n \Gamma_n \right) \cup \Gamma$, 
and $\mu_n$ are elements of the set $\cP(X)$ of probability measures on $(X, \mathcal{B}(X))$.
We will show next that
the set of probability measures $\{ \mu_n \}$ is {\it tight}, meaning that,
for each $\delta >0$ there exists a compact subset $K_\delta$ of $X$ such that $\mu_n(K_\delta) \geq 1-\delta$ for all $n$.
Fix $\delta >0$. Let $N_\delta \in \mathbb{N}$ be large enough so that for $n \geq N_\delta$ the conclusion of Lemma \ref{GammaHatLemma} holds  and $\delta_n \leq \delta /4$.
With $\hat{\Ga}_n$ as in Lemma \ref{GammaHatLemma}, define $K_\delta \subset X$ by 
$$K_\delta := \left( \bigcup_{n=0}^{N_\delta-1} \Gamma_n \right) \cup \left( \bigcup_{n=N_\delta}^\infty \hat\Gamma_n \right)
 \cup \Gamma.$$
We will show $K_\delta$ is compact, meaning every sequence in $K_\delta$ has a subsequence converging to an element in $K_\delta$.
A sequence in $K_\delta$ must satisfy one of the following two cases.  
In the first case, there is a subsequence contained in
$ \left( \bigcup_{n=0}^{N_\delta-1} \Gamma_n \right) \cup \left( \bigcup_{n=N_\delta}^M \hat\Gamma_n \right)$
 for some $M \in (N_\delta, \infty)$.  
Since  this set has a finite number of elements, this means there must be a sub-subsequence that is constant, and hence it converges to an element of $K_\delta$.
In the second case, there is a subsequence $\gamma_{n_k}$ with 
$$\gamma_{n_k} \in \left(\bigcup_{n=k}^\infty \hat\Gamma_n \right) \cup \Gamma.$$  
Hence by \eqref{ImagOsc}, there exists $p_{k} \in [0,1]$  so that  Im$\,\phi(u) \in [p_{k}-\delta_{k}, \, p_{k}+\delta_{k}]$ for all $u \in \gamma_{n_k}$. 
Since $[0,1]$ is compact, we can chose a subsequence of $p_k$ so that 
$p_{{k_j}} \to p \in [0,1]$.  
This implies that $\phi(\gamma_{n_{k_j}})$ converges to $\{z \in R \, : \, \Im(z) = p\}$, 
and hence the sub-subsequence $\gamma_{n_{k_j}}$ converges to an element of $\Gamma \subset K_\delta$.

Note that $\mu_n(K_\delta) = 1$ for $n<N_\delta$ 
and $\mu_n(K_\delta) =  \mu_n(\hat\Gamma_n) \geq 1-4\delta_n \geq 1-\delta$ for $n \geq N_\delta$
by Lemma \ref{GammaHatLemma}(iii).
Therefore, $\{ \mu_n \}$ is tight, and subsequently
Prohorov's Theorem (Theorem 83.10 in \cite{rogers_williams_2000}) implies that $\{ \mu_n \}$ is conditionally compact 
(i.e.~its closure is compact) in the space $\cP(X)$ equipped with the topology of weak convergence.
This implies the existence of subsequential limits of $\mu_n$.

It remains to show that the only possible subsequential limit of $\mu_n$ is the transverse measure on $\Gamma$.  
To that end, let $\mu$ be a limit of $\mu_{n_k}$.
Fix $r \in (0,1)$, and let $N$ be large enough so that for $n \geq N$, the conclusion of Lemma \ref{GammaHatLemma} holds and $\delta_n < r/3$.
Set $$H_r = \{ \gamma \in \left( \cup_{n=N}^\infty \hat{\Gamma}_n \right) \cup \Gamma \, : \,  \phi(\gamma) \subset \mathbb{R} \times (-\infty, r] \},$$
To characterize $\mu$ as the transverse measure on $\Gamma$, we must simply show that 
$\mu(H_r) = r$ for all $r \in (0,1)$.
Let $q = r-3\delta_{n}$. 
By Lemma \ref{GammaHatLemma}(ii) there exist $\gamma_{n}$
with $|p(\gamma_n) - q| \leq 2\delta_n$ and
$\Im \phi (u) \in [q - 3\delta_n, q+3 \delta_n]$ for $u$ in $\gamma_n$. 
Then $H_r$ contains the set
$$  \{ \gamma \in \hat{\Gamma}_n  \, : \, \gamma \leq \gamma_n \}.$$
Thus 
$$ \mu_n(H_r) \geq  p(\gamma_n) - 4\delta_n\geq  q -6\delta_n = r-9\delta_n.$$
Since $H_r$ is a closed set and $\mu$ is the limit of $\mu_{n_k}$ under weak convergence,
$$ \mu(H_r) \geq \limsup_{k \to \infty} \mu_{n_k}(H_r) \geq r.$$ 
Let $\delta>0$ with $r+ \delta < 1$ and set $U_{r+\delta} = \{ \gamma \in \left( \cup_{n=N}^\infty \hat{\Gamma}_n \right) \cup \Gamma \, : \, \phi(\gamma) \subset \mathbb{R} \times [r + \delta, \infty) \}.$
Note that the same argument as above shows that $\mu(U_{r+\delta}) \geq 1-(r+\delta)$.  
Further since $X \setminus H_r$ contains $U_{r + \delta}$, we have
$\mu(X \setminus H_r) \geq \mu(U_{r+ \delta})$.
Therefore $\mu(H_r) = 1-\mu(X \setminus H_r) \leq r+ \delta$.
Since this is true for all $\delta >0$,
 $ \mu(H_r) =  r$ for all $r \in (0,1)$.  This characterizes the limit measure as the transverse measure on $\Gamma$.

\end{proof}

\subsection{Further consequences of the minimal non-crossing subfamily}

As mentioned in the Introduction, Theorem \ref{existence} has an interpretation in terms of current flow. 

Let $G=(V,E, \sigma)$ be a finite weighted connected graph. 
Assume that there are disjoint nonempty sets of  boundary vertices $A$ and $B$.
Solve the (weighted) Dirichlet problem, as in Proposition \ref{discreteDP}, and find a function $h: V\rightarrow\R$, so that $h|_A=0$, $h|_B=1$, and $h$ is discrete harmonic on $V\setminus(A\cup B)$, as in (\ref{eq:si-harmonic}). The function $h$ produces a current flow $f$ via Ohm's law:
\begin{equation}\label{eq:ohmslaw}
f(xy) = \si(xy)\left(h(y)-h(x)\right) \;\;\; \text{ for all } xy \in \vv{E}.
\end{equation}
In particular,  in the notations of Section \ref{sec:discrete-harm-fnct}, $f$ is a flow from $A$ to $B$, meaning that it satisfies the node law in $V\setminus (A\cup B)$. Moreover, it is well-known that ${\rm strength}(f)=M$ in this case, where $M:=\Mod_{2,\si}(\Ga(A,B))$ and $\Ga(A,B)$ is the family of  all paths from $A$ to $B$ in $G$. To see this, use for instance \cite[Lemma 2.9]{LP:book} with their $\theta$  replaced by our flow $f$ and their gradient flow $df$ replaced by our $dh$, then recall the connection (\ref{eq:connection-harmod}). Finally, Thomson's principle \cite[Page 40]{LP:book} states that among all flows from $A$ to $B$ with strength $M$, $f$ is the unique energy minimizer, where $\cE(f):=\sum_{e\in E}\si(e)^{-1}|f(e)|^2$.

Lemma  \ref{lem:connect-flow-mu} below describes the connection between current flow and the probabilistic interpretation of modulus. Given a path $\gamma$ from A to B define $x_\gamma$ to be the unit flow along $\gamma$. Then, the current flow $f$ has a {\it path flow decomposition}, if it can be written as a sum $\sum_{\ga\in\Ga} a_\gamma x_\gamma$ of path flows, with coefficients $a_\ga> 0$, for $\ga$ in some family $\Ga\subset\Ga(A,B)$ and the paths in $\Ga$ all have the same direction of flow on every edge $e\in E$. 
\begin{lemma}\label{lem:connect-flow-mu}
Let $f$ be the current flow defined in (\ref{eq:ohmslaw}). Assume $\Ga\subset\Ga(A,B)$ strictly supports an optimal pmf $\mu^*$ for the probabilistic interpretation of $\Mod_{2,\si}(\Ga(A,B))$.
Then,  the paths in $\Ga$ all have the same direction of flow on every edge $e\in E$ and $f=\sum_{\ga\in\Ga} M\mu^*(\gamma) x_\gamma$ is a path flow decomposition for $f$, where $M:=\Mod_{2,\si}(\Ga(A,B))$.
Conversely, if  $\sum_{\ga\in\Ga} a_\gamma x_\gamma$ is a path flow decomposition for $f$, then
\[
\sum_{\ga\in\Ga} a_\gamma={\rm strength}(f)=M,
\]
Moreover, $\tilde{\mu}(\ga):=a_\ga/M$ is an optimal pmf strictly supported on $\Ga$.
\end{lemma}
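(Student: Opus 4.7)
The plan is to leverage the identity (\ref{eq:rhomu}) together with the orientation of edges by the current flow from Lemma \ref{dag}. For the forward direction, starting from an optimal pmf $\mu^*$ strictly supported on $\Gamma$, my first step is to show that every $\gamma \in \Gamma$ satisfies $\ell_{\rho^*}(\gamma) = 1$. This is a short complementary-slackness computation: using $\sigma(e)\rho^*(e) = M\,\bE_{\mu^*}[\cN(\underline{\gamma},e)]$ from (\ref{eq:rhomu}),
\begin{equation*}
M = \sum_{e\in E} \sigma(e)\rho^*(e)^2 = M\sum_{\gamma\in\Gamma} \mu^*(\gamma)\,\ell_{\rho^*}(\gamma) \geq M,
\end{equation*}
with equality forcing $\ell_{\rho^*}(\gamma) = 1$ on $\mathrm{supp}(\mu^*) = \Gamma$. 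Lemma \ref{dag}(iii) then identifies each such $\gamma$ as a directed path under the orientation induced by $f$, so for every edge $e$ every path of $\Gamma$ that contains $e$ traverses it in the direction of $f$, establishing the direction-consistency required of a path flow decomposition.

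With direction-consistency in hand, I would verify $g = f$ edge by edge: for an oriented edge $e=vw$ with $h(w)>h(v)$, Ohm's law (\ref{eq:ohmslaw}) gives $f(e)=\sigma(e)\rho^*(e)$, while $g(e) = \sum_{\gamma\in\Gamma} M\mu^*(\gamma)\,\ones_{\{e\in\gamma\}} = M\,\bP_{\mu^*}(e\in\underline{\gamma})$ equals $\sigma(e)\rho^*(e)$ by (\ref{eq:rhomu}). For the converse, assume $f=\sum_{\gamma\in\Gamma} a_\gamma x_\gamma$ with $a_\gamma>0$ and consistent edge directions. Each unit path flow $x_\gamma$ has unit strength, so $\sum_\gamma a_\gamma = \mathrm{strength}(f) = M$ (using the identification of strengths recalled before the lemma), and $\tilde\mu(\gamma) := a_\gamma/M$ is a pmf strictly supported on $\Gamma$. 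Direction-consistency yields $f(e)=\sum_{\gamma\ni e}a_\gamma$ on each oriented edge, hence $\bE_{\tilde\mu}[\cN(\underline{\gamma},e)] = \sigma(e)\rho^*(e)/M$; plugging into the right-hand side of (\ref{eq:prob-interp-p}) gives $\sum_e \sigma(e)^{-1}(\sigma(e)\rho^*(e)/M)^2 = M/M^2 = 1/M$, so $\tilde\mu$ attains the probabilistic minimum and is therefore optimal by Theorem \ref{thm:prob-interp}.

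The main obstacle is the direction-consistency argument in the forward direction: a priori the sum $g=\sum_\gamma M\mu^*(\gamma)x_\gamma$ is a signed combination of unit path flows that could partially cancel on shared edges, and we must rule this out in order to match $f$ as an oriented flow. The resolution is the two-step chain above (complementary slackness forces $\rho^*$-length one on $\Gamma$, Lemma \ref{dag}(iii) upgrades length one to directed, and the acyclicity of the oriented graph keeps all supported paths aligned with $f$); everything else is bookkeeping with Ohm's law, additivity of strength, and the characterization (\ref{eq:rhomu}).
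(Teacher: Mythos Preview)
Your argument is correct and takes a cleaner route than the paper's. For direction consistency, the paper argues by contradiction: if two supported paths $\gamma,\gamma'$ traversed some edge $e$ in opposite directions, one can reroute a small amount of mass through concatenated sub-paths that avoid $e$, producing a pmf with strictly smaller edge probability at $e$ and contradicting optimality of $\mu^*$. Your complementary-slackness computation is more direct and recovers more: it shows every supported path has $\rho^*$-length one, whence Lemma~\ref{dag}(iii) forces each to be a directed path in the $h$-orientation, which simultaneously yields direction consistency \emph{and} the sign information needed to match $g$ with $f$ edge by edge. For the identification $g=f$, the paper only matches absolute values $|g(e)|=|f(e)|$ via (\ref{eq:rhomu}) and then invokes Thomson's principle (uniqueness of the energy-minimizing flow of given strength) to conclude; your oriented edge-by-edge check avoids this appeal entirely. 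The converse is handled essentially the same way in both proofs.

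Two small points are worth making explicit in your write-up. First, Lemma~\ref{dag}(iii) is stated for paths in $G\setminus E_0$, so you should note that every $\gamma\in\Gamma$ indeed avoids $E_0$: if $e\in\gamma$ and $\mu^*(\gamma)>0$ then $\bP_{\mu^*}(e\in\underline{\gamma})\ge\mu^*(\gamma)>0$, hence $\rho^*(e)>0$ by (\ref{eq:rhomu}). Second, Lemma~\ref{dag} is formally stated for plane networks with boundary, while the present lemma is set up for general connected weighted graphs; the parts of Lemma~\ref{dag} you actually use (the orientation by $h$ and part (iii)) are proved there using only harmonicity and telescoping sums, with no appeal to planarity, so the argument transfers without change---but flagging this keeps your logic self-contained.
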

\begin{proof}
First, we want to show that if $\mu^*$ is an optimal pmf strictly supported on $\Ga\subset\Ga(A,B)$, then the paths in $\Ga$ all have the same direction of flow on each edge. Suppose not, then we can find an edge $e=xy$ and two paths $\ga,\ga'\in\Ga$, such that, $\ga=\ga_{ax}\cup xy \cup \ga_{yb}$ for some $a\in A$, $b\in B$ and paths $\ga_{ax}$ from $a$ to $x$ and $\ga_{yb}$ from $y$ to $b$. Likewise, $\ga'=\ga'_{a'y}\cup yx \cup \ga'_{xb'}$ for some $a'\in A$, $b'\in B$ and paths $\ga'_{a'y}$ from $a'$ to $y$ and $\ga'_{xb'}$ from $x$ to $b'$.  Then, find paths $\ga_1 \subset\ga_{ax}\cup\ga'_{xb'}$ and $\ga_2\subset \ga'_{a'y}\cup\ga_{yb}$ connecting $A$ and $B$, and define a new family of paths $\Ga'$ by adding $\ga_1$ and $\ga_2$  to $\Ga$. If $0<t<\min(\mu^*(\ga),\mu^*(\ga'))$, define a measure $\nu$ on $\Ga'$, by setting $\nu(\ga):=\mu^*(\ga)-t$, $\nu(\ga'):=\mu^*(\ga')-t$, $\nu(\ga_1)=\nu(\ga_2):=t$, and keeping $\nu=\mu^*$ otherwise.
Then, $\nu$ is a pmf on $\Ga'$, and has the same edge probabilities as $\mu^*$ at every edge 
except the edges in the set $H:=(\ga\cup\ga')\setminus(\ga_1\cup\ga_2)$ where they are smaller. In particular,
 $xy$ is guaranteed to be in $H$ and 
\[
\bP_\nu(xy\in\underline{\ga})=\bP_{\mu^*}(xy\in\underline{\ga})-2t<\bP_{\mu^*}(xy\in\underline{\ga}).
\]
This contradicts the optimality of $\mu^*$, by the second equality in (\ref{eq:prob-dual}).

Consider $g:=\sum_{\ga\in\Ga} M\mu^*(\gamma) x_\gamma$. Then $g$ satisfies the node law on $V\setminus (A\cup B)$ and ${\rm strength}(g)=M$.  Thus, if $\rho^*$ is the extremal density for $\Mod_{2,\si}(\Ga(A,B))$, then
\begin{align*}
|g(e)| & = \sum_{\ga\in\Ga} M\mu^*(\gamma) \ones_{\{e\in\ga\}} & \text{(all paths flow in same direction)}  \\
& =M\bP_{\mu^*}(e\in\underline{\ga}) & \text{(by definition of probability)}  \\
&  =\si(e)\rho^*(e)  & \text{(by (\ref{eq:rhomu}))} \\
& =\si(e)|h(y)-h(x)| & \text{(by (\ref{eq:connection-harmod}))}\\
& =|f(e)|. & \text{(by (\ref{eq:ohmslaw}))}
\end{align*}
Therefore, $g$ is also a flow energy minimizer, and by uniqueness in the Thomson's principle we must have $f=g$. So, we got a path flow decomposition for $f$.

Conversely,  if  $\sum_{\ga\in\Ga} a_\gamma x_\gamma$ is a path flow decomposition for $f$, then
$\sum_{\ga\in\Ga} a_\gamma={\rm strength}(f)=M$. Therefore, setting $\tilde{\mu}(\ga):=a_\ga/M$ yields a pmf on $\Ga$. Moreover, every path in $\Ga$ flows in the same direction as $f$ on every edge. Thus,
\[
M\bP_{\tilde\mu}(e\in\underline{\ga})= \sum_{\ga\in\Ga}M \tilde{\mu}(\gamma) \ones_{\{e\in\ga\}}=\sum_{\ga\in\Ga}a_\gamma \ones_{\{e\in\ga\}}=|f(e)|=\si(e)\rho^*(e).
\]
Hence, by (\ref{eq:rhomu}), $\tilde\mu$ is an optimal pmf for the probabilistic interpretation of $\Mod_{2,\si}(\Ga(A,B))$, that is strictly supported on $\Ga$.
\end{proof}
Theorem \ref{existence} and the non-crossing minimal subfamily algorithm imply that there is a unique way of decomposing the current flow $f$ so that the paths $\gamma$ in the path decomposition do not cross. Moreover,  the corresponding coefficients $a_\gamma$ are computed by the algorithm.
\begin{prop}
Let $G=(V, E, \sigma, A, B)$ be a plane network with boundary and let $f$ be the current flow from $A$ to $B$ as described above. Then, $f$ can be decomposed as $\sum_{\ga\in\Ga} m_\gamma x_\gamma$, where the family $\Ga$ and the weights $\{m_\ga\}$ are output by the Non-Crossing Minimal Subfamily Algorithm \ref{alg:non-cross-min-subfam}.

Moreover, if $\sum_{\tilde{\ga}\in\tilde{\Ga}} a_{\tilde{\ga}}x_{\tilde{\ga}}$ is another path decomposition, and  $\tilde{\Ga}$ consists of non-crossing paths from $A$ to $B$, then $\tilde{\Ga}=\Ga$ and the $a_\ga$-weights and $m_\ga$-weights coincide. 
\end{prop}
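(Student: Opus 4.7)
The proof is essentially a bookkeeping exercise that chains together Proposition \ref{existence}, Lemma \ref{lem:connect-flow-mu}, and Lemma \ref{minimalfamilyuniqueness}. Since each of these results is already established, no significant new obstacle needs to be overcome; the main task is to transport the statements between the language of pmfs and the language of flow decompositions.

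For existence, I would proceed as follows. By Proposition \ref{existence}, the Non-crossing Minimal Subfamily Algorithm produces the unique non-crossing minimal subfamily $\Gamma$ together with an optimal pmf $\mu^*$ for the probabilistic interpretation of $\Mod_{2,\sigma}(\Gamma(A,B))$, and $\mu^*$ is strictly supported on $\Gamma$ with $\mu^*(\gamma) = m_\gamma/M$, where $M = \Mod_{2,\sigma}(\Gamma(A,B))$ and $m_\gamma$ are the weights generated by the algorithm. Applying the forward direction of Lemma \ref{lem:connect-flow-mu} to $\mu^*$ yields that all paths in $\Gamma$ flow in the same direction on every edge as the current $f$, and that $\sum_{\gamma \in \Gamma} M \mu^*(\gamma) x_\gamma$ is a path flow decomposition of $f$. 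Substituting $M\mu^*(\gamma) = m_\gamma$ gives the desired decomposition $f = \sum_{\gamma \in \Gamma} m_\gamma x_\gamma$.

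For uniqueness, suppose $\sum_{\tilde\gamma \in \tilde\Gamma} a_{\tilde\gamma} x_{\tilde\gamma}$ is another path flow decomposition of $f$ with $a_{\tilde\gamma} > 0$ for every $\tilde\gamma \in \tilde\Gamma$ and with $\tilde\Gamma$ consisting of non-crossing paths from $A$ to $B$. By the converse direction of Lemma \ref{lem:connect-flow-mu}, the normalization $\tilde\mu(\tilde\gamma) := a_{\tilde\gamma}/M$ defines an optimal pmf strictly supported on $\tilde\Gamma$. Since $\tilde\Gamma$ is a non-crossing subfamily that strictly supports an optimal pmf, Lemma \ref{minimalfamilyuniqueness} applies verbatim to yield $\tilde\Gamma = \Gamma$ and $\tilde\mu(\gamma) = m_\gamma/M$ for all $\gamma \in \Gamma$. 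Multiplying by $M$ gives $a_\gamma = m_\gamma$, completing the proof.

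The only subtle point worth double-checking is that the converse direction of Lemma \ref{lem:connect-flow-mu} indeed applies: that lemma requires the path decomposition to have all paths flowing in the same direction on each edge, which is automatic for a genuine path flow decomposition in the sense defined just above the lemma. Everything else is a direct quotation of previously established results, so I do not anticipate any nontrivial obstacle in writing this out.
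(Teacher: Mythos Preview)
Your proposal is correct and matches the paper's proof almost exactly; the uniqueness half is verbatim the paper's argument (converse of Lemma \ref{lem:connect-flow-mu} followed by Lemma \ref{minimalfamilyuniqueness}). The only minor difference is in the existence half: the paper argues directly from the mechanics of the algorithm (each top path peels off a nonnegative portion of $f$, and Lemma \ref{components} guarantees $f$ is exhausted at termination), whereas you route through Proposition \ref{existence} and the forward direction of Lemma \ref{lem:connect-flow-mu}. Both are valid and equally short.
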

\begin{proof}
Let $\Ga$  be the subfamily given by the non-crossing minimal subfamily algorithm.
Then, 
$\sum_{\ga\in\Ga} m_\gamma x_\gamma$ coincides with the current flow $f$ by construction. Indeed,
the algorithm routinely picks the top path $\ga$ and finds a weight $m_\ga$ so that the corresponding path flow satisfies $0\le m_\ga x_\ga \le f$ on every edge. By Lemma \ref{components}, when the algorithm terminates the flow $f$ on every  edge has been exhausted.

Now suppose $\sum_{\tilde{\ga}\in\tilde{\Ga}} a_{\tilde{\ga}}x_{\tilde{\ga}}$ is another path decomposition for $f$,  where $\tilde{\Ga}$ consists of non-crossing paths from $A$ to $B$. 
By Lemma \ref{lem:connect-flow-mu}, defining $\tilde{\mu}(\tilde{\ga}):=a_{\tilde{\ga}}/M$, with $M:=\Mod_{2,\si}(\Ga(A,B))$, yields an optimal pmf that is strictly supported on $\tilde{\Ga}$. 
Then, by Lemma \ref{minimalfamilyuniqueness}, we obtain that $\Ga=\tilde{\Ga}$, and $\tilde{\mu}(\ga)=m_\ga/M$, for every $\ga\in\Ga$. Thus, the $a_\ga$-weights and $m_\ga$-weights coincide.
\end{proof}
Another consequence of the non-crossing minimal subfamily algorithm is that it gives an alternative way to draw a rectangle-packing given a planar electrical network, as in the famous result from \cite{brooks-smith-stone-tutte:duke1940}. Namely, given a weighted plane graph and two arcs of nodes $A$ and $B$ bordering the unbounded face, the resulting current flow from $A$ to $B$, interpreting the edge-weights as electric conductances, gives rise to a tiling of a rectangle by a collection of smaller rectangles that have certain precise combinatorial properties derived from the graph, see \cite[Theorem 6.1]{lovasz2019} for a precise description of these rectangle tilings. See also \cite[Lemma 5.4]{prasolov-skopenkov:jcombthsera2011} for similar tilings of more general polygons in the plane. We state the result informally in the following remark and will sketch a proof for it below.
\begin{remark}\label{rem:rect-pack}
Let $G=(V, E, \sigma, A, B)$ be a plane network with boundary. Let $\Ga(A,B)$ be the family of all paths from $A$ to $B$ in $G$.
Set $M:=\Mod_{2,\si} \Ga(A,B)$ and consider the rectangle $\mathcal{R}=[0,1]\times[0,M]$.
Then, the non-crossing minimal subfamily Algorithm \ref{alg:non-cross-min-subfam} can be used to tile $\mathcal{R}$ with a collection $\{R_e\}_{e\in E}$ of smaller rectangles  (possibly degenerating to a point), so that each rectangle corresponds to an edge $e\in E$, and every maximal vertical segment composed of sides of rectangles corresponds to one or more nodes in the graph.
 
The reason why we cannot guarantee that the maximal vertical segments composed of sides of rectangles are in bijection with the nodes of the graph, is because such segments might be a union of abutting subsegments, each corresponding to a node. This will be explained in the sketched proof explained below. See also \cite[Section 6.1]{lovasz2019}, in particular \cite[Figure 6.2]{lovasz2019}, where the expression ``resolving $4$-fold corners'' is used.
\end{remark}
\begin{figure}[H]
\begin{minipage}{0.45\textwidth}
\includegraphics[scale=0.5,trim={0 0 0 0},clip]{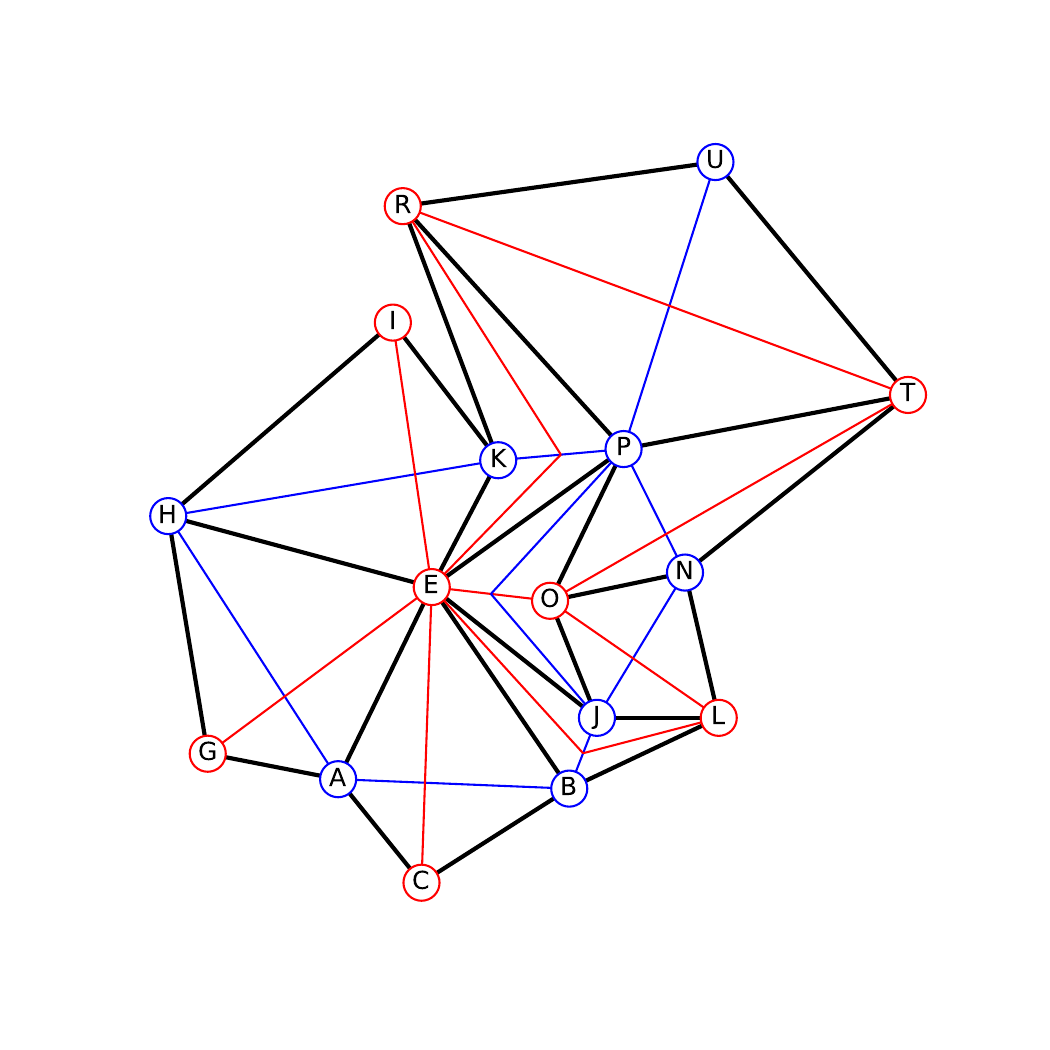}
\end{minipage}
\begin{minipage}{0.45\textwidth}
\includegraphics[scale=0.2,trim={5cm 5cm 5cm 5cm},clip]{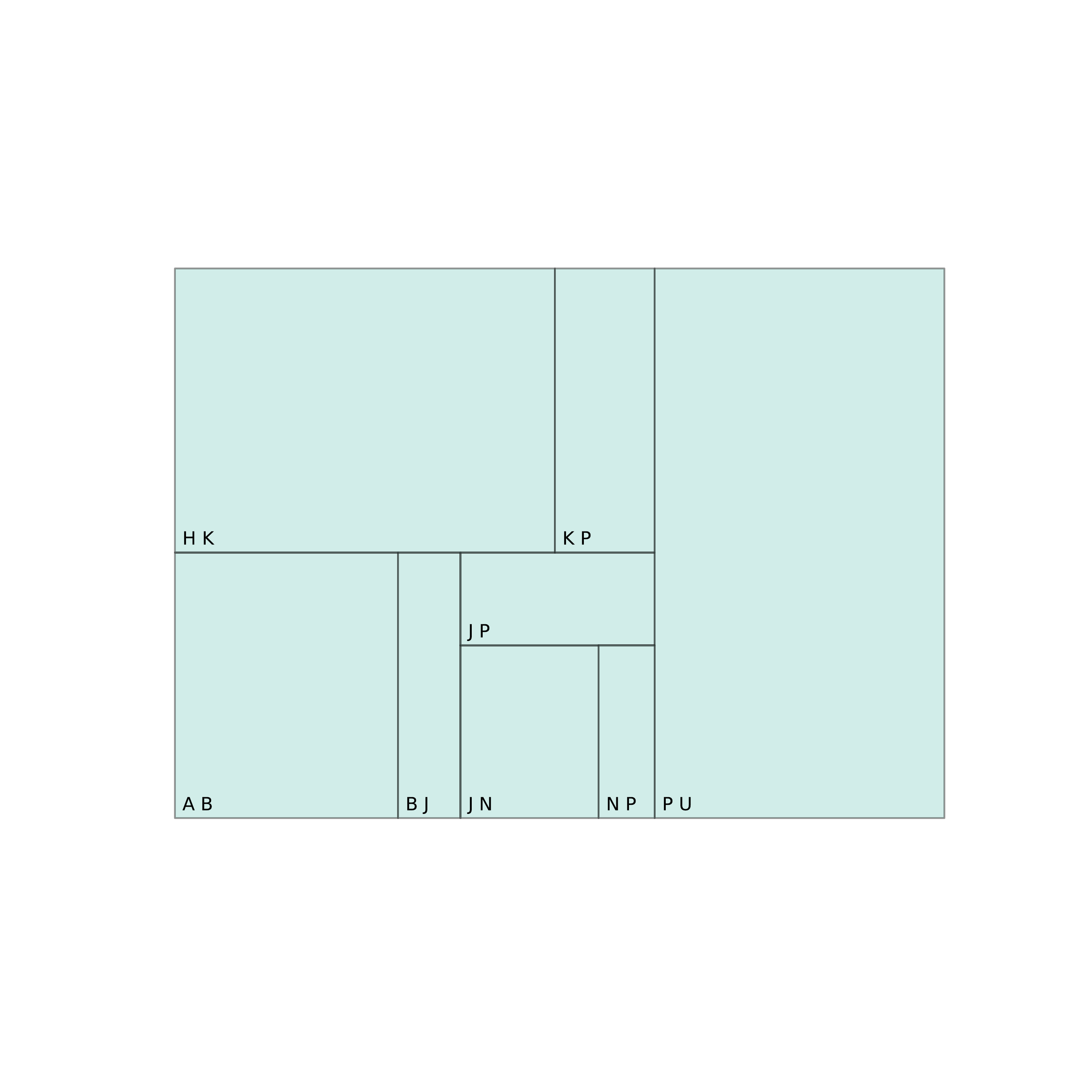}
\end{minipage}
\caption{This is an example of Remark \ref{rem:rect-pack} in the case of orthodiagonal maps. For the map on the left, the primal graph (in blue) is given by the nodes $A,B,H,J,K,N,P,U$. Here we compute the modulus $M$ of the family of curves connecting the set $\{A,H\}$ to the set $\{U\}$. In this case the quantity $\tilde{h}(e)$ coincides with the harmonic function defined on the dual graph with $\tilde{h}=0$ on the set $\{C,L,T\}$ and $\tilde{h}=M$ on the set $\{I,R\}$.}\label{fig:rectpack}
\end{figure}
\begin{proof}[Sketch of proof for Remark \ref{rem:rect-pack}]
Let  $h: V \to \mathbb{R}$ be the discrete harmonic on $V \setminus ( A \cup B )$ with 
$h|_A=0$ and $h|_B=1$. Recall from (\ref{eq:connection-harmod}) and  (\ref{rhofromharmonic}), that $\cE(h)=\Mod_{2,\si}\Ga(A,B)$, and that, for each edge $e=xy \in E$, the extremal density $\rho(e)$ for $\Mod_{2,\si}\Ga(A,B)$, satisfies $\rho(e)=|h(x)-h(y)|$.

For each edge $e= xy \in E$, define a rectangle $R_e$ is of the form 
\begin{equation}\label{eq:rectangles-re}
R_e:=[h(e),h(e)+\rho(e)]\times[\tilde{h}(e),\tilde{h}(e)+\si(e)\rho(e)],
\end{equation}
where $h(e):=\min\{h(x),h(y): e=xy\}$, and $\tilde{h}(e)$ is a height that we will compute using the non-crossing minimal subfamily Algorithm \ref{alg:non-cross-min-subfam}.

Note  that the rectangle $R_e$ degenerates to a point when the corresponding value of $\rho(e)$ is equal to zero. Furthermore, the area of each rectangle $R_e$ is equal to $\sigma(e) \rho^2(e)$, which is the contribution of the edge $e$ to the modulus $\Mod_{2,\si}\Ga(A,B)$, since 
$$\sum_{e \in E} \sigma(e) \rho^2(e) = \Mod_{2,\si} \Gamma(A,B).$$

To show how $\tilde{h}(e)$ is defined and deduce that the rectangles $R_e$ defined in (\ref{eq:rectangles-re}) form a tiling, we perform the non-crossing minimal subfamily Algorithm \ref{alg:non-cross-min-subfam} from the bottom up and write $\ga_1,\dots,\ga_n$ for the output paths in the order that they are added to $\Ga$. So, $\ga_1$ is the bottom path. We also write $m_j:=m(\ga_j)$ for the corresponding weights computed by the algorithm.

We begin by associating to $\ga_1$  the strip $(0,1)\times(0,m_1)$. Writing $\ga_1=x^1_0\ \cdots\ x^1_{k_1}$ we trace the horizontal segments $[h(x^1_j),h(x^1_{j+1})]\times\{0\}$, for $j=0,\dots,k_1-1$, as well as the vertical segments $\{h(x^1_j)\}\times [0,m_1]$, for $j=0,\dots,k_1$.  For every edge $e= x^1_j x^1_{j+1}$,  $j=0,\dots,k_1-1$, set $\tilde{h}(e)=0$.
Finally, for every edge $e= x^1_j x^1_{j+1}$ that attains the minimum in step 3 of the algorithm, trace the horizontal segment $[h(x^1_j),h(x^1_{j+1})]\times\{m_1\}$, thus closing up the rectangle corresponding to these edges.

Rather than a formal proof by induction, we now describe the second iteration. Consider the second path $\ga_2$ and add the strip $(0,1)\times(m_1,m_1+m_2)$. Note that if an edge $e$ was in the bottom path $\ga_1$ in the previous iteration and did not attain the minimum in step 3, then it will again be in the bottom path $\ga_2$.  Moreover, if we consider a maximal set of consecutive edges, $e_1,\cdots, e_r$ 
in $\ga_1$,  that attained the minimum in step 3, they
form a subpath $\ga_1':=x_{j_1}^1 x_{j_1+1}^1 \cdots x_{j_1+r}^1$ of $\ga_1$.  Also, the tops of the rectangles corresponding to the edges $e_i$ in $\ga_1'$ were closed up in the previous iteration, and their union forms a connected horizontal cut $K$. Back on the graph $G$ the edges $e_1,\cdots,e_r$ are removed, so the second path $\ga_2$ replaces $\ga_1'$ with a new path of the form $\ga_2':=x^2_{k_1},\dots,x^2_{k_l},\dots,x^2_{k_L}$, where the first and last node belong to $A\cup B\cup \{\ga_1\}$ (depending on whether the face corresponding to $\ga_1'\cup\ga_2'$ abuts $A\cup B$ or not). In particular, we will have $h(x^2_{k_1})<h(x^2_{k_l})<\cdots<h(x^2_{k_L})$.  
Also, for every edge $e=x^2_j x^2_{j+1}$, $j=k_1,\dots,k_L-1$, we set $\tilde{h}(e)=m_1$.

Now, further subdivide the cut $K$ and add vertical segments of the form $\{h(x^2_{k_l})\}\times [m_1,m_1+m_2]$. Further, extend the vertical segments from the previous step that were not bounding an edge realizing the minimum. Finally, again close up the rectangles corresponding to the edges in $\ga_2$ that attain the minimum in step 3 of the algorithm. Then, we are ready to repeat this process.

Note that each face in the plane network $G$ corresponds in the tiling to a horizontal cut which is a union of sides of smaller rectangles. On the other hand, every node in $G$ gives rise to a vertical segment in the tiling, which is also a union of sides of smaller rectangles. These vertical segments may abut each other in some special cases, namely when $h$ has the same value at a node in the subpaths $\ga_1'$ and $\ga_2'$. That's when the ``$4$-fold corners'' situation mentioned above arises.
  
When the algorithm ends we will have packed the rectangle $[0,1]\times[0,\tilde{M}]$, where
\[
\tilde{M}:=\sum_{j=1}^n m_j.
\]
However, the fact that $\tilde{M}=M=\Mod_{2,\si} \Gamma(A,B)$ follows from Proposition \ref{prop:non-cross-min-subfam} (i).
\end{proof}
Both rectangle tilings in Figure \ref{fig:rectpack} and Figure \ref{fig:rect-pack} were obtained using code implementing  the non-crossing minimal subfamily Algorithm \ref{alg:non-cross-min-subfam} and Remark \ref{rem:rect-pack}.
Note that Remark \ref{rem:rect-pack} is stated for general planar graphs $G$. When, $G$ is the primal graph of an othodiagonal map, then the quantity $\tilde{h}(e)$ in the sketched proof can be computed using the harmonic function defined on the dual graph with $\tilde{h}=0$ on the bottom side and $\tilde{h}=M$ on the top side.

\subsection{A numerical example}\label{sec:numerical-example}

We have implemented the non-crossing minimal subfamily algorithm, and some of its consequences, numerically. Consider the family of paths connecting the arcs $ab$ and $cd$ of the cross domain on the left of Figure \ref{fig:cross_domain}.
\begin{figure}[H]
\includegraphics[width=0.33\textwidth]{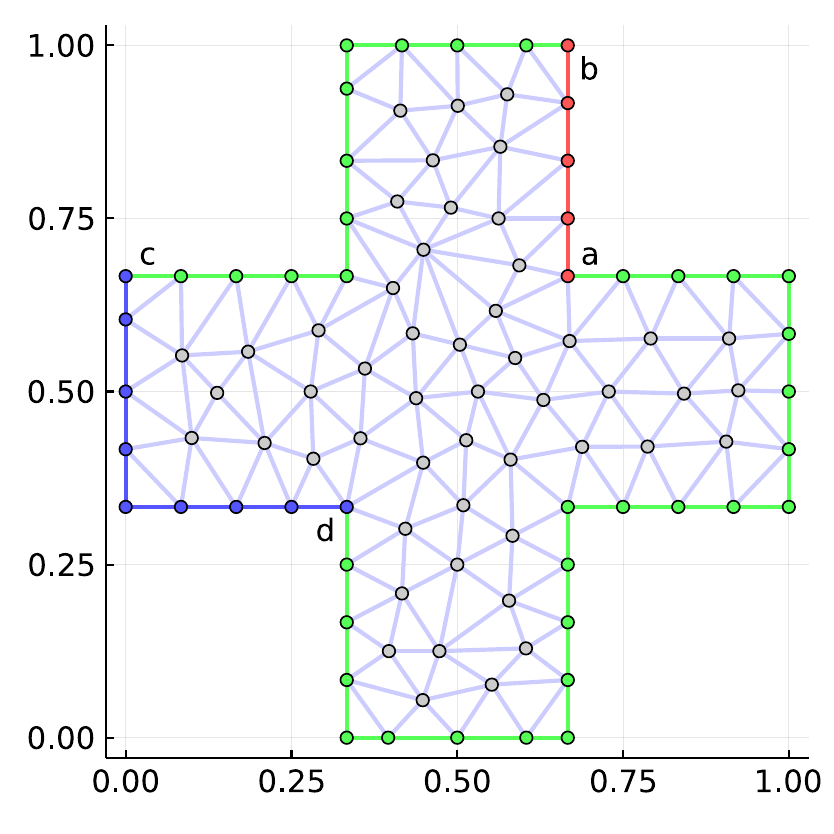}%
\includegraphics[width=0.33\textwidth]{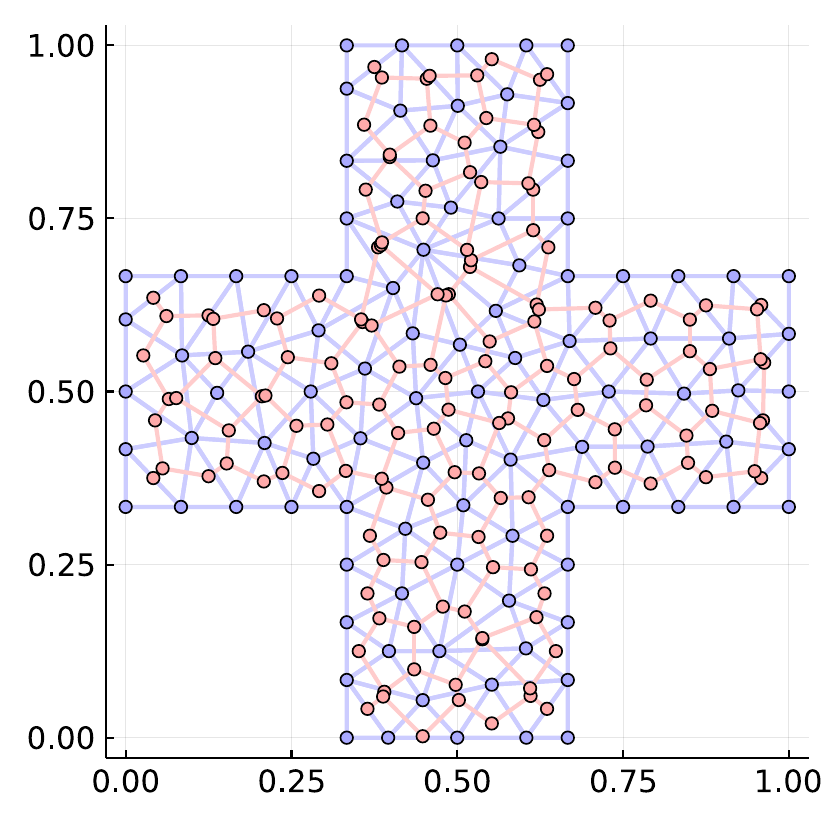}%
\includegraphics[width=0.33\textwidth]{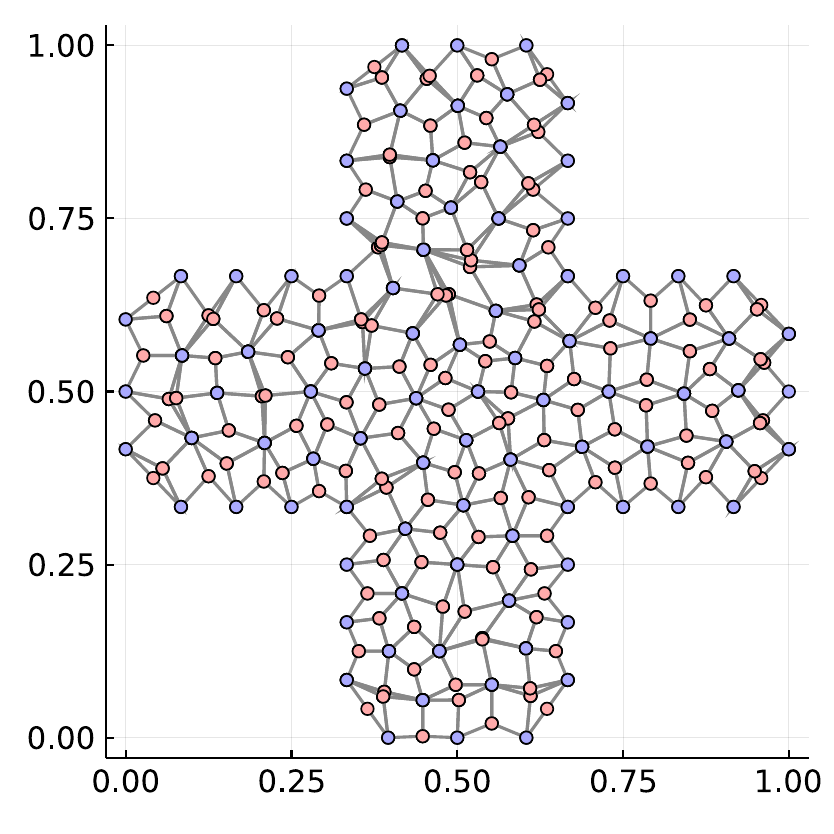}
\caption{Left: a Delaunay triangulation of a cross domain; Center: the triangulation shown together with its dual Voronoi graph; Right: the corresponding orthodiagonal map.}\label{fig:cross_domain}
\end{figure}
The figure shows a Delaunay triangulation of the domain, produced by the software package Triangle~\cite{shewchuk96b}.  The circumcenters of the triangles form a dual grid known as the  Voronoi diagram. In the center of Figure~\ref{fig:cross_domain}, we show the same triangulation along with its Voronoi dual.
The edges of the triangulation are blue, and the edges connecting two circumcenters are red. Note that the blue and red edges are orthogonal, hence can be thought as the orthogonal diagonals of a quadrilateral. In particular, an orthodiagonal map is obtained by connecting each blue node with a neighboring red node. We have drawn the corresponding orthodiagonal map on the right in Figure \ref{fig:cross_domain}. With some minor caveats (see below), the Delaunay triangulation can be thought as the primal graph of the orthodiagonal map and the Voronoi diagram as the dual graph. See also \cite[Section 5.1]{skopenkov:adv-math2013}.

\subsubsection{Approximating the modulus}

In order to approximate the modulus of the family of curves, we first choose a size parameter, $\epsilon>0$, which controls the size of the triangles in the triangulation.  The triangulation is constructed to satisfy the Delaunay property as well as two additional constraints: no triangle may have area larger than $\epsilon^2$ and no angle in any triangle may be smaller than 20$^\circ$.  These additional constraints ensure that the edges of the triangulation are of order $O(\epsilon)$, which is needed to ensure convergence of the discrete modulus to the continuous modulus.

Next, we need to assign appropriate conductances to the edges in the triangulation (Figure~\ref{fig:cross_domain}, left).  As stated above, the triangulation is \emph{almost} the graph that is obtained from the blue vertices of the quadrilateral mesh (Figure~\ref{fig:cross_domain}, right) by connecting vertices that belong to the same quadrilateral.  This does not quite reproduce the triangulation on the left;  notably absent are the edges on the boundary of the domain.  As a consequence, any vertex of the triangulation that is incident only upon boundary edges is absent from the quadrilateral mesh.  In the example shown, all convex ``corners'' are excluded from the quadrilateral mesh.  Fortunately, there is a relatively simple way to address these issues at the boundary.

Most edges of the triangulation will correspond to diagonals in the quadrilateral mesh.  For these, we can use the formulas specified in Section~\ref{orthodiagIntro}.  Namely, we assign the conductance, $\sigma$, on such an edge to be the ratio of the length of the dual edge to the length of the edge in question.  To all boundary edges, we assign $\sigma=\epsilon/|E_b|$, where $E_b$ is the set of boundary edges.  Since the harmonic function $h$ is bounded between $0$ and $1$, this means, by Ohm's law, that the contribution of all boundary edges to the current flow is $O(\epsilon)$, ensuring that the contribution of the extraneous boundary edges to the approximation is negligible as $\epsilon\to 0$.  With this choice of conductances, we can then solve the discrete Dirichlet problem to approximate the harmonic function $h$.

\subsubsection{Non-crossing paths}

Once the discrete harmonic function is computed, the non-crossing minimal family can be determined using Algorithm \ref{alg:non-cross-min-subfam}.  The left side of Figure~\ref{fig:cross-curves} shows the result of this numerical study for the choice $\epsilon=0.005$.  The algorithm produced a sequence of $34403$  non-crossing paths from the boundary segment $ab$ to the boundary segment $cd$.  We have plotted 21 of these paths, plotting roughly every $1720$-th path in the sequence.  The first and last path trace the boundary segments between $a$ and $d$ and between $b$ and $c$.  In the middle of Figure~\ref{fig:cross-curves}, we show the corresponding curves as computed by Driscoll's Schwarz–Christoffel Toolbox for Matlab.  Using the toolbox code, we approximated a conformal mapping between the cross and a rectangle with $ab$ and $cd$ sent to the vertical sides of the rectangle.  Inverting that map on horizontal lines produced the image.  The image on the right side of the figure shows the superposition of both sets of curves. 

\begin{figure}
\includegraphics[width=0.33\textwidth]{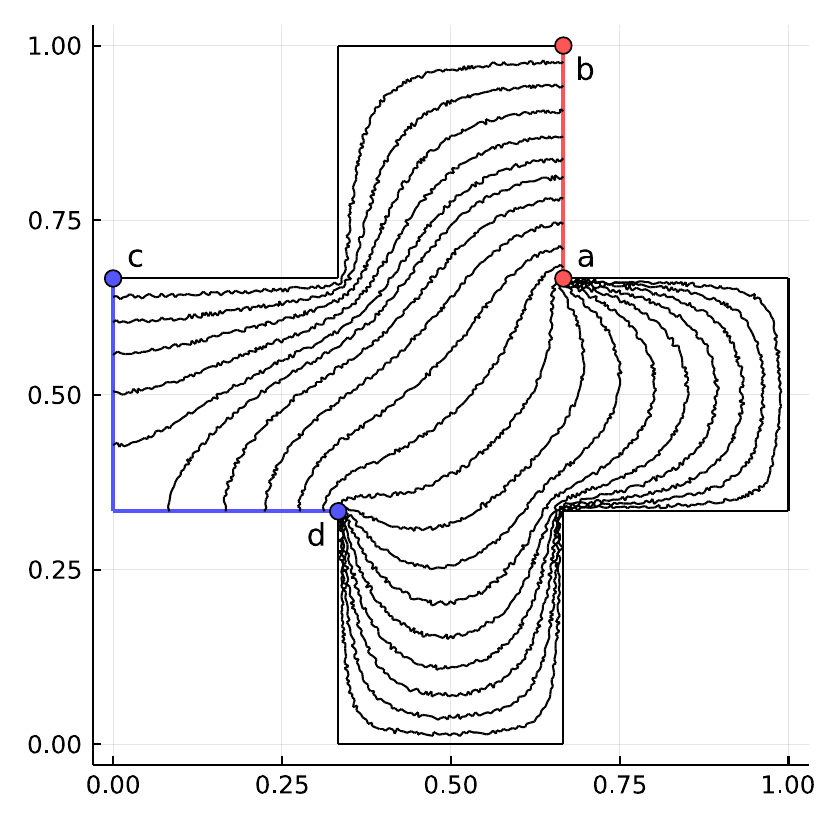}%
\includegraphics[width=0.33\textwidth]{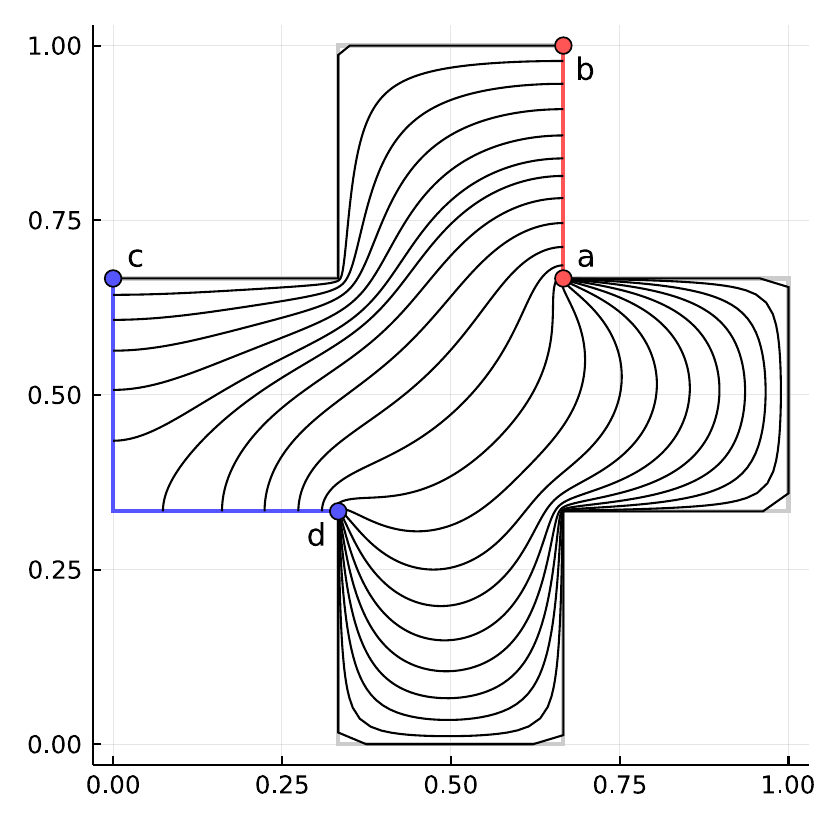}%
\includegraphics[width=0.33\textwidth]{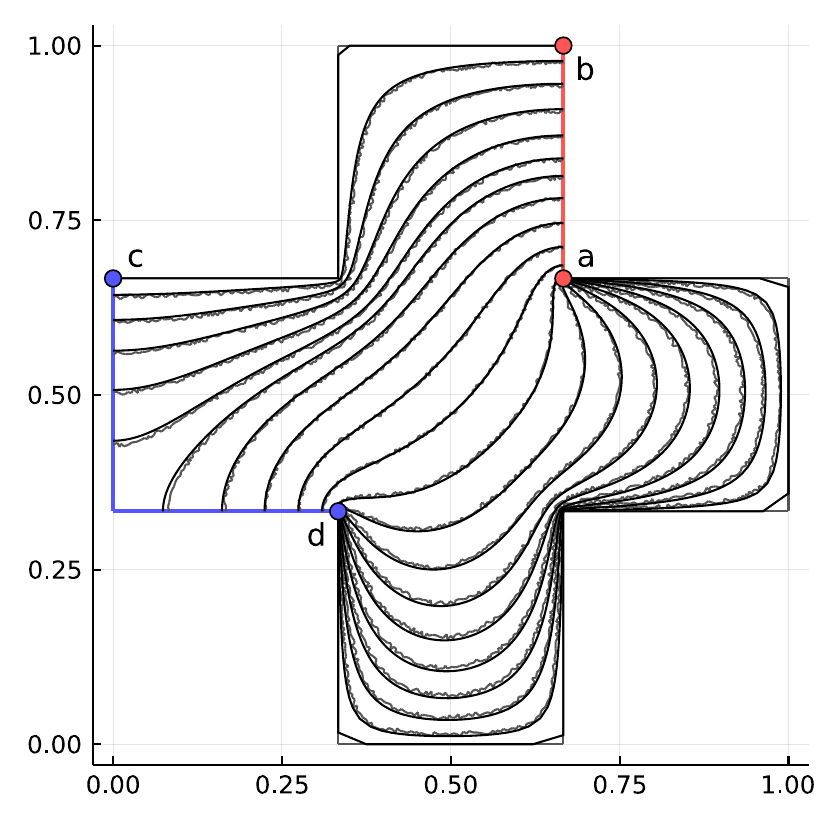}
\caption{Left: non-crossing paths for a triangulation of the cross domain; Center: the corresponding curvess as computed by the Schwarz-Christoffel Toolbox in Matlab; Right: the superposition of both sets of curves.}
\label{fig:cross-curves}
\end{figure}

Using the non-crossing path decomposition, we are able to assign a two-dimensional coordinate to each vertex in the graph.  The $x$-coordinate of each vertex is simply the value of the discrete harmonic function $h$ on that vertex.  To obtain the $y$-coordinate of a vertex $v$, we locate the first path in the sequence passing through $v$ and sum the flows of all previous paths.  The rectangle tiling for this choice of $\epsilon$ is too dense to draw, but the left side of Figure~\ref{fig:rect-pack} shows a rectangle tiling for $\epsilon=0.075$.  By Theorem \ref{HarmConvThm} and Theorem \ref{cor4}, this choice of coordinates provides a mapping on the vertices of the triangulation that approximates the conformal map to the rectangle.  The right side of Figure \ref{fig:rect-pack} shows the non-crossing paths in these coordinates (for the finer, $\epsilon=0.005$, triangulation).  Except near $a$ and $d$, where there is large distortion, these curves are nearly horizontal, as we would expect. Incidentally, the rectangle tiling (Figure \ref{fig:rect-pack}, left) gives an indication of where one may refine the original triangulation, since the largest distortion corresponds to very large rectangles in the tiling. 

\begin{figure}[H]
\includegraphics[width=0.5\textwidth]{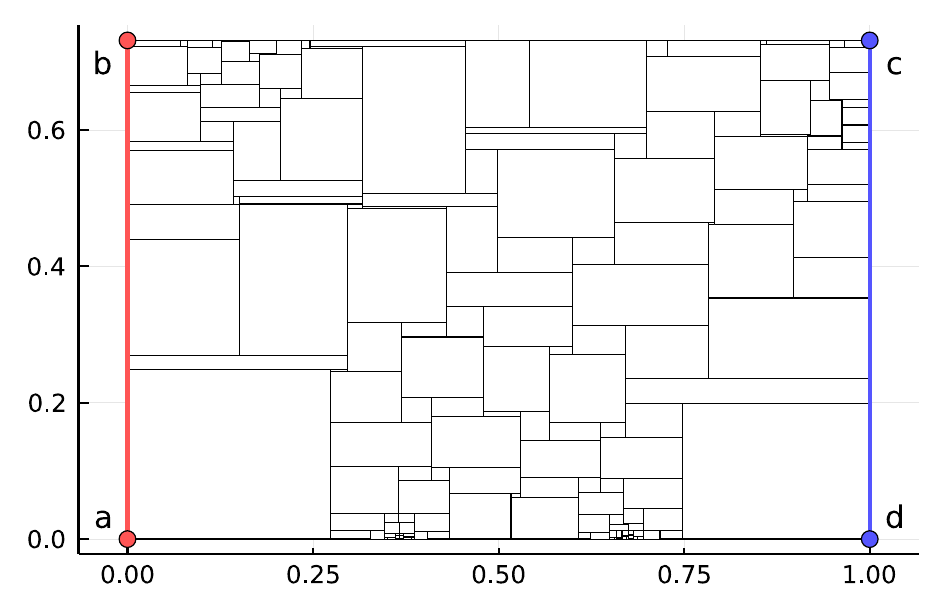}
\includegraphics[width=0.5\textwidth]{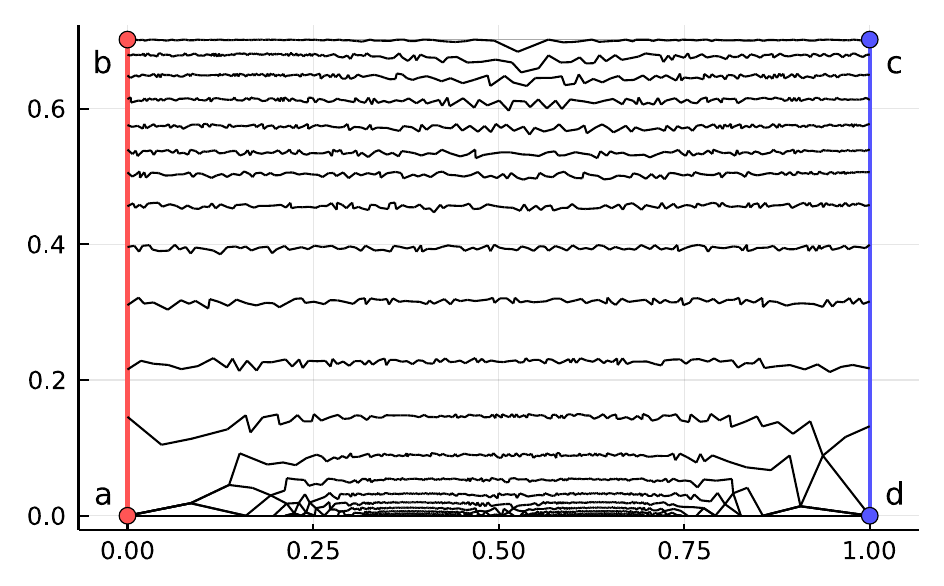}
\caption{Left: the rectangle tiling associated with a triangulation of the cross domain shown in Figure~\ref{fig:cross_domain}; Right: the images of the non-crossing paths from Figure~\ref{fig:cross-curves} in coordinates given by the rectangle tiling.}\label{fig:rect-pack}
\end{figure}

\section{Convergence of discrete harmonic functions}\label{harmonic}

In this section, we prove Theorem \ref{HarmConvThm} by adapting the arguments in \cite{gurel-jerison-nachmias:advm2020} to our setting and then we prove Theorem \ref{cor4} using Fulkerson duality for modulus (Lemma \ref{lem:fulkerson-duality}).
 
We begin by recalling our assumptions:
\begin{itemize}
\item The Jordan domain $\Omega$ is an analytic quadrilateral with boundary arcs $A, \tau_1, B, \tau_2$.  

\item The orthodiagonal map  $G=(V^\bullet \sqcup V^\circ, E)$ with boundary arcs $S_1, T_1, S_2, T_2$ 
has maximal edge length at most $\epsilon$ 
and approximates $\Omega$ within distance $\delta$. Recall that by Definition \ref{def:approx-smooth-quad}, we ask that $\overline{\Om}\subset \hat{G}$.

\item The continuous harmonic function $h_c$ is defined by  $h_c = \text{Re } \psi$, where $\psi$ is a conformal map of $\Omega$ onto the interior of the rectangle $R=[0,1] \times [0,m]$, taking the arcs $A$ and $B$ to the vertical sides of the rectangle.

\item  The function $h_d : V^\bullet \to \mathbb{R}$ is discrete harmonic on ${\rm int}(V^\bullet)$ with 
$h_d(z) = 0$ for $z \in S_1 \cap V^\bullet$ and $h_d(z) = 1$ for $z \in S_2 \cap V^\bullet$.

\item By scale invariance, we may assume diam$(\Omega) = 1$. 
\end{itemize}
Set $\lambda := \epsilon \vee \delta$.  Then under these assumptions, our goal is to show for all $z \in V^\bullet \cap \overline{\Omega}$
\begin{equation} \label{maingoal}
|h_d(z) - h_c(z)| \leq \frac{C}{\log^{1/2}\left(1/ \lambda \right)},
\end{equation}
where  $C$ depends only on the analytic quadrilateral $\Omega$
(i.e.~$C$ depends on $\Omega$ and the boundary arcs $A, \tau_1, B, \tau_2$).
Note that by assumption $\lambda < \diam(\Omega) =1$.
In our proof, we will show that this equation holds whenever $0<\lambda < C'$ for some $C'>0$ which depends only on $\Omega$.  
However, since $h_d$ and $h_c$ are both bounded between 0 and 1 (by the discrete and continuous maximum principles), we may adjust the constant $C$ in \eqref{maingoal} so that it holds for all $\l \in (0,1)$.

Lastly, we note that we will use $C$ to refer to constants that are independent of $\l$ but may depend on the analytic quadrilateral $\Omega$.  The exact value of these constants may change from line to line.
Further,  we will use  the notation $D_r(z)$ to mean the closed disk of radius $r$ centered at $z$.

\subsection{Modified toolbox}

We will be able to use some of the tools of \cite{gurel-jerison-nachmias:advm2020} as they are stated in that paper.  
However, we need to modify a handful of the results  to apply to our setting, and that is the goal of this section. Throughout this section, we will use the terminology of flows as described in Section \ref{sec:discrete-harm-fnct}.

There is one last piece of terminology that we need to introduce before we begin.
Let $G= (V^\bullet \sqcup V^\circ, E)$ be
a finite  orthodiagonal map 
with  boundary arcs $S_1, T_1, S_2, T_2$.
We say that $G_1$ is a {\it sub-orthodiagonal map of} $G$ when 
 $G_1$ is an othordiagonal map contained in $G$ and  
 the topological boundary $\partial G_1$ intersects $T_1\cup T_2$. 
The boundary
$   \partial V_1^\bullet$ are the points from  $V_1^\bullet$  that are in  $\partial G_1 \setminus \left(  T_1 \cup T_2 \right) $ and  the boundary $   \partial V_1^\circ$ are the points in $V_1^\circ \cap (T_1 \cup T_2)$.
Note that for $G_1$ to be a sub-orthodiagonal map, we do not require that $\partial G_1 \cap T_k$ is connected.
When we prove Theorem \ref{HarmConvThm} in Section \ref{HarmConvThmProof}, 
we will work with a sub-orthodiagonal map $G_1$ satisfying
$\dist(G_1, A\cup B) \geq \lambda^{1/4}$.  
With this assumption, Lemma  \ref{reflection} implies that for $\lambda$ small enough $\psi$ can be analytically extended to a domain containing $\tilde{G}_1$ so that $\dist(\hat{G}_1, \partial \mathcal{O}) \geq \frac{5}{16} \lambda^{1/4}$.
Since it may help to have a visual, see Figure \ref{SubOrthoPic} for a sketch of this setting.

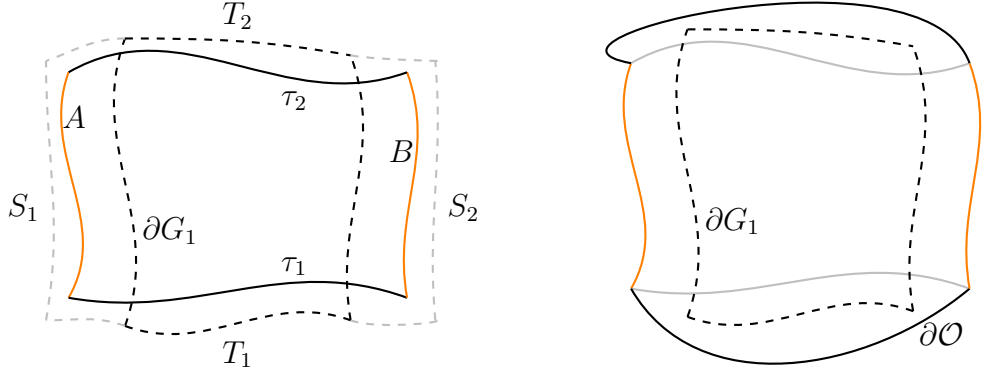
\begin{figure}
\centering
\begin{tikzpicture}[scale=1.5]

\draw[thick,orange] (0,0) to [out=60,in=-110] (0,2);
\draw[thick] (0,0) to [out=-10,in=160] (3,0);
\draw[thick,orange] (3,0) to [out=100,in=-70] (3,2);
\draw[thick] (0,2) to [out=30,in=200] (3,2);

\node at (0.05, 1.6) {$A$};
\node at (2.95, 1.3) {$B$};
\node at (2, 0.28) {$\tau_1$};
\node at (2, 1.75) {$\tau_2$};

\node at (-0.4, 0.8) {$S_1$};
\node at (3.5, 0.8) {$S_2$};
\node at (1.5, -0.5) {$T_1$};
\node at (1.5, 2.5) {$T_2$};

\draw[thick, dashed] (0.5,-0.25) to [out=70,in=-110] (0.5, 2.3);
\draw[thick, dashed] (0.5,-0.25) to [out=-20,in=160] (2.5, -0.2);
\draw[thick, dashed] (2.5, -0.2) to [out=100,in=-70] (2.5, 2.15);
\draw[thick, dashed] (0.5, 2.3) to [out=0,in=170] (2.5, 2.15);

\node at (0.9, 0.6) {$\partial G_1$};

\draw[thick, dashed, lightgray] (0.5,-0.25) to [out=160,in=0] (-0.2, -0.2);
\draw[thick, dashed, lightgray] (0.5,2.3) to [out=180,in=20] (-0.2, 2.1);
\draw[thick, dashed, lightgray] (-0.2,-0.2) to [out=80,in=-90] (-0.2, 2.1);
\draw[thick, dashed, lightgray] (2.5, -0.2) to [out=-20,in=0] (3.2, -0.2);
\draw[thick, dashed, lightgray] (2.5, 2.15) to [out=-10,in=0] (3.2, 2.1);
\draw[thick, dashed, lightgray] (3.25, -0.2) to [out=95,in=-85] (3.25, 2.1);

\node at (0,-0.85){ ~};

\end{tikzpicture}
\begin{tikzpicture}[scale=1.5]

\draw[thick,orange] (0,0) to [out=60,in=-110] (0,2);
\draw[thick, lightgray] (0,0) to [out=-10,in=160] (3,0);
\draw[thick] (0,0) to [out=-60,in=220] (3,0);
\draw[thick,orange] (3,0) to [out=100,in=-70] (3,2);
\draw[thick, lightgray] (0,2) to [out=30,in=200] (3,2);
\draw[thick] (0,2) to [out=170,in=110] (3,2);

\node at (2.75, -0.4) {$\partial \mathcal{O}$};
\node at (0.9, 0.6) {$\partial G_1$};

\draw[thick, dashed] (0.5,-0.25) to [out=70,in=-110] (0.5, 2.3);
\draw[thick, dashed] (0.5,-0.25) to [out=-20,in=160] (2.5, -0.2);
\draw[thick, dashed] (2.5, -0.2) to [out=100,in=-70] (2.5, 2.15);
\draw[thick, dashed] (0.5, 2.3) to [out=0,in=170] (2.5, 2.15);

\end{tikzpicture}

\caption{
A sketch of the setting encountered in the proof of Theorem  \ref{HarmConvThm}.  
Left: An analytic quadrilateral $\Omega$ with boundary arcs $A, B$ (orange) and $\tau_1, \tau_2$ (black)
is approximated by an orthodiagonal map $G$ 
with  boundary arcs $S_1, T_1, S_2, T_2$.
The boundary of the sub-orthodiagonal map $G_1$ is dashed black and $\partial G \setminus \partial G_1$ is dashed gray. 
Right: $G_1$ is contained within the extended domain $\mathcal{O}$ of Lemma \ref{reflection}.
}\label{SubOrthoPic}
\end{figure}

We begin with a modified version of Proposition 5.2 of \cite{gurel-jerison-nachmias:advm2020}, which tells us that the energy of $h_c-h$ is small, for $h_c$ continuous harmonic and $h$ discrete harmonic with matching boundary values.  Note that we we will be working with a sub-orthodiagonal map $G_1$, but for simplified notation, we use the notation $V$ and $E$ for the vertices and edges of $G_1$.

\begin{prop}\label{modified5.2}
Assume that $\Omega, \psi, h_c = \Re \psi$, and $G$ are defined as in  Theorem \ref{HarmConvThm}, 
assume $G_1=(V^\bullet \sqcup V^\circ, E)$ is a sub-orthodiagonal map of $G$,
and assume
$\psi$ can be analytically extended to an open set $\mathcal{O}$ containing $\tilde{G}_1$.
Let $h : V^\bullet \to \mathbb{R}$ be discrete harmonic on ${\rm int}(V^\bullet)$ and satisfy $h = h_c$ on $\partial V^\bullet$.
Set $$L = || \nabla h_c||_{\infty, \hat{G}_1} \;\; \text{ and } \;\;M = || H h_c ||_{\infty, \tilde{G}_1}.$$
Then 
$$\mathcal{E}_1^\bullet(h_c - h) \leq C \left[ M^2 \epsilon^2 + L^2(\epsilon \vee \delta) \right],$$
where $C$ depends only on  $\area(\hat{G})$ and $\Length(\tau_1 \cup \tau_2)$ and $\mathcal{E}_1^\bullet$ is the energy in the graph $G_1^\bullet$.

\end{prop}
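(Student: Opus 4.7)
The plan is to adapt the strategy of Proposition~5.2 in \cite{gurel-jerison-nachmias:advm2020} to the sub-orthodiagonal setting, where the free boundary $T_1 \cup T_2$ demands additional boundary analysis. Setting $f := h_c - h_d$, I first exploit that $f$ vanishes on $\partial V^\bullet_1$ while $h_d$ is $\sigma$-harmonic on $\mathrm{int}(V^\bullet_1)$: two applications of discrete summation by parts then yield the key identity
\begin{equation*}
\mathcal{E}_1^\bullet(f) = \sum_{xy \in E^\bullet_1} \sigma(xy)\bigl[h_c(y) - h_c(x)\bigr]\bigl[f(y) - f(x)\bigr] = -\sum_{v \in \mathrm{int}(V^\bullet_1)} f(v)\,D(v),
\end{equation*}
where $D(v) := \sum_{w:\, vw \in E^\bullet_1} \sigma(vw)[h_c(w) - h_c(v)]$ measures the failure of $h_c$ to be discrete $\sigma$-harmonic at $v$. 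The first equality comes from testing $\langle dh_d, df\rangle_\sigma = 0$; the second comes from summing by parts with $f$ as test function.

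Next I would bound $|D(v)|$ locally via the finite-volume interpretation of Section~\ref{orthodiagIntro}, using that Lemma~\ref{reflection} extends $\psi$, and hence $h_c = \mathrm{Re}\,\psi$, analytically to a neighborhood $\mathcal{O}$ of every dual cell of $G_1$. Two regimes occur. For a \emph{bulk} primal vertex $v$ whose dual cell $D_v$ sits strictly inside $\hat{G}_1$ away from $T_1 \cup T_2$, the boundary integral $\int_{\partial D_v} \partial_n h_c\,dS$ vanishes by harmonicity of $h_c$, and the quadrature error in the discrete approximation is of order $M\epsilon^2$ by Taylor expansion against the Hessian bound $M$. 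For a \emph{boundary-adjacent} vertex $v$ whose dual cell abuts $T_1 \cup T_2$, I close the open side of $\partial D_v$ with a segment of $T_k$ of length $O(\epsilon \vee \delta)$, apply the divergence theorem on the resulting closed region (using that $h_c$ is harmonic on $\mathcal{O} \supset \hat{G}_1$), and control the flux across the closing segment using the gradient bound $L$, obtaining $|D(v)| \leq C(M\epsilon^2 + L(\epsilon \vee \delta))$.

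To assemble the global bound I would apply Cauchy--Schwarz to $\mathcal{E}_1^\bullet(f) = -\sum_v f(v) D(v)$ with vertex weights comparable to the dual cell areas, then absorb the resulting $\ell^2$-norm of $f$ via a discrete Poincar\'e inequality --- available precisely because $f$ vanishes on $\partial V^\bullet_1$ --- producing $\mathcal{E}_1^\bullet(f) \leq C \sum_v \alpha(v)^{-1} D(v)^2$ for suitable weights $\alpha(v) \asymp \area(D_v)$. Summing the bulk contributions over the $O(\area(\hat{G})/\epsilon^2)$ interior cells produces $CM^2 \epsilon^2 \area(\hat{G})$, and summing the boundary contributions over the $O(\mathrm{Length}(\tau_1 \cup \tau_2)/\epsilon)$ boundary-adjacent cells produces $CL^2(\epsilon \vee \delta)\,\mathrm{Length}(\tau_1 \cup \tau_2)$, yielding the claimed bound with constant depending only on $\area(\hat{G})$ and $\mathrm{Length}(\tau_1 \cup \tau_2)$.

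The main obstacle will be the boundary analysis in the second step: extracting the sharp $(\epsilon \vee \delta)$ scaling rather than a cruder $(\epsilon + \delta)$, which requires carefully tracking the geometric mismatch between the discrete boundary $T_k$ and the analytic boundary $\tau_k$ (which can differ by up to $\delta$). The analytic continuation of $\psi$ supplied by Lemma~\ref{reflection} is what makes this flux estimate meaningful, since the closed integration region used in the divergence theorem may protrude slightly outside $\overline{\Omega}$; without that extension, $\partial_n h_c$ would not even be defined on the closing segment.
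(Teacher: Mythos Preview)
Your summation-by-parts identity $\mathcal{E}_1^\bullet(f) = -\sum_{v\in\mathrm{int}(V_1^\bullet)} f(v)\,D(v)$ is fine, but the assembly via Cauchy--Schwarz and Poincar\'e loses a full factor of $\epsilon^{-2}$ in the bulk, so the proof as written cannot close. With $\alpha(v)\asymp\area(D_v)\asymp\epsilon^2$ and $|D(v)|\le CM\epsilon^2$ for a bulk vertex (which is the correct quadrature error for the finite-volume approximation), each summand satisfies $\alpha(v)^{-1}D(v)^2 \asymp \epsilon^{-2}(M\epsilon^2)^2 = M^2\epsilon^2$, and summing over the $O(\area(\hat G)/\epsilon^2)$ bulk vertices gives $C\,M^2\,\area(\hat G)$, \emph{not} $C\,M^2\epsilon^2\,\area(\hat G)$. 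Your final tally simply drops an $\epsilon^2$. There is no way to recover this via a sharper bound on $D(v)$: on a general orthodiagonal map the discrete Laplacian defect of a smooth harmonic function is genuinely $O(M\epsilon^2)$ and no better. A further issue is that you invoke a discrete Poincar\'e inequality on $G_1^\bullet$ with constant independent of $\epsilon$; this is plausible but is neither proved in the paper nor in \cite{gurel-jerison-nachmias:advm2020}, and would itself require substantial work in the sub-orthodiagonal setting with free boundary.

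The paper bypasses both problems by never passing through the $L^2$ norm of $f$. Instead it constructs, face by face, a comparison flow $\theta$ on $\vv{E}^\bullet$ using the harmonic conjugate $\tilde h_c=\mathrm{Im}\,\psi$: on a face $Q=[v_1,w_1,v_2,w_2]$ one sets $\theta(e_Q^\bullet)=\tilde h_c(p(w_2))-\tilde h_c(p(w_1))$, where $p$ projects dual vertices lying outside $\overline{\Omega}$ back onto $\tau_k$. This $\theta$ satisfies the node law on $\mathrm{int}(V^\bullet)$ (using that $\tilde h_c$ is constant on each $\tau_k$), so Proposition~4.9 of \cite{gurel-jerison-nachmias:advm2020} gives $\mathcal{E}_1^\bullet(h_c-h_d)\le\mathcal{E}_1^\bullet(\phi-\theta)$ with $\phi=\sigma\,dh_c$. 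The Cauchy--Riemann equations then make the per-face contribution to $\mathcal{E}_1^\bullet(\phi-\theta)$ of order $\area(Q)\,M^2\epsilon^2$ in the bulk and $\area(Q)\,L^2$ near $\tau_1\cup\tau_2$; summing directly gives the stated bound. The harmonic-conjugate trick is the missing idea: it converts the second-order defect $D(v)$ into a first-order discrepancy $\phi-\theta$ measured in energy, which is exactly what is needed to gain the extra $\epsilon^2$.
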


\begin{proof}
We begin by following the initial steps in the proof of Proposition 5.2 in \cite{gurel-jerison-nachmias:advm2020}.
In particular,
we wish to define a bijective map $e \to e^\dagger$ from $\vv{E}^\bullet$ to $\vv{E}^\circ$.
If an edge $e=xy$ is oriented from $x$ to $y$, we define $e^- = x$ and $e^+=y$.
For $e \in  \vv{E}^\bullet$, let $Q = [v_1, w_1, v_2, w_2]$ be the inner face of $G_1$ containing $e$.  
We set $e^\dagger = e^\circ_Q$, and we choose its orientation based on the orientation of $e$:
if $e^- = v_1$, then choose the orientation  with $(e^\dagger)^- = w_1$, and in the other case choose the opposite orientation.

We wish to define a flow $\theta$ on $G^\bullet_1$ using the harmonic conjugate $\tilde h_c$  of $h_c$ (i.e. $\tilde h_c = \text{Im}\, \psi$).  However, we must take a little more care near the boundary arcs $\tau_k$.
To this end, let $U_k$ be the component of $\hat{G}_1 \setminus \Omega$ that intersects $\tau_k$, for $k=1,2$, see Figure \ref{SubOrthoPic}.
Next, we define a projection $p: V^\circ\rightarrow \overline{\Om}$ as follows:
for $w \in V^\circ\cap\overline{\Om}$, set $p(w) = w$;
for $w \in U_k \cap V^\circ$, set $p(w)$ to be a point in $\tau_k$ that is closest to $w$.
For $e \in \vv{E}^\bullet$,
 define
$$\theta(e) =\tilde h_c( p((e^\dagger)^+)) - \tilde h_c( p((e^\dagger)^- )).$$
Note that if $e^\dagger$ is not incident to a vertex in $U_1 \cup U_2$, then our definition gives
$$\theta(e) =\tilde h_c( (e^\dagger)^+ ) - \tilde h_c( (e^\dagger)^- )$$
matching the definition in \cite{gurel-jerison-nachmias:advm2020}.  

Now we want to show that $\theta$ satisfies the node law on ${\rm int}(V^\bullet)$.
Let $v \in {\rm int}(V^\bullet)$.
Suppose first $v$ is contained within an interior face $f_v$ of the dual graph $G^\circ_1$, i.e. there is a cycle of dual edges surrounding $v$.
Let $w_1, w_2, \cdots, w_k$ be the vertices incident to $f_v$ in counterclockwise order.
Then (taking indices mod $k$)
$$ \sum_{e\in \vv{E}^\bullet \, :  \,e^- = v} \theta(e) = \sum_{j=1}^k \left[\tilde h_c(p(w_{j+1})) - \tilde h_c(p(w_j)) \right] = 0.$$
If $v$ is not in an interior face $f$, then the edges $e^\dagger$ such that $e\in \vv{E}^\bullet$ with  $e^- = v$ form a path with both endpoints in $\partial V^\circ$ (and in fact both endpoints will be in the same boundary arc $T_1$ or $T_2$, see Figure \ref{SubOrthoPic}). 
Let $w_1, w_2, \cdots, w_k$ be the vertices of this path, indexed so that connecting $w_k$ to $w_1$ in the outer face of $G_1^\circ$ would give a counterclockwise cycle.
Then
$$ \sum_{e\in \vv{E}^\bullet \, :  \,e^- = v} \theta(e) =
\sum_{j=1}^{k-1} \left[\tilde h_c(p(w_{j+1})) - \tilde h_c(p(w_j)) \right] =  \tilde h_c(p(w_k)) - \tilde h_c(p(w_1)) = 0,$$
where we used  that $\tilde h_c = \text{Im} \, \psi$ is constant on $\tau_k$.

Since $\theta$ is a flow on ${\rm int}(V^\bullet)$ and $h$ is discrete harmonic with $h_c = h$ on $\partial V^\bullet$, we may apply Proposition 4.9 in \cite{gurel-jerison-nachmias:advm2020} with $f=h_c$  to obtain that
$$\mathcal{E}_1^\bullet(h_c - h) \leq \mathcal{E}_1^\bullet(h_c - h) + \mathcal{E}_1^\bullet(\theta - \sigma \,d h) 
              = \mathcal{E}_1^\bullet(\phi - \theta),$$
where we define $$\phi(e) := \sigma(e)[h_c(e^+) - h_c(e^-)].$$              
Hence, it suffices to bound 
  $         \mathcal{E}_1^\bullet(\phi - \theta).$      
The contribution to $ \mathcal{E}_1^\bullet(\phi - \theta)$ from a face $Q=[v_1, w_1, v_2, w_2]$ is 
 \begin{equation*}
 \frac{|v_1v_2|}{|w_1w_2|} \left( \frac{|w_1w_2|}{|v_1v_2|} [h_c(v_2) - h_c(v_1)] - [\tilde h_c(p(w_2)) - \tilde h_c(p(w_1))]\right)^2  \;\;\;\;\;\;\;\;\;\;\;\;
 \end{equation*}
 \begin{equation}  \label{RHS}
 \;\;\;\;\;\;\;\;\;\;\;\; = 
|v_1v_2| \cdot |w_1w_2| \cdot \left(\frac{h_c(v_2) - h_c(v_1)}{|v_1v_2|} - \frac{\tilde h_c(p(w_2)) - \tilde h_c(p(w_1))}{|w_1w_2|} \right)^2.
 \end{equation}
Therefore, assuming that neither $w_1$ nor $w_2$ are in $U_1 \cup U_2$,  the quantity in the parentheses (using the Cauchy-Riemann equations) can be rewritten as
 \begin{equation} \label{ugly}
  \frac{h_c(v_2) - h_c(v_1) -  \langle \nabla h_c(q), {\bf v} \rangle |v_1v_2| }{|v_1v_2|} 
 - \frac{\tilde h_c(w_2) - \tilde h_c(w_1)   - \langle \nabla \tilde h_c(q), {\bf w} \rangle |w_1w_2| }{|w_1w_2|}, 
\end{equation}
  where ${\bf v}$ and ${\bf w}$ are the unit vectors in the directions of $\vv{v_1v_2}$ and $\vv{w_1w_2}$, respectively, and $q \in Q$ is the intersection point  of $e_Q^\bullet$ and $e_Q^\circ$.
 Note that  $ || H \tilde h_c ||_{\infty, \tilde{G}_1} =  || H h_c ||_{\infty, \tilde{G}_1} =M$ by the Cauchy-Riemann equations. 
Hence both terms in \eqref{ugly} can be bounded using Lemma 5.3 in \cite{gurel-jerison-nachmias:advm2020} to obtain
\begin{align*}
 &\left| \frac{h_c(v_2) - h_c(v_1) -  \langle \nabla h_c(q), {\bf v} \rangle |v_1v_2| }{|v_1v_2|} \right| \leq 2M \epsilon, \\
 &\left| \frac{\tilde h_c(w_2) - \tilde h_c(w_1)   - \langle \nabla \tilde h_c(q), {\bf w} \rangle |w_1w_2| }{|w_1w_2|} \right| \leq 2M\epsilon.
 \end{align*}
Therefore if $w_1, w_2$ are not in $U_1 \cup U_2$, the contribution to $\mathcal{E}^\bullet ( \phi - \theta)$ is at most 
$$|v_1v_2|\cdot|w_1w_2| \cdot (4M\epsilon)^2 = 32 \,\area(Q) M^2 \epsilon^2. $$

Here, however, we need to deal with some extra cases. Assume that at least one of $w_1$ and $w_2$ are in $U_1 \cup U_2$.  
We will bound the terms in the parentheses of \eqref{RHS} directly.  Note first that
$$\left| \frac{h_c(v_2) - h_c(v_1)}{|v_1v_2|} \right| \leq L.$$
If $w_1$ and $w_2$ are both in $U_k$, then $ \tilde h_c(p(w_2)) - \tilde h_c(p(w_1)) =0$.
If $w_1$ is in $U_k$, but  $w_2$ is not,
then the line segment connecting $w_1$ to $w_2$ must cross $\tau_k$ and $|p(w_1) - w_1 | \leq |w_1-w_2|$, by the definition of $p$.  
(And a similar statement is true if $w_2$ is in $U_{k+1}$, taking the indices mod 2).
Then 
$$|p(w_1) - p(w_2)| = |p(w_1) - w_1| + |w_1 - w_2| + |w_2 - p(w_2)| \leq 3|w_1 - w_2|,$$
which implies that 
$$  \left|  \frac{\tilde h_c(p(w_2)) - \tilde h_c(p(w_1))}{|w_1w_2|} \right|  \leq L \frac{|p(w_2) - p(w_1)|}{|w_2- w_1|} \leq 3L. $$

Thus if $Q$ is incident to a vertex in $U_1 \cup U_2$, then the contribution to $\mathcal{E}_1^\bullet ( \phi - \theta)$ is at most 
$$ |v_1v_2|\cdot|w_1w_2| \cdot ( 4L)^2 \leq 32 \,\area(Q) L^2  .$$
The faces $Q$ that are incident to a vertex in $U_1 \cup U_2$ must be in a band containing $\tau_k$ for either $k=1$ or $k=2$, and the area of these bands is bounded by $8  \, \Length(\tau_1 \cup \tau_2) (\epsilon \vee \delta)$.
Thus summing over all inner faces $Q$ of $G_1$ gives
$$ \mathcal{E}_1^\bullet ( \phi - \theta) \leq 32 \,\area(\hat G) M^2 \epsilon^2 + 256 \, \Length(\tau_1 \cup \tau_2) L^2  (\epsilon \vee \delta). $$

\end{proof}

The next proposition generalizes Proposition 6.1 from \cite{gurel-jerison-nachmias:advm2020} to our setting.  

\begin{prop}\label{modified6.1}
Assume that $\Omega, \psi$ and $G$ are defined as in the setting of Theorem \ref{HarmConvThm}, and let $\lambda = \e \vee \delta$. Let $G_1=(V^\bullet \sqcup V^\circ, E)$ be a sub-orthodiagonal map of $G$ with $\dist(G_1, A\cup B) \geq \lambda^{1/4}$. 
Fix $z_0 \in {\rm int}(V^\bullet)$, and let $r \geq 4 \l$. 
Assume that $D_r(z_0) \cap \partial V^\bullet = \emptyset$.
Then there is a unit flow $\theta$ in $G_1^\bullet$ from $V^\bullet \cap D_r(z_0)$ to $\partial V^\bullet$ so that
$$\mathcal{E}_1^\bullet(\theta) \leq C \log \left(  \frac{\diam(\Omega)}{\l }\right),$$
for $\l \le \l_0$, where $C$ and $\l_0$ depend only on the analytic quadrilateral $\Omega$.
\end{prop}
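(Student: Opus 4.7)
The plan is to adapt the dyadic-annulus flow construction from Proposition 6.1 of \cite{gurel-jerison-nachmias:advm2020} to the present sub-orthodiagonal setting, and then correct the resulting flow so that its sinks lie on $\partial V^\bullet$ rather than on the entire topological boundary of $G_1$. Set $r_k := 2^k r$ for $k = 0, 1, \ldots, N$, where $N$ is the smallest integer with $r_N \geq \diam(\Omega)$; thus $N \leq C \log(\diam(\Omega)/r) \leq C \log(\diam(\Omega)/\epsilon)$. Consider the annuli $A_k := D_{r_{k+1}}(z_0) \setminus D_{r_k}(z_0)$. Since $r \geq 4\epsilon$ and $\dist(G_1, A \cup B) \geq \la^{1/4}$, each $A_k$ (for $k$ not too large) lies well inside $\hat G_1$ and contains many complete quadrilateral faces of $G_1$.

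The first main step is the annular energy estimate: for each $k$, I construct a unit flow $\theta^{(k)}$ on $G_1^\bullet$ with sources distributed on $V^\bullet \cap \partial D_{r_k}(z_0)$ and sinks on $V^\bullet \cap \partial D_{r_{k+1}}(z_0)$, whose energy satisfies $\mathcal{E}_1^\bullet(\theta^{(k)}) \leq C_0$ for a universal constant $C_0$. This is the discrete version of the well-known continuous fact that the conformal modulus of a Euclidean annulus of ratio $2$ equals $\log 2$. The construction uses the canonical weights \eqref{eq:canonical-weights} and mimics Lemma 4.4 and Proposition 4.7 of \cite{gurel-jerison-nachmias:advm2020}, with only cosmetic changes to accommodate the sub-orthodiagonal setting; the finite-volume interpretation of the conductances described in Section \ref{orthodiagIntro} is the essential ingredient.

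Stitching the $\theta^{(k)}$ together by matching inner/outer source and sink distributions at each interface produces a single unit flow $\theta'$ from $V^\bullet \cap D_r(z_0)$ to the topological boundary of $G_1$, with total energy
\[
\mathcal{E}_1^\bullet(\theta') \;\leq\; C_0 N \;\leq\; C \log(\diam(\Omega)/\epsilon).
\]
Some mass of $\theta'$ may land on $V^\bullet \cap (T_1 \cup T_2)$ instead of on $\partial V^\bullet$. To correct this, I add a second flow $\theta''$ of strength at most $1$ whose sources exactly cancel the $\theta'$-mass on $T_1 \cup T_2$ and whose sinks lie on $\partial V^\bullet$. By the probabilistic interpretation of modulus (Corollary \ref{cor:prob}) together with Theorem \ref{HarmConvThm}, the modulus of the path family in $G_1^\bullet$ between $V^\bullet \cap (T_1 \cup T_2)$ and $\partial V^\bullet$ is uniformly bounded below by a positive constant depending only on $\Om$; thus $\theta''$ exists with $\mathcal{E}_1^\bullet(\theta'') \leq C(\Om)$. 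Setting $\theta := \theta' + \theta''$ yields the desired unit flow from $V^\bullet \cap D_r(z_0)$ to $\partial V^\bullet$, with
\[
\mathcal{E}_1^\bullet(\theta) \;\leq\; 2\mathcal{E}_1^\bullet(\theta') + 2\mathcal{E}_1^\bullet(\theta'') \;\leq\; C \log(\diam(\Omega)/\epsilon).
\]

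The main obstacle is the $O(1)$-energy annular flow estimate, which is the discrete incarnation of the conformal invariance of annular modulus and is the heart of \cite{gurel-jerison-nachmias:advm2020}; it must be re-examined for our sub-orthodiagonal setting to make sure nothing degenerates near $\bd G_1$. Once this is in hand, stitching and correcting are routine: the corrector flow exists by general modulus theory once one knows that the quadrilateral has bounded continuous modulus, which is guaranteed by the hypotheses of Theorem \ref{HarmConvThm}.
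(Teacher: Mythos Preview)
Your approach is genuinely different from the paper's, and it has real gaps.

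The paper does \emph{not} use a dyadic annulus construction. Instead it builds the flow directly from the argument of a conformal map. Specifically, it composes $\psi$ with an exponential $g(z)=\exp(\pi z/y_0)+\exp(\pi x_0/y_0)$ chosen so that $F:=g\circ\psi$ sends $z_0$ to the origin and, crucially, sends the free boundary arcs $\tau_1,\tau_2$ (hence $T_1,T_2$) into the real line. The flow on a primal edge is then defined as the change in $\arg F$ along the dual edge, restricted to $\overline{\mathbb H}$. Because $F(T_k)\subset\mathbb R$, the argument is constant there, so at a vertex $v\in{\rm int}(V^\bullet)\cap(T_1\cup T_2)$ the dual edges around $v$ form only a \emph{path} (not a cycle), yet the telescoping sum still vanishes. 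This is the whole point: the conformal map is engineered so that Kirchhoff's law holds automatically at the free boundary, with no correction step needed. The energy bound then follows from Koebe distortion and the logarithmic integral $\int|z|^{-2}\,dA$ over an annulus.

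Your proposal has two concrete problems. First, the corrector step is circular: you invoke Theorem~\ref{HarmConvThm} to bound the modulus between $V^\bullet\cap(T_1\cup T_2)$ and $\partial V^\bullet$, but Proposition~\ref{modified6.1} is \emph{used in the proof of} Theorem~\ref{HarmConvThm} (it supplies the flow $\theta$ that, via Proposition~4.11 of \cite{gurel-jerison-nachmias:advm2020}, bounds the gap of $f=h_d^{(1)}-h_c$). Second, even if you had a non-circular modulus lower bound, that only gives you \emph{one} bounded-energy flow with a \emph{particular} source distribution (the harmonic measure); it does not give a bounded-energy flow from an \emph{arbitrary} distribution of sinks that $\theta'$ happens to deposit on $T_1\cup T_2$. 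You would need uniform effective-resistance bounds from each such vertex to $\partial V^\bullet$, which is a stronger and separate statement. Your annular construction also glosses over exactly the step that the paper's conformal map handles: when an annulus $A_k$ straddles $T_1\cup T_2$, the usual argument-based annular flow fails Kirchhoff at those vertices because the dual edges do not close up into a cycle; you acknowledge this (``must be re-examined'') but give no mechanism.
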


To prove this proposition, we would like to define the flow over an edge to be the change in argument over its dual edge  (taking care with the edge orientations).  Then take a fixed vertex $v$ in the topological interior, and consider the collection of all the edges incident to $v$; their dual edges form a cycle around $v$.  Thus, the sum of the flow over these edges will be the change in argument over the cycle of dual edges, which will be zero, as long as the cycle does not contain the origin.  However, if $v$ is on $T_1$ or $T_2$, then instead of a cycle, we will obtain a path, and the beginning and ending argument may not be equal.  To compensate for this, we will first transform the region conformally.

\begin{proof}

Recall that $\psi$ is a conformal map of $\Omega$ onto the rectangle $R=[0,1] \times [0,m]$ taking the arcs $\tau_1$ and $\tau_2$ to the horizontal sides of $R$.
Further by Lemma \ref{reflection}, we may take $\lambda$ small enough so that  $\psi$ can be analytically extended to an open set $\mathcal{O}$ containing $\hat{G}_1$ with  $\dist(\hat{G}_1, \partial \mathcal{O}) \geq \frac{5}{16} \la^{1/4} \geq 4\l$.

If $z_0 \notin \overline{\Omega}$, then there is a point $u_0 \in \overline{\Omega}$ with $|u_0 -z_0| < \delta$.  
Further by adjusting $u_0$ slightly, we may assume that $u_0$ is not on any of the dual edges.  
Then $u_0$ lies in one of the dual faces, and subsequently, there is $v_0 \in V^\bullet$ in the same face with $|u_0-v_0| < \epsilon.$
If $z_0 \in \overline{\Omega}$, we take $u_0=v_0 = z_0$.

Consider two triangles in $R$: one with vertices $\psi(u_0), 0, i m$ and the other with vertices $\psi(u_0), 1, 1+i m$.  
Shift both of these triangles by $-\psi(u_0)$, so they both have a corner at 0, and let
$P$ be the union of the regions bounded by these shifted triangles. 
Let $\kappa$ be the sum of the angles at 0 in these shifted triangles, and note that $\kappa$ is bounded below independent of $z_0$ by a constant depending only on $\Omega$. 
(In fact, one can show that $\kappa \geq \tan^{-1}(m)$.)
Let $F(z) = \psi(z) - \psi(u_0)$.
We will define a flow $\phi$ on  $\vv{E}^\bullet$ from $v_0$ to $\partial V^\bullet$ as follows.  
(Recall that   $\vv{E}^\bullet$  is the set of directed edges, which contains both possible orientations for all the edges of $E^\bullet$.)    
Given a face $Q=[v_1, w_1, v_2, w_2]$, let $e$ be the orientation of $e_Q^\bullet$ with $e^- = v_1$.
If $F(e_Q^\circ) \subset \overline{P}$, set
$$\phi(e) = \arg(F(w_2))-\arg(F(w_1)) \;\;\; \text{ and } \;\;\;   \phi(-e) = \arg(F(w_1))-\arg(F(w_2)),$$
where we use a branch of the argument that is well-defined on $P$.
If $F(e_Q^\circ)$ does not intersect $P$, then we define $\phi(e) = 0 = \phi(-e)$.
If $F(e_Q^\circ)$ intersects $P$ and $\C\setminus\overline{P}$, we define $\phi(e)$ to be the change of argument along $F(e_Q^\circ) \cap \overline{P}$, respecting the orientation above.

Let $v \in {\rm int}(V^\bullet)$. 
Consider the set $N$ of edges  $e\in \vv{E}^\bullet$ with  $e^- = v$. The edges dual to the edges in $N$ either form a cycle around $v$ (in the case $v$ is in the topological interior of $\hat G_1$) or a path starting and ending on a boundary arc $T_k$.
In the latter case, we can form a cycle by connecting the endpoints of the path with an arc that is in the complement of $\hat{G}_1$ but still in the domain of $\psi$.
Now in both cases, the image of the cycle under $F$ forms a cycle in $\C$, which we will call $\gamma_v$.
When $v=z_0$, then $\gamma_v$ winds once around the origin, and in all other cases, the winding number of $\gamma_v$ around the origin is zero.
Since
$ \sum_{e\in \vv{E}^\bullet \, :  \,e^- = v} \phi(e) $
gives the sum of the change in argument over the path(s) of $\gamma_v \cap \overline{P}$, this is equal to $\kappa$ when $v=v_0$ and it is equal to 0 otherwise.  
Thus $\phi$ is a flow of strength $\kappa$ from $v_0$ to $\partial V^\bullet$.
Define 
$$\theta(e) = \begin{cases}
	0					&\quad\text{if } e^+ \text{ and } e^- \in D_r(z_0)\\  
	\frac{1}{\kappa} \phi(e)           & \quad\text{else}\\                      
      \end{cases}.$$
Since $v_0 \in D_r(z_0)$, then $\theta$ is a unit flow from from $V^\bullet \cap D_r(z_0)$ to $\partial V^\bullet$.

It remains to bound the energy of $\theta$.
Let $Q=[v_1, w_1, v_2, w_2]$ be a face of $G_1$ so that $v_1, v_2$ are not both in $D_r(z_0)$.
We first consider the case that $F(e_Q^\circ)$ is contained in $\overline{P}$.
  Then the contribution of $e_Q^\bullet$ to $\mathcal{E}_1^\bullet(\theta)$ is
$$\frac{|v_1v_2|}{|w_1w_2|} |\theta(e^\bullet_Q)|^2 = \frac{1}{\kappa^2} |v_1v_2|\cdot |w_1w_2| \left( \frac{\arg(F(w_2)) - \arg(F(w_1))}{|w_1w_2|} \right)^2.$$
Note that $|v_1v_2|\cdot |w_1w_2| = 2 \,\text{area}(Q)$.
Since $Q$ has  diameter at most $2\epsilon$, then $Q$ is contained in a disc $D_t(q)$ with $t=2\epsilon$.
Now $\dist(D_t(q), u_0) \geq \dist(D_t(q), z_0) - |z_0-u_0| \geq 2\lambda-\lambda = \lambda$.
Further since $\dist(\hat{G}_1, \partial \mathcal{O}) \geq 4\lambda$, then $\dist(D_t(q), \partial \mathcal{O}) \geq 2\lambda$.
Hence $Q$ is contained in a disk $D_t(q)$ with the property that  $D_{1.5t}(q)\subset\mathcal{O} \setminus \{u_0\}$. 
Hence, by the Koebe distortion theorem   \cite[Theorem I.4.5 and Equation I.(4.18)]{garnett-marshall2008}, the derivative of $\log \circ F$ in $Q$ is comparable (with absolute constants) to any one of its values in $Q$.
Thus
\begin{align*}
\frac{|v_1v_2|}{|w_1w_2|} |\theta(e^\bullet_Q)|^2 
&=\frac{2}{\kappa^2} \,\text{area}(Q) \left( \frac{\arg(F(w_2)) - \arg(F(w_1))}{|w_2-w_1|} \right)^2 \\
&\leq \frac{2}{\kappa^2} \, \text{area}(Q)  \max_{z \in e^\circ_Q} \left| \frac{d}{dz} \log \circ F (z)\right|^2 \\
&\leq C_0
 \int_Q \frac{1}{|F(z)|^2} |F'(z)|^2 \, dA \\
&= C_0 \int_{F(Q)} \frac{1}{|z|^2} \, dA 
\end{align*}
for a constant $C_0$ depending only on $\Omega$.

If $F(e_Q^\circ)$ intersects  $\overline{P}$ but is not fully contained in it, the above estimate still holds, since there are points $w_1', w_2'$ on the edge $e_Q^\circ$ so that the contribution of 
$e_Q^\bullet$ to $\mathcal{E}_1^\bullet(\theta)$ is
$$ \frac{2}{\kappa^2} \text{area}(Q) \left( \frac{\arg(F(w_2')) - \arg(F(w_1'))}{|w_1w_2|} \right)^2 \leq
 \frac{2}{\kappa^2} \text{area}(Q) \left( \frac{\arg(F(w_2')) - \arg(F(w_1'))}{|w_2'-w_1'|} \right)^2.$$

For $|z-u_0| = \lambda$, we have 
$$|F(z)| \geq \frac{\lambda}{4}  |F'(z_0)| =  \frac{\lambda}{4}  |\psi'(z_0)| \geq \frac{\lambda}{16} \frac{\text{dist}(\psi(z_0), \partial \psi(\mathcal{O}) )}{\diam(\cO)}\geq  \frac{\lambda}{64}   \frac{\text{dist}(\psi(z_0), \partial \psi(\mathcal{O}) )}{\diam(\Omega)}.$$
The first inequality follows from the Koebe One-Quarter Theorem \cite[Theorem I.4.1, Equation I.(4.9)]{garnett-marshall2008}. 
The next  inequality is another application of the Koebe One-Quarter Theorem  \cite[Corollary I.4.4]{garnett-marshall2008}. And the last inequality follows because without loss of generality we can assume that $\diam(\cO)\le 4\diam(\Om)$, since every point in $\hat{G}_1$ is within distance $\de$ of $\Om$ by assumption.
Let $w_0 \in \partial \mathcal{O}$ so that $\text{dist}(\psi(z_0), \partial \psi(\mathcal{O}) ) = |\psi(z_0) - \psi(w_0)|$.
Then by Lemma \ref{reflection}, there is $b >0$ so that
$$ \text{dist}(\psi(z_0), \partial \psi(\mathcal{O}) ) = |\psi(z_0) - \psi(w_0)| \geq C_1 | z_0 - w_0 |^{b} 
\geq C_1 \left( \text{dist}(z_0, \partial \mathcal{O}) \right)^{b} \geq C_1 \lambda^{b}, $$
where $C_1$ is a constant that depends only on the domain $\Om$.
Therefore we have that for $|z-u_0| = \lambda$, then 
$ |F(z)| \geq \frac{C_1\lambda^{1+b} }{64\, \diam(\Omega)} =: d_1$.
This implies $F(Q)$ does not intersect a disc of radius $d_1$ centered at 0.
On the other hand, from Lemma \ref{reflection}, 
$$|F(z)| \leq C\left(\diam(\hat G_1) \right)^a \leq C\left(\diam( \Omega) \right)^a =:d_2 \;\; \text{ for } z \in \hat{G}_1,$$
where the constants $C$ and $a$ depend only on $\Omega$.
Putting this all together, we find that 
\begin{align*}
\mathcal{E}_1^\bullet(\theta) 
  &\leq C_0 \sum_{Q: \text{dist}(Q, u_0) \geq \lambda} \int_{F(Q)} \frac{1}{|z|^2} \, dA \\
  &\leq C_0 \int_0^{2\pi} \int_{d_1}^{d_2} \frac{1}{s^2} s \,ds \, dt \\
  &\leq C \log\left( \frac{\diam(\Omega)}{\lambda } \right),
\end{align*}
for $\l\le\l_0$, where $C$ and $\l_0$ only depend on $\Om$.

\end{proof}

We need modified versions of the three results in \cite[Section 7]{gurel-jerison-nachmias:advm2020}. 
The logical relationship between these results is the following:
$$ \text{Lemma 7.3} \implies \text{Lemma 7.2} \implies \text{Proposition 7.1}.$$
Since these results make use of the fact that the boundary value problem, in their case, is defined on the whole topological boundary, we will need to modify them.  This will have a ripple effect, causing us to update all three results, although the proofs connecting the results will follow the logic of \cite{gurel-jerison-nachmias:advm2020}. 

We begin with stating the modification of \cite[Proposition 7.1]{gurel-jerison-nachmias:advm2020}:

\begin{prop}\label{modified7.1}

Let $G$ 
be a finite orthodiagonal map with  boundary arcs $S_1, T_1, S_2, T_2$ 
with maximal edge length at most $\e$, and let $G_1=(V^\bullet \sqcup V^\circ, E)$ either be equal to $G$ or be a sub-orthodiagonal map of $G$.
Let $x, y \in V^\bullet$.
Assume we can choose $r, R$ so that $r \geq \frac{1}{2}|x-y|$, $R \geq 2r+ 3\epsilon$,
and that there is a path from $x$ to $y$ in $G_1^\bullet \cap D_{r}(z)$, where 
 $z=\frac{1}{2}(x+y)$. 
Further, assume that  
between any two vertices in $\partial V^\bullet$, there is a path $\gamma$  in $\partial G_1^\bullet$ so that $\gamma \cap D_R(z) $ only contains vertices in  $\partial V^\bullet$.

Let $h : V^\bullet \to \mathbb{R}$ be discrete harmonic on ${\rm int}(V^\bullet)$.
Set
$$ \kappa = \max_{v, v' \in \partial V^\bullet \cap D_R(z)} |h(v)-h(v')|,$$
with $\kappa=0$, if $\partial V^\bullet \cap D_R(z)=\emptyset$.
Then there exists a universal constant $C < \infty$ such that
$$| h(x) - h(y) | \leq \frac{C\, \mathcal{E}_1^\bullet(h)^{1/2}}{\log^{1/2}\left[ R/(r+\epsilon) \right]} + \kappa.$$
\end{prop}

The proof of this result is the same as the proof of Proposition 7.1 in \cite{gurel-jerison-nachmias:advm2020}, simply substituting Lemma \ref{modified7.2} in the place of  \cite[Lemma 7.2]{gurel-jerison-nachmias:advm2020}.  However, for the convenience of the reader, we include a proof below, after establishing Lemma \ref{modified7.2}. 
The rough idea is that for a lower bound on the energy, we must be able to find a thick family of paths connecting the red dashed line to the blue dashed line in Figure \ref{PlaneLemmaPic}, and this is what Lemma \ref{modified7.2} does.
For this, we introduce one more definition from \cite{gurel-jerison-nachmias:advm2020}.
For $\rho>0$, a {\it $\rho$-edge} of $G^\bullet$ is an edge  $ e \in E^\bullet$ whose dual edge $e^\circ \in E^\circ$ connects two vertices $w, w' \in V^\circ$ with $|w| < \rho \leq |w'|$.

\begin{lemma} \label{modified7.2}
Under the assumptions of Proposition \ref{modified7.1}, suppose that the origin is located at $\frac{1}{2}(x+y)$ and that $h(x) < h(y)$.
Define
\begin{align*}
X &= \{ v \in V^\bullet \, : \, h(v) \leq h(x) \} \\
Y &= \{ v \in V^\bullet \, : \, h(v) \geq h(y) \}
\end{align*}
and set $Y' = Y \cup (\partial V^\bullet \cap D_R(0))$.
Assume that $X \cap  Y' = \emptyset$.
Then for every $\rho \in (r+\epsilon, R-\epsilon)$, there is a path in $G^\bullet$ from $X$ to $Y'$ consisting entirely of $\rho$-edges.
\end{lemma}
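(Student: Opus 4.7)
I would adapt the proof of Lemma~7.2 in \cite{gurel-jerison-nachmias:advm2020}, taking careful account of the modified boundary: the ``sink'' set $Y'$ now absorbs any $\partial V^\bullet$ vertices that sit inside $D_R(0)$. The strategy is proof by contradiction combined with planar duality, and it proceeds in three steps.

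\emph{Step 1: Geometric localization of $\rho$-edges.} First I would note that if $e^\bullet = v_1 v_2$ is a $\rho$-edge whose dual $e^\circ = w_1 w_2$ satisfies $|w_1| < \rho \le |w_2|$, then all four endpoints $v_1, v_2, w_1, w_2$ lie within $2\epsilon$ of the circle $\{|z| = \rho\}$, because every edge of $G_1$ has length at most $\epsilon$. Equivalently, a non-$\rho$-edge $e^\bullet$ has both endpoints of its dual $e^\circ$ strictly on the same side of the circle $\{|z| = \rho\}$.

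\emph{Step 2: Dual separator from planar duality.} Suppose no $\rho$-edge path connects $X$ to $Y'$. By the standard planar duality between vertex-connectivity in the primal subgraph of $\rho$-edges and dual separators, I would extract a simple closed curve $\Gamma$ in the plane that separates $X$ from $Y'$ and crosses only non-$\rho$-edges of $G^\bullet$. The two-part hypothesis---either $D_R(0) \cap \partial V^\bullet = \emptyset$, or $G_1 = G$ with $R \le \tfrac{1}{4}\,\dist(T_1, T_2)$---combined with the assumption $X \cap (\partial V^\bullet \cap D_R(0)) = \emptyset$, guarantees that this duality can be applied, possibly closing $\Gamma$ off along the arcs $T_1 \cup T_2$ of $\partial G_1$ (which play the role of a reflecting boundary through the extension of Lemma~\ref{reflection}).

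\emph{Step 3: Topological contradiction.} By Step~1, consecutive faces of $G^\bullet$ traversed by $\Gamma$ have their dual vertices strictly on the same side of $\{|z|=\rho\}$; by connectedness of $\Gamma$ all these dual vertices lie on a single side, forcing either $\Gamma \subset D_{\rho+\epsilon}(0)$ or $\Gamma \subset \C \setminus D_{\rho-\epsilon}(0)$. In the latter case, since $x, y \in D_r(0) \subset D_{\rho-\epsilon}(0)$, both $x$ and $y$ lie in the same component of $\C \setminus \Gamma$, contradicting that $\Gamma$ separates $X \ni x$ from $Y' \ni y$. In the former case, $\Gamma$ is a Jordan curve inside $D_{\rho+\epsilon}(0)$ separating $X$ from $Y'$; the hypothesized primal $x$--$y$ path in $G^\bullet \cap D_r(0)$, together with the discrete maximum principle for $h$ applied on the bounded component of $\C \setminus \Gamma$ (whose boundary lies strictly inside $D_R(0)$ by $\rho < R - \epsilon$, and therefore stays away from $\partial V^\bullet$ by assumption), forces the contradiction.

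I expect the main obstacle to be \emph{Step~2}: rigorously carrying out the planar-duality argument in the mixed-boundary regime, where $\partial V^\bullet \cap D_R(0) \subset Y'$ is a ``Dirichlet'' sink while $T_1, T_2$ are ``Neumann'' reflecting arcs. One must verify that the separating object $\Gamma$ can indeed be realized as a single simple closed curve (rather than a collection of dual paths terminating on the boundary of $G_1$), or otherwise argue Step~3 component-by-component. The two alternatives in the hypothesis of Proposition~\ref{modified7.1} are precisely what is needed for this.
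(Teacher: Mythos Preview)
Your approach is genuinely different from the paper's, and it has a real gap that you partly anticipate but do not resolve.

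\medskip
\textbf{What the paper does.} The paper does not argue by contradiction at all. It reduces Lemma~\ref{modified7.2} to the purely topological Lemma~\ref{modified7.3} about plane graphs, and the reduction uses harmonicity of $h$ in exactly one way: since $h$ is discrete harmonic on ${\rm int}(V^\bullet)$, from any interior vertex there is always a neighbor with a smaller (resp.\ larger) value of $h$, so one can walk from $x$ to $\partial V^\bullet$ along a path whose vertices all lie in $X$, and likewise from $y$ to $\partial V^\bullet$ inside $Y$. These two monotone paths, together with the given $x$--$y$ path in $D_r$, feed directly into Lemma~\ref{modified7.3}. That lemma then \emph{constructs} the desired $\rho$-edge path: one takes the set $\sigma_\rho$ of dual vertices reachable (in the dual) from a dual vertex near $x$ while staying inside $D_\rho$, and the outer boundary $K$ of the union of the corresponding faces is a connected subgraph made entirely of $\rho$-edges. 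A Jordan-curve argument then shows $K$ must contain a subpath from the $X$-path to $\gamma_y\cup(\cup S_k)\subset Y'$.

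\medskip
\textbf{Where your argument breaks.} Your Step~3 ``inside'' case does not go through as written. Suppose your dual separator $\Gamma$ really is a single Jordan curve contained in $D_{\rho+\epsilon}$. You want the maximum principle on the bounded component $U$ of $\C\setminus\Gamma$ to yield a contradiction, but observe that \emph{both} $X$ and $Y'$ have vertices in the unbounded component: by the (global) discrete maximum principle the minimum of $h$ is attained on $\partial V^\bullet$, giving a vertex of $X\cap\partial V^\bullet$, and by hypothesis this vertex lies outside $D_R\supset D_{\rho+\epsilon}$; similarly $Y$ (hence $Y'$) contains a $\partial V^\bullet$ vertex where $h$ is maximal. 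So a small Jordan curve $\Gamma\subset D_{\rho+\epsilon}$ cannot topologically separate the \emph{full} sets $X$ and $Y'$, and the graph-theoretic separation you obtain from duality (``every $\rho$-edge path from $X$ to $Y'$ crosses $\Gamma$'') does not translate into the topological statement you use. The maximum principle on $U$ tells you nothing useful here, because the graph-boundary of $V^\bullet\cap U$ is just the set of endpoints of non-$\rho$-edges crossed by $\Gamma$, and there is no reason those endpoints lie in $X$ or $Y'$. This gap is in addition to the Step~2 difficulty you already flag: in the mixed-boundary regime the dual separator is generally a collection of arcs terminating on $T_1\cup T_2$, not a single closed curve, so the dichotomy ``inside/outside $D_\rho$'' need not even be well-posed.

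\medskip
\textbf{Comparison.} The paper's constructive route sidesteps both problems: the set $\sigma_\rho$ and its outer boundary $K$ are well-defined regardless of boundary type, and the monotone paths to $\partial V^\bullet$ let one run the Jordan-curve argument of Lemma~\ref{modified7.3} without ever invoking a dual separator or a second application of the maximum principle.
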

This lemma will follow from the following result about plane graphs. 
The proof of Lemma \ref{modified7.2} from Lemma \ref{modified7.3} follows the same logic that was used in \cite{gurel-jerison-nachmias:advm2020} to prove Lemma 7.2 from Lemma 7.3, but for the convenience of the reader, we include a proof after establishing Lemma \ref{modified7.3}.
This lemma guarantees that when the mesh-size is small enough one can follow the circular arcs in an annulus around $x$ and $y$ to get paths in the graph. The setting is illustrated in Figure \ref{PlaneLemmaPic}.

\begin{lemma}\label{modified7.3}
Let $H$ be a finite connected plane graph with planar dual graph $H^\dagger$.
Let $W, W^\dagger$ be the vertex sets of $H, H^\dagger$.
For each $w \in W^\dagger$, let $f(w)$ denote the face of $H$ that contains $w$.
Fix $\ep>0$ and assume that each inner face $f(w)$ is contained in the interior of $D_\epsilon(w)$.
Fix $0<r<R$ with $r+\epsilon < R-\epsilon$.
Let $\zeta \in W^\dagger$ be the dual vertex contained in the outer face of $H$, and assume that $\zeta \notin D_R=D_R(0)$.

Assume that $\partial H$, the topological boundary of the outer face of $H$,
is a cycle whose vertices are partitioned into two disjoint, nonempty sets
 $\mathcal{S}$ and $\mathcal{T}$, as in Figure \ref{PlaneLemmaPic}. 
Let $x, y \in W$ satisfy $|x|, |y| \leq r$.  
Let $X, Y$ be disjoint subsets of $W$ such that there is a path $\gamma_x$ in $H$ from $x$ to $\mathcal{S}$ whose vertices are all in $X$, and there is a path $\gamma_y$ in $H$ from $y$ to $\mathcal{S}$ whose vertices are all in $Y$.
Assume there is a path in $H$ from $x$ to $y$  that is contained in $D_r=D_r(0)$.
Further assume that 
there is a path $\gamma_\partial$ in $\partial H$ from  $\gamma_x\cap \mathcal{S}$ to  $\gamma_y\cap\mathcal{S}$ so that  $\gamma_\partial \cap D_R$ only contains vertices in $\mathcal{S}$.

Set $Y' = Y \cup [D_R \cap  \mathcal{S} ]$.  
If $X \cap Y' = \emptyset$, then for each $\rho \in (r + \epsilon, R-\epsilon)$, 
there is a path in $H$ from $X$ to $Y'$  consisting entirely of $\rho$-edges that are not in $\partial H$.

\end{lemma}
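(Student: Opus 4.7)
The plan is to argue by contradiction using a planar-duality and Jordan-curve argument, modelled on the proof of Lemma 7.3 in \cite{gurel-jerison-nachmias:advm2020}, with additional bookkeeping to handle the decomposition $\partial H = (\cup_k S_k) \cup (\cup_k T_k)$ and the definition $Y' = Y \cup (D_R \cap \cup_k S_k)$. Let $P_X \subset X$ denote the path from $x$ to some $v_X \in \cup_k S_k$, $P_Y \subset Y$ the path from $y$ to some $v_Y \in \cup_k S_k$, and $P_{xy} \subset D_r$ the path from $x$ to $y$ guaranteed by hypothesis.

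Assume for contradiction that no $\rho$-edge path from $X$ to $Y'$ exists in $H \setminus \partial H$. First I would build a simple closed Jordan curve $\Gamma$ in the plane by concatenating $P_X$, the reverse of $P_{xy}$, $P_Y$, and an arc $\alpha \subset \partial H$ joining $v_Y$ back to $v_X$; self-intersections are pruned to leave a simple closed curve. The arc $\alpha$ is chosen so that $\Gamma$ separates the origin from $\zeta$ in $\mathbb{R}^2$. When $v_X$ and $v_Y$ lie on the same arc $S_k$, take $\alpha$ to be the subarc of $S_k$ joining them; when $v_X \in S_1$ and $v_Y \in S_2$, use the hypothesis $S_k \cap D_R^c \ne \emptyset$ to route $\alpha$ through a vertex outside $D_R$, and combine with $\zeta \notin D_R$ to guarantee the required separation. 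Since $P_{xy} \subset D_r$, every face of $H$ incident to $P_{xy}$ has dual vertex in $D_{r+\epsilon} \subset D_\rho$, so the bounded component of $\mathbb{R}^2 \setminus \Gamma$ contains dual vertices in $W^\dagger_{\text{in}} := \{w \in W^\dagger : |w| < \rho\}$.

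Next I would invoke planar duality. Writing $W^\dagger_{\text{out}} := \{w \in W^\dagger : |w| \ge \rho\}$, the duals of $\rho$-edges are exactly the edges of $H^\dagger$ joining $W^\dagger_{\text{in}}$ to $W^\dagger_{\text{out}}$. Pick any path in $H^\dagger$ from $\zeta$ (in the unbounded component of $\mathbb{R}^2 \setminus \Gamma$) to a dual vertex $\hat w$ lying in the bounded component with $|\hat w| < \rho$. This dual path must cross $\Gamma$, and planarity forces each crossing to occur along a primal edge lying on $\Gamma$ whose dual joins $W^\dagger_{\text{in}}$ to $W^\dagger_{\text{out}}$; this produces at least one $\rho$-edge $e$ lying on $\Gamma$.

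The last step is to identify $e$ as a $\rho$-edge linking $X$ to $Y'$ inside $H \setminus \partial H$. The edge $e$ cannot lie on $P_{xy}$, since every edge of $P_{xy}$ has both adjacent faces with dual in $D_{r+\epsilon}$ and so cannot be a $\rho$-edge. By the chosen routing of $\alpha$, the portion of $\alpha$ inside $D_R$ has vertices in $D_R \cap \cup_k S_k \subset Y'$, while the portion outside $D_R$ is adjacent only to faces with dual in $W^\dagger_{\text{out}}$ and therefore contains no $\rho$-edges. A case analysis on which sub-paths of $\Gamma$ carry the two endpoints of $e$ --- together with the disjointness $X \cap Y' = \emptyset$ and the containments $V(P_X) \subset X$, $V(P_Y) \subset Y \subset Y'$ --- then forces $e$ to join $X$ to $Y'$, contradicting the standing assumption. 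The main obstacle will be the execution of this case analysis, in particular verifying in every configuration of $v_X, v_Y$ that $\Gamma$ can be arranged to enclose the origin but not $\zeta$ and that crossings by $\rho$-edges cannot be absorbed entirely into $\alpha$ or into a single one of $P_X, P_Y$; the hypothesis $S_k \cap D_R^c \ne \emptyset$ enters precisely here, by permitting $\alpha$ to be routed through a region where no $\rho$-edge meets it.
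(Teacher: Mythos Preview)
Your strategy contains a real gap at its core, and it differs from the paper's argument in a way that matters.

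The crucial problem is in your second step.  You pick a dual path from $\zeta$ to some $\hat w$ with $|\hat w|<\rho$ lying in the bounded component of $\mathbb{R}^2\setminus\Gamma$, observe that it must cross $\Gamma$, and then assert that ``planarity forces each crossing to occur along a primal edge lying on $\Gamma$ whose dual joins $W^\dagger_{\text{in}}$ to $W^\dagger_{\text{out}}$.''  This conflates two different separations.  A crossing of $\Gamma$ corresponds to a dual edge whose endpoints lie on opposite sides of the Jordan curve $\Gamma$; a $\rho$-edge corresponds to a dual edge whose endpoints lie on opposite sides of the circle $|w|=\rho$.  These are unrelated.  The dual path certainly contains at least one $\rho$-dual-edge somewhere (since it runs from $|\zeta|>\rho$ to $|\hat w|<\rho$), and it certainly crosses $\Gamma$ somewhere, but there is no reason for these to happen at the same dual edge.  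So you have not produced a $\rho$-edge on $\Gamma$.

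Even if you patched this and did obtain a single $\rho$-edge $e$ on $\Gamma$, your case analysis would not close.  The edge $e$ could lie entirely on $P_X$, with both endpoints in $X$; nothing in your setup prevents $P_X$ from carrying $\rho$-edges, since $P_X$ can wander across the circle of radius $\rho$ on its way out to $\cup_k S_k$.  A single $\rho$-edge on $\Gamma$ therefore need not connect $X$ to $Y'$.

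The paper avoids both problems by reversing the order of the construction.  It first builds a connected subgraph $K$ consisting entirely of $\rho$-edges: take the dual vertices reachable from a dual vertex near $x$ by dual paths staying inside the open $\rho$-disk, form the union $H[\sigma_\rho]$ of the corresponding face boundaries, and let $K$ be the boundary of the outer face of $H[\sigma_\rho]$.  Every edge of $K$ is then a $\rho$-edge by construction.  Only afterwards is the Jordan curve $J$ (your $\Gamma$) assembled from $\gamma_x$, $\gamma_{xy}$, $\gamma_y$ and a boundary arc, and one argues topologically that $K$ must contain an entire path crossing the interior of $J$ from the $\gamma_x$-side to the $\gamma_y\cup(\cup S_k)$-side.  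This gives a whole path of $\rho$-edges, not a single edge, and the crossing argument is between two curves rather than a curve and a circle, so the conflation above does not arise.
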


\begin{figure}
\centering
\begin{tikzpicture}[scale=1.6]

\draw[thick] (0,0) to [out=60,in=-110] (0,2);
\draw[thick] (0,0) to [out=-10,in=160] (3,0);
\draw[thick] (3,0) to [out=100,in=-70] (3,2);
\draw[thick] (0,2) to [out=30,in=200] (3,2);

\draw[fill] (2, 0.2) circle [radius=0.04];
\draw[fill] (2.4,0.4) circle [radius=0.04];

\draw (2.2,0.3) circle [radius=0.4];
\draw (2.2,0.3) circle [radius=1.2];

\node at (-0.15, 1) {$\mathcal{S}$};
\node at (3.3, 1.5) {$\mathcal{S}$};
\node at (0.5, -0.25) {$\mathcal{T}$};
\node at (1.5, 2.25) {$\mathcal{T}$};
\node at (2.6, -0.6) {$D_R$};
\node at (1.95,0.35) {$x$};
\node at (2.3,0.5) {$y$};
\node at (0.55, 1.3) {\color{red}{$\gamma_x$}};
\node at (2.65,1.0) {\color{blue}{$\gamma_y$}};

\draw[thick, dashed, red] (1.97,0.2) to [out=180,in=-20] (-0.1,1.48);
\draw[thick, dashed, black] (2.01, 0.2) to [out=0,in=-100] (2.4, 0.4);
\draw[thick, dashed, blue] (2.42, 0.42) to [out=50,in=180] (3.06, 1);

\end{tikzpicture}
\caption{A sketch of the setting for Lemma \ref{modified7.3}: the topological boundary of $H$ subdivided into sets $\mathcal{S}$ and $\mathcal{T}$ (solid black), the path $\gamma_x$ from $x$ to $\mathcal{S}$ with vertices in $X$ (dashed red), the path $\gamma_y$ from $y$ to $ \mathcal{S}$ with vertices in $Y$ (dashed blue), and the path $\gamma_{xy}$ from $x$ to $y$ in $D_r$ (dashed black).
The cycle $J$ from the proof of Lemma  \ref{modified7.3} consists of $\gamma_x, \gamma_{xy}, \gamma_{y}$ and the arc from  $\partial H$ traced counterclockwise from $\gamma_y$ to $\gamma_x$ .
}\label{PlaneLemmaPic}
\end{figure}
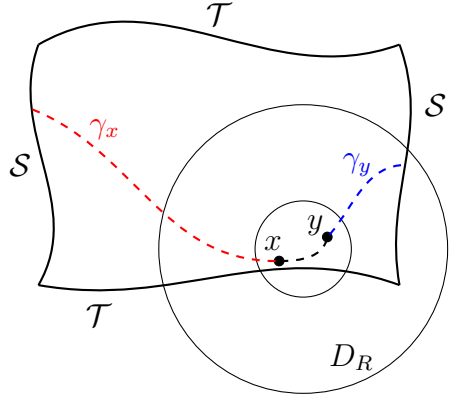

\begin{proof}
We begin by following the proof of Lemma 7.3 in \cite{gurel-jerison-nachmias:advm2020}
to obtain a cycle $K$ in $H$ such that all of its edges are $\rho$-edges.  
For convenience, we briefly repeat that argument here.  
Let $u \in W^\dagger$ be a dual vertex contained in an inner face of $H$ with $x$ incident to this face.  Then $u \neq \zeta$ and $|u| \leq r + \e < \rho$.
Let $\sigma_\rho$ be the set of vertices $w \in W^\dagger$ so that there is a path from $u$ to $w$ in $H^\dagger$ with all vertices of this path in the open disk of radius $\rho$ centered at 0.
Let $H[\sigma_\rho]$ be the subgraph of $H$ whose vertices and edges make up the boundaries of the faces containing $w$ for each $w \in \sigma_\rho$.
The graph $H[\sigma_\rho]$ is connected and contained in $D_R$.
Let $f_\rho^{\text{out}}$ be the outer face of $H[\sigma_\rho]$, which contains the complement of $D_R$.
Let $K$ be the boundary of $f_\rho^{\text{out}}$, which is a connected subgraph.
One can check that all of the edges in $K$ are $\rho$-edges.
Further,
we note that the edges in $K$ are either in $\partial H$, or within distance $\epsilon$ of $C_\rho$, the circle of radius $\rho$ centered at 0.  The former situation can happen when $C_\rho$ extends into the outer face of $H$.  However, when an arc of $C_\rho$ is contained entirely within the interior of $H$, then there will be edges of $K$ within distance $\epsilon$ of this arc.

Recall that $\gamma_x$ is a path in $H$ from $x$ to $ \mathcal{S}$ whose vertices are all in $X$,
$\gamma_y$ is a path in $H$ from $y$ to $ \mathcal{S}$ whose vertices are all in $Y$,
and $\gamma_\partial$ is a path in $\partial H$ from $\gamma_x$ to $\gamma_y$ with $\gamma_\partial \cap D_R \subset  \mathcal{S}$.
Let $\gamma_{xy}$ denote the path in $H$ from $x$ to $y$  that is contained in $D_r$.
Since $\gamma_x \subset X$ is disjoint from $\mathcal{S} \cap D_R \subset Y'$, we must have that $\gamma_x$ connects $x$ to a point in $\partial H$ that is outside $D_R$.  Hence $\gamma_x$ must cross the annulus $D_R \setminus D_r$.
Further  $\gamma_x$ and $\gamma_y \cup (\mathcal{S} \cap D_R)$ must be disjoint within $D_R$, since the vertices of the former are in $X$ and the vertices of the later are in $Y'$.
Hence we can find a cycle $J$ 
composed of three disjoint arcs:
(1) an arc contained in $\gamma_{xy}$ within $D_r$,
(2) an arc  contained in $\gamma_x$ that crosses $D_R \setminus D_r$,
and (3) an arc contained in $\gamma_y  \cup \gamma_\partial$ that crosses $D_R \setminus D_r$ and whose intersection with $D_R$ is contained in $\gamma_y \cup  \mathcal{S}$.

Now the interior of $J$ is contained in the interior of $H$ by construction.  
Further it contains an arc of $C_\rho$.
Hence there must be a path contained in $K$  that crosses the interior of $J$ from a vertex in $\gamma_x$ to a vertex in $\gamma_y \cup \mathcal{S}$.  
This path could potentially contain some edges in $\partial H$, but the only way for this to happen is for the path to contain edges from $\gamma_x$ or $\gamma_y \cup \mathcal{S}$.  
Hence, we can shorten the path by starting  at the last vertex it encounters in $\gamma_x$ and ending at the first vertex it encounters in $\gamma_y \cup \mathcal{S}$.  Then all the edges in the path are strictly within the interior of $J$ and hence in the interior of $H$.  This gives us the desired path.

\end{proof}

\begin{proof}[Proof of Lemma \ref{modified7.2}.]
Assume the hypotheses of Proposition \ref{modified7.1} and  Lemma \ref{modified7.2}.
In order to apply Lemma \ref{modified7.3}, we need to modify $G_1^\bullet$ and $G_1^\circ$ to create graphs $H$ and $H^\dagger$
that are planar duals.
Define $W=V^\bullet$ and $W^\dagger = V^\circ \cup \{ \zeta \}$, where $\zeta$ is a new vertex in the outer face of $G_1$ with $|\zeta|>R$ and with distance at least $10\epsilon$ to the topological boundary of $G$.
As you traverse the topological boundary of $G_1$ clockwise, let $v_1, w_1, v_2, w_2, \cdots, v_n, w_n, v_1$ denote the vertices in order, with $v_k \in V^\bullet$ and $w_k \in V^\circ$.
For $k=1, \cdots, n$, let $e_k^\bullet$ be a new edge in the outer face of $G_1$ between $v_k$ and $v_{k+1}$, taking indices mod $n$.
By drawing these edges appropriately, we may assume that $e_k^\bullet \subset \overline{D_\epsilon (w_k)}$.
For $k=1, \cdots, n$, let $e_k^\circ$ be a new edge in the outer face of $G_1$ between $w_k$ and $\zeta$ that intersects $e_k^\bullet$ exactly once.
Then define $H$ to have vertex set $W$ and edge set $E^\bullet \cup \{e_1^\bullet, \cdots, e_n^\bullet \}$, and similarly define
$H^\dagger$ to have vertex set $W^\dagger$ and edge set $E^\circ \cup \{e_1^\circ, \cdots, e_n^\circ \}$.
Note that $H$ and $H^\dagger$ are planar duals by construction.  Further, for each $w \in W^\dagger \setminus \{\zeta\}$, the inner face $f(w)$ is contained in the interior of $D_\epsilon(w)$ and $\zeta \notin D_R$.

The topological boundary of $H$ consists of the cycle with vertices $v_1, v_2, \cdots, v_n$.
These are precisely the vertices from $V^\bullet$ that are in the topological boundary of $G_1$.
Let $\mathcal{S} = \partial V^\bullet$, and let $\mathcal{T}$ be the remaining vertices in $\partial G_1$, 
i.e.~the vertices from $\partial G_1 \cap \left(T_1 \cup T_2\right)$. 
With this definition, note that the definitions of $Y'$ in Lemma \ref{modified7.2} and Lemma \ref{modified7.3} are in agreement.
To finish establishing the hypotheses of Lemma \ref{modified7.3}, we must show the existence of the paths $\gamma_x, \gamma_y,$ and
$\gamma_\partial$.

Let $X_0$ be the connected component of $X$ in $H$ that contains $x$.  For the sake of contradiction, assume that $X_0$ does not contain any vertices from $\mathcal{S}$.  
This means that $h$ is discrete harmonic on $X_0$.
Further, since $H$ is connected, the set 
$$X_{\text{adj}} = \{ v \in W \, : \,  v\notin X_0 \text{ but } v \text{ is adjacent to a vertex in } X_0\}$$ is nonempty.  
Thus by the discrete maximum principle, there is a vertex $v  \in X_\text{adj}$ with $h(v) \leq h(x).$
But this means $v \in X_0$, which gives a contradiction.
Therefore, $X_0$ contains a vertex from $\mathcal{S}$, which means that there is a path in $H$ from $x$ to $\mathcal{S}$ whose vertices are all in $X$.
The same argument gives that there is a path in $H$ from $y$ to $\mathcal{S}$ whose vertices are all in $Y$.
The path $\gamma_\partial$ exists by the assumption in Proposition \ref{modified7.1} that between any two vertices in $\partial V^\bullet$, there is a path $\gamma$  in $\partial G_1^\bullet$ so that 
$\gamma \cap D_R(z)$  only contains vertices in  $\partial V^\bullet$.

Fix  $\rho \in (r + \epsilon, R-\epsilon)$.
Since we have established all the conditions of Lemma \ref{modified7.3}, it follows that 
there is a path in $H$ from $X$ to $Y'$ consisting entirely of $\rho$-edges that are not in $\partial H$.
Note that this path is also contained in $G_1^\bullet$, since $\partial H$ contains all the extra edges in $H$ that are not in $G_1^\bullet$.
Further, one can check that the definition of $\rho$-edge in $H$ and $\rho$-edge in $G_1^\bullet $ are in agreement for the edges in this path.
\end{proof}

\begin{proof}[Proof of Proposition \ref{modified7.1}.]
Assume the hypotheses of Proposition \ref{modified7.1}.  We may further assume that the origin is located at $\frac{1}{2}(x+y)$ and that $h(x) < h(y)$.
Define $X, Y$ and $Y'$ as in the statement of Lemma \ref{modified7.2}, and let $X' =  X \cup [\partial V^\bullet \cap D_R(0) ]$.
If $X \cap Y' = \emptyset$, then by Lemma \ref{modified7.2} and Proposition 6.2 in \cite{gurel-jerison-nachmias:advm2020},
there is a unit flow $\theta$ in $G_1^\bullet$ from $X$ to $Y'$ with 
\begin{equation*}
\mathcal{E}_1(\theta) \leq \frac{C}{\log\left[(R-\epsilon)/(r+\epsilon)\right]},
\end{equation*}
where $C$ is a universal constant.
Since $R \geq 2r+3\epsilon$, one can show that $\frac{R}{r+\epsilon} \leq \left(\frac{R-\epsilon}{r+\epsilon} \right)^2$.
Therefore if $X \cap Y' = \emptyset$, 
\begin{equation}\label{XY'flow}
\mathcal{E}_1(\theta)  \leq \frac{C}{\log\left[R/(r+\epsilon)\right]}.
\end{equation}
Similarly, if $X' \cap Y = \emptyset$, 
then
there is a unit flow $\phi$ in $G_1^\bullet$ from $X'$ to $Y$ with 
\begin{equation}\label{X'Yflow}
\mathcal{E}_1(\phi) \leq \frac{C}{\log\left[R/(r+\epsilon)\right]}.
\end{equation}

To complete the proof, we will consider three cases.  First assume that $Y=Y'$, which means that $\partial V^\bullet \cap D_R(0)$ is empty or is contained in $Y$.  In this case, $X \cap Y' = \emptyset$, and so \eqref{XY'flow} holds. 
Then Proposition 4.11 in \cite{gurel-jerison-nachmias:advm2020} shows that
$$h(y) - h(x)  = \text{gap}_{X, Y}(h) \leq \mathcal{E}_1(\theta)^{1/2} \mathcal{E}_1(h)^{1/2}
\leq \frac{C\, \mathcal{E}_1^\bullet(h)^{1/2}}{\log^{1/2}\left[ R/(r+\epsilon) \right]},$$
where 
$$\text{gap}_{X, Y}(h) := \min_{v \in Y}h(v) - \max_{w \in X}h(w).$$

Next assume $X=X'$.  In this case, we can repeat the above argument using $\phi$ and \eqref{X'Yflow}  instead of $\theta$ and \eqref{XY'flow}. 

Lastly assume that $\partial V^\bullet \cap D_R(0)$ is nonempty and is not a subset of $X$ or $Y$.
Let $m_1$ and $m_2$ be the minimum and the maximum of $h(v)$ over $v \in \partial V^\bullet \cap D_R(0)$.
Then $\kappa = m_2-m_1$, and our assumption gives that  $m_1< h(y)$ and $m_2 > h(x)$.
This implies that
$$ \text{gap}_{X, Y'}(h) = m_1 -h(x) \; \text{ and } \; \text{gap}_{X', Y}(h) = h(y) - m_2.$$
Then $$ h(y)-h(x) - \kappa = \text{gap}_{X, Y'}(h) + \text{gap}_{X', Y}(h),$$ 
and to finish the proof, it remains to find an upper bound for these two gap terms.
Note that $X \cap Y' = \emptyset $ precisely when $\text{gap}_{X, Y'}(h) > 0$.
Hence, we can either bound $\text{gap}_{X, Y'}(h)$ above by 0, or we can use Proposition 4.11 in \cite{gurel-jerison-nachmias:advm2020} and \eqref{XY'flow}  to show 
$$\text{gap}_{X, Y'}(h) \leq \mathcal{E}_1(\theta)^{1/2} \mathcal{E}_1(h)^{1/2}
\leq \frac{C\, \mathcal{E}_1^\bullet(h)^{1/2}}{\log^{1/2}\left[ R/(r+\epsilon) \right]}.$$
Similarly, $X' \cap Y = \emptyset$ precisely when $\text{gap}_{X', Y}(h) > 0$, allowing us to bound $\text{gap}_{X', Y}(h)$ above by 0 or by $
 \frac{C\, \mathcal{E}_1^\bullet(h)^{1/2}}{\log^{1/2}\left[ R/(r+\epsilon) \right]}$ using Proposition 4.11 in \cite{gurel-jerison-nachmias:advm2020} and \eqref{X'Yflow}.  

\end{proof}

Lastly, we need the following result that allows us to convert a path in $\hat{G}$ to a nearby path in $G^\bullet$.  This result follows exactly the proof of Lemma 3.2 in \cite{gurel-jerison-nachmias:advm2020}, and so we omit the proof.

\begin{lemma}\label{nearbypath}
Let $G$ be a finite orthodiagonal map with edge lengths bounded by $\ep$, let $z,w \in V^\bullet$, and let $\gamma_{zw}$ be a curve from $z$ to $w$ in $\hat G$.
Then there is a path $\gamma$ between $z$ and $w$ in $G^\bullet$ so that $\diam(\gamma) \leq \diam(\gamma_{zw}) + 2 \ep.$
\end{lemma}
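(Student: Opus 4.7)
The plan is to prove Lemma \ref{nearbypath} by discretizing the continuous curve $\gamma_{zw}$ through the faces of $G$ and converting this information into a walk in the primal graph $G^\bullet$ whose vertices lie close to $\gamma_{zw}$.

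Concretely, I would parameterize $\gamma_{zw}:[0,1]\to\hat G$ with $\gamma_{zw}(0)=z$ and $\gamma_{zw}(1)=w$, and consider the open inner faces of $G$ as an open cover of the interior of $\hat G$. After a negligible perturbation to ensure the curve avoids the interior vertices of $G$ and crosses each boundary edge of $G$ transversally, one obtains times $0=t_0<t_1<\cdots<t_k=1$ such that on each open interval $(t_{j-1},t_j)$ the curve lies in a single inner quadrilateral face $Q_j=[v_1^j,w_1^j,v_2^j,w_2^j]$, and at each interior time $t_j$ the curve crosses a boundary edge $e_j\in E$ shared by $Q_j$ and $Q_{j+1}$. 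Because $G$ is bipartite, each such crossing edge has exactly one endpoint in $V^\bullet$; call it $u_j$. Set $u_0:=z$ and $u_k:=w$.

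The key observation is that for each $j\in\{1,\dots,k\}$ both $u_{j-1}$ and $u_j$ are primal corners of the same face $Q_j$. Hence either $u_{j-1}=u_j$ (and no edge is appended) or $\{u_{j-1},u_j\}=\{v_1^j,v_2^j\}$, in which case they are joined by the primal edge $e^\bullet_{Q_j}\in E^\bullet$. Concatenating these primal edges yields a walk from $z$ to $w$ in $G^\bullet$; a standard loop-erasure produces the desired simple path $\gamma$.

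For the diameter bound, note that each primal vertex $u_j$ in $\gamma$ is an endpoint of a boundary edge of $G$ through which $\gamma_{zw}$ passes, so there is a point $p_j\in\gamma_{zw}$ with $|u_j-p_j|\le \ep$. Each primal edge $e^\bullet_{Q_j}$ appearing in $\gamma$ lies in the quadrilateral $Q_j$ whose vertices are within $\ep$ of $\gamma_{zw}$, so every point on the primal edge lies within $\ep$ of some vertex $u_{j-1}$ or $u_j$. Combining these gives that every point of $\gamma$ is within $\ep$ of some point of $\gamma_{zw}$, and the triangle inequality then yields $\diam(\gamma)\le \diam(\gamma_{zw})+2\ep$.

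The main technical obstacle is the degenerate situation in which $\gamma_{zw}$ passes through an interior vertex of $G$ or runs along an edge of $G$ for a positive-length interval, so that face membership is ambiguous. This can be handled either by a small perturbation of $\gamma_{zw}$ (whose effect on $\diam(\gamma_{zw})$ is negligible and vanishes in the limit of the perturbation) or by fixing a deterministic tie-breaking rule (such as choosing the face to the immediate right of the direction of travel); after this, the finitely many crossing times are unambiguous by compactness of $\gamma_{zw}$ and finiteness of $G$, and the rest of the construction proceeds as above.
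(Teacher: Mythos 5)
Your construction --- tracking the faces traversed by $\gamma_{zw}$, taking the primal endpoint $u_j$ of each crossed edge of $G$, and joining consecutive ones by the primal edge of the common face --- is sound, and it is the same discretization idea as in Lemma 3.2 of \cite{gurel-jerison-nachmias:advm2020}, to which the paper defers instead of writing out a proof. The problem is the final estimate. You correctly observe that (a) every point of $\gamma$ lies within $\ep$ of one of the vertices $u_{j}$ (each primal edge is either a single segment of length at most $2\ep$, or the union of two segments $v_1p$, $pv_2$ of length at most $\ep$), and (b) each $u_j$ lies within $\ep$ of $\gamma_{zw}$, being an endpoint of an edge of $G$ of length at most $\ep$ that the curve crosses. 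But chaining (a) and (b) only shows that every point of $\gamma$ is within $2\ep$ of $\gamma_{zw}$, not within $\ep$ as you assert, and the triangle inequality then yields $\diam(\gamma)\le\diam(\gamma_{zw})+4\ep$ rather than the claimed $+2\ep$. As written, the stated bound does not follow.

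The constant can be recovered with your construction by bounding the diameter at the kink points of the polygonal arc $\gamma$ instead of at arbitrary points: since $a\mapsto|a-b|$ is convex, $\diam(\gamma)$ is attained at endpoints of its constituent segments, i.e.\ at the vertices $u_j$ and at the bend points $p_Q=(w_1+w_2)/2$ of bent primal edges. Each $u_j$ is within $\ep$ of a crossing point of $\gamma_{zw}$ by (b), and $u_0=z$, $u_k=w$ lie on the curve. For a bend point, $|p_Q-w_i|=\tfrac12|w_1w_2|\le\ep$ and $|p_Q-v_i|\le\max\bigl(|w_1-v_i|,|w_2-v_i|\bigr)\le\ep$, so all four corners of $Q$ are within $\ep$ of $p_Q$; by convexity every point of $\partial Q$ is within $\ep$ of $p_Q$, in particular the point where $\gamma_{zw}$ crosses a side of $Q$. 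Hence every kink point of $\gamma$ is within $\ep$ of $\gamma_{zw}$, and for any two points $a,b\in\gamma$ one gets $|a-b|\le\ep+\diam(\gamma_{zw})+\ep$, which is the asserted bound. (The perturbation issue you mention is harmless: since $G^\bullet$ carries only finitely many paths, letting the perturbation size tend to zero produces a single path satisfying the clean bound for the original curve.)
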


\subsection{Proof of Theorem \ref{HarmConvThm}}\label{HarmConvThmProof}
This section is devoted to the proof of Theorem \ref{HarmConvThm},
 where we must show  that \eqref{maingoal}  holds for $\lambda < C'$ for some $C'>0$ which depends only on $\Omega$.
 We split this into two cases, depending on the distance to the boundary arcs $A$ and $B$.   
 Note that each of the two propositions contains slightly more than we need to prove Theorem \ref{HarmConvThm}, but we will use this extra information elsewhere.

\begin{prop}\label{claim1B}
Assume we are in the setting of Theorem \ref{HarmConvThm}, and
let $z \in V^\bullet \cap \overline{\Omega}$ such that dist$(z, A \cup B) \leq 2\lambda^{1/4}$.  
 For $\lambda$ small enough (depending only on $\Omega$),
$|h_d(z) - h_c(z)| \leq \frac{C}{\log^{1/2}\left(\diam(\Omega/ \lambda \right)},$
and further there exists $w \in \partial V^\bullet$ so that 
$| h_d(z) - h_d(w) | \leq \frac{C}{\log^{1/2}\left( \diam(\Omega)/\lambda \right)}$.
\end{prop}

\begin{prop}\label{subdiagHarmConv}
Assume we are in the setting of Theorem \ref{HarmConvThm}, and
let $z_0 \in V^\bullet$ such that dist$(z_0, A \cup B) > 2\lambda^{1/4}$. 
Then 
$h_c$ has a harmonic extension to a region $\mathcal{O}$
so that for $\lambda$ small enough (depending only on $\Omega$),
$z_0 \in \mathcal{O}$ and 
  $ |h_d(z_0) - h_c(z_0)| \leq \frac{C}{\log^{1/2}\left(\diam(\Omega)/ \lambda \right)}$.
\end{prop}

By scale invariance, we will prove both of these in the case that $\diam(\Omega) =1$.

\begin{proof}[Proof of Proposition \ref{claim1B}.]

Let $z \in V^\bullet \cap \overline{\Omega}$ such that dist$(z, A \cup B) \leq 2\lambda^{1/4}$.
Choose $u \in A \cup B$ such that $|z-u| \leq 2\lambda^{1/4}$.
Then there is
 $v \in S_1 \cup S_2$ such that there is a path $\gamma_{uv}$ from $u$ to $v$ in $\hat G$ with length at most $\delta$.
 Further by following the edge containing $v$ to its endpoint in $V^\bullet$, there is 
 $w\in \partial V^\bullet$ such that $|v-w| \leq  \epsilon $ (and let $\gamma_{vw}$ be the path from $v$ to $w$ along the edge).
Then $|z-w| \leq 2\lambda^{1/4} + \delta + \e \leq 4\lambda^{1/4}$.

Since $h_d(w) = h_c(u)$, we have
$$ | h_d(z) - h_c(z) | \leq |h_d(z) - h_d(w) | + |h_c(u) - h_c(z)|.$$
From Lemma \ref{reflection},
$$ | h_c(u) - h_c(z) | \leq C |u-z|^{a} \leq C \lambda^{a/4} \leq  \frac{C}{\log^{1/2}\left(1/\lambda \right)}. $$
Thus it remains to bound $ |h_d(z) - h_d(w) | $.

Our goal will be to use Proposition \ref{modified7.1}, and so we must find a path from $z$ to $w$ in $G^\bullet$.
Since $\Omega$ is a linearly connected domain (with constant $K$), there is a path $\gamma_{zu}$ in $\overline{\Omega} \subset \hat{G}$ from $z$ to $u$ so that 
$$\diam(\gamma_{zu}) \leq K |z-u| \leq 2 K \lambda^{1/4}.$$
Therefore $\gamma_{zw} = \gamma_{zu} \cup \gamma_{uv} \cup \gamma_{vw}$ is a path from $z$ to $w$ in $\hat{G}$ with 
$$\diam(\gamma_{zw}) \leq 2K \lambda^{1/4}+\delta + \e \leq (2K + 2)\lambda^{1/4}.$$
Now by Lemma \ref{nearbypath} the path $\gamma_{zw}$ can be converted into a path $\gamma$ in $G^\bullet$  so that 
$$\diam(\gamma) \leq \diam(\gamma_{zu}) + 2\e \leq (2K+4)\lambda^{1/4}. $$
Now a disc centered at $\frac{1}{2}(z+w)$ will contain $\gamma$ if its radius is at least
$$\frac{1}{2}|z-w| + \diam(\gamma) \leq 2 \lambda^{1/4} + (2K+4)\lambda^{1/4} = (2K+6)\lambda^{1/4}.$$
Set $r= (2K+6)\lambda^{1/4}$ and $R = (4K + 15) \lambda^{1/8}$.
Note that, by taking $\l$ small enough,
$$ 2r+3\e \leq R  \leq \frac{1}{4} \hat{d}, $$
where $\hat{d} := \min\{ \text{\normalfont{dist}}(T_1, T_2),  \text{\normalfont{dist}}(S_1, S_2)\}$ 
and
$$\frac{R}{r+\e} = \frac{(4K + 15 ) \lambda^{1/8}}{(2K+6)\lambda^{1/4}+ \e} \geq \lambda^{-1/8}.$$
The fact that $R \leq \frac{1}{4} \hat{d}$ means that $D_R$, the disc of radius $R$ centered at $\frac{1}{2}|z+w|$ can intersect at most one of $S_1$ or $S_2$ and at most one of $T_1$ or $T_2$.
Because $h_d$ is constant equal to 0 on $S_1 \cap V^\bullet$ and constant equal to 1 on $S_2 \cap V^\bullet$, 
the term $\kappa$ from Proposition \ref{modified7.1} will be zero.
Further since $D_R$ intersects at most one of $T_1$ or $T_2$,
between any two vertices in $S_1 \cup S_2$, there is a path $\gamma_\partial$  in $\partial G^\bullet$ so that the vertices of $\gamma_\partial \cap D_R(z)$ belong to $S_1 \cup S_2$.
Therefore Proposition \ref{modified7.1} gives
$$| h_d(z) - h_d(w) | \leq \frac{C\, \mathcal{E}^\bullet(h_d)^{1/2}}{\log^{1/2}\left( 1/\lambda \right)}.$$

To complete the proof, it remains to bound  $\mathcal{E}^\bullet(h_d)$.
We reflect $\psi $ over $\tau_1$ and $\tau_2$, 
so that $\psi$ is conformal in an open set $\mathcal{O}$ containing $\Omega$, as in Lemma \ref{reflection}.
Define $\mathcal{O}_{\text{mid}} = \{z \in \mathcal{O} \, : \, 1/3 \leq h_c(z) \leq 2/3 \}$.
Consider the distance between $\tau_k \cap \mathcal{O}_{\text{mid}}$ and $\partial \mathcal{O}$.
By taking $\lambda$ small enough, we may assume
 that $3\lambda < \text{dist}\left( \tau_k \cap \mathcal{O}_{\text{mid}}, \, \partial \mathcal{O} \right)$. 
Then the boundary of any connected component of $\hat{G} \setminus \mathcal{O}_{\text{mid}}$ 
    will intersect exactly one of $\psi^{-1}(\{\text{Re}(z) = 1/3\})$ or $\psi^{-1}(\{\text{Re}(z) = 2/3\})$.
Let $\hat{G}_\text{left} $ be the union of all components whose boundary intersects  $\psi^{-1}(\{\text{Re}(z) = 1/3\})$, and similarly define  $\hat{G}_\text{right} $ as the union of components whose  boundary intersects $\psi^{-1}(\{\text{Re}(z) = 2/3\})$.
Define the continuous function $f$ on $\hat{G}$ as follows:
$$f(z) = \begin{cases}
	0					&\quad\text{if } z \in \hat{G}_\text{left}  \\  
	1					&\quad\text{if } z \in \hat{G}_\text{right}  \\  
	3h_c(z) -1                      & \quad\text{else}\\                      
      \end{cases}.$$
Then by Proposition 4.8 in  \cite{gurel-jerison-nachmias:advm2020},
$$\mathcal{E}^\bullet(h_d) \leq \mathcal{E}^\bullet(f).$$
Form $G_{\text{mid}}$  from all the quadrilaterals that intersect $\mathcal{O}_{\text{mid}}$.
Note that the only quadrilaterals in $G$ that contribute to $\mathcal{E}^\bullet(f)$ are those in $G_{\text{mid}}$.  
Further, for these $Q = [v_1, w_1, v_2, w_2]$ in $G_{\text{mid}}$, 
   we have that 
   \begin{equation*}
   |f(v_1) - f(v_2)| \leq 3 \left| h_c(v_1)- h_c(v_2) \right|.
   \end{equation*}
Therefore 
$$ \mathcal{E}^\bullet(f) \leq 9 \mathcal{E}_{\text{mid}}^\bullet(h_c),$$
where the energy on the right is with respect to the graph $G_{\text{mid}}$.
Since $G_{\text{mid}}$ is contained within $\mathcal{O}$, Proposition 5.2 in  \cite{gurel-jerison-nachmias:advm2020} implies that 
$$  \mathcal{E}_{\text{mid}}^\bullet(h_c) \leq 2 \int_{\hat{G}_{\text{mid}}} |\nabla h_c |^2 \, dA + 2\,\text{area}(\hat{G}_{\text{mid}})(10 LM\epsilon + 8M^2\e^2),$$
where $L = || \nabla h_c ||_{\infty, \hat{G}_{\text{mid}}}$ and $M = || H h_c ||_{\infty, \tilde{G}_{\text{mid}}}$.
The constants $L$ and $M$ depend on $\Omega$ but are independent of $\e$ and $\delta$.
Since $\mathcal{O}$ and the extended $\psi$ are defined by reflection, $\psi(\mathcal{O})$ is contained in the rectangle $[0,1]\times[-m,2m]$
and $\int_{\hat{G}_{\text{mid}}} |\nabla h_c |^2 \, dA \leq 3 \int_{\Omega} |\nabla h_c |^2\, dA$.  However,  $\int_{\Omega} |\nabla h_c |^2 \, dA= m$   the modulus of curves in $R$ from $\{ \text{Re}(z) = 0\}$ to $\{ \text{Re}(z) = 1\}$.
Thus we have shown that 
$\mathcal{E}^\bullet(h_d) $ is bounded by a constant (which depends only on $\Omega$), and hence
\begin{equation*}
| h_d(z) - h_d(w) | \leq \frac{C}{\log^{1/2}\left( 1/\lambda \right)}.
\end{equation*}

\end{proof}

\begin{proof}[Proof of Proposition \ref{subdiagHarmConv}.]
In this proof, we first discuss the overall logic and then conclude by establishing the stated claims
(following a similar structure as in \cite{gurel-jerison-nachmias:advm2020}).
We assume that 
there are $z \in V^\bullet$ with dist$(z, A \cup B) > 2\lambda^{1/4}$, since otherwise there is nothing to show.  
Since diam$(\Omega) =1$, this will imply that $2\lambda^{1/4} \leq 1$.  
This is equivalent to $\lambda \leq \frac{1}{16}$ or $\lambda \leq \frac{1}{8} \lambda^{1/4}$, which we will use at times.
We will also assume at times that $\lambda$ is small enough, where this depends only on the analytic quadrilateral $\Omega$.

To begin, we will construct a sub-orthodiagonal map which has distance at least $\lambda^{1/4}$ from $A\cup B$,
putting us in  the setting for  many of the tools developed in the previous section.
This task is accomplished in
the following construction and claim.
Let $P$ be the set of all inner faces $Q$ of $G$ such that  dist$(Q, A \cup B) \geq \lambda^{1/4}$, 
and let $G[P]$ be the subgraph of $G$ that is the union of the boundaries of the faces in $P$.
Note that $G[P]$ may be composed of more than one block (i.e.~maximal 2-connected component).

\begin{claim}\label{claim2}
The set $P$ is nonempty.  The inner faces of $G[P]$ are precisely the elements of $P$, and every point $p$ on the boundary of the outer face of $G[P]$ is either in $T_1 \cup T_2$ or satisfies
\begin{equation} \label{bdrybound}
\dist(p, A \cup B) \leq \frac{5}{4} \lambda^{1/4}.
\end{equation}
Each $z \in V^\bullet$ with $\dist(z, A \cup B) > 2\lambda^{1/4}$ is a vertex of a unique block of $G[P]$.  
If $H$ is a block of $G[P]$, then the inner faces of $H$ are all elements of $P$ and every point $p$ on the boundary of the outer face of $H$ is either in  $T_1 \cup T_2$ or satisfies \eqref{bdrybound}.
\end{claim}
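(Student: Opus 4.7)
Proof plan. We address the four assertions in turn.

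\textbf{Nonemptiness.} By hypothesis there exists $z \in V^\bullet \cap \overline{\Omega}$ with $\dist(z, A \cup B) > 2\lambda^{1/4}$. Any inner face $Q$ of $G$ is a quadrilateral whose four edges have length at most $\epsilon$, so $\diam(Q) \leq 2\epsilon \leq 2\lambda$. If $Q$ is incident to $z$ then
\[
\dist(Q, A \cup B) \;\geq\; 2\lambda^{1/4} - 2\lambda \;\geq\; \lambda^{1/4},
\]
using $2\lambda \leq \lambda^{1/4}$, which follows from $\lambda \leq 1/16$, itself forced by $2\lambda^{1/4} \leq \diam(\Omega) = 1$. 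Thus $Q \in P$.

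\textbf{Inner faces of $G[P]$.} It suffices to show that every non-$P$ face of $G$ is contained in the outer face of $G[P]$. By definition each such face lies in the open set $U := N_{\lambda^{1/4}}(A \cup B) \cap \hat G$, where $N_r(X)$ denotes the open $r$-neighborhood of $X$ in $\mathbb{C}$. Because $A$ and $B$ are disjoint analytic arcs meeting $\tau_1,\tau_2$ at non-cuspidal corners, for $\lambda$ small the pieces $N_{\lambda^{1/4}}(A) \cap \hat G$ and $N_{\lambda^{1/4}}(B) \cap \hat G$ are disjoint, connected tubular neighborhoods of $A$ and $B$ in $\hat G$. By the approximation hypothesis $\dist_{\hat G}(S_1, A) \leq \delta < \lambda^{1/4}$ (and similarly for $S_2, B$), so each tubular neighborhood meets $\partial G$ along the entire arc $S_k$; in particular it contains non-$P$ faces sharing a $\partial G$-edge with the outer face of $G$. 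Chaining through faces of the tubular neighborhood, we conclude that every non-$P$ face is connected, in the face-adjacency sense, to the outer face of $G$ through non-$P$ faces. Therefore all non-$P$ faces merge into the outer face of $G[P]$, leaving the inner faces of $G[P]$ exactly as $P$.

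\textbf{Boundary bound.} Any $p \in \partial G[P]$ lies on an edge $e$ of $G[P]$ with an inner $P$-face on one side and, on the other side, either the outer face of $G$ or a non-$P$ face $Q'$. In the former case $e \subset \partial G$; however $e$ cannot lie on $S_k$, because every face of $G$ touching $S_k$ has a vertex on $S_k$ and hence distance $\leq \delta < \lambda^{1/4}$ to $A \cup B$, so would be non-$P$, contradicting the choice of $e$. Thus $p \in T_1 \cup T_2$. In the latter case,
\[
\dist(p, A \cup B) \;\leq\; \diam(\overline{Q'}) + \dist(\overline{Q'}, A \cup B) \;\leq\; 2\epsilon + \lambda^{1/4} \;\leq\; 2\lambda + \lambda^{1/4} \;\leq\; \tfrac{5}{4}\lambda^{1/4},
\]
using $2\lambda \leq \tfrac{1}{4}\lambda^{1/4}$, which again follows from $\lambda \leq 1/16$, i.e.\ $8\lambda^{3/4} \leq 1$.

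\textbf{Block decomposition.} Fix $z \in V^\bullet \cap \overline{\Omega}$ with $\dist(z, A \cup B) > 2\lambda^{1/4}$. By the nonemptiness argument, every face of $G$ incident to $z$ lies in $P$. Enumerating these faces cyclically (counterclockwise) as $Q_1, \dots, Q_m$ with $Q_i = [z, w_i, v_i', w_{i+1}]$, the ``far'' three edges $w_i - v_i' - w_{i+1}$ of each $Q_i$ lie in $G[P]$ and avoid $z$; concatenating gives a closed curve (or, if $z \in T_1 \cup T_2$, an arc terminating on the boundary) of $G[P]$-edges through the dual-vertex neighbors of $z$ not passing through $z$. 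Hence $G[P] \setminus \{z\}$ remains locally connected at $z$, so $z$ is not a cut vertex of $G[P]$ and lies in a unique block $H$. For any block $H$ of $G[P]$, the uniqueness of the outer (non-$P$) face established above rules out nesting: if a block $H'$ lay inside an inner face of $H$, its presence would trap a bounded connected component of non-$P$ faces inside $H$, contradicting that the only non-$P$ face of $G[P]$ is its unbounded outer face. Consequently each inner face of $H$ coincides with an inner face of $G[P]$ (hence lies in $P$), and the cycle bounding the outer face of $H$ is a subset of $\partial G[P]$, so the bound \eqref{bdrybound} transfers to $\partial H$.
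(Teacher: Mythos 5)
The paper does not actually prove Claim \ref{claim2}; it states that this claim ``follows the arguments of the corresponding result, Claim 8.2, in \cite{gurel-jerison-nachmias:advm2020} with only minor changes of constants'' and omits the details. Your write-up is therefore a reconstruction rather than a parallel of the paper's text, and in its essentials it is correct: the numerology all checks out ($\diam(Q)\le 2\epsilon\le 2\lambda$, the inequalities $2\lambda\le\lambda^{1/4}$ and $2\lambda+\lambda^{1/4}\le\tfrac54\lambda^{1/4}$ both following from $\lambda\le 1/16$, which is forced by the standing assumption that some $z$ has $\dist(z,A\cup B)>2\lambda^{1/4}$), the exclusion of $S_1\cup S_2$ from $\partial G[P]$ via $\dist_{\hat G}(S_k,A\cup B)\le\delta$ is right, and the cut-vertex argument via the cycle (or arc) of ``far'' edges of the quadrilaterals around $z$ is the standard and correct way to see that $z$ lies in a unique block.

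Two steps are asserted rather than argued and deserve expansion. First, in the ``inner faces of $G[P]$'' paragraph, a non-$P$ face only \emph{intersects} $N_{\lambda^{1/4}}(A\cup B)$, it need not lie in it, and ``chaining through faces of the tubular neighborhood'' is the whole content of the step: the clean version is to run a segment of length $<\lambda^{1/4}$ from a point of the face to a nearest point $a\in A\cup B$, then a path in $\hat G$ of length $\le\delta$ from $a$ to $S_1\cup S_2$; every face met along the way contains a point within $\lambda^{1/4}$ of $A\cup B$, hence is non-$P$, and consecutive faces share edges bounded only by non-$P$ faces, so they all merge with the outer face of $G$ in $G[P]$. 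Second, the ``nesting'' sentence compresses the key mechanism: what one actually shows is that for any edge $e$ on the outer boundary of a block $H'$, the face of $G[P]$ on the outer side of $e$ cannot be a $P$-face (its quadrilateral boundary would be a cycle lying in the single block containing $e$, forcing it to be a \emph{bounded} face of $H'$ on the wrong side), hence must be the unique unbounded face of $G[P]$; this is what simultaneously rules out nested blocks inside an inner face of $H$ and shows that the outer boundary of each block lies in $\partial G[P]$, so that \eqref{bdrybound} transfers. With those two points spelled out, your proof is a sound substitute for the omitted one.
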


Fix $z_0 \in V^\bullet $ with dist$(z_0, A \cup B) > 2\lambda^{1/4}$, and
let $G_1$ be the block of $G[P]$ containing $z_0$.
Note that if the topological boundary $\partial G_1$ does not intersect $T_1 \cup T_2$, then we are in the setting addressed in $\cite{gurel-jerison-nachmias:advm2020}$ of no free boundary, and their work takes care of this case.  
Hence we will assume that $\partial G_1$ does intersect $T_1 \cup T_2$.
Then Claim \ref{claim2} implies that each block $H$ of $G[P]$ is a sub-orthodiagonal map, where
$   \partial V_1^\bullet$ are the points from  $V_1^\bullet$  that are in  $\partial G_1 \setminus \left(  T_1 \cup T_2 \right) $
and  ${\rm int}(V_1^\bullet) = V_1^\bullet \setminus  \partial V_1^\bullet$.
Lastly, by Lemma \ref{reflection}, we analytically extend $\psi$ (and hence $h_c$) to be defined on a domain $\mathcal{O}$ containing $\hat{G}_1$ so that $\dist(\hat{G}_1, \partial \mathcal{O}) \geq \frac{5}{16}\l^{1/4}$.

Our goal is to show that 
\begin{equation}\label{subgoal}
 |h_d(z_0) - h_c(z_0)| \leq \frac{C}{\log^{1/2}\left(1/ \lambda \right)}
 \end{equation}
  for $\lambda$ small enough (depending only on $\Omega$).
To accomplish this, we need derivative bounds on the extended $h_c=\rea \psi$ on $\hat{G}_1$.

\begin{claim}\label{claim3}
Let $\tilde{G}_1$ denote the union of the convex hulls of the closures of the inner faces of $G_1$.  
Then for $\lambda$ small enough (depending on $\Omega$),
$$ || \nabla h_c ||_{\infty, \tilde{G}_1} \leq C \lambda^{-1/4} \;\;\; \text{ and } \;\;\;
 || H h_c ||_{\infty, \tilde{G}_1} \leq C \lambda^{-1/2}.$$ 
\end{claim}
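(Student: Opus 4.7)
The plan is to bound $\psi$ uniformly on $\mathcal{O}$ and then apply Cauchy's integral formula on disks of radius comparable to $\lambda^{1/4}$ centered at points of $\tilde{G}_1$. The two ingredients needed are: (a) a uniform bound $|\psi| \leq M_0$ on $\mathcal{O}$ with $M_0$ depending only on $\Omega$, and (b) a lower bound $\dist(\tilde{G}_1, \partial \mathcal{O}) \geq c\lambda^{1/4}$ for some universal $c>0$, valid for $\lambda$ small enough in terms of $\Omega$.

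For (a): recall that $\psi$ was extended to $\mathcal{O}$ by a single Schwarz reflection across each of the analytic arcs $\tau_1, \tau_2$. Since $\psi$ sends $\tau_k$ into the horizontal side of the rectangle $R = [0,1] \times [0,m]$, the reflection of $\psi$ near $\tau_k$ takes values in the reflection of $R$ across the corresponding horizontal side. Hence $\psi(\mathcal{O})$ is contained in the union of $R$ with its reflections across the two horizontal sides, a bounded set whose diameter depends only on $m$ (equivalently, only on $\Omega$). This gives $|\psi(z)| \leq M_0$ for all $z \in \mathcal{O}$, with $M_0 = M_0(\Omega)$.

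For (b): by construction, $\tilde{G}_1$ is contained in the $\epsilon$-neighborhood of $\hat{G}_1$, since each inner face of $G_1$ has diameter at most $2\epsilon$ and so its convex hull lies within distance $\epsilon$ of the face itself. Combined with the bound $\dist(\hat{G}_1, \partial \mathcal{O}) \geq \tfrac{5}{16}\lambda^{1/4}$ from Lemma \ref{reflection}, and using $\epsilon \leq \lambda$ together with $\lambda \leq \tfrac{1}{16}\lambda^{1/4}$ (valid for $\lambda$ sufficiently small depending only on $\Omega$), we obtain
\begin{equation*}
\dist(\tilde{G}_1, \partial\mathcal{O}) \;\geq\; \tfrac{5}{16}\lambda^{1/4} - \epsilon \;\geq\; \tfrac{1}{4}\lambda^{1/4}.
\end{equation*}

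With these two facts in hand, fix $z \in \tilde{G}_1$ and set $r = \tfrac{1}{4}\lambda^{1/4}$. Then $\overline{D_r(z)} \subset \mathcal{O}$, and by Cauchy's integral formula applied to the holomorphic function $\psi$ on $D_r(z)$,
\begin{equation*}
|\psi'(z)| \;\leq\; \frac{M_0}{r} \;=\; C\lambda^{-1/4},\qquad |\psi''(z)| \;\leq\; \frac{2M_0}{r^2} \;=\; C\lambda^{-1/2},
\end{equation*}
for a constant $C$ depending only on $\Omega$. Since $h_c = \Re\psi$, the Cauchy--Riemann equations yield $|\nabla h_c(z)| = |\psi'(z)|$, and the entries of the Hessian $Hh_c(z)$ are all bounded in absolute value by $|\psi''(z)|$ (again by Cauchy--Riemann), so $\|Hh_c(z)\|_2 \leq C|\psi''(z)|$. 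Taking the supremum over $z \in \tilde{G}_1$ gives the claimed bounds. The only mildly subtle step is (a), namely verifying that the once-reflected function is globally bounded by a constant depending only on $\Omega$; everything else is a direct application of Cauchy's formula on a disk of radius $\asymp \lambda^{1/4}$.
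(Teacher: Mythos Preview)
Your proof is correct and follows essentially the same approach as the paper. The paper obtains the distance lower bound $\dist(q,\partial\mathcal{O}) \geq \tfrac{3}{16}\lambda^{1/4}$ from Lemma~\ref{reflection} and then invokes Proposition~3.7 of \cite{gurel-jerison-nachmias:advm2020} (which packages the standard interior gradient and Hessian estimates for bounded harmonic functions) together with the bound $|h_c|\leq 1$; you instead bound $|\psi|$ on $\mathcal{O}$ via the reflection picture and apply Cauchy's integral formula directly, which amounts to the same thing spelled out by hand.
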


Define $h_d^{(1)} : V_1^\bullet \to \mathbb{R}$ to be the solution to the discrete Dirichlet problem on ${\rm int}(V_1^\bullet)$ with boundary data $h_c$ on $\partial V_1^\bullet$.
Then
\begin{equation}  \label{2parts}
|h_d(z_0) - h_c(z_0) | \leq | h_d(z_0) - h_d^{(1)}(z_0) | + | h_d^{(1)}(z_0) - h_c(z_0)|. 
\end{equation}
We will bound each of the terms in \eqref{2parts} separately, and we begin with the first term.

\begin{claim}\label{claim4}
The first term of \eqref{2parts} is bounded as follows:
$$ |h_d(z_0) - h_d^{(1)}(z_0)| \leq \frac{C}{\log^{1/2}(1/\lambda)} .$$
\end{claim}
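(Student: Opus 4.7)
The plan is to apply the discrete maximum principle on the sub-network $G_1^\bullet$ to the difference $u:=h_d-h_d^{(1)}$. By construction $h_d^{(1)}$ is $G_1^\bullet$-discrete harmonic on $\text{int}(V_1^\bullet)$ with $h_d^{(1)}=h_c$ on $\partial V_1^\bullet$. The first step is to check that $h_d$ is also $G_1^\bullet$-discrete harmonic on $\text{int}(V_1^\bullet)$. The inclusion $\text{int}(V_1^\bullet)\subset\text{int}(V^\bullet)$ holds because any vertex in $V^\bullet\cap(A\cup B)$ sits at distance $0$ from $A\cup B$, so no face containing it lies in $P$, and therefore no such vertex lies in $V_1^\bullet$; hence $h_d$ satisfies the $G^\bullet$-discrete harmonic equation on $\text{int}(V_1^\bullet)$. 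Next, for each $v\in\text{int}(V_1^\bullet)$ I would argue that the $G$-faces incident to $v$ coincide with the $G_1$-faces incident to $v$: if $v$ lies in the topological interior of $G_1$, all adjacent faces are inner faces of $G_1$, hence in $P$; if $v$ lies on $T_1\cup T_2$, the adjacent faces are again in $P$ by Claim \ref{claim2} provided $v$ is not too close to $A\cup B$. Thus the $G^\bullet$- and $G_1^\bullet$-Laplacians of $h_d$ coincide at each such $v$, making $u$ discrete harmonic in $G_1^\bullet$ throughout $\text{int}(V_1^\bullet)$.

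The discrete maximum principle on the finite network $G_1^\bullet$ then yields
$$|u(z_0)|\leq\max_{v\in\partial V_1^\bullet}|u(v)|=\max_{v\in\partial V_1^\bullet}|h_d(v)-h_c(v)|,$$
since $h_d^{(1)}=h_c$ on $\partial V_1^\bullet$. By Claim \ref{claim2} every $v\in\partial V_1^\bullet$ satisfies $\dist(v,A\cup B)\leq\tfrac{5}{4}\lambda^{1/4}<2\lambda^{1/4}$. For $v\in\overline{\Omega}$, Claim \ref{claim1} applies and gives $|h_d(v)-h_c(v)|\leq C/\log^{1/2}(1/\lambda)$ directly. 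For the few vertices $v\in\partial V_1^\bullet\setminus\overline{\Omega}$, I would pick a nearest $v'\in V^\bullet\cap\overline{\Omega}$, which lies within $\delta\leq\lambda\leq\tfrac{1}{8}\lambda^{1/4}$ of $v$ and is still within $2\lambda^{1/4}$ of $A\cup B$; I would then bound $|h_d(v)-h_d(v')|$ using a discrete Hölder estimate for $h_d$ near the boundary (obtainable from Proposition \ref{modified7.1} with center $v$, $r\sim\epsilon$, and $R\sim\lambda^{1/4}$), and $|h_c(v)-h_c(v')|$ using the Hölder continuity of the analytic extension of $\psi$ provided by Lemma \ref{reflection}. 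Both of these differences are $O(\lambda^p)$ for some $p>0$ and are therefore dominated by $C/\log^{1/2}(1/\lambda)$ for $\lambda$ small. Combining these estimates gives the required bound on $|u(z_0)|$.

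The hard part will be the verification that the $G^\bullet$- and $G_1^\bullet$-neighborhoods genuinely coincide at the "free-boundary" vertices $v\in\text{int}(V_1^\bullet)\cap(T_1\cup T_2)$ that lie on $\partial G_1$. In principle, some $G$-face adjacent to such a $v$ might fall inside the $\lambda^{1/4}$-buffer around $A\cup B$ and so be excluded from $P$; this can only happen at a handful of "corner" vertices where $T_k$ meets the buffer. I would resolve this either by showing via Claim \ref{claim2} that any such $v$ already belongs to $\partial V_1^\bullet$ (and is therefore treated as boundary by the maximum principle), or by mildly shrinking $G_1$ by finitely many boundary quadrilaterals near these corners, without disturbing either $z_0\in\text{int}(V_1^\bullet)$ or the $\tfrac{5}{4}\lambda^{1/4}$ proximity estimate needed to invoke Claim \ref{claim1}.
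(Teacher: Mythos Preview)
Your approach is correct and coincides with the paper's: the paper does not give an explicit proof of this claim but defers to Claim~8.4 of \cite{gurel-jerison-nachmias:advm2020}, which is precisely the discrete maximum principle argument you outline---$h_d-h_d^{(1)}$ is discrete harmonic on $\mathrm{int}(V_1^\bullet)$, so its value at $z_0$ is controlled by its boundary values, which in turn are bounded via Claim~\ref{claim1}.

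Your treatment of the two subtleties (vertices $v\in\partial V_1^\bullet\setminus\overline{\Omega}$, and the possible failure of neighborhood-matching at ``corner'' vertices on $T_k\cap\partial G_1$) is more careful than the paper itself, which simply asserts that only constants change relative to the reference. Both fixes you propose are sound. For the second point, note that such a corner vertex $v\in T_k$ is adjacent in $G$ to a face $Q\notin P$, so $\dist(v,A\cup B)\le\dist(Q,A\cup B)+2\epsilon<\lambda^{1/4}+2\epsilon\le\tfrac{5}{4}\lambda^{1/4}$; hence one may simply enlarge $\partial V_1^\bullet$ to include these finitely many corner vertices without affecting the boundary estimate, exactly as you suggest. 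For the first point, rather than passing to a nearby $v'\in\overline{\Omega}$, it is slightly cleaner to observe that the proof of Claim~\ref{claim1} (specifically the argument leading to Lemma~\ref{claim1B}) only uses $z\in\overline{\Omega}$ to obtain a short path in $\hat G$ from $z$ to $A\cup B$; for $v\in\hat G\setminus\overline{\Omega}$ one has $\dist_{\hat G}(v,\partial\Omega)\le\delta$ by the approximation hypothesis, which supplies the needed path with at most an extra $\delta$ in length, and the H\"older bound on the extended $h_c$ from Lemma~\ref{reflection} handles the $h_c$ increment.
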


We now turn to the second term of \eqref{2parts} and
set $f =  h_d^{(1)} - h_c$, which satisfies
 that $f=0$ on $\partial V^\bullet_1$.
The following claim is a key part of the proof, and it will be used to  bound $|f(z_0)|$, which is the second term in \eqref{2parts}.

\begin{claim}\label{claim5}
Let  $D_{4\lambda}$ be the disk of radius $4\lambda$ centered at $z_0$.
For $\lambda$ small enough (depending only on $\Omega$), the set $D_{4\lambda}$ is disjoint from $\partial V^\bullet_1$, and for $y \in V^\bullet_1 \cap D_{4\lambda}$
$$ |f(z_0) - f(y)| \leq \frac{C}{\log^{1/2}(1/\lambda)} .$$
\end{claim}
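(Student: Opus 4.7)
The plan is to apply the discrete Beurling-type estimate in Proposition \ref{modified7.1} to the function $f = h_d^{(1)} - h_c$ on the sub-orthodiagonal map $G_1$, exploiting the fact that $f$ vanishes identically on $\partial V_1^\bullet$ so that the boundary oscillation term $\kappa$ disappears. The input ingredients will be the energy bound from Proposition \ref{modified5.2} combined with the derivative bounds from Claim \ref{claim3}.

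\textbf{Step 1 (Disjointness of $D_{4\epsilon}$ from $\partial V_1^\bullet$).} By Claim \ref{claim2}, every vertex $v \in \partial V_1^\bullet$ satisfies $\dist(v, A \cup B) \leq \tfrac{5}{4}\lambda^{1/4}$. Combined with the standing assumption $\dist(z_0, A \cup B) > 2\lambda^{1/4}$, the triangle inequality gives $|v - z_0| \geq \tfrac{3}{4}\lambda^{1/4}$. Since $\epsilon \leq \lambda$, the inequality $4\epsilon \leq 4\lambda < \tfrac{3}{4}\lambda^{1/4}$ holds whenever $\lambda^{3/4} < \tfrac{3}{16}$, which is true for $\lambda$ small enough.

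\textbf{Step 2 (Apply Proposition \ref{modified7.1} to $f$).} The function $f$ is discrete harmonic on $\inte(V_1^\bullet)$ and $f \equiv 0$ on $\partial V_1^\bullet$, so the quantity $\kappa$ in Proposition \ref{modified7.1} is zero. Fix $y \in V_1^\bullet \cap D_{4\epsilon}(z_0)$ and set $z = \tfrac{1}{2}(z_0 + y)$. Choose $r$ of order $\epsilon$ (say $r = C_0\epsilon$ for a suitable absolute constant $C_0$), large enough that $z_0$ and $y$ can be joined by a path in $G_1^\bullet \cap D_r(z)$: this is possible because the segment $[z_0, y]$ has length at most $8\epsilon$ and crosses only $O(1)$ quadrilateral faces of $G_1$ (each of diameter $\leq 2\epsilon$), whose primal vertices provide such a short path. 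Take $R = \tfrac{1}{2}\lambda^{1/4}$; by Step 1, $D_R(z) \cap \partial V_1^\bullet = \emptyset$, and the hypotheses $r \geq \tfrac{1}{2}|z_0 - y|$ and $R \geq 2r + 3\epsilon$ are satisfied for $\lambda$ small.

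\textbf{Step 3 (Energy bound via Proposition \ref{modified5.2} and Claim \ref{claim3}).} By Claim \ref{claim3}, $L := \|\nabla h_c\|_{\infty, \hat{G}_1} \leq C\lambda^{-1/4}$ and $M := \|Hh_c\|_{\infty,\tilde{G}_1} \leq C\lambda^{-1/2}$. Since $\epsilon, \delta \leq \lambda \leq 1$, Proposition \ref{modified5.2} yields
\begin{equation*}
\mathcal{E}_1^\bullet(f) \leq C\bigl[M^2 \epsilon^2 + L^2(\epsilon \vee \delta)\bigr] \leq C\bigl[\lambda^{-1}\lambda^2 + \lambda^{-1/2}\lambda\bigr] \leq C\lambda^{1/2}.
\end{equation*}
With $r + \epsilon = O(\epsilon) = O(\lambda)$ and $R = \tfrac{1}{2}\lambda^{1/4}$, the logarithmic factor satisfies $\log(R/(r+\epsilon)) \geq c\log(\lambda^{-3/4}) \geq c\log(1/\lambda)$. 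Plugging these bounds into Proposition \ref{modified7.1},
\begin{equation*}
|f(z_0) - f(y)| \;\leq\; \frac{C\,\mathcal{E}_1^\bullet(f)^{1/2}}{\log^{1/2}(R/(r+\epsilon))} \;\leq\; \frac{C\lambda^{1/4}}{\log^{1/2}(1/\lambda)} \;\leq\; \frac{C}{\log^{1/2}(1/\lambda)},
\end{equation*}
where the last inequality uses $\lambda^{1/4} \leq 1$.

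\textbf{Main obstacle.} The heart of the argument is essentially bookkeeping: once Proposition \ref{modified7.1} is set up with the correct $r, R$ and one feeds in the energy estimate from Proposition \ref{modified5.2} with the derivative bounds from Claim \ref{claim3}, everything aligns. The only subtlety is justifying the existence of a path from $z_0$ to $y$ in $G_1^\bullet$ of diameter $O(\epsilon)$ inside $D_r(z)$. This requires unpacking the local structure of the orthodiagonal map near $z_0$: not every pair of primal vertices within $4\epsilon$ of each other is joined by a single primal edge, so one must chain together primal vertices of the $O(1)$ faces crossed by the segment $[z_0, y]$. Ensuring this chain lies inside $G_1$ (and not merely in $G$) uses that $z_0$ lies strictly inside the block $G_1$ by a distance $\gtrsim \lambda^{1/4} \gg \epsilon$.
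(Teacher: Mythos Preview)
Your argument has a genuine gap at the very first sentence of Step~2: you assert that ``$f$ is discrete harmonic on $\inte(V_1^\bullet)$,'' but this is false. By definition $f = h_d^{(1)} - h_c$, where $h_c$ is the \emph{continuous} harmonic function $\Re\psi$ restricted to the primal vertices; this restriction is \emph{not} discrete harmonic (indeed, the entire content of Theorem~\ref{HarmConvThm} is that it is only approximately so). Consequently Proposition~\ref{modified7.1} cannot be applied to $f$, and your Step~2 collapses.

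The paper circumvents this by splitting
\[
|f(z_0)-f(y)| \le |h_d^{(1)}(z_0)-h_d^{(1)}(y)| + |h_c(z_0)-h_c(y)|
\]
and treating the two pieces separately. The second piece is handled directly by the gradient bound $\|\nabla h_c\|_{\infty,\hat G_1}\le C\lambda^{-1/4}$ from Claim~\ref{claim3}, giving $|h_c(z_0)-h_c(y)|\le C\lambda^{3/4}$ along a path of length $O(\lambda)$. For the first piece, Proposition~\ref{modified7.1} \emph{is} legitimately applied to $h_d^{(1)}$, which is genuinely discrete harmonic; the relevant energy input there is $\mathcal{E}_1^\bullet(h_d^{(1)})\le \mathcal{E}_1^\bullet(h_c)\le C$ (bounded independently of $\lambda$, via Propositions~4.8 and~5.1 of \cite{gurel-jerison-nachmias:advm2020}), not the much smaller $\mathcal{E}_1^\bullet(f)\le C\lambda^{1/2}$ you invoke. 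A secondary issue: your path construction in Step~2 via the straight segment $[z_0,y]$ is unjustified, since $\hat G_1$ need not be convex and $y$ may lie outside $\overline{\Omega}$; the paper uses the linear connectedness of $\Omega$ and Lemma~\ref{nearbypath} to produce a path of diameter $O(\lambda)$ inside $\hat G_1$.
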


Proposition \ref{modified5.2} and Claim \ref{claim3} imply that 
$$\mathcal{E}^\bullet_1(f) \leq C \left[ M^2 \epsilon^2 + L^2\lambda \right] 
\leq C \lambda^{1/2},$$
where
 $L = || \nabla h_c||_{\infty, \hat{G}_1}$ and $M = || H h_c ||_{\infty, \tilde G_1}.$
Applying Proposition \ref{modified6.1}
yields a  unit flow $\theta$ in $G^\bullet_1$ from $V_1^\bullet \cap D_{4\lambda}$ to $\partial V_1^\bullet$ such that 
$$\mathcal{E}_1^\bullet(\theta) \leq C \log\left( 1/\lambda \right).$$
Then $-\theta$ is a unit flow from $\partial V_1^\bullet$ to $V_1^\bullet \cap D_{4\lambda}$.
Applying Proposition 4.11 in \cite{gurel-jerison-nachmias:advm2020}  to $f$ and $-\theta$ shows that 
$$ \text{gap}_{\partial V_1^\bullet, V_1^\bullet \cap D_{4\lambda} }(f) \leq \mathcal{E}_1^\bullet(-\theta)^{1/2} \mathcal{E}_1^\bullet(f)^{1/2} \leq C\lambda^{1/4}\log^{1/2}\left( 1/\lambda \right),$$
where 
\begin{equation}\label{eq:gap-def}
\displaystyle \text{gap}_{X,Y}(f) :=  \min_{y \in Y} f(y) - \max_{x \in X} f(x).
\end{equation}
Note that Proposition 4.11 only applies in the case that the gap is nonnegative, but  the inequality is immediate in the other case.
Since $f=0$ on $\partial V_1^\bullet$, there is $y \in V_1^\bullet \cap D_{4\lambda}$ so that
$$f(y) \leq   C\lambda^{1/4}\log^{1/2}(1/\lambda).$$
Then combining this with Claim \ref{claim5}, we obtain
$$f(z_0) = f(y) + [ f(z_0) - f(y) ] \leq  C\lambda^{1/4}\log^{1/2}(1/\lambda) + \frac{C}{\log^{1/2}(1/\lambda)} \leq \frac{C}{\log^{1/2}(1/\lambda)}$$
Applying the same argument to $-f$ yields that
$$|f(z_0)| \leq \frac{C}{\log^{1/2}(1/\lambda)} .$$

All that remains is to establish Claims \ref{claim2}-\ref{claim5}.  
Two of these, Claims \ref{claim2} and \ref{claim4}, follow the arguments of the corresponding results, Claims 8.2 and 8.4, in \cite{gurel-jerison-nachmias:advm2020}  with only minor changes of constants.  As a result, we omit those proofs here.

\begin{proof}[Proof of Claim \ref{claim3}.]
Note that $\sup_{y \in \Omega} |h_c(y)| \leq 1$, since $h_c$ is the real part of the conformal map $\psi$ onto $[0,1] \times [0,m]$.
For all $q \in \tilde{G}_1$, Lemma \ref{reflection} implies that for $\lambda$ small enough
\begin{equation}\label{dist2bdry2}
\text{dist}(q, \partial \mathcal{O}) \geq \frac{5}{16} \lambda^{1/4} - \e \geq \frac{3}{16} \lambda^{1/4}.
\end{equation}
Proposition 3.7 of \cite{gurel-jerison-nachmias:advm2020} and \eqref{dist2bdry2} imply that 
$$ || \nabla h_c ||_{\infty, \tilde{G}_1} \leq C \lambda^{-1/4}.$$
We obtain the Hessian bound $ || H h_c ||_{\infty, \tilde{G}_1} \leq C \lambda^{-1/2}$ from Proposition 3.7 of \cite{gurel-jerison-nachmias:advm2020}  and \eqref{dist2bdry2} exactly as in the proof of Claim 8.3 in \cite{gurel-jerison-nachmias:advm2020}.

\end{proof}

\begin{proof}[Proof of Claim \ref{claim5}.]
Recall that we have $z_0 \in V_1^\bullet$ with dist$(z_0, A \cup B) > 2 \lambda^{1/4}$.
From Claim \ref{claim2}, we know that every point $p \in \partial G_1$ satisfies dist$(p, A \cup B) \leq \frac{5}{4} \lambda^{1/4}$.
Thus the distance from $z_0$ to $\partial V_1^\bullet$ is at least 
  $2\lambda^{1/4} -  \frac{5}{4} \lambda^{1/4} = \frac{3}{4} \lambda^{1/4} \geq 6 \lambda$,
since $\lambda^{1/4} \geq 8\lambda$,
and this implies that $D_{4\lambda}$ cannot intersect $\partial V_1^\bullet$.

Let $y  \in V^\bullet_1 \cap D_{4\lambda}$.  Then
$$| f(z_0) - f(y)|  \leq | h_d^{(1)} (z_0) - h_d^{(1)}(y)| +  |h_c(z_0) -h_c(y)|.$$
Our next step is to get a curve from $z_0$ to $y$ in $\hat G_1$ with length bounded by $C \lambda$.  
Then by Claim \ref{claim3}, we will have that
$$ |h_c(z_0) - h_c(y) | \leq || \nabla h_c ||_{\infty, \hat G_1} \cdot C \lambda \leq C \lambda^{3/4}.$$ 
Since $z_0$ and $y$ may not be in $\overline{\Omega}$, we will choose $u$ and $w$ in $\overline \Omega$ so that 
there is a  curve $\gamma_{z_0u}$ from $z_0$ to $u$ in $\hat G_1$ 
and a curve  $\gamma_{wy}$ from $w$ to $y$ in $\hat G_1$ 
with the length of both curves $\gamma_{z_0u}, \gamma_{wy}$ bounded by $\delta$. 
Since $\Omega$ is a linearly connected domain (with constant $K$), there is a curve $\gamma_{uw}$ in $\overline{\Omega} \subset \hat{G}_1$ from $u$ to $w$ so that 
$$\diam(\gamma_{uw}) \leq K |u-w| \leq K \left( |z_0-y| +  2\delta \right) \leq 6K \lambda.$$
Therefore $\gamma_{z_0y} = \gamma_{z_0u} \cup \gamma_{uw}   \cup \gamma_{wy}$ is a path from $z_0$ to $y$ in $\hat{G_1}$ with 
$$\diam(\gamma_{z_0y}) \leq (6K +2) \lambda.$$

It remains to bound $| h_d^{(1)} (z_0) - h_d^{(1)}(y)| $.
Propositions 4.8 and 5.1 of \cite{gurel-jerison-nachmias:advm2020} give
$$ \mathcal{E}^\bullet_1( h_d^{(1)} ) \leq  \mathcal{E}^\bullet_1( h_c ) 
   \leq 2 \int_{\hat G_1} | \nabla h_c |^2 \,dA + 2 \text{area}(\hat G_1) \, ( 10 LM \e + 8 M^2 \e^2) $$
where (using Claim \ref{claim3})
\begin{align*}
L &= || \nabla h_c ||_{\infty, \hat G_1} \leq C \lambda^{-1/4}, \\
M &= || H h_c ||_{\infty, \tilde G_1}  \leq C \lambda^{-1/2}.
\end{align*}
Since $\mathcal{O}$ and the extended $\psi$ are defined by reflection, $\psi(\mathcal{O})$ is contained in the rectangle $[0,1]\times[-m,2m]$, and so
$$\int_{\hat G_1} | \nabla h_c |^2 \, dA \leq 3\int_{\Omega} | \nabla h_c |^2 \, dA = 3m,$$
where $m$ is the continuous modulus of the family of curves from $A$ to $B$ in $\Omega$.
Thus,
$$  \mathcal{E}^\bullet_1( h_d^{(1)} ) \leq C.$$

We will utilize Proposition \ref{modified7.1} to finish the proof, and so we must verify that its hypotheses are satisfied.
By Lemma \ref{nearbypath}, the curve $\gamma_{z_0y}$ from above can be converted into a path $\gamma$ in $G^\bullet_1$ 
so that
$$\diam(\gamma) \leq \diam(\gamma_{z_0y}) + 2\e \leq (6K+4)\lambda. $$
Thus
a disc centered at $\frac{1}{2}(z_0+y)$ will contain $\gamma$ if its radius is larger than
$$\frac{1}{2}|z_0-y| + \diam(\gamma) < 2 \lambda + (6K+4)\lambda \leq (6K+6) \lambda.$$
Set $r=(6K+7) \lambda $ and $R = \frac{1}{8} \lambda^{1/4}$.
Choose $\lambda$ to be small enough so that  $ 2r+3\e \leq R$. 
Lastly, we must check the boundary path condition of Proposition \ref{modified7.1}.
By taking $\lambda$ small enough, we may assume that $D_R$ intersects at most one of $T_1$ or $T_2$.  If $D_R$ intersects neither, then the 
 the boundary path condition  is satisfied.  
 Assume then that $D_R$ intersects $T_1$ (and not $T_2$).
 Further, for the sake of contradiction, assume  the boundary path condition  is not true, that is, suppose there are two vertices in $\partial V^\bullet$, so that both paths in $\partial G_1^\bullet$ between these vertices contain a point in $T_1 \cap D_R $.
This means that $T_1 \cap \partial G_1$ must be disconnected.
For this to happen, look at the original graph $G$ and follow $T_1$:  after $T_1$ intersects $D_R$ for the first time, it  must intersect $\partial V^\bullet$ of $G_1$ (which is within $\frac{5}{4} \lambda^{1/4}$ of $A\cup B$), and then $T_1$ must  intersect $D_R$ for a second time.  
Since $\dist_{\hat{G}}(T_1, \tau_1) < \lambda$, it follows that $\tau_1$ must have a similar oscillation: after $\tau_1$ intersects $D_{R+\lambda}$, it must move close to $A\cup B$ (i.e.~within distance $\frac{5}{4} \lambda^{1/4} + \lambda \leq \frac{11}{8}\lambda^{1/4}$), and then it must again intersect $D_{R+\lambda}$.
Note that
$$\dist(D_{R+\lambda}, A\cup B) \geq \dist(z_0, A\cup B) - \frac{1}{2}|z_0-y| - (R+\lambda) > 2\lambda^{1/4} - 2\lambda - \frac{1}{8} \lambda^{1/4} - \lambda \geq  \frac{3}{2} \lambda^{1/4}.$$
 So  $\tau_1$ moves away from $A\cup B$ to a distance of at least $\frac{3}{2}\lambda^{1/4}$, then  moves within distance $\frac{11}{8}\lambda^{1/4}$, and then again moves at least distance $\frac{3}{2}\lambda^{1/4}$ away.
However, we can take $\lambda$ small enough to ensure that this oscillation cannot happen.
Let $z_1, z_2, z_3, z_4$ be the four corners of $\Omega$, i.e.~the points in the intersection of $A\cup B$ and $\tau_1 \cup \tau_2$.
Then we can take $s$ small enough so that the connected component $\eta_k$ of $(\tau_1\cup \tau_2) \cap D_s(z_k)$ containing $z_k$ 
is closely
approximated by a line segment for $k=1, \cdots, 4$.  Further, $(\tau_1 \cup \tau_2) \setminus \cup\eta_k$ and  $A \cup B$ are compact, disjoint sets.  Therefore, we can assume that $2\lambda^{1/4}$ is smaller than the minimum distance between these two sets.
However this makes the oscillation of $\tau_1$ described above impossible.

Note that since 
the distance from $z_0$ to $\partial V_1^\bullet$ is at least   $ \frac{3}{4} \lambda^{1/4}$,
   the distance from $\frac{1}{2}(z_0+y)$ to $\partial V_1^\bullet$ is at least  $ \frac{3}{4} \lambda^{1/4} - 2 \e \geq \frac{1}{2} \lambda^{1/4}$ (using that $\lambda \leq \frac{1}{8} \lambda^{1/4}$).
   Thus $D_R$ does not intersect $\partial V_1^\bullet$, and so the term $\kappa$ in Proposition \ref{modified7.1} is equal to zero.
Therefore Proposition \ref{modified7.1} implies that 
$$| h_d^{(1)}(z_0) - h_d^{(1)}(y) | \leq \frac{C\, \mathcal{E}_1^\bullet(h_d^{(1)})^{1/2}}{\log^{1/2}\left[ R/(r+\epsilon) \right]} 
  \leq  \frac{C}{\log^{1/2}\left(1/\lambda \right)}.$$

\end{proof}

This completes the proof of Proposition \ref{subdiagHarmConv}.
\end{proof}

\begin{proof}[Proof of Theorem \ref{HarmConvThm}.]
The theorem follows immediately from Propositions \ref{claim1B} and \ref{subdiagHarmConv}.
\end{proof}

\subsection{Proof of Theorem \ref{cor4}}\label{cor4Proof}

\begin{proof}[Proof of Theorem \ref{cor4}.]

Let $\Omega$ be an analytic quadrilateral, as defined in Definition \ref{def:smooth-quad},
with boundary arcs $A, \tau_1, B, \tau_2$ listed counterclockwise, 
and let $\Gamma$ be the curve family between $A$ and $B$ in $\Omega$.
Without loss of generality, assume $\diam(\Omega) = 1$.
Let $\epsilon_n>0$ with $\epsilon_n \to 0$.
Let $G_n$ be an orthodiagonal map with boundary arcs $S^n_1, T^n_1, S^n_2, T^n_2$, with edge-length at most $\epsilon_n$, and so that $(G_n;S^n_1, T^n_1, S^n_2, T^n_2)$  approximates $(\Omega;A, \tau_1, B, \tau_2)$ within distance $\e_n$ 
(in the sense of Definition \ref{def:approx-smooth-quad}). Equip $G_n$ with canonical edge-weights $\si_n$ as in Equation (\ref{eq:canonical-weights}).
Consider the family of all paths in $G^\bullet_n$ from $A_n^\bullet = V_n^\bullet \cap S^n_1$ to $B_n^\bullet= V_n^\bullet \cap S^n_2$, and let $\Gamma_n$ be the unique minimal subfamily with non-crossing paths.

Let $\psi$ be a conformal map of $\Omega$ onto the rectangle $R=[0,1] \times [0,m]$ taking the arcs $A$ and $B$ to the vertical sides of the rectangle,
and let $h = \text{Re } \psi$.
Let $h_n^\bullet: V_n^\bullet \to \mathbb{R}$ be discrete harmonic on ${\rm int}(V_n^\bullet)$, with the following boundary values:
$h_n^\bullet(z) = 0$ for $z \in A_n$ and $h_n^\bullet(z) = 1$ for $z \in B_n$.
Recall that 
$$\Mod_{2,\si_n}(\Gamma_n) = \mathcal{E}_n^\bullet(h_n^\bullet).$$
Similarly, the continuous modulus of $\Gamma$ equals the energy of $h$:
$$ \Mod_2(\Gamma) = \mathcal{E}(h) = \int_\Omega |\nabla h |^2\, dA.$$

Our goal is to show that $\Mod_{2,\si_n}(\Ga_n)$ tends to $m:=\Mod_2(\Ga)$ as $n$ tends to infinity.
In order to do this we introduce four other harmonic functions. First, in the continuous case, we let $\tilde{h}$ be the harmonic conjugate of $h$, i.e. 
\[
\tilde{h}:=\ima \psi.
\]
On the other hand, in the discrete case, we introduce three more discrete harmonic functions 
$$h_n^\circ: V_n^\circ \to \mathbb{R}, \;\; 
\tilde{h}_n^\bullet: V_n^\bullet \to \mathbb{R}, 
\; \text{ and } \; \tilde{h}_n^\circ: V_n^\circ \to \mathbb{R}$$
that satisfy the following boundary values:
\begin{align*}
h_n^\circ(z) &= 0 \; \text{ for } \;z \in  V_n^\circ \cap S_1^n \; \text{ and } \; h_n^\circ(z) = 1 \; \text{ for } z \in  V_n^\circ \cap S_2^n, \\
\tilde{h}_n^\bullet(z) &= 0 \; \text{ for } \; z \in  V_n^\bullet \cap T_1^n \; \text{ and } \; \tilde{h}_n^\bullet(z) = 1 \; \text{ for } \; z \in V_n^\bullet \cap T_2^n,  \\
\tilde{h}_n^\circ(z) &= 0 \; \text{ for } \; z \in  V_n^\circ \cap T_1^n \; \text{ and  } \; \tilde{h}_n^\circ(z) = 1 \; \text{ for } \; z \in  V_n^\circ \cap T_2^n.
\end{align*} 

As in the proof of Theorem \ref{HarmConvThm}, 
let $P_n$ be the set of all inner faces $Q$ of $G_n$ such that  dist$(Q, A \cup B) \geq \epsilon_n^{1/4}$, 
and let $G_{n,1} = G[P_n]$ be the subgraph of $G_n$ that is the union of the boundaries of the faces in $P_n$.
By Lemma \ref{reflection}, when $\epsilon_n$ is small enough, we can analytically extend $\psi$, so that $\psi$ is defined on all the vertices of $G_{n,1}$.  We define $G_{n,2}$ in an analogous way by switching the roles of $A,B$ and $\tau_1, \tau_2$.
For $k=1$ or $2$, the notation $\mathcal{E}_{n,k}^\bullet$ (respectively $\mathcal{E}_{n,k}^\circ$) refers to the energy with respect to the graph $G_{n,k}^\bullet$ (respectively $G_{n,k}^\circ$).

\begin{claim}\label{claimA}
There exists $r_n \searrow 1$ so that 
\begin{equation}\label{energybound1}
  \mathcal{E}_n^\bullet(h_n^\bullet) \leq r_n \, \mathcal{E}_{n, 1}^\bullet(h).
\end{equation}
\end{claim}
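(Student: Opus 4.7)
The plan is to invoke Dirichlet's principle on $G_n^\bullet$: by Proposition~4.8 of \cite{gurel-jerison-nachmias:advm2020}, $h_n^\bullet$ uniquely minimizes $\mathcal{E}_n^\bullet$ among all functions $g:V_n^\bullet\to\mathbb{R}$ with $g|_{A_n}=0$ and $g|_{B_n}=1$. It therefore suffices to exhibit a competitor $g_n$ with these boundary values satisfying $\mathcal{E}_n^\bullet(g_n)\leq r_n\,\mathcal{E}_{n,1}^\bullet(h)$ for some $r_n\searrow 1$.

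For the competitor I would use a \emph{clamp-and-rescale} of $h$. First, extend $h=\operatorname{Re}\psi$ analytically across $\tau_1,\tau_2$ via Lemma \ref{reflection}. Then, with $\delta_n:=C\epsilon_n^{1/16}$ (where $C$ depends on the H\"older constant from Proposition \ref{Holdercontinuity}), set
\[
g_n(v):=\max\!\Bigl(0,\min\Bigl(1,\tfrac{h(v)-\delta_n}{1-2\delta_n}\Bigr)\Bigr).
\]
By the H\"older-$1/4$ estimate, $h(v)\leq C\,\dist(v,A)^{1/4}\leq\delta_n$ at every vertex $v$ within $\epsilon_n$ of $A$, so $g_n=0$ on $A_n$; symmetrically $g_n=1$ on $B_n$. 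Conversely, whenever $g_n(v)\in(0,1)$ one has $h(v)>\delta_n$, and the contrapositive of the H\"older bound forces $\dist(v,A\cup B)>(\delta_n/C)^4\geq\epsilon_n^{1/4}$, placing $v$ in $V_{n,1}^\bullet$.

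Since $g_n$ is a pointwise $(1-2\delta_n)^{-1}$-Lipschitz function of $h$ and vanishes identically on every edge whose endpoints both lie in a single saturation region, the discrete energy decomposes as
\[
\mathcal{E}_n^\bullet(g_n)\leq (1-2\delta_n)^{-2}\,\mathcal{E}_{n,1}^\bullet(h)+\mathcal{E}_{\mathrm{trans}},
\]
where $\mathcal{E}_{\mathrm{trans}}$ collects the contributions of \emph{transition edges} joining a saturated vertex to a vertex in $V_{n,1}^\bullet$.

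The main obstacle is to show $\mathcal{E}_{\mathrm{trans}}\to 0$: a direct H\"older estimate yields a divergent sum, because the total conductance across the saturation boundary is $O(1/\epsilon_n)$. The remedy is to separate the saturation boundary into its portion away from the four corners of $\Omega$---where Schwarz reflection of $\psi$ across the analytic arcs $A$ and $B$ upgrades $h$ to a locally Lipschitz (in fact analytic) function, yielding $|h(v)-h(w)|\leq L\epsilon_n$ on each transition edge and so a total contribution of $O(\epsilon_n)$---and small corner neighborhoods, in which only the H\"older bound holds but the affected region is small enough that a H\"older-based estimate is summable to $o(1)$. Combining these estimates with the uniform lower bound $\mathcal{E}_{n,1}^\bullet(h)\geq c>0$ (from positivity of the continuous modulus $m$), one obtains $r_n=(1-2\delta_n)^{-2}+o(1)\searrow 1$, proving Claim \ref{claimA}.
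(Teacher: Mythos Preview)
Your clamp-and-rescale competitor is a viable alternative, but the paper takes a different and cleaner route. Rather than building a test function on all of $G_n^\bullet$, the paper works entirely on the subgraph $G_{n,1}$: Claim~\ref{claim2} splits $\partial V_{n,1}^\bullet$ into sets $A_n',B_n'$ lying within $\tfrac54\epsilon_n^{1/4}$ of $A,B$; Lemma~\ref{claim1B} together with Theorem~\ref{HarmConvThm} then gives $\text{gap}_{A_n',B_n'}(h)\geq 1-C\log^{-1/2}(1/\epsilon_n)$; Dirichlet's principle on $G_{n,1}^\bullet$ bounds the energy of the $A_n'$--$B_n'$ harmonic function $g$ by $r_n\,\mathcal{E}_{n,1}^\bullet(h)$ with $r_n=(1-C\log^{-1/2}(1/\epsilon_n))^{-2}$; and finally modulus monotonicity (every path in $\Gamma_n$ contains an $A_n'$--$B_n'$ subpath in $G_{n,1}^\bullet$) yields $\mathcal{E}_n^\bullet(h_n^\bullet)=\Mod_{2,\sigma_n}(\Gamma_n)\leq\mathcal{E}_{n,1}^\bullet(g)$. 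This never evaluates $h$ outside $\hat G_{n,1}$ and never touches the transition edges at all.

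Two remarks on your argument. First, the ``main obstacle'' you identify is not one: since the clamp is $(1-2\delta_n)^{-1}$-Lipschitz in $h$, every edge contributing to $\mathcal{E}_n^\bullet(g_n)$ has an endpoint $v$ with $g_n(v)\in(0,1)$, hence $\dist(v,A\cup B)>(\delta_n/C)^4$; if you choose $\delta_n$ so that this exceeds $2\epsilon_n^{1/4}$, the containing face lies in $P_n$, the edge is in $E_{n,1}^\bullet$, and both endpoints sit in $V_{n,1}^\bullet$ where $h$ is defined via Lemma~\ref{reflection}. Hence $\mathcal{E}_n^\bullet(g_n)\leq(1-2\delta_n)^{-2}\mathcal{E}_{n,1}^\bullet(h)$ directly---no separate $\mathcal{E}_{\mathrm{trans}}$, no Schwarz reflection across $A,B$, no corner analysis, and no appeal to a lower bound on $\mathcal{E}_{n,1}^\bullet(h)$. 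Second, there is a genuine loose end: your formula for $g_n$ invokes $h(v)$ at every $v\in V_n^\bullet$, but even after Lemma~\ref{reflection} the extension of $h$ is only defined on $\hat G_{n,1}$, not near $A\cup B$ outside $\overline\Omega$. You need to set $g_n\equiv 0$ (resp.\ $1$) by fiat on vertices outside $V_{n,1}^\bullet$ near $A$ (resp.\ $B$), and then verify that edges straddling $V_{n,1}^\bullet$ and its complement carry zero $g_n$-increment; this requires checking that the adjacent $V_{n,1}^\bullet$-endpoint also has $h\le\delta_n$ (resp.\ $\ge 1-\delta_n$), which follows from Claim~\ref{claim2} and the H\"older bound once the constants are tuned.
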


We assume Claim \ref{claimA} for now and return to its proof later.  Note that Claim \ref{claimA} implies that there exists $r_n \searrow 1$ so that
\begin{equation}\label{energybound2}
\mathcal{E}_n^\circ(h_n^\circ) \leq r_n \, \mathcal{E}_{n, 1}^\circ(h), \;\;
\mathcal{E}_n^\bullet(\tilde{h}_n^\bullet) \leq r_n \, \mathcal{E}_{n, 2}^\bullet(\tilde{h}/m), \; \text{and } \;
\mathcal{E}_n^\circ(\tilde{h}_n^\circ) \leq r_n \, \mathcal{E}_{n, 2}^\circ(\tilde{h}/m).
\end{equation}

Next we need some properties that follow from the Cauchy-Riemann equations for $\psi$.
\begin{claim}\label{lem:main-identity}
Fix a face $Q = [v_1, w_1, v_2, w_2]$ that is an inner face of $G_{n, 1}$ or $G_{n,2}$, and define 
$$\zeta_Q = \frac{(h(w_2)-h(w_1))+i(\tilde{h}(w_2)-\tilde{h}(w_1))}{|w_2 -w_1|} \;\; \text{and} \;\; \eta_Q = \frac{(h(v_2)-h(v_1))+i(\tilde{h}(v_2)-\tilde{h}(v_1))}{|v_2- v_1|}.$$
Then for $\epsilon_n$ small enough
\begin{enumerate}
\item[(i)] $|\zeta_Q -i\eta_Q| \leq C \ep_n^{1/2}$.
\item[(ii)] $ \ima(\zeta_Q\bar{\eta}_Q) \area(Q) \leq  \int_Q |\nabla h|^2 \,dA + C \ep_n^{1/4} \area{Q}$.
\end{enumerate}
\end{claim}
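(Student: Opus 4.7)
The plan is to exploit the fact that $\psi = h + i\tilde h$ is holomorphic on an extended domain containing $Q$ (by Lemma \ref{reflection} applied either to $\tau_1, \tau_2$ for $G_{n,1}$ or, after swapping roles, to $A, B$ for $G_{n,2}$), so that both claims reduce to standard Taylor/Cauchy--Riemann estimates. The crucial geometric input is that, in an orthodiagonal quadrilateral, the unit vectors $\mathbf{v} := (v_2 - v_1)/|v_2 - v_1|$ and $\mathbf{w} := (w_2 - w_1)/|w_2 - w_1|$ satisfy $\mathbf{w} = i\mathbf{v}$ (viewed as complex numbers) because the vertices are listed counterclockwise and the diagonals are perpendicular. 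The other crucial input is that Claim \ref{claim3} (or its analogue on $G_{n,2}$) gives uniform bounds $\|\psi'\|_\infty \leq C\epsilon_n^{-1/4}$ and $\|\psi''\|_\infty \leq C\epsilon_n^{-1/2}$ on the relevant extended domain, which are tight enough for the $O(\epsilon_n^{1/2})$ and $O(\epsilon_n^{1/4})$ errors that appear in (i) and (ii).

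For (i), I would fix a reference point $p \in Q$ (the intersection of the diagonals is convenient) and Taylor-expand $\psi$ around $p$: since $\mathrm{diam}(Q) \leq 2\epsilon_n$,
\begin{equation*}
\psi(w_2) - \psi(w_1) = \psi'(p)(w_2 - w_1) + O\!\left(\|\psi''\|_\infty \cdot \mathrm{diam}(Q) \cdot |w_2 - w_1|\right).
\end{equation*}
Dividing by $|w_2-w_1|$ and using the derivative bounds yields $\zeta_Q = \psi'(p)\mathbf{w} + O(\epsilon_n^{1/2})$, and the analogous computation gives $\eta_Q = \psi'(p)\mathbf{v} + O(\epsilon_n^{1/2})$. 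Since $\mathbf{w} = i\mathbf{v}$, the main terms cancel in $\zeta_Q - i\eta_Q$, leaving only the $O(\epsilon_n^{1/2})$ error.

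For (ii), I would multiply the two asymptotic formulas to obtain
\begin{equation*}
\zeta_Q \bar\eta_Q = |\psi'(p)|^2 \, \mathbf{w}\bar{\mathbf{v}} + O\!\left(\epsilon_n^{1/2}|\psi'(p)|\right) + O(\epsilon_n) = |\psi'(p)|^2 \, i + O(\epsilon_n^{1/4}),
\end{equation*}
using $|\psi'(p)| \leq C\epsilon_n^{-1/4}$ and $\mathbf{w}\bar{\mathbf{v}} = i$. Taking imaginary parts and noting $|\psi'(p)|^2 = |\nabla h(p)|^2$ give $\mathrm{Im}(\zeta_Q \bar\eta_Q) = |\nabla h(p)|^2 + O(\epsilon_n^{1/4})$. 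The final step is to replace $|\nabla h(p)|^2 \, \mathrm{area}(Q)$ by $\int_Q |\nabla h|^2\, dA$, which costs at most $\mathrm{osc}_Q(|\nabla h|^2) \cdot \mathrm{area}(Q) \leq 2\|\nabla h\|_\infty \|Hh\|_\infty \mathrm{diam}(Q) \cdot \mathrm{area}(Q) = O(\epsilon_n^{1/4} \mathrm{area}(Q))$, again by the derivative bounds. Combining these two approximations yields (ii) as an equality up to $O(\epsilon_n^{1/4}\mathrm{area}(Q))$, which in particular gives the claimed inequality.

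The only potential obstacle is bookkeeping the dependence on $\epsilon_n$: each Taylor remainder carries a factor of $\|\psi''\|_\infty = O(\epsilon_n^{-1/2})$, and each multiplication by $\zeta_Q$ or $\eta_Q$ introduces a factor of $\|\psi'\|_\infty = O(\epsilon_n^{-1/4})$, so one needs to check that all error terms are dominated by the claimed $O(\epsilon_n^{1/2})$ and $O(\epsilon_n^{1/4})$; the computation above shows this is exactly what falls out because the diameter of $Q$ contributes a compensating factor of $\epsilon_n$.
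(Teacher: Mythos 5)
Your proof is correct and is essentially the paper's argument in complex-analytic packaging: the Taylor expansion of $\psi$ about the diagonal intersection point is exactly the content of Lemma 5.3 of \cite{gurel-jerison-nachmias:advm2020} that the paper invokes, your identity $\mathbf{w}=i\mathbf{v}$ together with holomorphy of $\psi$ is exactly the Cauchy--Riemann step $\langle\nabla h(q),\mathbf{w}\rangle=-\langle\nabla\tilde h(q),\mathbf{v}\rangle$, and both proofs close with Claim \ref{claim3}'s gradient and Hessian bounds and the same oscillation estimate to pass from $|\nabla h(q)|^2\area(Q)$ to $\int_Q|\nabla h|^2\,dA$. The bookkeeping of powers of $\epsilon_n$ you carry out matches the paper's.
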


Again, we will postpone the proof of this claim until the end, and we will assume that we have taken $\epsilon_n$ small enough for its conclusion to hold.
By Claim \ref{lem:main-identity}(i), summing over all $Q$  that are inner faces of either $G_{n,1}$ or $G_{n,2}$ gives
$$ \sum_Q 2\area(Q) |\zeta_Q-i\eta_Q|^2 \leq C\ep_n, $$
since the area of $\Omega$ is finite.
Expanding $|\zeta_Q-i\eta_Q|^2=|\zeta_Q|^2+|\eta_Q|^2-2\ima(\zeta_Q\bar{\eta}_Q)$ in this sum, we obtain three terms, which we will relate to different energies. First,
\begin{align*}
\cE_{n, 1}^\circ(h)+\cE_{n, 2}^\circ(\tilde{h}) & \le \sum_{Q}2\area(Q)\left( |\rea \zeta_Q|^2+|\ima \zeta_Q|^2\right)\\
& = \sum_{Q} 2\area(Q) |\zeta_Q|^2, 
\end{align*}
where the inequality arises since the lefthand side only uses the inner faces of $G_{n,k}$ to compute $\cE_{n, k}^\circ$.
Similarly,
\[
\cE_{n, 1}^\bullet(h)+\cE_{n, 2}^\bullet(\tilde{h}) \leq \sum_{Q} 2\area(Q) |\eta_Q|^2.
\]
Lastly by Claim \ref{lem:main-identity}(ii), 
$$ \sum_{Q} 2\area(Q) \cdot 2 \ima(\zeta_Q\bar{\eta}_Q) \leq 4 \int_{\hat{G}_{n,1} \cup \hat{G}_{n,2}}  |\nabla h|^2 \, dA+ C \ep_n^{1/4}.$$
Therefore collecting everything gives
\[
\cE_{n, 1}^\bullet(h)+\cE_{n, 2}^\bullet(\tilde{h})+\cE_{n, 1}^\circ(h)+\cE_{n, 2}^\circ(\tilde{h}) \leq 4\int_{\hat{G}_{n,1} \cup \hat{G}_{n,2}}  |\nabla h|^2 \, dA + C \ep_n^{1/4},
\]
and from \eqref{energybound1} and  \eqref{energybound2} we obtain
\[
\cE_n^\bullet(h_n^\bullet)+m^2\cE_n^\bullet(\tilde{h}_n^\bullet)+\cE_n^\circ(h_n^\circ)+m^2\cE_n^\circ(\tilde{h}_n^\circ) \leq 4r_n \int_{\hat{G}_{n,1} \cup \hat{G}_{n,2}}  |\nabla h|^2 \, dA+ C \ep_n^{1/4}.
\]
Now, we use the fact that Fulkerson duality (Lemma \ref{lem:fulkerson-duality}(i)) gives
\[
\cE_n^\bullet(h_n^\bullet)\cE_n^\circ(\tilde{h}_n^\circ)=1\qquad\text{and}\qquad \cE_n^\circ(h_n^\circ)\cE_n^\bullet(\tilde{h}_n^\bullet)=1.
\]
So, dividing by $m= \int_\Omega |\nabla h |^2 \, dA$, we get
\[
\left(\frac{\cE_n^\bullet(h_n^\bullet)}{m}+ \frac{m}{\cE_n^\bullet(h_n^\bullet)}\right)+\left(\frac{\cE_n^\circ(h_n^\circ)}{m}+ \frac{m}{\cE_n^\circ(h_n^\circ)}\right) \leq 4r_n \frac{1}{m} \int_{\hat{G}_{n,1} \cup \hat{G}_{n,2}}  |\nabla h|^2 \, dA+ C \ep_n^{1/4}.
\]
Letting $\ep_n$ tend to zero, the right-hand side approaches 4, and hence
\[
\min\left\{ \limsup_{n\rightarrow \infty}\left(\frac{\cE_n^\bullet(h_n^\bullet)}{m}+ \frac{m}{\cE_n^\bullet(h_n^\bullet)}\right), \,\limsup_{n\rightarrow \infty}\left(\frac{\cE_n^\circ(h_n^\circ)}{m}+ \frac{m}{\cE_n^\circ(h_n^\circ)}\right)\right\}\le 2.
\]
But, the function $f(x)=\frac{x}{m}+\frac{m}{x}$, for $x>0$, has an absolute minimum at $x=m$ with value $2$. So, this shows that
\[
 \lim_{n\rightarrow \infty}\cE_n^\bullet(h_n^\bullet)= \lim_{n\rightarrow \infty}\cE_n^\circ(h_n^\circ)=m.
\]

To finish, we much establish the two claims.

\begin{proof}[Proof of Claim \ref{claimA}]

It suffices to prove the statement for $\epsilon_n$ small enough.
Let $\partial V_{n,1}^\bullet$ consist of the vertices in $V_{n,1}^\bullet$ that are in the topological boundary of the outer face of $G_{n, 1}$ but not in  $T_1^n\cup T_2^n$.  
If  $z \in \partial V^\bullet_{n,1}$, then $\text{dist}(z, A \cup B) \leq \frac{5}{4} \epsilon_n^{1/4}$
by Claim \ref{claim2}.  
Set $A_n' = \{ z \in \partial V_{n,1}^\bullet \, : \, \text{dist}(z, A) \leq \frac{5}{4} \epsilon_n^{1/4} \}$ 
and define $B_n'$ similarly (by replacing $A$ with $B$ in the above definition).
We assume that $\epsilon_n$ is small enough so that $A_n'$ and $B_n'$ are disjoint. 
Let $z \in A_n'$.
Then we see from Proposition \ref{claim1B}  that there exists $w \in A_n $ so that
$|h_n^\bullet(z) - h_n^\bullet(w)| \leq C \log^{-1/2} (1/\epsilon_n)$.  
Using this along with Theorem \ref{HarmConvThm} shows that,  for $z\in A_n'$,
$$ |h(z) | = |h(z) - h_n^\bullet(w) | \leq |h(z) - h_n^\bullet(z)| + |h_n^\bullet(z) - h_n^\bullet(w)| \leq \frac{C}{ \log^{1/2} (1/\epsilon_n)}.$$
Similarly, for $z \in B_n'$, 
$$| h(z) -1| \leq \frac{C}{ \log^{1/2} (1/\epsilon_n)}.$$
Therefore, 
$$\text{gap}_{A_n', B_n'}\left(h \right) \geq 1-\frac{C}{\log^{1/2} (1/\epsilon_n)},$$
where $\text{gap}$ is defined in (\ref{eq:gap-def})
and we assume that $\epsilon_n$ is small enough so that the righthand side of this equation is positive.

Let $g$ be the solution to the discrete Dirichlet problem on $G_{n,1}^\bullet$ that is 0 on $A_n'$ and 1 on $B_n'$.
Then Dirichlet's Principle gives that 
$$  \mathcal{E}_{n, 1}^\bullet (g) = \inf_{f} \frac{ \mathcal{E}_{n, 1}^\bullet (f) }{\text{gap}_{A_n', B_n'}(f)^2}
\leq r_n \, \mathcal{E}_{n, 1}^\bullet(h), $$
where $r_n = \left( 1-\frac{C}{\log^{1/2} (1/\epsilon_n)} \right)^{-2}$.
Since every path in $\Gamma_n$ contains a path between $A_n'$ and $B_n'$ in $G_{n,1}^\bullet$,
Statement 5.1 in \cite{APCDSG} implies that
$$\Mod_{2,\si_n}(\Gamma_n) \leq  \Mod_{2,\si_n}\left(\text{paths between } A_n'  \text{ and } B_n' \text{ in } G_{n,1}^\bullet \right) = \mathcal{E}_{n, 1}^\bullet (g). $$
Therefore
$$ \mathcal{E}_n^\bullet(h_n^\bullet) = \Mod_{2,\si_n}(\Gamma_n)  \leq \mathcal{E}_{n, 1}^\bullet (g)  \leq  r_n \, \mathcal{E}_{n, 1}^\bullet(h).$$

\end{proof}

\begin{proof}[Proof of Claim \ref{lem:main-identity}]
Let $q$ be the intersection point of $e_Q^\bullet$ and $e^\circ_Q$, and let ${\bf v}$ and ${\bf w}$ are the unit vectors in the directions of $\vv{v_1v_2}$ and $\vv{w_1w_2}$, respectively.
We will assume that $\ep_n$ is small enough for Claim \ref{claim3} to hold.
Then since $\langle \nabla  h(q), {\bf w} \rangle =-\langle \nabla \tilde h(q), {\bf v} \rangle$,
\begin{align*}
 &\left| \frac{h(w_2) - h(w_1)}{|w_2-w_1|} +  \frac{\tilde{h}(v_2)-\tilde{h}(v_1)}{|v_2-v_1|} \right| \\ 
&\hspace{1.1in} \leq  \left| \frac{h(w_2) - h(w_1)}{|w_2-w_1|} -\langle \nabla  h(q), {\bf w} \rangle \right| + \left|  \frac{\tilde{h}(v_2)-\tilde{h}(v_1)}{|v_2-v_1|}- \langle \nabla \tilde h(q), {\bf v} \rangle \right| \\
&\hspace{1.1in} \leq 2 || Hh ||_{\tilde{Q}} \, \ep_n \\
&\hspace{1.1in} \leq C \ep_n^{1/2}
\end{align*}
 where Lemma 5.3 in \cite{gurel-jerison-nachmias:advm2020} gives the penultimate inequality and Claim \ref{claim3} gives the last inequality.
Similarly, we can use that $\langle \nabla  h(q), {\bf v} \rangle =\langle \nabla \tilde h(q), {\bf w} \rangle$ to show
$\left| \frac{\tilde h(w_2) - \tilde h(w_1)}{|w_2-w_1|} -  \frac{h(v_2)-h(v_1)}{|v_2-v_1|} \right|  \leq C \ep_n^{1/2}$,
and this establishes (i).

For (ii), note that 
$$ \ima(\zeta_Q\bar{\eta}_Q) = \frac{h(v_2)-h(v_1)}{|v_2-v_1|} \frac{\tilde{h}(w_2)-\tilde{h}(w_1)}{|w_2-w_1|}-\frac{h(w_2)-h(w_1)}{|w_2-w_1|}\frac{\tilde{h}(v_2)-\tilde{h}(v_1)}{|v_2-v_1|}.$$
Using the same tools as above (Cauchy-Riemann equations,
Lemma 5.3 in \cite{gurel-jerison-nachmias:advm2020}, and Claim \ref{claim3}), one can show that 
$$\left|\ima(\zeta_Q\bar{\eta}_Q) - |\nabla h(q)|^2\right| \leq C \ep_n^{1/4}.$$
Further, Lemma 5.3 in \cite{gurel-jerison-nachmias:advm2020} and Claim \ref{claim3} also give that 
$|\nabla h(q)|^2 \leq |\nabla h(z) |^2 + C\ep_n^{1/4}$ for any $z \in Q$.
Hence
$$ \ima(\zeta_Q\bar{\eta}_Q) \area(Q) \leq  |\nabla h(q)|^2 \area(Q) + C \ep_n^{1/4}\area(Q) \leq \int_Q |\nabla h|^2 \, dA + C \ep_n^{1/4}\area(Q).$$
\end{proof}

By proving Claims \ref{claimA} and \ref{lem:main-identity}, we have established Theorem  \ref{cor4}.
\end{proof}

\bibliographystyle{acm}
\bibliography{pmodulus}
\def\cprime{$'$}

\end{document}